\documentclass[letterpaper,10pt]{amsart}
\usepackage{indentfirst} 
\usepackage{amssymb}
\usepackage{mathrsfs}
\usepackage{amsmath}
\usepackage{amsbsy}
\usepackage{amsthm,stmaryrd}
\usepackage{thmtools}
\usepackage{enumitem}
\usepackage{bbm}            
\usepackage[colorlinks = true, linkcolor = black, citecolor = blue ]{hyperref}
\usepackage[usenames,dvipsnames]{xcolor} 
\usepackage{tikz}

\usepackage[left=2.6cm, right=2.6cm, bottom=3.4cm]{geometry}

\theoremstyle{plain}
\newtheorem{theorem}{Theorem}[section]

\theoremstyle{plain}
\newtheorem{proposition}{Proposition}[subsection]
\newtheorem{lemma}[proposition]{Lemma}
\newtheorem{corollary}[proposition]{Corollary}
\newtheorem{definition}[proposition]{Definition}

\theoremstyle{remark}
\newtheorem{remark}{Remark}[subsection]

\newcommand{\vertiii}[1]{{\left\vert\kern-0.25ex\left\vert\kern-0.25ex\left\vert #1 
    \right\vert\kern-0.25ex\right\vert\kern-0.25ex\right\vert}}

\def\e{\epsilon}
\def\R{{\mathbb R}}
\def\N{{\mathbb N}}

\def\T{{\bold T}}

\def\T{{\mathbf T}}
\def\z{{\mathbf{z}_{\mathrm{mod}}}}
\def\dr{{\mathrm d}}
\def\X{{\mathbf{X}}}
\def\Y{{\mathbf{Y}}}
\def\V{{\mathbf{V}}}
\def\L{{\mathscr L}}

\begin{document}

\title[Late-time asymptotics of small data solutions for the Vlasov--Poisson system]{Late-time asymptotics of small data solutions for the Vlasov--Poisson system}

\author[L\'eo Bigorgne]{L\'eo Bigorgne} \address{Institut de recherche math\'ematique de Rennes (IRMAR) - UMR 6625, CNRS, Universit\'e de Rennes, F-35000 Rennes, France.}
\email{leo.bigorgne@univ-rennes.fr}

\author[Renato Velozo Ruiz]{Renato Velozo Ruiz} \address{Department of Mathematics, University of Toronto, 40 St. George Street, Toronto, ON, Canada.}
\email{renato.velozo.ruiz@utoronto.ca}

\begin{abstract}
In this paper, we study the precise late-time asymptotic behaviour of small data solutions for the Vlasov--Poisson system in dimension three. First, we show that the spatial density and the force field satisfy asymptotic self-similar polyhomogeneous expansions. Moreover, we obtain an enhanced modified scattering result for this non-linear system. We show that the distribution function converges, with an arbitrary rate, to a regular distribution function along high order modifications to the characteristics of the linearised problem. We exploit a hierarchy of asymptotic conservation laws for the distribution function. As an application, we show late-time tails for the spatial density and the force field, where the coefficients in the tails are obtained in terms of the scattering state. Finally, we prove that the distribution function (up to normalisation) converges weakly to a Dirac mass on the zero velocity set.
\end{abstract}

\keywords{Vlasov--Poisson, polyhomogeneous expansions, modified scattering, late-time tails}
\subjclass{35Q70, 35Q83}

\maketitle

\setcounter{tocdepth}{1}
\tableofcontents

\section{Introduction}
In this paper, we investigate the late-time asymptotics of collisionless systems on $\R^3_x\times \R^3_v$ near vacuum. Specifically, we study collisionless systems described statistically by a distribution function $f(t,x,v)$ satisfying the \emph{Vlasov--Poisson system}
\begin{equation}\label{eq:VP}
\begin{cases}
\partial_t f+v\cdot\nabla_xf -\mu\nabla_x\phi \cdot \nabla_vf=0,\\ 
\Delta_x \phi=\rho(f),\\
\rho(f)(t,x):=\int_{\R_v^3}f(t,x,v) \mathrm{d} v,\\
f(t=0,x,v)=f_{0}(x,v),
\end{cases} \tag{VP}
\end{equation}
where $t\in \R$, $x\in \R^3_x$, $v\in \R^3_v$, and $\mu\in \{1,-1\}$. We call $\nabla_x\phi$ the \emph{force field}, and $\rho(f)$ the \emph{spatial density}. The non-linear Vlasov equation in \eqref{eq:VP} is a transport equation along the Hamiltonian flow defined by $\mathcal{H}_t(x,v):=\frac{|v|^2}{2}+\phi(t,x)$ in terms of the potential $\phi$. The non-linear term in this kinetic PDE system arises from the mean field generated by the many-particle system. Note that the interaction between the particles of the system is \emph{attractive} or \emph{repulsive}, when $\mu=1$ or $\mu=-1$, respectively. The Vlasov--Poisson system is a classical model for the description of collisionless many-particle systems in astrophysics (attractive case) \cite{BT} and plasma physics (repulsive case) \cite{LPit}. 

In this work, we are specifically interested in the study of fine asymptotic properties of small data solutions for the Vlasov--Poisson system. In particular, we will investigate regular solutions of the Vlasov--Poisson system near the vacuum solution $f\equiv 0$.

\subsection{Small data global existence for the Vlasov--Poisson system}

The non-linear dynamics of small data solutions for the Vlasov--Poisson system have been extensively studied in previous works. The first mathematical study of small data solutions for the Vlasov--Poisson system was performed by Bardos and Degond \cite{BD85}. In this work, small data global existence was proved for the Vlasov--Poisson system. For this, the authors obtained time decay for the spatial density by using the method of characteristics. Later in time, small data global existence was revisited by Hwang, Rendall, and Vel\'asquez \cite{HRV11}. Here, optimal time decay for high order derivatives of the spatial density was established.

More recently, the stability of the vacuum solution for the Vlasov--Poisson system was addressed once again by Smulevici \cite{Sm16}. In this paper, the small data global existence was obtained via energy estimates using a vector field method suitably adapted for kinetic equations. In particular, boundedness in time of a suitable energy norm is established, and also optimal space and time decay for the spatial density. We emphasise the novel modified vector field technique introduced in \cite{Sm16} to prove small data global existence in dimension three. Later in time, this work was revisited by Duan \cite{Du22}. In this article, the energy norms and the control of the force field are both simplified. 

See also Wang's work \cite{W23} for another proof of small data global existence for the Vlasov--Poisson system using Fourier techniques. Finally, Schaeffer \cite{SchPoisson} also studied small data solutions for \eqref{eq:VP}. Here, the smallness assumption on the derivatives of the distribution function was relaxed ($f$ is still assumed to be initially small).

\subsection{Small data modified scattering for the Vlasov--Poisson system}

In the last decade, there have been several works on the fine scattering properties of the distribution function for small initial data. Previous small data global existence results \cite{BD85,HRV11} proved that the non-linear term $\nabla_x\phi \cdot \nabla_vf$ decays in time. So, one could wonder if linear scattering holds. 

\begin{definition}
We say that \emph{linear scattering} holds for a solution of the Vlasov--Poisson system on $\R^3_x\times \R^3_v$, if there exists a regular function $f_{\infty}:\R^3_x\times \R^3_v \to \R$ such that the linear profile $f(t,x+vt,v)$ converges to $f_{\infty}(x,v)$. We call $f_{\infty}(x,v)$ the \emph{scattering state}.  
\end{definition}

This question was answered by Choi and Ha \cite{ChoiHa}, who proved that linear scattering \emph{does not} hold for any solution arising from non-trivial initial data, due to the long-range interaction of the particle system in dimension three. In this case, the non-linear term $\nabla_x\phi \cdot \nabla_vf$ only decays as $t^{-1}$ because of the long-range interaction. Nonetheless, one can still hope that modified scattering holds.

\begin{definition}
We say that \emph{modified scattering} holds for a solution $f$ of the Vlasov--Poisson system on $\R^3_x\times \R^3_v$, if $f$ does not enjoy a linear scattering dynamics, and if there exists a regular function $f_{\infty}:\R^3_x\times \R^3_v \to \R$ as well as lower order corrections $\mathscr{C}_x(t,x,v)$ and $\mathscr{C}_v(t,x,v)$, such that the modified profile $f(t,x+vt+\mathscr{C}_x,v+\mathscr{C}_v)$ converges in time. We call $f_{\infty}(x,v)$ the \emph{scattering state}.
\end{definition}

The first proof of small data modified scattering for the Vlasov--Poisson system was obtained by Choi and Kwon \cite{CK16}. Later, Ionescu, Pausader, Wang, and Widmayer \cite{IPWW}, obtained another proof of small data modified scattering using methods inspired from dispersive analysis. In particular, this paper identified an explicit logarithmic correction to the linearised characteristic system, in order to show modified scattering. For this, the authors considered the limit, when time goes to infinity, of the spatial average of the distribution function. This function allows to identify the precise self-similar asymptotic behavior of the force field. Finally, the explicit logarithmic correction of the linearised characteristic system can be found in terms of the asymptotics of the force field. Around the same time, Pankavich \cite{P22} proved modified scattering for a multispecies collisionless plasma assuming that the electric field decays sufficiently fast (instead of assuming smallness of the initial data). We note that \cite{P22} assumes compact support for the initial distribution function. 

We summarise this overview with a statement of small data modified scattering for the Vlasov--Poisson system according to \cite{IPWW, P22}.

\begin{theorem}[Small data modified scattering for the Vlasov--Poisson system]\label{thm_main_result_first_version}
Every solution $f$ to the Vlasov--Poisson system arising from regular and small initial data is global in time. Moreover, the following properties hold:
\begin{enumerate}[label = (\alph*)]
\item The spatial average of $f$ converges to a regular function $Q_{\infty}:\R^3_v\to \R$ such that $$\forall t\in [2,\infty), \qquad \Big|\int_{\R^3_x} f(t,x,v)\mathrm{d}x -Q_{\infty}(v) \Big|\lesssim \log(t)\langle t \rangle^{-1}.$$

\item The spatial density has a regular self-similar asymptotic profile $$\forall t\in [2,\infty), \qquad \Big|t^3\int_{\R^3_x} f(t,x,v)\mathrm{d}v -Q_{\infty}\Big(\frac{x}{t}\Big) \Big|\lesssim \log^{2}(t)\langle t \rangle^{-1}.$$

\item Let $\phi_{\infty}:\R^3_v\to \R$ be defined by $\Delta_v \phi_{\infty}=Q_{\infty}$. The force field has the regular self-similar asymptotic profile $v\mapsto \nabla_v \phi_{\infty}$, in the sense that $$\forall t\in [2,\infty), \qquad |t^2 \nabla_x\phi (t,x+tv)-
\nabla_v \phi_{\infty} (v)|\lesssim \langle x \rangle \log^{2}(t) \langle t \rangle^{-1}.$$ 
\item Modified scattering holds for the distribution function. There exists a regular distribution $\tilde{f}_{\infty}: \R^3_x\times\R^3_v\to \R$ such that $$\forall t\in [2,\infty), \qquad |f(t,x+tv+\mu\log (t) \nabla_v \phi_{\infty}(v),v)-\tilde{f}_{\infty}(x,v)|\lesssim \log^2(t)\langle t \rangle^{-1}.$$ 
\end{enumerate}
\end{theorem}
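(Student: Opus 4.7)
The plan is to follow an Ionescu--Pausader--Wang--Widmayer--style strategy: take small--data global existence and sharp uniform decay as a black box, identify the scattering state $Q_\infty$ through the $x$--average of $f$, promote this to self--similar asymptotics for $\rho$ and $\nabla_x\phi$, and finally use the asymptotic profile of $\nabla_x\phi$ to design the logarithmic correction to the free--streaming characteristics. I treat as given the sharp bounds $\|\rho(t)\|_{L^\infty}\lesssim \langle t\rangle^{-3}$, $\|\nabla_x\phi(t)\|_{L^\infty}\lesssim \langle t\rangle^{-2}$, and the pointwise control of (modified) vector fields applied to $f$, coming from \cite{Sm16,Du22,IPWW}.

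Part (a). Integrating the Vlasov equation in $x$ and using $\int v\cdot\nabla_x f\,\mathrm{d}x=0$ gives $\partial_t Q(t,v)=\mu\nabla_v\!\cdot\!\int \nabla_x\phi(t,x) f(t,x,v)\,\mathrm{d}x$. Combining the $t^{-2}$ decay of $\nabla_x\phi$ with the fact that the relevant $x$--support of $f(t,\cdot,v)$ has diameter $\lesssim t$ (whence a logarithmic loss from the $x$--integration, after one commuted $v$--derivative) yields $|\partial_t Q|\lesssim \log(t)\langle t\rangle^{-2}$, time--integrable with primitive of order $\log(t)\langle t\rangle^{-1}$; this proves (a) and defines $Q_\infty$. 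Part (b). I change variables $w:=x-tv$ in $\rho(t,x)=\int f(t,x,v)\,\mathrm{d}v$, obtaining $t^3\rho(t,x)=\int f(t,x,(x-w)/t)\,\mathrm{d}w$, then (i) replace $(x-w)/t$ by $x/t$ at the cost of one $v$--derivative (a logarithm), and (ii) identify the remaining $w$--integral, via a free--streaming argument combined with (a), with $Q_\infty(x/t)$; the error is the announced $\log^2(t)\langle t\rangle^{-1}$.

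Part (c). I start from the Green's function representation $\nabla_x\phi(t,y)=\frac{1}{4\pi}\int\frac{y-z}{|y-z|^3}\rho(t,z)\,\mathrm{d}z$, substitute $y=x+tv$ and rescale $z=x+tv-t\zeta$, which produces a prefactor $t^{-2}$ and reduces the analysis to $t^3\rho(t,x+tv-t\zeta)$ inside the integral. Estimate (b) converts this into $Q_\infty(v-\zeta+x/t)$ up to a $\log^2(t)\langle t\rangle^{-1}$ error, and a Taylor expansion in $x/t$ identifies the limit as $\int\frac{\zeta}{4\pi|\zeta|^3}Q_\infty(v-\zeta)\,\mathrm{d}\zeta=\nabla_v\phi_\infty(v)$ (up to signs); the $\langle x\rangle$ factor in the remainder comes from the Taylor error, and the key technical point is to control the kernel singularity at $\zeta=0$ uniformly in $x,v$ while carrying the remainder from (b).

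Part (d). I introduce $X_t^{\mathrm{mod}}(x,v):=x+tv+\mu\log(t)\nabla_v\phi_\infty(v)$ and $g(t,x,v):=f(t,X_t^{\mathrm{mod}}(x,v),v)$. A direct chain--rule computation using \eqref{eq:VP} yields
\[
\partial_t g=\mu(\nabla_x\phi)(t,X_t^{\mathrm{mod}})\cdot\nabla_v g+\tfrac{\mu}{t}\bigl[\nabla_v\phi_\infty(v)-t^2(\nabla_x\phi)(t,X_t^{\mathrm{mod}})\bigr]\cdot\nabla_x g-\mu^2\log(t)\bigl[\nabla_v^2\phi_\infty(v)(\nabla_x\phi)(t,X_t^{\mathrm{mod}})\bigr]\cdot\nabla_x g.
\]
The bracketed factor in the middle is exactly the residual controlled by (c) evaluated at a point that differs from $x+tv$ by $O(\log t)$, hence of size $O(\log^3(t)\langle t\rangle^{-1})$; after multiplication by $\tfrac{\mu}{t}$ it is integrable in time. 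The other two terms are manifestly $O(\log(t)\langle t\rangle^{-2})$. Integrating yields convergence of $g$ at rate $\log^2(t)\langle t\rangle^{-1}$ and defines $\tilde f_\infty$. The main obstacle is step (c): one must carry the asymptotic of $\rho$ through the singular Newton kernel with enough precision that, after multiplication by $t$ in (d), the residual remains time--integrable; this refined control is exactly what forces the logarithmic correction to equal $\mu\log(t)\nabla_v\phi_\infty(v)$ rather than merely something of the same order.
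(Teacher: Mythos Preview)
Your overall strategy is the same four–step scheme the paper uses (spatial average $\to$ self-similar density $\to$ asymptotic force field $\to$ logarithmic correction), but there is a concrete gap in your Part~(b) and a related imprecision in Part~(a).

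In Part~(b) you write $t^3\rho(t,x)=\int f(t,x,(x-w)/t)\,\mathrm{d}w$ and then ``replace $(x-w)/t$ by $x/t$.'' Taken literally this produces $\int f(t,x,x/t)\,\mathrm{d}w$, which diverges since the integrand no longer depends on $w$; the mean-value remainder $-t^{-1}\int w\cdot(\partial_v f)(t,x,\cdot)\,\mathrm{d}w$ is equally ill-defined. The missing object is the linear profile $g_0(t,y,v):=f(t,y+tv,v)$. One has $f(t,x,(x-w)/t)=g_0(t,w,(x-w)/t)$, so $t^3\rho(t,x)=\int g_0(t,w,(x-w)/t)\,\mathrm{d}w$, and \emph{now} replacing $(x-w)/t$ by $x/t$ gives $\int g_0(t,w,x/t)\,\mathrm{d}w=\int f(t,y,x/t)\,\mathrm{d}y=Q(t,x/t)$, a convergent integral that Part~(a) controls. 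This is exactly how the paper proceeds (Proposition~\ref{Prodecayvelocitav}); the error term involves $\nabla_v g_0=Gf$, not $\nabla_v f$, which is what makes the $t^{-1}$ gain genuine. Your ``free-streaming argument'' hints at this, but the argument as written does not go through.

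The same confusion between $\nabla_v$ and the Galilean field $G=t\nabla_x+\nabla_v$ affects Part~(a): the bound $|\partial_t Q|\lesssim \log(t)t^{-2}$ does not follow from ``$x$-support of diameter $\lesssim t$,'' because $\nabla_v f$ is not pointwise bounded (it contains $t\nabla_x f$). The paper writes $\int\nabla_x\phi\cdot\nabla_v f\,\mathrm{d}x=\int\nabla_x\phi\cdot Gf\,\mathrm{d}x+t\int(\Delta_x\phi)f\,\mathrm{d}x$ and bounds each piece separately using $|\nabla_x\phi|\lesssim t^{-2}$ and $|\Delta_x\phi|=|\rho|\lesssim t^{-3}$ (see Step~1 in Section~\ref{Subsecstrategymodifiedscatt} and Proposition~\ref{prop_derivative_time_x_average}). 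Your Part~(c) via the Newton kernel is a legitimate alternative to the paper's cleaner route through the asymptotic Poisson equation $\Delta_v\phi_\infty=Q_\infty$ (Proposition~\ref{Proasympselfsiphi}), which sidesteps the singularity you flag. Your Part~(d) matches the paper's computation in Section~\ref{subsec_mod_scatt}.
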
 

We remark that small data linear scattering holds for the Vlasov--Poisson system on $\R^n_x\times\R^n_v$ when $n\geq 4$. This result was obtained by Pankavich \cite{P23}.

\subsection{The main results}


We first recall that the local well-posedness theory for this PDE system is standard. See \cite[Section 3]{HK19} for further details. Concerning the global regularity properties for \eqref{eq:VP}, seminal works by Pfaffelmoser \cite{Pf92} and Lions--Perthame \cite{LP91} established that this non-linear system is \emph{globally well-posed}. See also the proof of global well-posedness by Schaeffer \cite{Sch91}.

In the framework of the initial value problem, we study the evolution in time of small initial distributions $f_{0}:\R^{3}_x\times \R^{3}_v\to \R$, in a space of functions defined by a weighted $L^{\infty}_{x,v}$ norm
$$ \mathbb{E}_N^{N_x,N_v}[f_{0}]:=\sum_{|\beta|+|\kappa|\leq N}\sup_{(x,v)\in \R^3_x\times \R^3_v}\langle x \rangle^{N_x} \langle v \rangle^{N_v}|\partial_x^{\beta}\partial_v^{\kappa} f_{0}|,$$
 where $N, \,N_x,\, N_v \in \N$ and $\langle \cdot \rangle$ is the standard Japanese bracket. 

In the rest of the paper, the notation $A\lesssim B$ is used to specify that there exists a universal constant $C > 0$ such that $A \leq CB$, where $C$ depends only on the corresponding order of regularity, or other fixed constants.

\subsubsection{Late-time asymptotics for the Vlasov--Poisson system}

Let $N\geq 3$. Set the sequence $(r_n)_{n\geq 1} $ given by 
\begin{equation}\label{kev:defSn}
 r_n:=1+\frac{n(n+1)}{2},
 \end{equation}
  The main result of this paper establishes high order late-time asymptotics for small data solutions of the Vlasov--Poisson system. We note that this result requires a smallness assumption on $\mathbb{E}_N^{8,7}[f_{0}]$ and the finiteness of $\mathbb{E}_N^{N+4,7}[f_{0}]$. Note that we allow the norm $\mathbb{E}_N^{N+4,7}[f_{0}]$ to be large for $N>4$.

\begin{theorem}[High order late-time asymptotics for the Vlasov--Poisson system]\label{ThLatetime}
Let $N\geq 3$. Every solution $f\in C^N$ to the Vlasov--Poisson system arising from regular and small initial data is global in time. Let $n \in \mathbb{N}$ such that $ r_n \leq N$, $ \X_0(t,x,v) := x,$ and $\V_0(t,x,v):=v$. There exist regular modifications of the linear characteristic flow
\begin{align*}
\X_{n}(t,x,v) & =x+ \mu \nabla_v \phi_{\infty}(v) \log(t)+\sum_{1 \leq q \leq n-1} \, \sum_{|\alpha|+ p \leq q} \frac{x^\alpha \log^p(t)}{t^{q}} \mathbb{X}_{q,\alpha,p}(v), \\
\V_{n}(t,x,v) &=v+  \frac{\mu}{t}\nabla_v \phi_{\infty}(v)+\sum_{1 \leq q \leq n-1} \, \sum_{|\alpha|+ p \leq q} \frac{x^\alpha \log^p(t)}{t^{q+1}} \mathbb{V}_{q,\alpha,p}(v), 
\end{align*}
such that the following properties hold. For every integer $n$ such that $r_{n+1} \leq N$, 
\begin{enumerate}[label = (\alph*)]
 \item  The normalised spatial density $t^3 \!\rho (f)$ and the normalised force field $t^{2}\nabla_x  \phi$ satisfy asymptotic self-similar polyhomogeneous expansions of order $n$ according to Definitions \ref{Defselsimexp}--\ref{defpolhomoexpforce}. 
\item \label{stat_enhn_mod_scatt} Modified scattering holds with an enhanced rate of convergence. Let $g_{n+1}:\R_+^*\times \R^3_x\times\R^3_v\to \R$ be defined by
$$ g_{n+1}(t,x,v) := f \big(t,\X_{n+1}(t,x,v)+t\V_{n+1}(t,x,v),\V_{n+1}(t,x,v) \big).$$ There exists a regular distribution $f_{\infty}\colon \R^3_x \times \R^3_v \to \R$ such that for all $(t,x,v) \in [2,\infty) \times \R^3_x \times \R^3_v$ with $|x| \leq t$, we have
\begin{equation*}
   \big| g_{n+1}(t,x,v)-f_\infty (x,v)  \big| \lesssim  \frac{\log^{N(n+3)}(t)}{t^{n+1}}.
  \end{equation*}
\item The modified spatial average verifies enhanced convergence to the spatial average of $f_\infty$. There exists $\mathbf{Q}_{p,\xi} \in C^0 \cap L^\infty (\R^3_v) $ such that for all $(t,v) \in [2,\infty) \times \R^3_v$, we have
$$ \color{white} \square \qquad  \quad \color{black} \bigg| \int_{|x|<t }  g_{n+1}(t,x,v) \dr x~-\int_{\R^3_x}  f_\infty (x,v) \dr x~- \! \sum_{p +|\xi| \leq n} \frac{\log^p(t)}{t^{n+1}} \mathbf{Q}_{p,\xi} (v) \! \int_{\R^3_x} x^\xi  f_\infty (x,v) \dr x \bigg| \lesssim \frac{ \log^{N(n+3)}(t)}{t^{n+2}}.$$
\end{enumerate}
\end{theorem}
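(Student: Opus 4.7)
The plan is to prove the three statements simultaneously by induction on $n$, with Theorem~\ref{thm_main_result_first_version} providing the base case. The central object is the modified profile $g_{n+1}$ introduced in statement~\ref{stat_enhn_mod_scatt}. Writing $Y := \X_{n+1}+t\V_{n+1}$ and using the Vlasov equation, a direct computation gives
$$\partial_t g_{n+1} = \bigl(\partial_t \X_{n+1} + t\, \partial_t \V_{n+1}\bigr) \cdot (\nabla_x f)(t,Y,\V_{n+1}) + \bigl(\partial_t \V_{n+1} + \mu \nabla_x \phi(t,Y)\bigr) \cdot (\nabla_v f)(t,Y,\V_{n+1}).$$
The strategy is to select $\mathbb{X}_{q,\alpha,p}, \mathbb{V}_{q,\alpha,p}$ so that, after inserting into the above the self-similar polyhomogeneous expansion of $\nabla_x\phi$ (provided by part~(a) at the previous order of the induction) and Taylor-expanding $\nabla_x f, \nabla_v f$ around the linearised flow, every contribution of size $t^{-k-1}\log^p(t)$ with $k\leq n$ cancels identically; integration in time from $+\infty$ then yields the bound with rate $t^{-n-1}\log^{N(n+3)}(t)$.

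For part~(a), I would feed the enhanced modified scattering of the previous step into the velocity integral $\rho(t,x) = \int_{\R^3_v} f(t,x,v)\, \dr v$ and change variables via the inverse of $v \mapsto \V_n$, so that $f$ is replaced by $f_\infty$ up to an error controlled by~(b). The Jacobian of this change expands as a polyhomogeneous series in $t^{-1}$ and $\log(t)$ because each $\mathbb{V}_{q,\alpha,p}$ contributes at precise order $t^{-q-1}\log^p(t)$; collecting terms produces the coefficients of the expansion of $t^3 \rho(t,tY)$. Solving the Poisson equation via the Newtonian kernel then yields the self-similar expansion of $t^2 \nabla_x \phi$. These coefficients feed back into the recursion $\partial_t \V_{n+1} = -\mu \nabla_x \phi + O(t^{-n-3})$, which determines $\mathbb{V}_{q,\alpha,p}$ uniquely at each order; a parallel recursion driven by $\partial_t \X_{n+1} + t\,\partial_t \V_{n+1}$ determines $\mathbb{X}_{q,\alpha,p}$. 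This triangular solvability is the hierarchy of asymptotic conservation laws mentioned in the abstract.

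For part~(c), I would integrate the bound from~(b) over the light cone $\{|x|<t\}$ and compare with $\int_{\R^3_x} f_\infty\, \dr x$. The correction terms $\log^p(t)\, t^{-n-1}\, x^\xi$ arising in $\V_{n+1}-v$ and $\X_{n+1}-x$, once inserted into the Taylor expansion of $f_\infty$ around $(x,v)$ and integrated against $x^\xi$, produce exactly the coefficients $\mathbf{Q}_{p,\xi}(v)$. The contribution of the exterior $\{|x|\geq t\}$ decays rapidly thanks to the weighted-$L^\infty$ decay $\langle x \rangle^{N+4}$ inherited by $f_\infty$ from the initial data.

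The main obstacle will be closing the induction without accumulating too many factors of $\log(t)$. Each layer of corrections is built from time integrals of $\nabla_x\phi$ along the current modified flow, each of which can generate a new logarithm; simultaneously, controlling $\nabla_x f$ and $\nabla_v f$ in the error term requires commuting enough modified vector fields through the Vlasov equation, in the spirit of Smulevici's method, which introduces further log-losses. Verifying that the total accumulated power stays below the threshold $N(n+3)$, and that the algebraic system for $(\mathbb{X}_{q,\alpha,p}, \mathbb{V}_{q,\alpha,p})$ is solvable at every step without resonance, is the delicate part, which should ultimately follow from the triangular structure of the hierarchy in $(q,p,|\alpha|)$.
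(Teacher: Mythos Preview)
Your inductive scheme matches the paper's, and your formula for $\partial_t g_{n+1}$ is correct. However, there is a genuine gap in your treatment of part~(c). Integrating the pointwise bound from~(b) over $\{|x|<t\}$ only yields convergence at rate $t^{-n-1}$, not the enhanced rate $t^{-n-2}$, and Taylor-expanding $f_\infty$ in the correction variables does not recover the missing order. The paper's mechanism is different: one rewrites $\nabla_v f = Gf - t\nabla_x f$ so that $\partial_t g_{n+1}$ splits into ``first kind'' terms carrying $\nabla_x\partial_x^\alpha G^\beta f$ and ``second kind'' terms carrying $G\partial_x^\alpha G^\beta f$. The second kind already decay as $t^{-n-3}$. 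The first kind only decay as $t^{-n-2}$ pointwise, but after integrating over $\{|x|<t\}$ one may integrate by parts the $\nabla_x$ onto the prefactor $\partial_t\X_{n+1}-t\mu\nabla_x\phi$; by construction this prefactor has all coefficients of order $\leq n+1$ vanishing, so its $x$-derivative has an identifiable leading part of order exactly $t^{-n-2}$. Integrating in time then produces the explicit $t^{-n-1}$ correction with coefficients $\mathbf{Q}_{p,\xi}$ plus an error of order $t^{-n-2}$. Without this integration by parts the induction does not close, because part~(a) at order $n+1$ requires the enhanced spatial-average convergence (c) at order $n$, not merely the pointwise bound~(b).

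Two smaller points. First, working with $\nabla_v f$ rather than $Gf$ is dangerous: $\nabla_v f$ grows like $t$, whereas $Gf$ is bounded up to logarithms; the substitution $\nabla_v = G - t\nabla_x$ is what makes the coefficients $\partial_t\X_{n+1}-t\mu\nabla_x\phi$ and $\mu\nabla_x\phi+\partial_t\V_{n+1}$ appear naturally. Second, for part~(a) the change of variables is not simply $v\mapsto\V_n$: one must invert the full map $(x,v)\mapsto(\X_n+t\V_n,\V_n)$ to write $f$ in terms of $g_n$, and this inverse exists only on a domain $\{|x-tv|<c_n t\}$ (which is why the convergence in~(b) is restricted to $|x|\leq t$). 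The relevant change of variables in the velocity integral is then $y=x-tv+\mathfrak{X}^n_t(x-tv,v)$, whose Jacobian and inverse must themselves be polyhomogeneously expanded.
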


\begin{remark}
Note that for $n=0$, property $(a)$ and $(b)$ of Theorem \ref{ThLatetime}, corresponds to properties $(b)$--$(c)$ and $(d)$ of Theorem \ref{thm_main_result_first_version}, respectively. Already for $n=0$, the property $(c)$ of Theorem \ref{ThLatetime} is an improvement of property $(a)$ of Theorem \ref{thm_main_result_first_version}.
\end{remark}

\begin{remark}
The asymptotic self-similar polyhomogeneous expansion for the spatial density may be expected to hold due to the modified scattering of the distribution. However, the problem is far from being trivial because of the need to show an enhanced convergence estimate for the spatial averages of the distribution function. A posteriori, the polyhomogeneous expansion for the force field can be shown by using the \emph{asymptotic Poisson equation} (see Section \ref{SubsecforcefieldAxpan} for more details). 
\end{remark}

\begin{remark}
The quadratic loss of derivatives to derive the expansion of order $n$ for $t^3 \rho (f)$, and the convergence estimate for $g_{n+1}$, seem optimal for our method. So far, the previous modified scattering results for the small data solutions of \eqref{eq:VP} require to control $\nabla_{x}f$ and $\nabla_{v}f$. In our approach, we need to control $g_n$ in $W^{n+1,\infty}_{x,v}$ to show the convergence estimate for $g_{n+1}$. In this sense, the quadratic loss for the high order asymptotics is consistent with previous small data modified scattering results for \eqref{eq:VP}.
\end{remark}

\subsubsection{Non-linear tails and weak convergence}

The first part of Theorem \ref{ThLatetime} consists in proving asymptotic self-similar polyhomogeneous expansions for the normalised spatial density and the normalised force field. As an application, we can show non-linear late-time tails for the spatial density and the force field. For this purpose, we consider a hierarchy of asymptotic conservation laws for the solutions of the Vlasov--Poisson system.

Let $f_{\infty}\colon\R^3_x\times\R^3_v\to \R$ be a regular scattering state. Let $\alpha \in \N^3$ be a multi-index. We consider the \emph{weighted spatial averages} $\mathcal{A}^{\alpha}:\R^3_v\to\R$ defined by 
\begin{equation*}
\mathcal{A}^{\alpha}(v):=\int_{\R^3_x} x^{\alpha}\partial_v^{\alpha}f_{\infty}(x,v)\dr x.
\end{equation*}
We will later show that $\mathcal{A}^{\alpha}(v)$ are well-defined. The weighted spatial averages $\mathcal{A}^{\alpha}(v)$ can be characterised as 
\begin{equation}\label{charact_asymp_conse_laws}
\mathcal{A}^{\alpha}(v)=\lim_{t\to \infty} \int_{\R^3_x}x^{\alpha}\partial_{v}^{\alpha}g_1(t,x,v)\dr x.
\end{equation}
See Section \ref{SecRhoOrder2} for further details. In the sense of \eqref{charact_asymp_conse_laws}, the functions $\mathcal{A}^{\alpha}(v)$ define \emph{asymptotic conservation laws} for small data solutions of the Vlasov--Poisson system. We can now state our result on late-time tails for the spatial density and the force field.

\begin{theorem}[Late-time tails for the spatial density and the force field]
Let $N\geq 2$. Let $f\in C^N$ be a solution to the Vlasov--Poisson system arising from regular and small initial data. There exist constants $C^{p,q}\in\R$ such that, for $n= \frac{N}{2}-1$, the spatial density satisfies
\begin{align*}
\forall t\geq 2\qquad \bigg|t^3\int_{\R^3_v}f(t,x,v)\dr v-\sum_{p\leq q\leq n}\sum_{|\gamma|\leq n-q} \frac{C^{p,q}}{\gamma !}\partial_v^{\gamma}\mathbf{F}_{p,q}(0)\frac{x^{\gamma}\log^p (t)}{t^{|\gamma|+q}}\bigg|\lesssim \dfrac{\log^{N(n+3)}(t) }{t^{n+1}} \langle x\rangle^{n+1},
\end{align*}
where $\partial_v^{\gamma}\mathbf{F}_{p,q}(v)$ can be computed in terms of $\mathcal{A}^{\alpha}(v)$ and its derivatives. A similar expansion holds for the normalised force field $t^2 \nabla_x \phi$.
\end{theorem}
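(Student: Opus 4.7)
The starting point is Theorem \ref{ThLatetime}\,(a), applied with $n$ such that $r_{n+1} \leq N$, which provides an asymptotic self-similar polyhomogeneous expansion
\begin{equation*}
t^{3} \rho(f)(t,x) \;=\; \sum_{p \leq q \leq n} \frac{\log^{p}(t)}{t^{q}}\, \mathbf{F}_{p,q}\!\left(\frac{x}{t}\right) \;+\; \mathcal{R}_{n}(t,x),
\end{equation*}
with leading coefficient $\mathbf{F}_{0,0} = Q_{\infty}$ and higher coefficients $\mathbf{F}_{p,q} \in C^{\infty}(\R^{3}_{v})$ (with enough regularity to carry out the Taylor step below). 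The remainder $\mathcal{R}_{n}$ satisfies the polyhomogeneous error bound from Definition \ref{defpolhomoexpforce}, essentially $\log^{N(n+3)}(t)/t^{n+1}$ on $\{|x|\leq t\}$. Combined with the choice $n = \frac{N}{2}-1$ (which is what the quadratic loss in Theorem \ref{ThLatetime} permits starting from $C^{N}$ regularity), this gives the full self-similar profile to order $n$.

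The plan is then to Taylor expand each coefficient $\mathbf{F}_{p,q}$ of the self-similar variable $v = x/t$ around $v = 0$ up to order $n-q$:
\begin{equation*}
\mathbf{F}_{p,q}\!\left(\frac{x}{t}\right) \;=\; \sum_{|\gamma| \leq n-q} \frac{1}{\gamma!}\, \partial_{v}^{\gamma} \mathbf{F}_{p,q}(0)\, \frac{x^{\gamma}}{t^{|\gamma|}} \;+\; \mathfrak{R}_{p,q}(t,x),
\end{equation*}
where the integral form of the Taylor remainder gives $|\mathfrak{R}_{p,q}(t,x)| \lesssim (|x|/t)^{n-q+1}$ for $|x|\leq t$. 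Substituting into the polyhomogeneous expansion and regrouping produces exactly the double sum in the statement, with the constants $C^{p,q}$ absorbing the normalisations in Definitions \ref{Defselsimexp}--\ref{defpolhomoexpforce}. Summing the Taylor remainders contributes $\sum_{p \leq q \leq n} \log^{p}(t)\, t^{-q}\, (|x|/t)^{n-q+1} \lesssim \log^{n}(t)\, \langle x\rangle^{n+1}/t^{n+1}$, which is dominated by the advertised error $\log^{N(n+3)}(t)\,\langle x\rangle^{n+1}/t^{n+1}$, and this bound persists uniformly in $\{|x| \leq t\}$. Outside this set the claim is instead reduced to the global decay $t^3\rho(f) \lesssim 1$ together with $\langle x\rangle \gtrsim t$, handled separately.

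The identification $\partial_v^{\gamma} \mathbf{F}_{p,q}(0) = $ (explicit combination of $\mathcal{A}^{\alpha}$) proceeds in two stages. At the bottom of the hierarchy, $\mathbf{F}_{0,0} = Q_{\infty} = \int f_{\infty}\, \mathrm{d}x$, so $\partial_{v}^{\gamma} Q_{\infty}(0) = \int \partial_{v}^{\gamma} f_{\infty}(x,0)\, \mathrm{d}x$, which is related to $\mathcal{A}^{\gamma}(0)$ after commuting $\partial_{v}^{\gamma}$ through and using the $\mathcal{A}^{\alpha}$-definition; higher $\mathbf{F}_{p,q}$ are built recursively by the same procedure used to construct the modifications $\mathbb{X}_{q,\alpha,p}, \mathbb{V}_{q,\alpha,p}$ in Theorem \ref{ThLatetime}, so their Taylor coefficients at $v=0$ are polynomial expressions in $\partial_{v}^{\beta} \mathcal{A}^{\alpha}(0)$ and $\partial_{v}^{\beta} \nabla_{v}\phi_{\infty}(0)$. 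The statement for the normalised force field $t^{2}\nabla_{x}\phi$ follows by repeating the argument with the polyhomogeneous expansion of $t^{2}\nabla_{x}\phi$ (again from Theorem \ref{ThLatetime}\,(a)), whose coefficients $\mathbf{F}_{p,q}^{\phi}$ are solutions of the asymptotic Poisson equation $\Delta_{v} \mathbf{F}_{p,q}^{\phi} = \mathbf{F}_{p,q}$ noted in the paragraph preceding the statement.

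The main obstacle is the bookkeeping between the two layered expansions: one has to track simultaneously the powers $\log^{p}(t)/t^{q}$ from the self-similar hierarchy and the powers $x^{\gamma}/t^{|\gamma|}$ from the Taylor step, and verify that the recombination yields a \emph{single} combined error estimate uniform on $\{|x|\leq t\}$ without losing the sharp $\log^{N(n+3)}(t)$ factor that Theorem \ref{ThLatetime} provides. The second delicate point is the explicit verification that the Taylor coefficients of $\mathbf{F}_{p,q}$ at the origin can be rewritten purely in terms of the asymptotic conservation laws $\mathcal{A}^{\alpha}$, which requires unwinding the inductive construction of the $\mathbf{F}_{p,q}$ from Theorem \ref{ThLatetime}.
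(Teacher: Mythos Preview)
Your proposal is correct and follows essentially the same approach as the paper: start from the asymptotic self-similar polyhomogeneous expansion of $t^{3}\rho(f)$ (the paper's Proposition \ref{Proselfsimrho}, which is the content behind Theorem \ref{ThLatetime}\,(a)), then Taylor expand each profile $\mathbf{F}_{p,q}(x/t)$ at $v=0$ to order $n-q$ (the paper's Lemma \ref{lemexpconslawprofil_more_general}), and combine the two error terms. The paper's proof of Theorem \ref{thm_latetime_nonlinear} is literally one line citing these two ingredients, and the force field case (Theorem \ref{thm_latetime_nonlinear_force}) is handled identically via Proposition \ref{Proexpa} and the asymptotic Poisson equation, exactly as you outline.
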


\begin{remark}
These non-linear tails for the spatial density and the force field, are obtained by using the asymptotic self-similar polyhomogeneous expansions for the spatial density and the force field.
\end{remark}

Geometrically, the decay in time of the spatial density holds due to the concentration in time of the support of the distribution function in the zero velocity set. We express the concentration of the support of the distribution with a suitable weak convergence statement.

\begin{theorem}[Concentration of the distribution in the zero velocity set]\label{thmconc_rough}
Let $\varphi\in C^{\infty}_{x,v}$ be a compactly supported test function. Then, for every solution $f$ to the Vlasov--Poisson system arising from regular and small initial data, we have $$\lim_{t\to\infty}\int_{\R^3_x\times \R^3_v}t^3f(t,x,v)\varphi(x,v)\dr x\dr v= \int_{\R^3_x} f_{\infty}(x,0)\dr x\int_{\R^3_x\times \R^3_v} \delta_{v=0}(v)\varphi(x,v)\dr x\dr v.$$ In other words, the distribution $t^3f(t,x,v)$ converges weakly to $(\int_{\R^3_x} f_{\infty}(x,0)\dr x)\delta_{v=0}(v)$.
\end{theorem}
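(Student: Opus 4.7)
The plan is to combine the enhanced modified scattering of Theorem \ref{ThLatetime}(b) at a high order $n \geq 3$ with two changes of variables that absorb the normalisation $t^3$ and compactify the $x$-argument of $\varphi$. By a partition of unity, it suffices to consider separately: (i) $\varphi$ with $v$-support in $\{|v| \leq r\}$ for some $r < 1$, and (ii) $\varphi$ with $v$-support disjoint from a neighbourhood of $0$. In case (ii), the right-hand side vanishes since $\varphi(x, 0) = 0$. For the left-hand side, by modified scattering $f(t, x, v)$ is concentrated along the trajectories $x \approx -tv - \mu\log(t)\nabla_v\phi_\infty(v)$, so for $|v|$ bounded below and $|x| \leq R$ one has $|f(t, x, v)| \lesssim t^{-K}$ for any $K$ (using the rapid decay of $f_\infty$ together with the error bound of Theorem \ref{ThLatetime}(b)), hence $t^3 \int |f \varphi|\, \dr x\, \dr v \to 0$.

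For case (i), fix $n \geq 3$ with $r_{n+1} \leq N$ and perform the change of variables $(x, v) = (\X_{n+1}(t, \tilde x, \tilde v) + t\V_{n+1}(t, \tilde x, \tilde v), \V_{n+1}(t, \tilde x, \tilde v))$, whose Jacobian equals $1 + O(1/t)$ and under which $f(t, x, v) = g_{n+1}(t, \tilde x, \tilde v)$. Since the map is essentially volume-preserving, the effective support in $(\tilde x, \tilde v)$ has fixed finite volume, so replacing $g_{n+1}$ by $f_\infty$ via Theorem \ref{ThLatetime}(b) introduces an error bounded by
$$t^3 \cdot \frac{\log^{N(n+3)}(t)}{t^{n+1}} \cdot O(1) \;=\; t^{2-n}\log^{N(n+3)}(t) \;\longrightarrow\; 0 \qquad (n \geq 3).$$
Next substitute $u = t\tilde v$ (cancelling the $t^3$), followed by $z = \tilde x + u + \mu(\log(t) + 1)\nabla_v\phi_\infty(u/t) + (\text{lower order})$, namely the first argument of $\varphi$ in the new coordinates (Jacobian $1 + o(1)$). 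The remaining integral reads
$$\int_{\R^6} f_\infty(\tilde x, u/t)\, \varphi\bigl(z,\, \V_{n+1}(t, \tilde x, u/t)\bigr)\, (1 + o(1))\, \dr\tilde x\, \dr z,$$
and as $t \to \infty$ one has pointwise $u/t \to 0$ and $\V_{n+1}(t, \tilde x, u/t) \to 0$. Dominated convergence — with majorant from the decay of $f_\infty$ in $\tilde x$ and the compactness of the $z$-support — then yields the limit $\int_{\R^3} f_\infty(\tilde x, 0)\, \dr\tilde x \int_{\R^3} \varphi(z, 0)\, \dr z$, matching the claimed right-hand side.

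The principal obstacle is that the pointwise modified scattering error, naively multiplied by $t^3$, blows up. It survives only because one uses a high-order profile ($n \geq 3$) and works in the modified characteristic coordinates, in which the effective support has fixed finite volume (the map is volume-preserving up to $O(1/t)$). The logarithmic shift $\mu\log(t)\nabla_v\phi_\infty$, which would otherwise prevent the argument of $\varphi$ from converging, is absorbed by the second change of variables $u \mapsto z$.
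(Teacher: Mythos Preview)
Your approach is essentially correct but takes a much heavier route than the paper. The paper's proof (Proposition \ref{Proconc0}) uses only the \emph{linear} change of variables $y=x-tv$ together with the first-order fact that the spatial average $\int_{\R^3_x} f(t,x,v)\,\dr x$ converges to $\int_{\R^3_x} f_\infty(x,v)\,\dr x$ (Proposition \ref{ProconvQ} and Corollary \ref{cor_limit_mass_stable_mfld_origin}). Concretely, writing $f(t,x,v)=g_0(t,x-tv,v)$ and applying the mean value theorem in the second slot (Lemma \ref{lem_asympt_exp_spatial_density_estimate_weak_conv_first_order}) gives
\[
\int_{\R^6} t^3 f\,\varphi\,\dr x\,\dr v \;=\; \int_{\R^3_x}\varphi\Big(x,\tfrac{x}{t}\Big)\Big(\int_{\R^3_y} g_0\Big(t,y,\tfrac{x}{t}\Big)\dr y\Big)\dr x \;+\; O\Big(\tfrac{\log^{N-1}(t)}{t}\Big),
\]
after which dominated convergence (using only the compact $x$-support of $\varphi$) finishes. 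No modified characteristics are needed, and the argument runs already at low regularity. Your route instead invokes the $n^{\mathrm{th}}$-order enhanced scattering with $n\geq 3$, forcing $N\geq r_4=11$, and passes through the nonlinear diffeomorphism $(\X_{n+1},\V_{n+1})$; what it buys is an explicit quantitative rate $t^{2-n}\log^{\cdots}(t)$ in the replacement step, at the cost of substantially more machinery.

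One small repair: in your case (ii) you appeal to Theorem \ref{ThLatetime}\ref{stat_enhn_mod_scatt}, but that estimate carries the restriction $|\tilde x|\leq t$, which fails once $|v|>1$ (then $|\tilde x|\approx t|v|>t$). The correct tool there is simply the spatial decay of $g_0$ (Corollary \ref{Corboundg0}): for $|v|\geq c>0$, $|x|\leq R$ one has $|f(t,x,v)|=|g_0(t,x-tv,v)|\lesssim \langle x-tv\rangle^{-N_x}\log^{N_x}(t)\lesssim t^{-N_x}\log^{N_x}(t)$, which already kills the $t^3$ factor. Also, ``$t^{-K}$ for any $K$'' is a slight overstatement in the paper's fixed-$N_x$ framework; $K=N_x$ suffices. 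Finally, in your displayed integral the differentials should read $\dr \tilde x\,\dr u$ before the $u\mapsto z$ substitution and $\dr \tilde x\,\dr z$ after; as written the notation is ambiguous about which variable is being replaced.
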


\begin{remark}
The mass of the Dirac measure in Theorem \ref{thmconc_rough} is explicitly identified as the mass of the zero velocity set in terms of the scattering state. Later in the paper, we also prove a more general weak convergence statement for $t^{3}f(t,x+\bar{v}t,v+\bar{v})$, for a fixed $\bar{v}\in \R^3_v$. We show that $t^{3}f(t,x+\bar{v}t,v+\bar{v})$ converges weakly to the Dirac measure $(\int f_{\infty}(x,\bar{v})\mathrm{d}x)\delta_{v=0}(v)$. We observe that the masses $\int f_{\infty}(x,\bar{v})\mathrm{d}x$ of these Dirac measures are the masses along the energy levels $\{v=\bar{v}\}$ in terms of the scattering state. We note that the mass of the energy levels $\{v=\bar{v}\}$ defines the self-similar asymptotic profile of the spatial density. 
\end{remark}

\textit{Previous weak convergence results for the Vlasov--Poisson system.}
\begin{enumerate}[label = (\alph*)]
\item The previous weak convergence result can be compared with the weak convergence of the distribution function for solutions of the Vlasov--Poisson system on $\mathbb{T}^3_x\times \R^3_v$ in the breakthrough work on Landau damping by Mouhot and Villani \cite{MV09}. For this comparison, it is important to also consider our weak convergence result for $t^{3}f(t,x+\bar{v}t,v+\bar{v})$ to the Dirac mass $(\int f_{\infty}(x,\bar{v})\mathrm{d}x)\delta_{v=0}(v)$. For further details see Theorem \ref{thm_weak_convergence_prop_complete}.
\item Theorem \ref{thmconc_rough} can also be compared with the weak convergence result \cite[Theorem 1.3]{BVV23} for the distribution function in the context of small data solutions for the Vlasov--Poisson system with a trapping potential $\frac{-|x|^2}{2}$. In this context, the distribution function (up to normalisation) converges weakly to a Dirac mass on the unstable manifold of the origin. For this system, the mass of the limiting Dirac measure is equal to the mass of the stable manifold of the origin with respect to the scattering state. 
\end{enumerate}

\subsection{Key elements of the proof of high order asymptotics}

In this subsection, we show the key elements in the proof of Theorem \ref{ThLatetime}.

\subsubsection{Modified scattering: A story of asymptotics}\label{Subsecstrategymodifiedscatt}

We begin recalling the basic ideas to prove small data modified scattering for the Vlasov--Poisson system. In particular, we emphasise the role of late-time asymptotics.

\textbf{Step $1$.} Although we cannot expect $f$ to have a linear behaviour for large times due to \cite{ChoiHa}, we can still expect a weaker quantity to verify such a property. It turns out that the spatial average of $f$, which is conserved in the linear case and moreover governs the asymptotic behaviour of $\rho(f)$, converges as $t \to \infty$. Using the Vlasov equation, and performing integration by parts in $x$, we have
$$ \bigg|\frac{\dr}{\dr t}\int_{\R^3_x} f(t,x,v) \dr x\bigg|=\bigg| \int_{\R^3_x} \mu \nabla_x \phi(t,x) \cdot \big[t\nabla_x+\nabla_v f \big](t,x,v) \dr x+t \int_{\R^3_x} \mu \Delta_x \phi(t,x) f(t,x,v) \dr x \bigg| , $$ where the RHS is bounded above by the time integrable function $\log(t) t^{-2}$.

\textbf{Step $2$.} Then, we can isolate the leading order contribution of the charge density by 
\begin{equation}\label{Intro0order}
\bigg| t^3 \int_{\R^3_v} f(t,x,v) \dr v -  Q_\infty \Big( \frac{x}{t} \Big) \bigg| \lesssim \frac{\log(t)}{t},  \qquad Q_\infty(v) := \lim_{t \to + \infty}\int_{\R^3_x} f(t,x,v) \dr x  .
\end{equation}

\textbf{Step $3$.} This allows us to consider the asymptotic Poisson equation $\Delta_v \phi_\infty =Q_\infty$. Its solution captures the asymptotic behaviour of the force field along the trajectories of the particle system 
\begin{equation}\label{Introexpphi} 
\big| t^2 \nabla_x \phi(t,x+tv)-\nabla_v \phi_\infty(v) \big|  \lesssim \langle x \rangle \, \log^2(t) t^{-1}.
\end{equation}

\textbf{Step $4$.} We are finally able to prove that $f$ converges along a logarithmic correction of the linear characteristics by $$ \big| f \big(t,\X_1(t,x,v)+tv,v)-f_\infty(x,v) \big| \lesssim \log^{3}(t) t^{-1}, \qquad  \X_1(t,x,v):= x+\mu \log(t) \nabla_v \phi_\infty (v).$$ 

\subsubsection{Improved late-time asymptotic expansion for the spatial density}\label{Subsecsecondorde}

We now sketch the key arguments to prove an improved late-time asymptotic expansion for the spatial density. Here, we begin focusing on the first order expansion in powers of $t^{-1}$. 

\textbf{Step $1'$ and $2'$.} We now wonder, if we could obtain an improved asymptotic expansion for the spatial density $\rho(f)$. For this, the starting point of the analysis is the same as for the derivation of \eqref{Intro0order}, that is
$$ \rho(f)(t,x) = \int_{\R^3_v} g_0(t,x-tv,v) \dr v = \int_{\R^3_y} g_0 \Big(t,y,\frac{x-y}{t}\Big) \dr y , \qquad g_0(t,x,v):=f(t,x+tv,v).$$
Instead of applying the mean value theorem, we use this time a first order Taylor expansion to show
\begin{align}
\bigg| t^3\rho (f)(t,x)-  \int_{\R^3_y} g_0 \Big(t,y,\frac{x}{t} \Big) \dr y +\frac{1}{t} \int_{\R^3_y} y \cdot \big[ \nabla_v g_0 \big] \Big(t,y,\frac{x}{t} \Big) \dr y \bigg| \lesssim  \frac{ \log^{6}(t)}{t^{2}} . \label{eq:introrho}
 \end{align}
The problems to derive a first order expansion for the normalised density $t^3 \rho(f)$ in $t^{-1}$, are:
 \begin{itemize}
 \item The spatial average of $g_0$ merely converges to $Q_\infty$ at the rate $t^{-1}\log(t)$. 
 \item The third term in \eqref{eq:introrho} is a weighted spatial average of $\nabla_v g_0$, so we cannot treat it by applying previous techniques. Moreover, this quantity does not converge in general, as it is suggested by the modified scattering dynamics. 
 \end{itemize}
 
We deal with the second issue by rewriting $y \cdot \nabla_v g_0(t,y,v)$ in terms of derivatives of $f(t,\X_1+tv,v)$ or $g_1$, introduced thereafter. For the first issue, it is required to consider higher order corrections for the linear velocity characteristics as well. Since $\frac{\mathrm{d}V}{\mathrm{d} t}=\mu \nabla_x \phi (t,X)$ and in view of \eqref{Introexpphi}, we consider the modification
$$g_1(t,x,v):= f(t,\X_1(t,x,v) +t\V_1(t,x,v) ,\V_1(t,x,v) ), \qquad  \mathbf{V}_1(t,x,v) := v+\mu t^{-1} \nabla_v \phi_\infty (v).$$ 
The main idea then consists in deriving a similar estimate to \eqref{eq:introrho}, in terms of the modified profile $g_1$ instead of $g_0$. Then, we show the following two properties:
 \begin{itemize}
 \item An enhanced convergence estimate for the spatial average of $g_1$ towards the spatial average of $f_\infty$.
 \item The weighted spatial average of $\nabla_v g_1$ converges to the corresponding weighted spatial average of $\nabla_v f_\infty$ at the rate $\log^6(t)t^{-1}$.
 \end{itemize} 
 
 \textbf{Step $3'$ and $4'$.} On the top of these estimates, we improve our estimate on the force field. Finally, we consider second order modified characteristics $(\X_2,\V_2)$, and we show that the distribution $$g_2(t,x,v):= f\Big(t,\X_2(t,x,v)+t\V_2(t,x,v),\V_2(t,x,v)\Big),
$$ satisfies $$ \big| g_2(t,x,v)-f_\infty(x,v) \big| \lesssim \log^{16}(t) t^{-2}.$$

\subsubsection{About the rest of the proof}

We derive high order late-time asymptotics for $\rho(f)$ by iterating this process. Let us however mention that the $x$-dependency of the modified characteristics of order $n \geq 2$ gives rise to new difficulties that force us to restrict our convergence statement to the domain $\{|x| \leq t\}$. After, we establish an asymptotic self-similar polyhomogeneous expansion for the spatial density, we use the asymptotic Poisson equation to show a polyhomogeneous expansion for the force field. In particular, we can then prove asymptotics for the force field along the high order spatial modified characteristics $\X_n+t\V_n$. We use these estimates to show the enhanced modified scattering result for the high order modified profile $g_{n+1}(t,x,v)$, which converges to $f_\infty$ as $t^{-n-1}\log^{N(n+3)}(t)$ .

Finally, we show strong convergence estimates for the spatial averages of the modified profile $g_{n+1}$. The key idea of these estimates consists in observing that the modified characteristics $(\X_{n+1},\V_{n+1})$ of order $n+1$ allow us to prove the next property in an induction argument. Up to error terms decaying as $t^{-n-3}$, the derivative 
$$\partial_t  \int_{|x| <t} g_{n+1}(t,x,v) \dr x $$
is equal to a sum of terms of the form $\mathbf{Q}(v)t^{-n-2} \log^p(t)$ with $p \leq n$, where the quantities $\mathbf{Q}(v)$ can be computed in terms of $\mathcal{A}^\alpha(v)$. With these estimates, we will complete the proof of Theorem \ref{ThLatetime}.

\subsection{Related works on modified scattering}

In this subsection, we give a list of related works on modified scattering for other non-linear Vlasov equations. Specifically, we discuss previous modified scattering results for the relativistic Vlasov--Maxwell system on Minkowski spacetime, the Vlasov--Poisson system with a trapping potential on $\R^2_x\times \R^2_v$, and the repulsive Vlasov--Poisson system on $\R^3_x\times \R^3_v$ with a point charge. In these three different settings, one could consider the problem of studying high order late-time asymptotics.

\subsubsection{For the relativistic Vlasov--Maxwell system on Minkowski spacetime}

Small data modified scattering for the relativistic Vlasov--Maxwell system on Minkowski spacetime \cite{B22} has been recently established by the first author. This non-linear PDE system models the dynamics of a collisionless plasma of charged particles. We note that the small data modified scattering result in \cite{B22} \emph{does not} require smallness on the Maxwell field. In contrast with previous works on the subject, this paper establishes small data modified scattering in \emph{high order regularity}. See the work of Pankavich and Ben-Artzi \cite{PB23} for another proof of small data modified scattering for the relativistic Vlasov--Maxwell system for compactly supported initial data. 

\subsubsection{For the Vlasov--Poisson system with a trapping potential on $\R^2_x\times \R^2_v$}

Small data modified scattering for the Vlasov--Poisson system with the trapping potential $\frac{-|x|^2}{2}$ on $\R^2_x\times \R^2_v$, has been recently established in a joint work \cite{BVV23} of the authors with A. Velozo Ruiz. The motivation to consider small data solutions for the Vlasov--Poisson system with the potential $\frac{-|x|^2}{2}$ comes from studying the stability of systems for which the dynamics of their particles are \emph{hyperbolic}. Here, we considered the Vlasov--Poisson system with the simplest external potential for which \emph{unstable trapping} holds for the associated linearised characteristic flow. We note the use of hyperbolic type coordinates in order to prove the modified scattering result. See also \cite{VV24} for more information on this non-linear system.

\subsubsection{For the repulsive Vlasov--Poisson system with a point charge on $\R^3_x\times \R^3_v$}

Beyond vacuum solutions for non-linear Vlasov equations, we remark the modified scattering dynamics for perturbations of a point charge for the repulsive Vlasov--Poisson system obtained by Pausader, Widmayer, and Yang \cite{PWY}. We note the use of asymptotic action-angle coordinates to show this modified scattering result.

\subsection{Related works on high order late-time asymptotics}

Here, we comment about related works on late-time asymptotics for hyperbolic PDEs in general relativity. There has been a lot of progress in the last years concerning this problem. 

\subsubsection{Late-time tails for solutions of the wave equation on black hole spacetimes}

Precise late-time asymptotics have been derived for the solutions of the wave equation on black hole spacetimes by Angelopoulos, Aretakis, and Gajic \cite{AAGsph, AAGextRN, AAGkerr}. In these works, late-time tails are derived for linear scalar fields. For this purpose, the authors use a hierarchy of asymptotic conservation laws for the solutions of the wave equation on the corresponding black hole spacetimes. For comparison with the results in this paper, see the late-time tails for the linearised system in Section \ref{seclinear}.

In this context, a logarithmic term arises when studying the leading order asymptotics of solutions to the wave equation on a Schwarzschild black hole for initial data slowly decaying at infinity. This result was established by Kehrberger \cite{K21}. In the problem considered in this article, we obtain logarithmic terms in the non-linear expansions for completely different reasons. Indeed, these terms arise for any non-trivial small data solution to \eqref{eq:VP}. The logarithmic terms in the expansions of this paper appear because of non-linear effects. 

We refer to the overview \cite{GK22} for more information on the recent progress concerning the relation of conservation laws and late-time tails for massless scalar fields on black hole spacetimes. 

\subsubsection{Late-time tails for solutions of wave equations on dynamic asymptotically flat spacetimes}

On recent work by Oh and Luk \cite{OL24} a general method is developed for the study of late-time tails of solutions to wave equations on asymptotically flat spacetimes with odd space dimensions. In this paper, late-time tails are obtained for wave equations on dynamical backgrounds, and also for non-linear wave equations. We remark novel corrections obtained for the Price law rates concerning the decay of the solutions of wave equations on black hole spacetimes.

\subsubsection{Polyhomogeneity of the metric for solutions of the Einstein vacuum equations}

We also mention the work by Hintz and Vasy \cite{HV20} on the nonlinear stability of Minkowski spacetime for the Einstein vacuum equations. In this result, the authors use the framework of Melrose’s b-analysis to prove that the metric satisfies a polyhomogeneous expansion for polyhomogeneous initial data. We note that for the problem addressed in this article, we merely require the initial data to be non-trivial in order to obtain such expansions.

\subsection{Outline of the paper}
The rest of the article is structured as follows.

\begin{itemize}

\item \textbf{Section \ref{secpreliminaries}.} We study the linearisation of the Vlasov--Poisson system with respect to the vacuum solution. Here, we introduce the vector fields used to define the energy norms in the main results. We also define the notion of an asymptotic self-similar polyhomogeneous expansion.

\item \textbf{Section \ref{secmain}.} We state the precise statements of the main results of the article.

\item \textbf{Section \ref{seclinear}.} We study the late-time asymptotic behaviour of the linearised system. First, we prove asymptotic self-similar expansions for the spatial density. We set the hierarchy of conservation laws for the linearised system. Then, we show late-time tails for the spatial density in terms of the conservation laws. We prove that the distribution (up to normalisation) converges weakly to a Dirac mass in the zero velocity set. We also capture the shearing of the system with a weak convergence statement. 

\item \textbf{Section \ref{section_small_data_global_existence}.} We show small data global existence in high order regularity for the Vlasov--Poisson system. For this, we prove that weighted $L^{\infty}_{x,v}$ norms of the distribution function grow at most polynomially. 

\item \textbf{Section \ref{SecModiscat}.}   We obtain small data modified scattering in high order regularity for the Vlasov--Poisson system. For this, we show that the spatial density and the force field have self-similar asymptotic profiles. 

\item \textbf{Section \ref{SecRhoOrder2}.} We begin the study of high order late-time asymptotics of the spatial density for the Vlasov--Poisson system. In particular, we show a second order expansion for the spatial density. 

\item \textbf{Section \ref{SEcLatetime}.} We prove asymptotic polyhomogeneous self-similar expansions for the spatial density and the force field. Then, we obtain an enhanced modified scattering result for the distribution function.

\item \textbf{Section \ref{sec_tail_weak_conv}} We obtain late-time tails for the spatial density and the force field. We also prove that the distribution (up to normalisation) converges weakly to a Dirac mass in the zero velocity set. We also capture the shearing of the system with a weak convergence statement. 
\end{itemize}

\subsection{Acknowledgements}
LB conducted this work within the France 2030 framework programme, the Centre Henri Lebesgue ANR-11-LABX-0020-01. RVR would like to express his gratitude to Jacques Smulevici for many stimulating discussions. RVR acknowledges partial support from the European Union’s Horizon 2020 research and innovation programme under the Marie Skłodowska-Curie grant 101034255.

\section{Preliminaries}\label{secpreliminaries}

In this section, we introduce the set of commuting vector fields $\boldsymbol{\lambda}$ and the set of weights $\mathbf{k}$ used to define the space of functions considered in the paper. The vector fields in $\boldsymbol{\lambda}$ and the weights in $\mathbf{k}$, are motivated by the dynamics of the characteristic flow for the linearised system. We then state the commuted equations for the non-linear Vlasov equation, and for the Poisson equation. Finally, we define the notion of asymptotic self-similar polyhomogeneous expansion that we will use throughout this paper.

\subsection{The linearised system}
The linearisation of the Vlasov--Poisson system with respect to its vacuum solution $f\equiv 0$, corresponds to the \emph{Vlasov equation} 
\begin{equation}\label{vlasov_linear_flow}
\begin{cases}
\partial_t f+v\cdot\nabla_xf=0,\\ 
f(t=0,x,v)=f_{0}(x,v),
\end{cases}\tag{V}
\end{equation}
where $f_{0}:\R^{3}_x\times \R^{3}_v\to \R$ is a regular initial data. The Vlasov equation \eqref{vlasov_linear_flow} can be explicitly solved by 
\begin{equation*}
f(t,x,v)=f_{0}(x-vt,v)
\end{equation*}
in terms of the linear flow map 
\begin{equation*}
(X_{\L}(t,x,v),V_{\L}(t,x,v)):=(x+vt,v).
\end{equation*}
For future reference, we denote the linear transport operator in \eqref{vlasov_linear_flow} as 
\begin{equation*}
\T_0:=\partial_t +v\cdot\nabla_x.
\end{equation*}

The Vlasov equation on $\R^{3}_x\times \R^{3}_v$ is a transport equation along the Hamiltonian flow 
\begin{equation}\label{linear_hyperbolic_ode_system}
    \dfrac{\dr x}{\dr t}=v,\qquad \dfrac{\mathrm{d}v}{\dr t}=0.
\end{equation}
In other words, the characteristic flow \eqref{linear_hyperbolic_ode_system} is the Hamiltonian flow induced by the Hamiltonian system $(\R^{3}_x\times \R^{3}_v, \mathcal{H})$ with $$\mathcal{H}(x,v):=\dfrac{1}{2}(v^1)^2+\dfrac{1}{2}(v^2)^2+\dfrac{1}{2}(v^3)^2.$$ 

\begin{proposition}
The Hamiltonian system $(\R^{3}_x\times \R^{3}_v,\mathcal{H})$ is completely integrable in the sense of Liouville.
\end{proposition}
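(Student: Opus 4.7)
The plan is to exhibit three functionally independent first integrals in involution on the $6$-dimensional symplectic manifold $(\R^3_x \times \R^3_v, \omega)$, where $\omega = \sum_{i=1}^{3} \dr x^i \wedge \dr v^i$ is the standard symplectic form. Since the phase space has dimension $2n$ with $n = 3$, Liouville's definition of complete integrability requires exactly three such integrals, one of which can be taken to be the Hamiltonian itself.

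The natural candidates are the momentum components $F_i(x,v) := v^i$ for $i \in \{1,2,3\}$. I would verify three properties in sequence. First, each $F_i$ is a first integral: along the Hamiltonian flow \eqref{linear_hyperbolic_ode_system}, $\frac{\dr}{\dr t} v^i = 0$, which is equivalent to the Poisson bracket identity $\{F_i, \mathcal{H}\} = \partial_{x^j} F_i \, \partial_{v^j} \mathcal{H} - \partial_{v^j} F_i \, \partial_{x^j} \mathcal{H} = 0$ since $F_i$ is independent of $x$ and $\mathcal{H}$ is independent of $x$. Second, the integrals are pairwise in involution: $\{F_i, F_j\} = 0$ for all $i,j$, since both $F_i$ and $F_j$ depend only on $v$. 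Third, functional independence is immediate from $\dr F_1 \wedge \dr F_2 \wedge \dr F_3 = \dr v^1 \wedge \dr v^2 \wedge \dr v^3 \neq 0$ everywhere on $\R^3_x \times \R^3_v$.

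With these three properties established, Liouville's theorem on complete integrability applies. As a closing remark, one can observe that $\mathcal{H} = \frac{1}{2}(F_1^2 + F_2^2 + F_3^2)$ so that the Hamiltonian is itself a smooth function of the chosen first integrals, which confirms the well-known fact that free-particle motion is an archetypal integrable system. There is no real obstacle in this argument; the statement is essentially a reformulation of the fact that momentum is conserved along the free transport flow.
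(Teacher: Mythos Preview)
Your proof is correct and takes exactly the same approach as the paper: exhibit the three conserved momenta $v^i$, $i\in\{1,2,3\}$, and note that they are independent and in involution. The paper's proof is a single sentence to this effect, while you have spelled out the verifications in detail.
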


\begin{proof}
Consider the three independent conserved quantities in involution $v^i$ where $i\in \{1,2,3\}$. 
\end{proof}

\subsection{Commuting vector fields}

In this subsection, we introduce a set $\boldsymbol{\lambda}$ of vector fields on $\R^3_x\times \R^3_v$ that we will use to show time decay for small data solutions of the Vlasov--Poisson system. We consider vector fields that satisfy good commuting properties with the linearised system \eqref{vlasov_linear_flow}.

Specifically, we consider 
\begin{enumerate}[label = (\alph*)]
    \item translation vector fields $T_i:=\partial_{x^i}$,
    \item Galilean vector fields $G_{i}:=t\partial_{x^i}+\partial_{v^i}$,
\end{enumerate}
and we define 
\begin{equation*}
\boldsymbol{\lambda}:=\Big\{ T_i,G_i: ~i\in \{1,2,3\}  \Big\}.
\end{equation*}
We will further use the notation $G=t\nabla_x+\nabla_v$. The vector fields in $\boldsymbol{\lambda}$ were previously used in \cite{Sm16, Du22} to set the energy spaces on which small data global existence was obtained. Note that \cite{Sm16, Du22} also used the scaling vector field $L:=\sum_{i=1}^3 x^i\partial_{x^i}+v^i\partial_{v^i}$ and the rotational vector fields $R_{ij}=x^i\partial_{x^j}-x^j\partial_{x^i}+v^i\partial_{v^j}-v^j\partial_{v^i}$ in their energy spaces. In this article, we \emph{only} use the translations and the Galilean vector fields.

\begin{lemma}\label{lemma_commutators_Vlasov_external_potential}
Let $Z\in \boldsymbol{\lambda}$. Let $f$ be a regular solution of the Vlasov equation. Then, $Zf$ is also a solution of this equation. 
\end{lemma}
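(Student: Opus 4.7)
The plan is to verify the lemma by a direct commutator calculation, namely by showing that every vector field $Z \in \boldsymbol{\lambda}$ commutes with the linear transport operator $\T_0 = \partial_t + v\cdot \nabla_x$. Since $f$ solves $\T_0 f = 0$, the identity $\T_0(Zf) = Z(\T_0 f) + [\T_0,Z]f = 0$ will immediately yield the claim.

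First I would handle the translations $T_i = \partial_{x^i}$. Since the coefficients of $\T_0$ (namely $1$ and $v^j$) are independent of $x$, the vector field $\partial_{x^i}$ commutes with both $\partial_t$ and with each $v^j \partial_{x^j}$, giving $[\T_0, T_i] = 0$ directly.

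Next I would treat the Galilean vector fields $G_i = t\partial_{x^i} + \partial_{v^i}$, which is the only nontrivial computation. Splitting the bracket into four pieces, the only nonzero contributions come from the time-dependence of the coefficient $t$ in the $x$-component (producing $[\partial_t, t\partial_{x^i}] = \partial_{x^i}$) and from the action of $\partial_{v^i}$ on the coefficient $v^j$ (producing $[v^j\partial_{x^j}, \partial_{v^i}] = -\partial_{x^i}$). These two contributions cancel exactly, so $[\T_0, G_i] = 0$.

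I do not anticipate any obstacle here: both computations are one-line verifications, and the statement is essentially the observation that $\boldsymbol{\lambda}$ has been designed precisely to commute with $\T_0$. I would conclude in a single sentence, noting that since $[\T_0,Z] = 0$ for every $Z \in \boldsymbol{\lambda}$, applying $Z$ to $\T_0 f = 0$ shows that $Zf$ again solves the Vlasov equation.
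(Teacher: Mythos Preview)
Your proposal is correct and is precisely the standard verification one would carry out; the paper itself states the lemma without proof, so your direct commutator computation $[\T_0,Z]=0$ for each $Z\in\boldsymbol{\lambda}$ is exactly what is implicitly intended.
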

 
Lemma \ref{lemma_commutators_Vlasov_external_potential} justifies the terminology \emph{commuting vector fields} for the elements in $\boldsymbol{\lambda}$.

\subsection{Weights preserved by the linear flow}

We consider 
\begin{enumerate}[label = (\alph*)]
    \item translation weights $v^i$,
    \item Galilean weights $z_{i}:=x^i-v^it$,
\end{enumerate}
and we define 
\begin{equation*}
\mathbf{k}:=\Big\{ v_i,z_i: ~i\in \{1,2,3\}  \Big\}.
\end{equation*}
The weights in $\mathbf{k}$ are conserved along the characteristic flow $(x,v)\mapsto (x+vt,v)$ of the Vlasov equation. In particular, the weight functions are solutions to the Vlasov equation. 

\begin{lemma}
For every weight $w\in \mathbf{k}$, we have $\T_0(w)=0$. 
\end{lemma}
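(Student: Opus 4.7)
The plan is straightforward: this is a direct computation, since $\T_0 = \partial_t + v\cdot\nabla_x$ treats $v$ as a parameter (there is no $v$-derivative in the linear transport operator). The weights in $\mathbf{k}$ are simple polynomials in $(t,x,v)$, so I would just apply $\T_0$ to each type of weight and verify that the result vanishes.

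First, I would handle the translation weights $v^i$. Since $v^i$ depends on neither $t$ nor $x$, both $\partial_t v^i$ and $v^j \partial_{x^j} v^i$ vanish, giving $\T_0(v^i) = 0$ immediately.

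Next, I would handle the Galilean weights $z_i = x^i - v^i t$. Here $\partial_t z_i = -v^i$, and $v^j \partial_{x^j} z_i = v^j \delta^i_j = v^i$, so the two contributions cancel and $\T_0(z_i) = -v^i + v^i = 0$. Since $\mathbf{k}$ consists exactly of these two families, this exhausts all cases.

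There is no real obstacle here; the only point worth emphasising is the conceptual one, namely that this computation is the reason why the Galilean shift $x \mapsto x - vt$ along the linear flow justifies calling $z_i$ the natural ``position-at-time-zero'' weight: by construction $z_i$ agrees with $x^i$ at $t=0$ and is transported as a constant along the characteristics $t \mapsto (x+vt, v)$, which is exactly the statement $\T_0(z_i)=0$. The same remark makes the statement for $v^i$ obvious, since $v$ is conserved along those characteristics.
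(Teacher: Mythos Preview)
Your proof is correct and is exactly the direct computation one would expect; the paper in fact states this lemma without proof, as it is immediate from the definitions. Your added remark about the interpretation of $z_i$ as the initial position along the linear flow matches the paper's surrounding discussion that these weights are conserved along the characteristic flow $(x,v)\mapsto (x+vt,v)$.
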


If $\T_0(g)=0,$ then the same property is satisfied by $wg$ if $w\in\mathbf{k}$. Hence, weighted $L^{\infty}_{x,v}$ norms are conserved for solutions to the Vlasov equation. In the nonlinear small data regime, these norms will grow logarithmically in time and will then provide useful decay properties for the Vlasov field.

It will be useful to work with $$g_0(t,x,v):= f(t,x+vt,v),$$ when studying the asymptotic properties of $\rho(f)$ and its derivatives. The following lemma suggests that $g_0$ enjoys strong decay properties.

\begin{lemma}\label{lem_properties_linear_change_variable_trivial}
Let $f\colon [0,\infty)\times \R^3_x\times \R^3_v\to \R$ be a regular distribution function and $g_0(t,x,v):=f(t, x+tv,v)$. Then, we have 
$$\langle x\rangle^{N_x} \langle v\rangle^{N_v}  |g_0|(t,x,v)= \langle x\rangle^{N_x} \langle v\rangle^{N_v}|f|(t,x+vt,v),$$
and
$$ \nabla_x g_0(t,x,v)=\nabla_x f(t,x+tv,v), \qquad \nabla_v g_0(t,x,v)= Gf(t,x+tv,v)= \big[ t\nabla_xf+\nabla_vf\big](t,x+tv,v).$$
\end{lemma}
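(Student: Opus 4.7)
The statement is a direct consequence of the change of variables defining $g_0$ together with a single application of the chain rule, so the plan is short. The first identity requires no argument beyond unfolding the definition $g_0(t,x,v) = f(t,x+tv,v)$ and multiplying both sides by the weight $\langle x \rangle^{N_x} \langle v \rangle^{N_v}$. Nothing about the weights is translated along the flow; they remain attached to the arguments $(x,v)$ of $g_0$ on the left and of $|f|(t,x+vt,v)$ on the right.

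For the derivatives, I would write $y(x,v) := x+tv$ and view $g_0 = f \circ \Psi$ where $\Psi(t,x,v) = (t,y(x,v),v)$. Applying the chain rule coordinate by coordinate, the $x^i$-derivative sees only the dependence of $y = x+tv$ on $x^i$, whose Jacobian is the identity, which gives $\partial_{x^i} g_0(t,x,v) = (\partial_{x^i} f)(t,x+tv,v)$. The $v^i$-derivative picks up both contributions: the dependence of $y = x+tv$ on $v^i$, contributing $t \, (\partial_{x^i} f)(t,x+tv,v)$, and the direct dependence of $f$ on its third slot, contributing $(\partial_{v^i} f)(t,x+tv,v)$. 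Summing yields $\nabla_v g_0(t,x,v) = \big[t \nabla_x f + \nabla_v f\big](t,x+tv,v)$, which is exactly $Gf$ evaluated at $(t,x+tv,v)$ since $G = t\nabla_x + \nabla_v$ by the definition in the previous subsection.

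There is no real obstacle here: the lemma is a bookkeeping statement that records how the commuting vector field $G$ translates into a genuine $v$-derivative after passing to the linear profile. The only point worth emphasising in the write-up is that the weights $\langle x \rangle^{N_x}$ and $\langle v \rangle^{N_v}$ are \emph{not} transported by the change of variables, which is why they remain as $\langle x \rangle^{N_x}\langle v \rangle^{N_v}$ on the right-hand side rather than being replaced by $\langle x+tv \rangle^{N_x}\langle v \rangle^{N_v}$. This foreshadows the usefulness of working with $g_0$ when extracting decay for $\rho(f)$, since strong decay for $g_0$ in the $x$-variable translates, via $\rho(f)(t,x) = \int g_0(t,x-tv,v)\,\mathrm{d}v$, into decay for the spatial density.
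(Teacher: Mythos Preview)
Your proposal is correct and matches the paper's treatment: the paper states the lemma without proof, treating it as an immediate consequence of the definition of $g_0$ and the chain rule, which is exactly what you have written out.
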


\begin{remark}There is an explicit correspondence between the commuting vector fields in $\boldsymbol{\lambda}$ and the weights in $\mathbf{k}$. We have $\partial_{x^i}=\{v^i,\cdot\}$ and $G_i=\{x^i-v^i t,\cdot\}$, where  $\{\cdot,\cdot\}$ is the Poisson bracket of the standard symplectic structure on $\R_x^3\times \R_v^3$.
\end{remark}

\subsection{Multi-index notations}

Let $(Z^i)_i$ be an arbitrary ordering of the vector fields in $\boldsymbol{\lambda}$. We use a multi-index notation for the differential operators of order $|\alpha|$ given by 
\begin{equation*}
Z^{\alpha}:=Z^{\alpha_1} Z^{\alpha_2}\dots Z^{\alpha_{n}},
\end{equation*}
for every $\alpha\in \N^{6}$. We denote by $\boldsymbol{\lambda}^{|\alpha|}$ the set of differential operators obtained as a composition of $|\alpha|$ vector fields in $\boldsymbol{\lambda}$. 

We can uniquely associate a differential operator on $\R^3_x$ to any differential operator $Z^{\alpha}\in \boldsymbol{\lambda}^{|\alpha|}$ by replacing every vector field $Z$ on $\R^3_x\times \R^3_v$ by the corresponding vector field $Z_x$ on $\R^3_x$. As a result, we have 
\begin{enumerate}[label = (\alph*)]
    \item translation vector fields $T_{i,x}:=\partial_{x^i}$,
    \item projected Galilean vector fields $G_{i,x}:=t\partial_{x^i}$.
\end{enumerate}
We define 
\begin{equation*}
\mathbf{\Lambda}:=\Big\{ T_{i,x},G_{i,x}: ~i\in \{1,2,3\}  \Big\}.
\end{equation*}
We denote by $\boldsymbol{\Lambda}^{|\alpha|}$ the family of differential operators on $\R^3_x$ of order $|\alpha|$ obtained as a composition of $|\alpha|$ vector fields in $\boldsymbol{\Lambda}$. By a small abuse of notation, we denote by $Z^{\alpha}$ the associated differential operator on $\R^3_x$ for an arbitrary differential operator $Z^{\alpha}$ on $\R^3_x\times \R^3_v$.

Finally, for every $\alpha \in \mathbb{N}^3$, we denote by $\partial_x^{\alpha}$, $\partial_v^\alpha$, and $G^\alpha$, the differential operators $$\partial_x^{\alpha}:=\partial^{\alpha_1}_{x^{1}}\partial^{\alpha_2}_{x^2}\partial^{\alpha_3}_{x^3},  \qquad \partial_v^{\alpha}:=\partial^{\alpha_1}_{v^{1}}\partial^{\alpha_2}_{v^2}\partial^{\alpha_3}_{v^3},  \qquad G^\alpha = G_1^{\alpha_1}G_2^{\alpha_2} G_3^{\alpha_3}.$$

\begin{lemma}\label{lemma_commuting_in_lambda}
Let $\alpha$ and $\beta$ be two multi-indices. Then, the commutator $[Z^{\alpha},Z^{\beta}]$ vanishes. 
\end{lemma}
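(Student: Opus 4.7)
My plan is to reduce the statement to the six pairwise commutators of the generators of $\boldsymbol{\lambda}$, check these by direct computation, and then extend to arbitrary multi-indices by a standard induction using the derivation property of the bracket.

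First, I would verify that the generators of $\boldsymbol{\lambda} = \{T_i, G_i : i \in \{1,2,3\}\}$ commute pairwise. Since $T_i = \partial_{x^i}$ and $G_j = t\partial_{x^j} + \partial_{v^j}$, and since the coefficient $t$ is independent of $x$ and $v$ (and no vector field in $\boldsymbol{\lambda}$ differentiates in $t$), each of the brackets
\[
[T_i, T_j], \qquad [T_i, G_j], \qquad [G_i, G_j]
\]
expands into a sum of second-order partial derivatives with equal coefficients that cancel. Hence the identities $[Z, Z'] = 0$ hold for every pair $Z, Z' \in \boldsymbol{\lambda}$.

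Next, I would promote these base-case identities to arbitrary multi-indices by induction on $n := |\alpha| + |\beta|$. The case $n \leq 2$ is covered by the previous step. For the inductive step, write $Z^\alpha = Z \, Z^{\alpha'}$ with $Z \in \boldsymbol{\lambda}$ and $|\alpha'| = |\alpha| - 1$, and use the Leibniz-type identity
\[
[Z \, Z^{\alpha'}, Z^\beta] = Z \, [Z^{\alpha'}, Z^\beta] + [Z, Z^\beta] \, Z^{\alpha'}.
\]
By the induction hypothesis applied to the pairs $(\alpha', \beta)$ and (after an analogous decomposition of $Z^\beta$) to the resulting smaller brackets with $Z \in \boldsymbol{\lambda}$, both terms on the right-hand side vanish. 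This yields $[Z^\alpha, Z^\beta] = 0$ for all $\alpha, \beta$.

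There is no real obstacle here: the only subtle point is remembering that although $G_i$ has a time-dependent coefficient, the vector fields of $\boldsymbol{\lambda}$ act on $(x,v)$ only, so the factor $t$ behaves as a scalar and does not produce extra terms in any bracket. Once this is observed, the derivation-plus-induction argument is automatic.
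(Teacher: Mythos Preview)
Your argument is correct. The paper states this lemma without proof, treating it as an elementary observation; your reduction to pairwise commutators of the generators followed by induction via the derivation identity is exactly the natural way to justify it.
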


Next, we relate the derivatives $Z^\alpha_x \rho(f)$ and $\rho(Z^\alpha f)$.
\begin{lemma}\label{lemma_connection_microscopic_macroscopic_vector_fields}
Let $f$ be a regular distribution function, and let $\alpha$ be a multi-index. Then, 
\begin{equation}
    Z^{\alpha}_x\rho(f)=\rho(Z^{\alpha}f)
\end{equation}
where $Z_x^\alpha \in \mathbf{\Lambda}^{|\alpha|}$ and $Z^\alpha \in \boldsymbol{\lambda}^{|\alpha|}$.
\end{lemma}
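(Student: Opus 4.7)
The plan is a direct induction on $|\alpha|$, where the only non-trivial ingredient is that integration in $v$ kills the $\partial_{v^i}$ component of the Galilean vector field thanks to decay at infinity.

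\textbf{Base case} ($|\alpha| = 1$). There are two vector fields to check. If $Z = T_i = \partial_{x^i}$, then by differentiation under the integral sign,
\[
\rho(T_i f)(t,x) = \int_{\R^3_v} \partial_{x^i} f(t,x,v) \, \dr v = \partial_{x^i} \int_{\R^3_v} f(t,x,v) \, \dr v = T_{i,x} \rho(f)(t,x).
\]
If $Z = G_i = t \partial_{x^i} + \partial_{v^i}$, then
\[
\rho(G_i f)(t,x) = t \int_{\R^3_v} \partial_{x^i} f(t,x,v) \, \dr v + \int_{\R^3_v} \partial_{v^i} f(t,x,v) \, \dr v.
\]
The first term equals $t \partial_{x^i} \rho(f) = G_{i,x} \rho(f)$, and the second term vanishes since $f$ lies in a space of functions decaying sufficiently fast in $v$, so the boundary contribution at $|v| \to \infty$ is zero. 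This proves the base case.

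\textbf{Induction step.} Assume the identity holds for every multi-index of length at most $k$. Given $\alpha$ with $|\alpha| = k+1$, write $Z^\alpha = Z \circ Z^{\alpha'}$ with $Z \in \boldsymbol{\lambda}$ and $|\alpha'| = k$. Since $Z^{\alpha'} f$ is again a regular distribution function with the same qualitative decay in $v$ (every vector field in $\boldsymbol{\lambda}$ preserves the class of Schwartz-in-$v$ functions), the base case gives
\[
\rho\bigl( Z (Z^{\alpha'} f) \bigr) = Z_x \, \rho(Z^{\alpha'} f),
\]
and by the inductive hypothesis the right-hand side equals $Z_x Z_x^{\alpha'} \rho(f) = Z_x^{\alpha} \rho(f)$. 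This closes the induction.

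\textbf{Main obstacle.} There is essentially no obstacle beyond justifying the vanishing of the $v$-boundary term when integrating $\partial_{v^i}$-derivatives; this is guaranteed by the weighted $L^\infty_{x,v}$ decay built into the norm $\mathbb{E}_N^{N_x,N_v}$. The commutation of the vector fields in $\boldsymbol{\lambda}$ (recorded in the preceding lemma) ensures that the order in which one decomposes $Z^{\alpha}$ as a composition is irrelevant, so the induction is well posed.
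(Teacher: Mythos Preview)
Your proof is correct; the paper states this lemma without proof, treating it as an elementary consequence of differentiation under the integral sign and the vanishing of the $\partial_{v^i}$ contribution due to decay in $v$. Your induction argument makes this explicit and is exactly the intended reasoning.
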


\subsection{The commuted equations}

Let us denote the transport operator in the Vlasov--Poisson system by $$\T_\phi :=\partial_t +v\cdot\nabla_x -\mu\nabla_x\phi\cdot \nabla_v.$$ Here, the force field $\nabla_x\phi$ is defined through the Poisson equation $\Delta_x \phi=\rho(f)$.

\begin{lemma}\label{LemComfirstorder}
Let $Z \in \boldsymbol{\lambda}$, and $Z_x \in \mathbf{\Lambda}$ be the corresponding vector field on $\R^3_x$. Then,
$$ [\T_\phi,Z]=\mu \nabla_x Z_x \phi \cdot \nabla_v .$$
\end{lemma}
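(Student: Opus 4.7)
The plan is to decompose the transport operator as $\T_\phi = \T_0 + (-\mu \nabla_x \phi \cdot \nabla_v)$, where $\T_0 = \partial_t + v \cdot \nabla_x$ is the linear transport operator. By Lemma \ref{lemma_commutators_Vlasov_external_potential}, each $Z \in \boldsymbol{\lambda}$ satisfies $[\T_0, Z] = 0$ (this is precisely the statement that $Z$ applied to a solution of the free Vlasov equation produces another solution). Consequently, the computation reduces to evaluating $[-\mu \nabla_x \phi \cdot \nabla_v, Z] = -\mu[\nabla_x \phi \cdot \nabla_v, Z]$.

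I would then handle the two families of vector fields separately. For a translation $Z = T_i = \partial_{x^i}$, the projected field is $T_{i,x} = \partial_{x^i}$; since $\phi$ is independent of $v$ and partial derivatives commute, the standard formula $[a \cdot \nabla_v, \partial_{x^i}] = -(\partial_{x^i} a) \cdot \nabla_v$ for a $v$-independent vector $a = \nabla_x \phi$ yields $[\nabla_x \phi \cdot \nabla_v, T_i] = -\nabla_x (T_{i,x}\phi) \cdot \nabla_v$. For a Galilean field $Z = G_i = t \partial_{x^i} + \partial_{v^i}$, with $G_{i,x} = t \partial_{x^i}$, the $t \partial_{x^i}$ piece contributes $-t \nabla_x \partial_{x^i} \phi \cdot \nabla_v = -\nabla_x(G_{i,x}\phi) \cdot \nabla_v$ by the same computation, while the $\partial_{v^i}$ piece commutes with $\nabla_x \phi \cdot \nabla_v$ because $\phi$ has no $v$-dependence and $[\partial_{v^j}, \partial_{v^i}] = 0$. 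In both cases one obtains $[\nabla_x \phi \cdot \nabla_v, Z] = -\nabla_x (Z_x \phi) \cdot \nabla_v$.

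Multiplying by $-\mu$ gives $[\T_\phi, Z] = \mu \nabla_x (Z_x \phi) \cdot \nabla_v$, which is the claim. There is no real obstacle here; the only subtlety is organising the case analysis so that the role of the projected vector field $Z_x$ emerges uniformly, with the key structural point being that $\phi$ depends only on $(t,x)$ so that the $v$-derivatives in each $Z$ play no role when hitting $\phi$.
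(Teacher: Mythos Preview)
Your proof is correct. The paper does not provide a proof for this lemma, treating it as an elementary computation; your decomposition $\T_\phi = \T_0 - \mu \nabla_x\phi\cdot\nabla_v$ together with the case analysis on $T_i$ and $G_i$ is exactly the natural argument one would supply.
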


Iterating the lemma above, we obtain the higher order case.

\begin{lemma}\label{lemma_commuted_nonliner_Vlasov}
There exist constant coefficients $C^{\beta}_{\alpha \gamma} \in \mathbb{Z}$ such that 
\begin{equation}\label{eq_comm}
    [\T_{\phi},Z^{\beta}]=\sum_{|\alpha|\leq |\beta|-1} \, \sum_{\gamma+\alpha= \beta} C^{\beta}_{\alpha \gamma}\nabla_x Z^{\gamma}_x\phi\cdot  \nabla_v Z^{\alpha},
\end{equation}
where $Z^{\beta}\in\boldsymbol{\lambda}^{|\beta|}$, $Z^{\gamma}_x\in \boldsymbol{\Lambda}^{|\gamma|}$, and $Z^{\alpha}\in \boldsymbol{\lambda}^{|\alpha|}$. 
\end{lemma}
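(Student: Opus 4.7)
My approach is induction on $|\beta|$. The base case $|\beta|=1$ is exactly Lemma~\ref{LemComfirstorder}. For the inductive step, suppose the identity holds for all multi-indices of length at most $|\beta|$, and consider $Z^{\beta'} = Z\, Z^\beta$ with $Z \in \boldsymbol{\lambda}$ and $|\beta'| = |\beta|+1$. The Leibniz identity for commutators
\begin{equation*}
[\T_\phi, Z Z^\beta] = [\T_\phi, Z]\, Z^\beta + Z\, [\T_\phi, Z^\beta]
\end{equation*}
reduces the problem to two pieces. The first is already in the required form by Lemma~\ref{LemComfirstorder} (with $|\gamma|=1$ and $|\alpha|=|\beta|=|\beta'|-1$), and the second is handled by substituting the induction hypothesis and then pushing the outer $Z$ past each summand.

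To carry out this latter step, I would use three elementary facts: (i) since $\phi=\phi(t,x)$, one has $T_i\phi = \partial_{x^i}\phi = T_{i,x}\phi$ and $G_i\phi = (t\partial_{x^i}+\partial_{v^i})\phi = t\partial_{x^i}\phi = G_{i,x}\phi$, hence $Z\psi = Z_x\psi$ for every function $\psi$ depending only on $(t,x)$; (ii) every element of $\mathbf{\Lambda}$ has coefficients depending only on $t$, so it commutes with $\nabla_x$, which gives $Z(\nabla_x Z^\gamma_x\phi) = \nabla_x(Z_xZ^\gamma_x)\phi$; (iii) the coefficients of every $Z\in\boldsymbol{\lambda}$ are independent of $v$, so $[Z,\nabla_v]=0$. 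With these in hand, the Leibniz rule applied to the product operator yields
\begin{equation*}
Z\bigl(\nabla_x Z^\gamma_x\phi \cdot \nabla_v Z^\alpha\bigr) = \nabla_x(Z_x Z^\gamma_x)\phi \cdot \nabla_v Z^\alpha + \nabla_x Z^\gamma_x\phi \cdot \nabla_v (Z Z^\alpha),
\end{equation*}
and by Lemma~\ref{lemma_commuting_in_lambda}, $Z_xZ^\gamma_x\in\boldsymbol{\Lambda}^{|\gamma|+1}$ and $ZZ^\alpha\in\boldsymbol{\lambda}^{|\alpha|+1}$, so both pieces remain of the desired structural form.

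Collecting all contributions, each resulting term reads $\nabla_x Z^{\tilde\gamma}_x\phi\cdot\nabla_v Z^{\tilde\alpha}$ with $\tilde\gamma+\tilde\alpha=\beta'$ and an integer coefficient inherited from the induction step (multiplied only by $\mu\in\{\pm 1\}$ in the base-case piece). The only real piece of bookkeeping, which I expect to be the sole subtlety, is verifying the constraint $|\tilde\alpha|\leq|\beta'|-1$ in all three families of summands: for $[\T_\phi,Z]Z^\beta$ we get $|\tilde\alpha|=|\beta|=|\beta'|-1$; for the first piece of the Leibniz expansion $|\tilde\alpha|=|\alpha|\leq|\beta|-1$; and for the second piece $|\tilde\alpha|=|\alpha|+1\leq|\beta|$. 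Everything else reduces to linearity of the commutator and the commutation identities already at our disposal, so the induction closes.
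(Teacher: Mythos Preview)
Your proof is correct and follows exactly the approach indicated in the paper, which simply states that the result follows by ``iterating the lemma above'' (Lemma~\ref{LemComfirstorder}). Your induction via the Leibniz identity $[\T_\phi, Z Z^\beta] = [\T_\phi, Z]\, Z^\beta + Z\, [\T_\phi, Z^\beta]$, together with the commutation facts (i)--(iii) you isolate, is precisely the content of that iteration, spelled out in full detail.
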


\begin{remark}
In particular, if $Z^\beta$ contains $p$ Galilean vector fields, then $Z^\beta=\partial_x^\kappa G^\xi$ with $|\xi|=p$.
\end{remark}

We conclude this subsection with the commuted Poisson equation.

\begin{lemma}\label{lemma_commuted_poisson_equation}
Let $f$ be a regular distribution function, and let $\phi$ be the solution to the Poisson equation $\Delta_x \phi = \rho(f)$. Then, for any multi-index $\alpha$ the function $Z^{\alpha}_x\phi$ satisfies 
\begin{equation*}
\Delta_x Z^{\alpha}_x\phi=\rho( Z^{\alpha} f).
\end{equation*} 
\end{lemma}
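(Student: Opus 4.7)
The plan is to exploit two facts, both essentially algebraic: the $x$-projected vector fields in $\mathbf{\Lambda}$ commute with $\Delta_x$, and the relation between $Z^\alpha_x \rho(f)$ and $\rho(Z^\alpha f)$ has already been recorded in Lemma \ref{lemma_connection_microscopic_macroscopic_vector_fields}.

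First I would verify that any $Z_x \in \mathbf{\Lambda}$ commutes with $\Delta_x$. Indeed, $Z_x$ is either $T_{i,x} = \partial_{x^i}$ or $G_{i,x} = t\partial_{x^i}$. In both cases, $Z_x$ is a linear combination with time-dependent (not $x$-dependent) coefficients of the partial derivatives $\partial_{x^j}$, which themselves commute with $\Delta_x = \sum_j \partial_{x^j}^2$. Since $t$ is a scalar from the point of view of spatial differentiation, we get $[Z_x, \Delta_x] = 0$. Iterating, for any multi-index $\alpha$, $[Z^\alpha_x, \Delta_x] = 0$, i.e.
\begin{equation*}
Z^\alpha_x \Delta_x \phi = \Delta_x Z^\alpha_x \phi.
\end{equation*}

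Next, I would apply $Z^\alpha_x$ to both sides of the Poisson equation $\Delta_x \phi = \rho(f)$. The left-hand side becomes $\Delta_x Z^\alpha_x \phi$ by the commutation just established, and the right-hand side becomes $Z^\alpha_x \rho(f) = \rho(Z^\alpha f)$ by Lemma \ref{lemma_connection_microscopic_macroscopic_vector_fields}. Combining these yields exactly $\Delta_x Z^\alpha_x \phi = \rho(Z^\alpha f)$, which is the claim.

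There is no real obstacle here; the result is a direct consequence of the time-independence of $\Delta_x$ and the fact that $\mathbf{\Lambda}$ consists of linear combinations of $\partial_{x^i}$ with coefficients depending only on $t$. The only mild point to be careful about is that the statement depends implicitly on the existence and uniqueness convention for $\phi$ (the decaying solution of Poisson's equation); since both $\Delta_x Z^\alpha_x \phi$ and $\rho(Z^\alpha f)$ are well-defined pointwise under the regularity assumptions on $f$, no delicate justification of derivatives through convolutions is required.
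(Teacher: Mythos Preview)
Your proof is correct and is exactly the natural argument: commute $Z^\alpha_x$ through $\Delta_x$ (trivially, since each $Z_x \in \mathbf{\Lambda}$ is a constant-in-$x$ multiple of a coordinate derivative) and invoke Lemma~\ref{lemma_connection_microscopic_macroscopic_vector_fields}. The paper in fact states this lemma without proof, so your write-up simply supplies the details the authors left implicit.
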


\subsection{Asymptotic self-similar polyhomogeneous expansions}

We now introduce the terminology that we will use for the expansions satisfied by the normalised spatial density and the normalised force field. We further prove some preparatory results.
\begin{definition}\label{Defselsimexp}
Let $N_0 \in \mathbb{N}$ and $\varrho : \R_+ \times \R^3_x \to \R$. We say that $\varrho$ admits an \emph{asymptotic self-similar polyhomogeneous expansion of order $N_0$} if there exist functions $[\varrho]_{q,p} : \R_x^3 \to \R$ and $S \in \mathbb{N}$ such that
\begin{equation}\label{defn_polyhom_spatialdensit}
 \forall \, (t,x)\in [2,\infty)  \times \R^3_x, \qquad  \bigg|\varrho (t,x)  -  \sum_{0 \leq p \leq q \leq N_0}  \frac{\log^p(t)}{t^q}[\varrho]_{q,p} \Big( \frac{x}{t} \Big) \bigg| \lesssim \frac{\log^S(t)}{\langle t+|x| \rangle^3t^{N_0-2}} .
\end{equation}
\end{definition}

\begin{remark}
The spatial decay in \eqref{defn_polyhom_spatialdensit} will be important to perform elliptic estimates (see \eqref{remark_uniform_integral_bound_kernel_convolution_duan} below).
\end{remark}

The next definition will be applied to quantities of the form $\nabla_x \partial_x^\gamma\phi (t,\X_n+t\V_n)$, that is, to derivatives of the force field along corrections to the spatial linear characteristics $t \mapsto x+tv$.

\begin{definition}\label{defpolhomoexpforce}
Let $N_0 \in \mathbb{N}$ and $\Psi : \R_+ \times \R^3_x \times \R^3_v \to \R$. We say that $\Psi$ admits an \emph{asymptotic polyhomogeneous expansion of order $N_0$} if there exist functions $[\Psi]_{q,\alpha,p} : \R^3_v \to \R$ and $S \in \mathbb{N}$ such that, for all $(t,x,v) \in [2,\infty) \times \R^3_x \times \R^3_v$ with $|x| \leq t$, we have
\begin{equation}\label{eq:Expan}
  \bigg| \Psi (t,x,v)  -   \sum_{q \leq N_0} \,  \sum_{ |\alpha|+p \leq q} \frac{x^\alpha \log^p(t)}{t^{q}}[\Psi]_{q,\alpha,p} (v) \bigg| \lesssim \frac{\langle x \rangle^{N_0+1} \log^S(t)}{t^{N_0+1}} .
   \end{equation}
Consider a function $\psi :\R_+ \times \R^3_x  \to \R$ and a family of curves $\gamma_{x,v} : [2,\infty) \to  \R^3_x$ parameterised by $(x,v) \in \R^3_x \times \R^3_v$. We say that $\psi$ admits an \emph{asymptotic polyhomogeneous expansion of order $N_0$ along the curves $\gamma_{x,v}$} if $\Psi (t,x,v):= \psi \big(t, \gamma_{x,v}(t) \big)$ verifies \eqref{eq:Expan}.
\end{definition}

Let us now prove the uniqueness of the coefficients in the expansions. We also show that under suitable assumptions on $\Psi$, we can differentiate \eqref{eq:Expan} without losing the structure of the expansion.
\begin{lemma}\label{LemForexp}
Let $N_0 \in \mathbb{N}$ and $\Psi \in C^0 \cap L^\infty( \R_+ \! \times \R^3_x \times \R^3_v)$ admitting an asymptotic polyhomogeneous expansion of order $N_0$. Then:
\begin{enumerate}
\item The functions $[\Psi]_{q,\alpha,p}$ are unique, and belong to $C^0 \cap L^\infty (\R^3_v)$.
\item Let $1 \leq i \leq 3$. Assume that $\partial_{v^i} \Psi$ and $t\partial_{x^i} \Psi$ admit asymptotic polyhomogeneous expansions of order $N_0' \leq N_0$. Then, for any $q \leq N'_0$ and any $|\alpha|+p \leq q$, we have
$$ \partial_{v^i} [\Psi]_{q,\alpha,p}=[\partial_{v^i}\Psi]_{q,\alpha,p}.$$
Moreover, if $N_0' \leq N_0-1$, $\alpha\in\N^3$, and $\overline{\alpha}:=\alpha+(\delta_1^i,\delta_2^i,\delta_3^i)$, then $$ [t\partial_{x^i}\Psi]_{q,\alpha,p}=(\alpha_i+1) [\Psi]_{q,\overline{\alpha},p}.$$
\end{enumerate}
\end{lemma}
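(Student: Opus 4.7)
The proof splits into the uniqueness and regularity assertions (Part 1) and the two commutation identities (Part 2).

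\textbf{Uniqueness.} Suppose two coefficient families $([\Psi]^{(1)}_{q,\alpha,p})$ and $([\Psi]^{(2)}_{q,\alpha,p})$ yield valid order-$N_0$ expansions, and set $d_{q,\alpha,p} := [\Psi]^{(1)}_{q,\alpha,p} - [\Psi]^{(2)}_{q,\alpha,p}$. Subtracting the two expansions gives
\begin{equation*}
 \sum_{q \leq N_0}\sum_{|\alpha|+p \leq q} \frac{x^\alpha \log^p t}{t^q}\, d_{q,\alpha,p}(v) = O\Big( \frac{\log^S t\, \langle x \rangle^{N_0+1}}{t^{N_0+1}} \Big), \qquad |x| \leq t.
\end{equation*}
Fix $v$ and order the pairs $(q,p)$ by asymptotic dominance of $\log^p t/t^q$ (i.e., $q$ ascending, then $p$ descending). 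For bounded $x$, taking $t \to \infty$ first forces $d_{0,0,0}(v) = 0$; subtracting this contribution, multiplying by $t/\log t$ and letting $t \to \infty$ then isolates $d_{1,0,1}(v) = 0$. Iterating, at each level $(q,p)$ one recovers the full polynomial $P_{q,p}(x,v) := \sum_{|\alpha| \leq q-p} x^\alpha d_{q,\alpha,p}(v)$ as a limit in $t$, and polynomial uniqueness in $x$ then kills every $d_{q,\alpha,p}(v)$.

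\textbf{Boundedness and continuity.} Once uniqueness holds, the plan is to fix once and for all $M$ test points $\{(t_k, x_k)\}_{k=1}^M$, where $M$ is the cardinality of the index set, chosen so that the $M \times M$ matrix $A$ with entries $A_{k,(q,\alpha,p)} = x_k^\alpha \log^p(t_k)/t_k^q$ is invertible; such a choice exists by linear independence of the functions $(t,x) \mapsto x^\alpha \log^p t/t^q$. Writing $\vec c(v)$ for the coefficient vector, the expansion gives $|\vec\Psi(v) - A\vec c(v)| \leq C_*$ with $C_*$ depending only on the chosen points, so $|\vec c(v)| \leq \|A^{-1}\|(\|\Psi\|_\infty + C_*)$ uniformly in $v$, which provides the $L^\infty$ bound. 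For continuity, given $v_n \to v$, compactness gives a subsequence with $\vec c(v_n) \to \vec \ell$, and the pointwise limit $R^*(t,x) := \Psi(t,x,v) - \sum \ell_{q,\alpha,p} x^\alpha \log^p t/t^q$ inherits the uniform bound of \eqref{eq:Expan}; thus $\vec \ell$ defines a valid expansion of $\Psi(\cdot,\cdot,v)$, uniqueness forces $\vec \ell = \vec c(v)$, and the standard sub-subsequence trick yields $\vec c(v_n) \to \vec c(v)$.

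\textbf{Part 2.} Both commutation identities follow from the fundamental theorem of calculus combined with Part 1. For the $\partial_{v^i}$ identity, the relation $\Psi(t,x,v + V e_i) - \Psi(t,x,v) = \int_0^V \partial_{v^i}\Psi(t,x,v + s e_i)\,ds$ gives two expansions of the same function: one obtained by subtracting the expansions of $\Psi$ at $v + V e_i$ and $v$ (truncated at order $N_0'$, the $q > N_0'$ terms being absorbed into an error of size $O(\log^S t\, \langle x \rangle^{N_0+1}/t^{N_0'+1})$ compatible with $|x| \leq t$), and one obtained by inserting the order-$N_0'$ expansion of $\partial_{v^i}\Psi$ under the integral. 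Uniqueness of order-$N_0'$ expansions then forces
\begin{equation*}
[\Psi]_{q,\alpha,p}(v + V e_i) - [\Psi]_{q,\alpha,p}(v) = \int_0^V [\partial_{v^i}\Psi]_{q,\alpha,p}(v + s e_i)\,ds, \qquad q \leq N_0',
\end{equation*}
and differentiating at $V = 0$, using the continuity of $[\partial_{v^i}\Psi]_{q,\alpha,p}$, yields the identity. The $t\partial_{x^i}$ case is analogous, starting from $\Psi(t,x + h e_i,v) - \Psi(t,x,v) = t^{-1} \int_0^h [t\partial_{x^i}\Psi](t,x + s e_i,v)\,ds$; the algebraic identity $(x + h e_i)^{\overline\alpha} - x^{\overline\alpha} = (\alpha_i + 1)\int_0^h (x + s e_i)^\alpha\,ds$ accounts for the factor $(\alpha_i + 1)$ after matching indices.

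\textbf{Main obstacle.} The delicate part is the bookkeeping in Part 2: the expansion of $\Psi$ carries error weight $\langle x \rangle^{N_0+1}/t^{N_0+1}$, while that of the derivative carries weight $\langle x \rangle^{N_0'+1}/t^{N_0'+1}$, so one must truncate $\Psi$'s expansion to order $N_0'$ and absorb the $q > N_0'$ contributions into a remainder by using $|x| \leq t$ to trade $\langle x \rangle$-growth for negative powers of $t$. The restriction $N_0' \leq N_0 - 1$ in the $t\partial_{x^i}$ case appears precisely because the formal differentiation produces an index shift $q \mapsto q+1$, so a spare power of $1/t$ is needed for the indices to match.
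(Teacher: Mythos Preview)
Your proof is correct and takes a genuinely different route from the paper for both the regularity assertion and Part~2. The paper extracts each coefficient polynomial $\sum_{|\alpha|\le n-m} x^\alpha [\Psi]_{n,\alpha,m}(v)$ as a uniform limit on $K\times\R^3_v$ of the quantity $\frac{t^n}{\log^m t}\big(\Psi-\text{(previously extracted terms)}\big)$, and then invokes the classical ``uniform convergence of $E_{n,m}$ and $\partial_{v^i}E_{n,m}$ implies differentiability of the limit'' to get Part~2. That argument is elegant once the uniform convergence is in hand, but proving the latter hides a circularity: the not-yet-extracted terms in $E_{n,m}$ carry coefficients whose boundedness is exactly what one is trying to establish. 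Your test-point matrix sidesteps this completely---a fixed invertible Vandermonde-type system gives all coefficients at once, with a uniform bound depending only on $\|A^{-1}\|$, $\|\Psi\|_\infty$, and the error constant. For Part~2 your fundamental-theorem-of-calculus argument is likewise more elementary; it relies only on the uniqueness already proved, with no need for uniform-limit interchange. The one place your write-up is a bit compressed is the $t\partial_{x^i}$ case: after the FTC identity the two ``expansions'' are in the basis $I_\alpha(x,h)\log^p t/t^q$ with $I_\alpha(x,h)=\int_0^h(x+se_i)^\alpha\,ds$ rather than the standard $x^\alpha\log^p t/t^q$, so invoking ``uniqueness'' requires either noting that $\{I_\alpha\}_\alpha$ are linearly independent (since $I_\alpha(x,h)/h\to x^\alpha$ as $h\to 0$) or expanding $I_\alpha$ as a polynomial in $x$ for fixed $h$ and matching in the standard basis. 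Either closes the argument; it would be worth adding one sentence to make this explicit.
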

\begin{remark}
Note that if $N_0'=N_0$, we cannot compute $[t\partial_{x^i}\Psi]_{q,\alpha,p}$ using the coefficients of the expansion of $\Psi$.
\end{remark}
\begin{proof}
The uniqueness of $[\Psi]_{q,\alpha , p }$ follows from an induction, on $(q,p)$ for the lexicographical order, and the uniqueness of the (pointwise) limit. Concerning the regularity, we use the convergence in $L^{\infty}(K \times \R^3_v)$, for any compact subset $K \subset \R^3_x$, of
$$\frac{t^{n}}{\log^{m}(t)}\bigg( \Psi (t,x,v)  -  \sum_{ q \leq n-1} \, \sum_{|\alpha|+p \leq q}  \frac{x^\alpha \log^p(t)}{t^{q}}[\Psi]_{q,\alpha,p} (v)+ \sum_{p \leq m} \,  \sum_{|\alpha| \leq n-p}  \frac{x^\alpha \log^p(t)}{t^n}[\Psi]_{q,\alpha,p} (v)\bigg),$$
where $0 \leq n \leq N_0$, $0 \leq m \leq n$. For the second statement, we use the convergence in $L^{\infty}(K \times \R^3_v)$ of the previous quantity corresponding to $\partial_{v^i} \Psi$ and $t \partial_{x^i} \Psi$.
\end{proof}

By similar considerations, we also have the next result.

\begin{lemma}\label{LemForexpbis}
Let $N_0 \in \mathbb{N}$ and $\varrho \in C^0 \cap L^\infty( \R_+ \! \times \R^3_x )$ admitting an asymptotic self-similar polyhomogeneous expansion of order $N_0$. Then:
\begin{enumerate}
\item The functions $[\varrho]_{q,p}$ are unique, and belong to $C^0 \cap L^\infty (\R^3_v)$.
\item Assume that $t\nabla_x \varrho$ admits an expansion of order $N_0' \leq N_0$. Then, for any $p \leq q \leq N'_0$, we have
$$  t\nabla_x[ \varrho]_{p,q}=[t\nabla_x \varrho]_{p,q}.$$
\end{enumerate}
\end{lemma}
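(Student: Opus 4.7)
The strategy mirrors that of Lemma \ref{LemForexp}, working in the self-similar variable $\xi := x/t$. First, I would fix $\xi \in \R^3$ and evaluate the expansion \eqref{defn_polyhom_spatialdensit} at $x = t\xi$. Since $\langle t+|x|\rangle \gtrsim t$ in this regime, the remainder satisfies
\[
\bigg| \varrho(t, t\xi) - \sum_{0 \leq p \leq q \leq N_0} \frac{\log^p(t)}{t^q} [\varrho]_{q,p}(\xi) \bigg| \lesssim \frac{\log^S(t)}{t^{N_0+1}},
\]
with a bound that is uniform for $\xi$ in any compact subset $K \subset \R^3$. I would then extract the coefficients $[\varrho]_{q,p}(\xi)$ by induction on $(q,p)$ in the order peeling off slowest-decaying terms first: smaller $q$ first, and within fixed $q$, larger $p$ first. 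After subtracting the previously determined contributions and multiplying by $t^q/\log^p(t)$, the residual converges pointwise to $[\varrho]_{q,p}(\xi)$, since the remaining faster-decaying terms vanish in the limit and the error contribution $\log^{S-p}(t)/t^{N_0+1-q}$ is $o(1)$ thanks to $q \leq N_0$. Uniqueness is immediate from uniqueness of pointwise limits.

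For regularity, the convergence above is uniform on every compact $K \subset \R^3$, so each $[\varrho]_{q,p}$ is a uniform-on-compacts limit of continuous functions $\xi \mapsto \varrho(t,t\xi)$ (minus previously determined continuous terms), hence continuous. Global boundedness follows via a standard Vandermonde-type argument: evaluating the expansion at sufficiently many distinct values of $t$ and inverting the resulting linear system yields $|[\varrho]_{q,p}(\xi)| \lesssim \|\varrho\|_{L^\infty}$ independently of $\xi$.

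For the differentiation statement, I would apply the same analysis to $t\nabla_x \varrho$, which by hypothesis admits an expansion of order $N_0' \leq N_0$, obtaining its coefficients $[t\nabla_x \varrho]_{q,p}$ as uniform-on-compacts limits as well. Setting $h(t,\xi) := \varrho(t, t\xi)$ and $H(t,\xi) := (t\nabla_x \varrho)(t, t\xi)$, the chain rule gives $\nabla_\xi h = H$. Let $h_{q,p}$ (respectively $H_{q,p}$) denote the renormalized residuals converging uniformly on compacts to $[\varrho]_{q,p}$ (respectively $[t\nabla_x \varrho]_{q,p}$). Since the renormalization factor $t^q/\log^p(t)$ does not depend on $\xi$, and under the inductive hypothesis that $\nabla_\xi [\varrho]_{q',p'} = [t\nabla_x \varrho]_{q',p'}$ for all $(q',p') \prec (q,p)$, differentiation in $\xi$ gives $\nabla_\xi h_{q,p} = H_{q,p}$. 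The standard $C^1$-convergence theorem (uniform convergence of a sequence of $C^1$ functions together with uniform convergence of their derivatives) then yields $[\varrho]_{q,p} \in C^1$ with $\nabla_\xi [\varrho]_{q,p} = [t\nabla_x \varrho]_{q,p}$. Rewriting via $t\nabla_x\bigl([\varrho]_{q,p}(x/t)\bigr) = (\nabla_\xi [\varrho]_{q,p})(x/t)$ gives the claimed identity.

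The main obstacle is the interchange of the $t \to \infty$ limit with the derivative $\nabla_\xi$, which is precisely where the hypothesis on $t\nabla_x \varrho$ is used: it provides the uniform-on-compacts convergence of $\nabla_\xi h_{q,p} = H_{q,p}$ required to invoke the $C^1$-convergence lemma. Without this hypothesis, one could have $[\varrho]_{q,p}$ merely continuous and the formal derivative of its expansion would carry no a priori meaning.
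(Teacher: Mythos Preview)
Your proposal is correct and follows essentially the same approach as the paper, which proves Lemma~\ref{LemForexpbis} by the phrase ``by similar considerations'' referring back to Lemma~\ref{LemForexp}: inductive extraction of coefficients via pointwise limits, continuity from uniform-on-compacts convergence, and interchange of limit and derivative via the assumed expansion of $t\nabla_x\varrho$. Your Vandermonde-type argument for the $L^\infty$ bound is a useful detail the paper leaves implicit; without it the inductive extraction alone does not obviously yield global boundedness, since bounding the rescaled residual at step $(q,p)$ a priori requires bounds on the later coefficients.
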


\section{Statement of the main results}\label{secmain}

We now provide a detailed formulation of the main results of the article. 

\subsection{Small data modified scattering in high order regularity}

First, we prove small data global existence for the Vlasov--Poisson system in high regularity. We provide estimates for weighted $L^{\infty}_{x,v}$ norms for the distribution function. These properties will be used later when studying the scattering properties of small data solutions.

\begin{theorem}\label{thm_detail_modified_scattering}
Let $\epsilon >0$ and $N \geq 1$. Let $f_{t=0}$ be an initial data of class $C^N$ for the Vlasov--Poisson system such that 
\begin{equation}\label{eq:smallnessassump}
\sup_{|\kappa| \leq N} \, \sup_{(x,v) \in \R^3_x \times \R^3_v } \langle x \rangle^8 \, \langle v \rangle^7 \big| \partial_{x,v}^\kappa f_{t=0} \big|(x,v) \leq \epsilon.
\end{equation}
\textbf{Global existence.} There exists $\varepsilon_0>0$, depending only on $N$, such that the following statement holds. If $\epsilon \leq \varepsilon_0$, then there exists a unique global solution $f$ to the Vlasov-Poisson system \eqref{eq:VP} arising from the initial data $f_{t=0}$. Let $N_x\geq 8$, and $N_v\geq 7$. Then, if
\begin{equation}\label{eq:defininorm}
 \mathbb{E}_N^{N_x,N_v}[f_{t=0}]:= \sup_{|\kappa| \leq N} \, \sup_{(x,v) \in \R^3_x \times \R^3_v } \langle x \rangle^{N_x} \langle v \rangle^{N_v} \big| \partial_{x,v}^\kappa f_{t=0}|(x,v) <\infty ,
\end{equation}
there exists a constant $C[N_x,N_v]>0$ depending only on $(N_x,N_v)$ such that, for any $|\kappa_x|+|\kappa_v| \leq N$ and all $(t,x,v) \in \R_+ \times \R^3_x \times \R^3_v$, we have for $g_0(t,x,v):=f(t,x+tv,v)$ that
\begin{equation}\label{estimate_point_globa_exist}
 \langle x \rangle^{N_x}  \langle v \rangle^{N_v} \big| \partial_{x,v}^\kappa g_0 (t,x,v) \big| \leq \left\{ 
	\begin{array}{ll}
        C[N_x,N_v] \, \mathbb{E}_N^{N_x,N_v}[f_{t=0}] \log^{N_x+|\kappa_v|}(2+t) \quad & \mbox{if $|\kappa_x|+|\kappa_v| \leq N-1$, } \\[3pt]
       C[N_x,N_v] \, \mathbb{E}_N^{N_x,N_v}[f_{t=0}]\,  \langle t \rangle^{\frac{1}{3}} & \mbox{if $|\kappa_x|+|\kappa_v|=N$}.
    \end{array} 
\right. 
\end{equation}
\textbf{Modified scattering.} Assume further that $N \geq 2$. Then:
\begin{enumerate}[label=(\alph*)]
\item \label{first_prop_mod_scatt_main} The spatial average of $ f$ converges to the function $Q_{\infty} \in C^{N-2}\cap W^{N-2,\infty}(\R^3_v)$. For any $|\beta|\leq N-2$, we have $$\forall (t,v)\in [2,\infty) \times \R^3_v, \qquad \langle v \rangle^{4}\bigg|\int_{\R^3_v}Z^\beta f(t,x,v)\mathrm{d}x -\partial_v^{\beta}Q_{\infty}(v)\bigg|\lesssim \e  \frac{\log^{N-1}(t)}{t} .$$ 

\item \label{second_prop_mod_scatt_main} The spatial density $\rho(Z^{\beta}f)$ has a self-similar asymptotic profile. For any $|\beta| \leq N-2$, we have $$\forall (t,x) \in [2,\infty) \times \R^3_x,\qquad \bigg| t^{3}\int_{\R^3_v}Z^{\beta}f(t,x,v)\mathrm{d}v-\partial_v^{\beta}Q_{\infty}\Big(\dfrac{x}{t}\Big)\bigg|\lesssim \e  \frac{\log^{N}(t)}{t}.$$

\item The distribution function $f$ satisfies modified scattering to a distribution $f_{\infty} \!\in C^{N-2} \cap~W^{N-2,\infty}(\R^3_x \times \R^3_v)$. Let $$\X_1(t,x,v):=x+ \mu \log(t) \nabla_v \phi_\infty(v)\qquad \textrm{and}\qquad \V_1(t,x,v):=v+\mu t^{-1} \nabla_v \phi_\infty (v).$$ For any $|\beta|\leq N-2$ and all $(t,x,v)\in [2,\infty) \times \R^3_x \times \R^3_v$, we have 
$$\color{white} \square \quad \quad \color{black} \langle x\rangle^{N_x-1}\langle v \rangle^{N_v} \Big| \partial_{x,v}^{\beta}\big[f(t,\X_1+t\V_1,\V_1 )\big] - \partial_{x,v}^{\beta}f_{\infty}(x,v) \Big| \lesssim  \log^{2N}(t)t^{-1}.$$ 
\end{enumerate}
\end{theorem}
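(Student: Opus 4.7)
The plan is to split the proof into the global existence/pointwise bounds part and the three modified scattering statements, following the four-step strategy outlined in Section \ref{Subsecstrategymodifiedscatt}. I would work throughout with the modified profile $g_0(t,x,v):=f(t,x+tv,v)$, which by Lemma \ref{lem_properties_linear_change_variable_trivial} satisfies $\nabla_x g_0=(\nabla_xf)(t,x+tv,v)$ and $\nabla_v g_0=(Gf)(t,x+tv,v)$. A direct computation yields
$$\partial_t g_0(t,x,v)=\mu\nabla_x \phi(t,x+tv)\cdot \bigl[\nabla_v g_0-t\nabla_x g_0\bigr](t,x,v),$$
and analogous equations for the commuted profiles $\partial_{x,v}^\kappa g_0$ through Lemma \ref{lemma_commuted_nonliner_Vlasov}.

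For the global existence statement and the bounds \eqref{estimate_point_globa_exist}, I would bootstrap the pointwise estimates on $\partial_{x,v}^\kappa g_0$ for $|\kappa|\leq N$, integrating each commuted equation along the linear flow. The required hierarchy of decay rates for $\nabla_x Z^\gamma_x\phi$ is extracted from the elliptic equation $\Delta_x Z^\gamma_x\phi=\rho(Z^\gamma f)$ (Lemma \ref{lemma_commuted_poisson_equation}) by combining the weighted pointwise bounds on $g_0$ (providing integrability in $v$ thanks to the $\langle v\rangle^{-N_v}$ factor, which needs $N_v\geq 7$) with standard Newtonian potential estimates. The critical observation is that the dangerous term $t\nabla_x\phi\cdot \nabla_x g_0$ is bounded by $t\cdot t^{-2}\cdot(\textrm{bootstrap})\sim t^{-1}\cdot(\textrm{bootstrap})$, which integrates to the logarithmic growth in \eqref{estimate_point_globa_exist}; the mild derivative loss at top order explains the $\langle t \rangle^{1/3}$ rate.

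For statement \ref{first_prop_mod_scatt_main}, differentiating $\int Z^\beta f\,\dr x$ in $t$, using $\T_\phi f=0$ together with Lemma \ref{LemComfirstorder}, and integrating by parts in $x$ (rewriting $\nabla_v$ in terms of $G$ and exploiting $\Delta_x\phi=\rho(f)$) produces a right-hand side controlled by $\log(t)/t^2$, so the integral converges as $t\to\infty$ to some $\partial_v^\beta Q_\infty(v)$ at the stated rate. For statement \ref{second_prop_mod_scatt_main}, the identity
$$t^3\int_{\R^3_v} Z^\beta f(t,x,v)\,\dr v=\int_{\R^3_y} (Z^\beta g_0)(t,y,x/t)\,\dr y+\mathcal{E}(t,x),$$
obtained via the change of variable $y=x-tv$ followed by a first-order Taylor expansion of $g_0$ in $v$, reduces this statement to \ref{first_prop_mod_scatt_main} up to the error $\mathcal{E}$, which is bounded by $t^{-1}$ times weighted $L^1_y$ norms of $\nabla_v g_0$ controlled by the bootstrap. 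For (c), I define $h(t,x,v):=f(t,\X_1+t\V_1,\V_1)$ and compute, using $\partial_t(\X_1+t\V_1)=\V_1$ and $\partial_t\V_1=-\mu t^{-2}\nabla_v\phi_\infty(v)$, that
$$\partial_t h(t,x,v)=\mu\bigl[\nabla_x\phi(t,\X_1+t\V_1)-t^{-2}\nabla_v\phi_\infty(v)\bigr]\cdot(\nabla_v f)(t,\X_1+t\V_1,\V_1).$$
The key ingredient is the convergence estimate $|t^2\nabla_x\phi(t,x+tv)-\nabla_v\phi_\infty(v)|\lesssim \langle x\rangle\log^2(t)/t$, which follows by inserting the self-similar profile from (b) into the Newtonian kernel representation of $\nabla_x\phi$; integrating the resulting bound on $\partial_t h$ and on its commuted analogs (again using Lemma \ref{lemma_commuted_nonliner_Vlasov}) against $\langle x\rangle^{N_x-1}\langle v\rangle^{N_v}$ weights yields the claim, where the loss of one power in $\langle x\rangle$ accounts precisely for the $\langle x\rangle$ factor in the convergence estimate for $\nabla_x\phi$.

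The main obstacle is the sharp closure of the bootstrap argument. Propagating the pointwise bounds \eqref{estimate_point_globa_exist} with only logarithmic growth below top order requires a tight hierarchy of decay rates for all $\nabla_x Z^\gamma_x\phi$ with $|\gamma|\leq N-1$, which in turn demands uniform-in-$v$ spatial decay of $\rho(Z^\gamma f)$; the Newtonian potential estimates only close if this decay matches the weights provided by $g_0$. A secondary but delicate point appears in statement (c), where one must control $\nabla_x\phi$ and its derivatives at the modified point $\X_1+t\V_1$ rather than at $x+tv$; since $\X_1-x=O(\log t)$, this forces the replacement $\langle x\rangle\mapsto \langle x\rangle+\log t$ in several estimates, and requires careful bookkeeping to prevent the compounding of these logarithmic losses when the commutators act on $h$.
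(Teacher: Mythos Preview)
Your plan for parts \ref{first_prop_mod_scatt_main}, \ref{second_prop_mod_scatt_main} and (c) matches the paper's approach in Section~\ref{SecModiscat}, and your computation of $\partial_t h$ via the cancellation $\partial_t(\X_1+t\V_1)=\V_1$ is a clean equivalent of the paper's formula \eqref{timederivfirstexptechlemm}.

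The gap is in the global existence bootstrap. Integrating the equation $\partial_t g_0=\mu\nabla_x\phi(t,x+tv)\cdot[\nabla_v g_0-t\nabla_x g_0]$ and its $\partial_{x,v}^\kappa$--commuted versions ``along the linear flow'' (i.e.\ in $t$ at fixed $(x,v)$) does not close. The principal term on the right is $\mu\nabla_x\phi\cdot(\nabla_v-t\nabla_x)\partial_{x,v}^\kappa g_0$, which carries $|\kappa|+1$ derivatives of $g_0$: at top order $|\kappa|=N$ this forces control of $N+1$ derivatives you do not have, and below top order the piece $t\nabla_x\phi\cdot\nabla_x\partial_{x,v}^\kappa g_0\sim \epsilon t^{-1}|\partial_x^{\kappa_x+1}\partial_v^{\kappa_v}g_0|$ has the \emph{same} number $|\kappa_v|$ of velocity derivatives. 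Under the bootstrap ansatz $\log^{|\kappa_v|}(t)$ this term integrates to $\log^{|\kappa_v|+1}(t)$, so the hierarchy breaks; a Gr\"onwall on the full coupled system yields only $t^{C\epsilon}$ growth, not the stated logarithmic rates.

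The paper resolves both issues simultaneously by working with the nonlinear transport $\T_\phi(Z^\beta f)=[\T_\phi,Z^\beta]f$, whose commutator (Lemma~\ref{lemma_commuted_nonliner_Vlasov}) involves only $\nabla_v Z^\alpha f$ with $|\alpha|\le|\beta|-1$, so no derivative is lost. Integration is then along the \emph{nonlinear} characteristics, but the linear weight $\langle x-tv\rangle$ drifts logarithmically there; the missing ingredient is the modified weight $\mathbf{z}_{\mathrm{mod}}=\langle x-tv+\varphi\rangle$ (see Lemma~\ref{lemma_varphi}), constructed to satisfy $\T_\phi(\mathbf{z}_{\mathrm{mod}})=0$ exactly. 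With this weight the bootstrap of Proposition~\ref{prop_point_bound_deriv_distrib} closes with the sharp $\log^{|\beta_v|}$ hierarchy (using that the $t^{-1}$--critical commutator terms have strictly fewer $G$'s). The $N_x$ extra logarithms in \eqref{estimate_point_globa_exist} then arise \emph{a posteriori} from $|\mathbf{z}_{\mathrm{mod}}(t,x+tv,v)-\langle x\rangle|\lesssim\epsilon\log t$, not from the bootstrap itself.
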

\begin{remark}\label{RqModonederiv}
The estimate \eqref{estimate_point_globa_exist} for the top order derivatives could be improved to $\langle t \rangle^{\delta}$ with $\delta >0$, but it would require to consider a smaller $\varepsilon_0$. Because of the weaker control on the top order derivatives, we prove modified scattering for the derivatives up to order $N-2$. Nonetheless, one could easily adapt our proof to derive that $f_\infty \in C^{N-1} (\R^3_x \times \R^3_v)$.
\end{remark}

\subsection{High order late-time asymptotics}

We now state the main result of the article. Recall for this the sequence $(r_n)_{n\geq 1}$ introduced in \eqref{kev:defSn}.

\begin{theorem}\label{ThLatetimeFull}
Let $N \geq 3$, an integer $n \geq 1$ with $r_n\leq N$, $ \X_0(t,x,v) := x,$ and $\V_0(t,x,v):=v$. Let $f$ be a small data solution to the Vlasov--Poisson system arising from initial data satisfying the smallness assumption \eqref{eq:smallnessassump} as well as
\begin{equation}\label{eq:assump}
 \mathbb{E}_N^{N_x,N_v}[f_{t=0}] <\infty, \qquad N_x \geq 2\sqrt{2N}+5, \quad N_v \geq 7.
 \end{equation} There exist modifications of the linear characteristic flow
\begin{align*}
\X_{n}(t,x,v) & =x+ \mu \nabla_v \phi_{\infty}(v) \log(t)+\sum_{1 \leq q \leq n-1} \, \sum_{|\alpha|+ p \leq q} \frac{x^\alpha \log^p(t)}{t^{q}} \mathbb{X}_{q,\alpha,p}(v), \\
\V_{n}(t,x,v) &=v+  \frac{\mu}{t}\nabla_v \phi_{\infty}(v)+\sum_{1 \leq q \leq n-1} \, \sum_{|\alpha|+ p \leq q} \frac{x^\alpha \log^p(t)}{t^{q+1}} \mathbb{V}_{q,\alpha,p}(v),
\end{align*}
where $\mathbb{X}_{q,\alpha,p}, \; \mathbb{V}_{q,\alpha,p} \in C^{N-2-2q}\cap W^{N-2-2q} (\R^3_v),$ such that the following properties hold. For any integer $n \geq 0$ and multi-index $\beta$ such that $ r_{n+1} \leq N$ and $|\beta| \leq N-r_{n+1}$: 
\begin{enumerate}[label = (\alph*)]
 \item  The normalised spatial density $t^3 \!\rho (G^\beta \! f)$ and normalised force field $t^{2+|\beta|}\nabla_x \partial_x^\beta \phi$ admit an asymptotic self-similar polyhomogeneous expansion of order $n$ according to Definition \ref{Defselsimexp}. 
 
\vspace{1mm} 
 
\item Modified scattering with an enhanced rate of convergence. Let $g_{n+1}:\R_+^*\times \R^3_x\times\R^3_v\to \R$ be defined by
$$ g_{n+1}(t,x,v) := f \big(t,\X_{n+1}(t,x,v)+t\V_{n+1}(t,x,v),\V_{n+1}(t,x,v) \big).$$ There exists a regular distribution $f_{\infty}\colon \R^3_x \times \R^3_v \to \R$ such that for any $ |\kappa| \leq N-r_{n+1}$ and all $(t,x,v) \in [2,\infty) \times \R^3_x \times \R^3_v$ with $|x| \leq t$, we have
\begin{equation*}
  \langle x \rangle^{N_x-1-n} \, \langle v \rangle^{N_v} \big|\partial_{x,v}^\kappa \big[ g_{n+1}(t,x,v)-f_\infty (x,v) \big] \big| \lesssim  \frac{\log^{N(n+3)}(t)}{t^{n+1}}.
  \end{equation*}
  
\vspace{1mm}  
  
\item The modified spatial average verifies an enhanced convergence to the spatial average of $f_\infty$. For any $|\kappa| \leq N-1-r_{n+1}$, 
there exists $\mathbf{Q}_{p,\xi}^{\kappa,\beta} \in C^0 \cap L^\infty (\R^3_v) $ such that for all $(t,v) \in [2,\infty) \times \R^3_v$, we have 
\begin{align*}
\color{white} \square \qquad  \quad \color{black} \bigg| \int_{|x|<t } \partial_v^\kappa g_{n+1}(t,x,v) \dr x -\int_{\R^3_x} \partial_v^\kappa f_\infty (x,v) \dr x- \sum_{|\beta| \leq |\kappa|} \sum_{p +|\xi| \leq n}& \frac{\log^p(t)}{t^{n+1}} \mathbf{Q}_{p,\xi}^{\kappa,\beta} (v)\int_{\R^3_x} x^\xi \partial_v^\beta f_\infty (x,v) \dr x \bigg| \\
  \lesssim t^{-n-2} \log^{N(n+3)}(t)&.
  \end{align*}  
\end{enumerate}
\end{theorem}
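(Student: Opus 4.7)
The plan is to proceed by induction on $n$, with the base case $n=0$ furnished---up to minor modifications regarding the modified spatial average---by Theorem \ref{thm_detail_modified_scattering}. In the inductive step, we follow the cycle sketched in Section \ref{Subsecsecondorde}: assuming items $(a)$, $(b)$, $(c)$ at index $n-1$, we first upgrade the density and force-field expansions $(a)$ to order $n$, use the new expansion to construct the next-order modified characteristics $(\X_{n+1},\V_{n+1})$, prove $(b)$ at index $n$ by integrating the Vlasov equation along the new flow, and finally establish $(c)$ at index $n$ since this is what feeds back into $(a)$ at the next round. The regularity loss per cycle is $r_{n+1}-r_n=n+1$ derivatives, matching the constraint $r_{n+1}\le N$ and forced by the Taylor expansion in the first sub-step.

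To upgrade $(a)$, we express $\rho(G^\beta f)(t,x)$ as an integral of $g_n$ after absorbing the characteristic correction $(\X_n,\V_n)$, then Taylor expand in the velocity argument about $x/t$ to order $n+1$; the remainder is controlled pointwise via \eqref{estimate_point_globa_exist} together with the inductive $(b)$. The leading terms produce a self-similar polyhomogeneous expansion of $t^3\rho(G^\beta f)$ once the weighted spatial averages $\int y^\alpha \partial_v^\alpha g_n\,\dr y$ are identified through the inductive $(c)$ and Lemma \ref{LemForexp}. The expansion of $t^{2+|\beta|}\nabla_x\partial_x^\beta\phi$ is then obtained by solving the commuted Poisson equation of Lemma \ref{lemma_commuted_poisson_equation} mode by mode (the \emph{asymptotic Poisson equation}), the spatial decay in \eqref{defn_polyhom_spatialdensit} ensuring that each elliptic inversion is well-defined. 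Integrating the new modes of the force field in $t$ along $t\mapsto x+tv$ yields the higher-order corrections $\mathbb{X}_{n,\alpha,p}$, $\mathbb{V}_{n,\alpha,p}$ and hence $(\X_{n+1},\V_{n+1})$. The restriction $|x|\le t$ enters precisely here, since the polynomial $x^\alpha$ dependence of the corrections would otherwise spoil uniform control of the remainders. Substituting this expansion into the Vlasov equation along the new flow gives, by construction,
\begin{equation*}
\partial_t g_{n+1}(t,x,v) = \mathcal{R}_{n+1}(t,x,v), \qquad \bigl| \partial_{x,v}^\kappa \mathcal{R}_{n+1} \bigr| \lesssim \langle x\rangle^{-N_x+1+n}\langle v\rangle^{-N_v}\, \frac{\log^{N(n+3)}(t)}{t^{n+2}},
\end{equation*}
for $|\kappa|\le N-r_{n+1}$; integrating from $t$ to $+\infty$ produces $(b)$ at index $n$ together with the definition of $f_\infty$.

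The hardest step is $(c)$, the enhanced asymptotic of the modified spatial average, and it is indispensable because it is what closes the induction at the next round of $(a)$. Differentiating $\int_{|x|<t}\partial_v^\kappa g_{n+1}\,\dr x$ in $t$ and using the Vlasov equation together with $\nabla_v = G - t\nabla_x$ and integration by parts in $x$, the nonlinear term is rewritten as a sum of expressions involving $\nabla_x\partial_x^\gamma\phi$ tested against $\partial_{x,v}^\alpha f$. Inserting the force-field expansion of order $n$, these terms organise into a polyhomogeneous expression of the form $\sum_{p+|\xi|\le n} \mathbf{Q}^{\kappa,\beta}_{p,\xi}(v)\, t^{-n-2}\log^p(t)\int x^\xi \partial_v^\beta f_\infty\,\dr x$, plus a remainder of size $t^{-n-3}\log^{N(n+3)}(t)$. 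The boundary contribution from the expanding ball $|x|<t$ is controlled via the pointwise weight $\langle x\rangle^{N_x-1-n}$ on $g_{n+1}$ supplied by $(b)$, which is what forces $N_x$ to grow linearly in $n$ and accounts for the assumption \eqref{eq:assump}. Integrating in $t$ and matching with the limit of $\int \partial_v^\kappa g_{n+1}\,\dr x$ produces $(c)$ at index $n$. The main obstacle lies in the algebraic bookkeeping: identifying the coefficients $\mathbf{Q}^{\kappa,\beta}_{p,\xi}(v)$ as explicit combinations of the scattering-state averages $\mathcal{A}^\alpha(v)$, and saturating the log exponent $N(n+3)$ without slack.
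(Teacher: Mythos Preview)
Your inductive outline matches the paper's strategy: the four-step cycle (density expansion $\to$ force-field expansion via the asymptotic Poisson equation $\to$ construction of $(\X_{n+1},\V_{n+1})$ and enhanced modified scattering $\to$ enhanced spatial-average convergence) is exactly Proposition~\ref{Proinduction} and Sections~\ref{Subsecrho}--\ref{Subsecspatialgnplus1}, and your identification of $(c)$ as the step that closes the loop is correct.

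That said, you underplay the most laborious sub-step and slightly misdiagnose the restriction $|x|\le t$. The phrase ``express $\rho(G^\beta f)$ as an integral of $g_n$ after absorbing the characteristic correction'' hides the need to \emph{invert} the map $(x,v)\mapsto(\X_n+t\V_n,\V_n)$. For $n\ge 2$ this map depends polynomially on $x$ and is not a global diffeomorphism; the paper decomposes it as $\Upsilon_t\circ\Phi_{t,n}\circ\Xi_t$ and shows (Lemma~\ref{Lemdiffeoordern}) that $\Phi_{t,n}$ is invertible only on a domain $\{|x|<c_nt\}\times\R^3_v$. This non-injectivity, rather than mere remainder growth, is the true origin of the restriction (Remark~\ref{Rkdomain}). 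One then has to derive polyhomogeneous expansions for the inverse components $(\mathfrak{X}_t^n,\mathfrak{V}_t^n)$ and for the Jacobian of the resulting change of variables $\Y_{t,x}^n$ (Corollaries~\ref{CorexpanXVn} and~\ref{CorYordern}); your Taylor expansion about $x/t$ only makes sense once this inversion is in hand. A second, minor point: in upgrading $(a)$ you invoke the inductive $(c)$ for all the weighted averages $\int y^\alpha\partial_v^\kappa g_n\,\dr y$. In fact $(c)$ is only needed for the \emph{unweighted} term ($q=0$, $|\alpha|=0$); for $q\ge 1$ the prefactor $t^{-q}$ means the cruder convergence from $(b)$ already suffices (see the proof of Proposition~\ref{Proselfsimrho}).
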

\begin{remark}
The first order asymptotic self-similar polyhomogeneous expansion of $t^3 \rho(f)$ is written in full details in Proposition \ref{ProfirstorderexpansionRho}. In particular, one can see that the self-similar coefficients of the expansion with factors $1$, $t^{-1}\log(t)$, and $t^{-1}$, do not vanish in general. 
\end{remark}

\begin{remark}
In the expansions for $t^3\rho (G^\beta f)$ and $t^{2+|\beta|}\nabla_x \partial_x^\beta \phi$, the coefficients of index $(p,q)$ are related through a Poisson equation. See Proposition \ref{Proexpa} for more details. These self-similar expansions are written in terms of functions $v\mapsto \int_x f_\infty (x,v) \dr x$ and its derivatives up to order $|\beta|$.
\end{remark}
\subsection{Non-linear tails and weak convergence}

We next show non-linear late-time tails for the spatial density and the force field. For this, we define a hierarchy of asymptotic conservation laws for the solutions of the Vlasov--Poisson system. 

Let $f_{\infty}\in C^{N-2}(\R^3_x\times\R^3_v)$ be a regular scattering state. Let $|\alpha|+|\beta| \leq N-2$ be multi-indices. We consider the \emph{weighted spatial averages} $\mathcal{A}^{\alpha}_{\beta}:\R^3_v\to\R$ given by 
\begin{equation*}
\mathcal{A}_{\beta}^{\alpha}(v):=\int_{\R^3_x} x^{\alpha}\partial_v^{\alpha+\beta}f_{\infty}(x,v)\dr x.
\end{equation*}
The weighted spatial averages $\mathcal{A}_{\beta}^{\alpha}$ can be characterised as $$\mathcal{A}^{\alpha}_{\beta}(v)=\lim_{t\to \infty} \int_{\R^3_x}x^{\alpha}\partial_{v}^{\alpha+\beta}g_1(t,x,v)\dr x,$$ as we will prove in Section \ref{SecRhoOrder2}. 

\begin{definition}
Let $f\colon[0,\infty)\times \R^3_x\times\R^3_v\to \R$ be a regular small data solution for the Vlasov--Poisson system on $\R^3_x\times\R^3_v$. The \emph{hierarchy of asymptotic conservation laws for the Vlasov field} $f$ is defined as $$\mathcal{A}(f):=\Big\{\mathcal{A}^{\alpha}_{\beta}(v)=\int_{\R^3_x} x^{\alpha}\partial_v^{\alpha+\beta}f_{\infty}(x,v)\dr x : \alpha,\beta \in \N^3\Big\}.$$
\end{definition}

We can now state in details the late-time tails for the spatial density and the force field.

\begin{theorem}
Let $N\geq 4$ and $n \in \mathbb{N}$ such that $r_{n+1}\leq N$. Consider further $|\beta| \leq N-r_{n+1}-1$ and $f$ a small data solution to the Vlasov--Poisson system arising from initial data satisfying the assumptions of Theorem \ref{thm_detail_modified_scattering} and Theorem \ref{ThLatetimeFull}. There exist constants $C^{p,q}\in\R$ such that the spatial density satisfies
\begin{align*}
\forall t \geq 2, \qquad \bigg|t^3\int_{\R^3_v}G^{\beta}f(t,x,v)\dr v-\sum_{p\leq q\leq n}\sum_{|\gamma|\leq n-q} \frac{C^{p,q}}{\gamma !}\partial_v^{\gamma+\beta}\mathbf{F}_{p,q}(0)\frac{x^{\gamma}\log^p (t)}{t^{|\gamma|+q}}\bigg|\lesssim \dfrac{\log^{N(n+3)}(t) }{t^{n+1}} \langle x \rangle^{n+1},
\end{align*} 
where $\partial_v^{\gamma+\beta}\mathbf{F}^{\beta}_{p,q}(v)$ can be computed in terms of $\mathcal{A}_{\beta}^{\alpha}(v)$ and its derivatives for $|\alpha|+|\beta|\leq N-2$. Moreover, the force field satisfies
\begin{align*}
\forall t\geq 2\qquad  \bigg|t^{2+|\beta|} \nabla_x \partial_x^\beta \phi (t,x)-\sum_{p\leq q\leq n}\sum_{|\gamma|\leq n-q} \frac{C^{p,q}}{\gamma !}\nabla_v \partial_v^{\gamma+\beta}\Phi_{p,q}(0)\frac{x^{\gamma}\log^p (t)}{t^{|\gamma|+q}}\bigg|\lesssim \dfrac{\log^{N(n+3)}(t) }{t^{n+1}} \langle x \rangle^{n+1},
\end{align*}
where $\Phi_{p,q}$ is defined by $\Delta_v\Phi_{p,q}=\mathbf{F}_{p,q}$.
\end{theorem}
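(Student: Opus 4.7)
The plan is to deduce both tails directly from the asymptotic self-similar polyhomogeneous expansions provided by Theorem \ref{ThLatetimeFull}(a), via a Taylor expansion in the self-similar variable $x/t$ around $v = 0$. For the spatial density, part (a) of Theorem \ref{ThLatetimeFull} yields, for $|\beta| \leq N - r_{n+1} - 1$, functions $[\varrho^{\beta}]_{q,p} : \R^3_v \to \R$ such that
$$ \bigg| t^3 \int_{\R^3_v} G^{\beta} f(t,x,v) \dr v - \sum_{0 \leq p \leq q \leq n} \frac{\log^p(t)}{t^q} [\varrho^{\beta}]_{q,p}\Big(\frac{x}{t}\Big) \bigg| \lesssim \frac{\log^S(t)}{\langle t + |x| \rangle^3 \, t^{n-2}} . $$
Since $|\beta| \leq N - r_{n+1} - 1$, one gains an extra derivative of regularity on the coefficients $[\varrho^{\beta}]_{q,p}$ via Lemma \ref{LemForexp} applied to $t^3 \rho(\partial_{v^i} G^\beta f) = t^2 \partial_{x^i} [t^3 \rho(G^\beta f)]$, so each $[\varrho^{\beta}]_{q,p}$ is $C^{n-q+1}$ on a neighborhood of $0 \in \R^3_v$. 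I would then Taylor expand
$$ [\varrho^{\beta}]_{q,p}\Big(\frac{x}{t}\Big) = \sum_{|\gamma| \leq n-q} \frac{1}{\gamma!} \partial_v^\gamma [\varrho^{\beta}]_{q,p}(0) \, \frac{x^\gamma}{t^{|\gamma|}} + O\!\left( \frac{|x|^{n-q+1}}{t^{n-q+1}} \right) . $$
Combining the two estimates, regrouping the terms carrying the same power of $t$ and $\log(t)$, and using $|x| \leq t$ to bound the $|x|^{n-q+1}$ remainders by $\langle x \rangle^{n+1}$ after multiplying by $\log^p(t)/t^q$, produces an expansion of the claimed form with $\mathbf{F}_{p,q}^{\beta} := C^{p,q} \, [\varrho^{\beta}]_{q,p}$ for appropriate combinatorial constants $C^{p,q}$.

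The next step is to identify $\partial_v^{\gamma + \beta} \mathbf{F}_{p,q}^\beta(0)$ with a specific polynomial in the asymptotic conservation laws $\mathcal{A}^\alpha_\beta$ of order at most $N-2$. For this I would use the uniqueness part of Lemma \ref{LemForexp} together with Theorem \ref{ThLatetimeFull}(c): the modified spatial averages admit an expansion whose coefficients are universal profiles $\mathbf{Q}^{\kappa,\beta}_{p,\xi}(v)$ multiplied by $\int x^\xi \partial_v^\beta f_\infty \dr x = \mathcal{A}^\xi_{\beta - \xi}(v)|_{v}$ evaluated at the relevant velocity. Passing from the spatial average of $g_{n+1}$ to $t^3 \rho(G^\beta f)$ is done by tracking the effect of the modified characteristics $\X_{n+1}, \V_{n+1}$ on the change of variables $x \mapsto \X_{n+1} + t \V_{n+1}$: this change of variables is polyhomogeneous with coefficients depending only on $\nabla_v \phi_\infty$ and derivatives thereof (hence on $Q_\infty = \mathcal{A}^0_0$ and its derivatives), and the Jacobian expands similarly. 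Evaluating the resulting expansions at $x = 0$ after $|\gamma|$ derivatives in $v$ then exhibits $\partial_v^{\gamma+\beta} \mathbf{F}_{p,q}^\beta(0)$ as a polynomial expression in $\mathcal{A}^\alpha_{\beta'}(0)$ for $|\alpha| + |\beta'| \leq N-2$, as claimed.

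For the force field tail, I would exploit the fact, recorded in the second remark after Theorem \ref{ThLatetimeFull}, that the coefficients of the self-similar expansion of $t^{2+|\beta|} \nabla_x \partial_x^\beta \phi$ are linked to those of $t^3 \rho(G^\beta f)$ through an asymptotic Poisson equation. Concretely, if $\mathbf{F}_{p,q}^\beta(v)$ are the density coefficients, then $\Phi_{p,q}$ is defined by $\Delta_v \Phi_{p,q} = \mathbf{F}_{p,q}$, with $\Phi_{p,q}$ of sufficient regularity on a neighborhood of $0$ by standard elliptic regularity (using the decay of $\mathbf{F}_{p,q}$ at infinity coming from Definition \ref{Defselsimexp}). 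The force field expansion in Theorem \ref{ThLatetimeFull}(a) applied to $\partial_x^\beta \phi$ provides self-similar coefficients $[\nabla_x \partial_x^\beta \phi]_{q,p}(v) = \nabla_v \partial_v^\beta \Phi_{p,q}(v)$; Taylor expanding these at $v = 0$ exactly as above and using the same remainder analysis yields the second claimed expansion, with the same constants $C^{p,q}$.

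The main obstacle, and the step that requires the most care, is the identification of the $\partial_v^{\gamma+\beta}\mathbf{F}_{p,q}^\beta(0)$ as computable in terms of the hierarchy $\{\mathcal{A}^\alpha_{\beta}\}$. This bookkeeping step is delicate because the modified characteristics $\X_{n+1}, \V_{n+1}$ themselves depend recursively on $\nabla_v \phi_\infty$ and on the lower-order asymptotic profiles $\mathbb{X}_{q,\alpha,p}, \mathbb{V}_{q,\alpha,p}$, and because one must separate the contributions coming from the Taylor expansion of the change of variables from those coming from the logarithmic corrections. Nevertheless, since each step in the chain preserves the property of being a polynomial expression in the $\mathcal{A}^\alpha_{\beta'}$, a careful induction on $n$, building on the first-order identification of $Q_\infty = \mathcal{A}^0_0$ obtained in Section \ref{SecModiscat} and Section \ref{SecRhoOrder2}, should close the argument.
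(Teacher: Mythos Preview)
Your approach is correct and matches the paper's: combine the self-similar polyhomogeneous expansion (the paper cites Propositions \ref{Proselfsimrho} and \ref{Proexpa}) with a Taylor expansion of the profiles $\mathbf{F}_{p,q}(x/t)$ and $\nabla_v\Phi_{p,q}(x/t)$ at $v=0$ (Lemmata \ref{lemexpconslawprofil_more_general} and \ref{lemexpconslawprofil_more_general_force}). Your extended second and third paragraphs on identifying the coefficients are unnecessary --- the expressibility of $\mathbf{F}_{p,q}$ in terms of the hierarchy $\mathcal{A}^\alpha_\beta$ is already built into its construction in the proof of Proposition \ref{Proselfsimrho} (where the expansion is assembled from weighted spatial averages of $g_n$, which converge to those of $f_\infty$), so no separate bookkeeping is required at this stage.
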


We next show the concentration of the support of the distribution in the zero velocity set in a weak convergence sense. We also show the shearing of the solutions of the system in a weak convergence sense for $f(t,x+\bar{v}t,v)$ for a fixed $\bar{v}\in\R^3_v$.

\begin{theorem}\label{thm_weak_convergence_prop_complete}
Let $\varphi\in C^{\infty}_{x,v}$ be a compactly supported test function. Let $\bar{v}\in\R^3_v$, and $|\beta|\leq N-2$. Let $f$ be a small data solution to the Vlasov--Poisson system arising from initial data satisfying the assumptions of Theorem \ref{thm_detail_modified_scattering}. Then, the distribution $f$ satisfies $$\lim_{t\to\infty}\int_{\R^3_x\times \R^3_v}t^3G^{\beta}f(t,x,v)\varphi(x,v)\dr x\dr v= \int_{\R^3_x\times \R^3_v} \Big(\mathcal{A}_{\beta}(0) \delta_{v=0}(v)\Big)\varphi(x,v) \dr x\dr v.$$ In other words, the distribution $t^{3}G^{\beta}f(t,x,v)$ converges weakly to $\mathcal{A}_{\beta}(0)\delta_{v=0}(v)$ as $t \to  \infty$. Moreover, the distribution $f$ satisfies $$\lim_{t\to\infty}\int_{\R^3_x\times \R^3_v}t^3G^{\beta}f(t,x+t\bar{v},v+\bar{v})\varphi(x,v)\dr x\dr v=\int_{\R^3_x\times \R^3_v} \Big(\mathcal{A}_{\beta}(v) \delta_{v=\bar{v}}(v)\Big)\varphi(x,v) \dr x\dr v.$$ In other words, the distribution $t^{3}G^{\beta}f(t,x+t\bar{v},v+\bar{v})$ converges weakly to $\mathcal{A}_{\beta}(v)\delta_{v=\bar{v}}(v)$ as $t \to  \infty$.
\end{theorem}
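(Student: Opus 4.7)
The plan is to reduce the weak convergence to the pointwise convergence of the normalised spatial density $t^3 \rho(G^\beta f)$ provided by Theorem~\ref{thm_detail_modified_scattering}\ref{second_prop_mod_scatt_main}, together with a remainder estimate capturing the velocity concentration. First I would split the test function as $\varphi(x,v) = \varphi(x,0) + \bigl(\varphi(x,v)-\varphi(x,0)\bigr)$. For the $v$-independent piece, integrating $v$ first gives
$$ \int_{\R^3_x \times \R^3_v} t^3 G^\beta f(t,x,v) \varphi(x,0) \dr x \dr v = \int_{\R^3_x} t^3 \rho(G^\beta f)(t,x) \varphi(x,0) \dr x .$$
By Theorem~\ref{thm_detail_modified_scattering}\ref{second_prop_mod_scatt_main} we have $t^3 \rho(G^\beta f)(t,x) = \partial_v^\beta Q_\infty(x/t) + O(\log^{N}(t) \, t^{-1})$ uniformly in $x$. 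Since $\varphi(\cdot,0)$ is compactly supported, $x/t\to 0$ uniformly on its support, so dominated convergence yields the limit $\partial_v^\beta Q_\infty(0) \int_{\R^3_x} \varphi(x,0) \dr x$. I would then identify $Q_\infty(v) = \int_{\R^3_x} f_\infty(x,v) \dr x$ by applying the change of variables $(x,v) \mapsto (\X_1+t\V_1,\V_1)$ in $\int_{\R^3_x} f(t,X,V) \dr X$ (whose Jacobian in $x$ equals $1$ for fixed $V=\V_1(t,v)$), obtaining $\int_{\R^3_x} f(t,X,V) \dr X = \int_{\R^3_x} g_1(t,x,v) \dr x$, and then passing to the limit using Theorem~\ref{thm_detail_modified_scattering}\ref{first_prop_mod_scatt_main} together with the weighted $L^\infty$ convergence $g_1 \to f_\infty$. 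Differentiating this identity $|\beta|$ times at $v=0$ gives $\partial_v^\beta Q_\infty(0) = \mathcal{A}_\beta(0)$.

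For the remainder $\psi(x,v) := \varphi(x,v) - \varphi(x,0)$, which satisfies $|\psi(x,v)| \lesssim |v|$ and is supported in $\{|x| \leq R\}$ for some $R>0$, I would use the identity $G^\beta f(t,x,v) = \partial_v^\beta g_0(t,x-tv,v)$ from Lemma~\ref{lem_properties_linear_change_variable_trivial} together with the change of variables $(y,u) = (x-tv,tv)$ of Jacobian $t^{-3}$, which rewrites
$$ \int_{\R^3_x \times \R^3_v} t^3 G^\beta f(t,x,v) \psi(x,v) \dr x \dr v = \int_{\R^3_y \times \R^3_u} \partial_v^\beta g_0(t,y,u/t) \, \psi(y+u,u/t) \dr y \dr u.$$
The global existence estimate \eqref{estimate_point_globa_exist}, which applies since $|\beta|\leq N-2$, gives $|\partial_v^\beta g_0(t,y,u/t)| \lesssim \log^{A}(t) \langle y \rangle^{-N_x}$ for some constant $A$, whereas $|\psi(y+u,u/t)| \lesssim (|u|/t)\mathbf{1}_{|y+u|\leq R}$. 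Integrating in $y$ first (using $\int \langle y \rangle^{-N_x} \mathbf{1}_{|y+u|\leq R} \dr y \lesssim \langle u \rangle^{-N_x}$) and then in $u$ (convergent thanks to $N_x\geq 8$) produces a bound of order $\log^A(t) \, t^{-1}$, which vanishes as $t \to \infty$.

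The shifted statement follows from exactly the same argument applied after the Galilean translation $(x,v)\mapsto (x+t\bar v, v+\bar v)$: the density-convergence step is now evaluated at the point $x+t\bar v$ and yields $\partial_v^\beta Q_\infty(\bar v) = \mathcal{A}_\beta(\bar v)$, while the remainder estimate is identical since only the velocity support of the test function shifts. The hard part is the remainder step: closing it in the $(y,u)$-variables requires the spatial weight $N_x$ in \eqref{estimate_point_globa_exist} to be high enough for the $u$-integral to converge, which is why one must rely on the logarithmic-in-$t$ estimate available for $|\kappa|\leq N-1$ rather than the weaker $t^{1/3}$ top-order bound. Beyond this, every step is a routine combination of the modified scattering of Theorem~\ref{thm_detail_modified_scattering} and dominated convergence.
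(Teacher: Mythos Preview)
Your argument is correct. Both the splitting $\varphi(x,v)=\varphi(x,0)+\psi(x,v)$ and the treatment of each piece are valid: the main term reduces to the self-similar expansion of $t^3\rho(G^\beta f)$ from Theorem~\ref{thm_detail_modified_scattering}\ref{second_prop_mod_scatt_main}, the identification $\partial_v^\beta Q_\infty=\mathcal{A}_\beta$ is exactly Proposition~\ref{lem_constant_expansion_rho_term_scatt_state}, and your remainder estimate in the $(y,u)=(x-tv,tv)$ variables closes since $\int_{\R^3_u}|u|\,\langle u\rangle^{-N_x}\dr u<\infty$ for $N_x\geq 8$. The shifted statement follows by the same computation with $x/t$ replaced by $\bar v+x/t$ in the density step.

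The paper organises the proof differently. Rather than splitting $\varphi$, it applies a single mean-value step to the product $g_0(y,(x-y)/t)\,\varphi(x,(x-y)/t)$ (Lemma~\ref{lem_asympt_exp_spatial_density_estimate_weak_conv_first_order}), comparing it directly with $g_0(y,x/t)\,\varphi(x,x/t)$; this yields
\[
\int t^3 G^\beta f\,\varphi\,\dr x\dr v=\int_{\R^3_x}\varphi\Big(x,\frac{x}{t}\Big)\int_{\R^3_y}G^\beta f\Big(t,y,\frac{x}{t}\Big)\dr y\,\dr x+O\big(t^{-1}\log^{N-1}(t)\big),
\]
and the limit is then read off via Corollary~\ref{cor_limit_mass_stable_mfld_origin}. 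Your decomposition is more explicit about which contribution produces the Dirac mass (the $\varphi(x,0)$ piece, through the density profile) and which one encodes the velocity concentration (the $\psi$ piece, via the $|u|/t$ smallness), whereas the paper's route is shorter because one Taylor step handles both the test function and the distribution at once. The underlying inputs---the pointwise bounds \eqref{estimate_point_globa_exist} on $g_0$ and the convergence of spatial averages---are the same in both proofs.
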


\section{Asymptotics of the linearised system}\label{seclinear}

In this section, we address the problem of late-time asymptotics for the linearised system \eqref{vlasov_linear_flow}. In this case, we show that the spatial density satisfies an asymptotic self-similar expansion. As an application, we obtain late-time tails for the spatial density, where the coefficients in the tails are \emph{exact} conservation laws for the linearised system. Finally, we prove that the distribution function (up to normalisation) converges weakly to a Dirac mass in the zero velocity set. 

\subsection{Asymptotic self-similar expansion for the spatial density}

In this subsection, we show an asymptotic self-similar expansion for the velocity average $t^3\rho(G^{\beta}f)$ in terms of the \emph{weighted spatial averages} 
\begin{equation*}
\int_{\R^3_x} x^{\alpha}\partial_v^{\alpha+\beta}f_{0}(x,v)\dr x.
\end{equation*} 
of the initial distribution function $f_{0}$. 

\begin{proposition}\label{Prolin}
Let $N\in\N$. Let $f_{0}$ be a regular initial data for the Vlasov equation on $\R^3_x\times \R^3_v$. Then, there exist universal constants $C_{\alpha}\in\R$ such that the solution $f$ to the Vlasov equation satisfies
\begin{equation*}
\bigg|t^3\int_{\R^3_v}f(t,x,v)\dr v-\sum_{|\alpha|\leq N}\dfrac{C_{\alpha}}{t^{|\alpha|}}\int_{\R^3_y} y^{\alpha}\partial^{\alpha}_{v}f_{0}\Big(y,\frac{x}{t}\Big)\dr y\bigg|\lesssim \dfrac{1}{t^{N+1}}\sum_{|\alpha| =  N+1} \sup_{(z,v)\in\R^3_z \times \R^3_v}  \langle z\rangle^{N+5}|\partial_v^{\alpha}f_{0}|(z,v)
\end{equation*}
for all $t\geq 2$ and $x \in \R^3_x$.
\end{proposition}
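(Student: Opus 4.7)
The plan is to use the explicit solution formula for the linearised Vlasov equation followed by a change of variables and a Taylor expansion of the initial datum in the velocity variable.

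First I would use $f(t,x,v) = f_{0}(x - vt, v)$ and perform the change of variables $y = x - vt$, under which $\mathrm{d}v = t^{-3} \mathrm{d}y$ and $v = (x-y)/t$. This gives the identity
\begin{equation*}
t^{3} \int_{\R^{3}_{v}} f(t,x,v) \, \mathrm{d}v \;=\; \int_{\R^{3}_{y}} f_{0}\!\left(y, \tfrac{x-y}{t}\right) \mathrm{d}y.
\end{equation*}

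Next I would Taylor expand $v \mapsto f_{0}(y,v)$ at the point $v = x/t$ to order $N$, evaluated at the increment $-y/t$:
\begin{equation*}
 f_{0}\!\left(y, \tfrac{x-y}{t}\right) = \sum_{|\alpha| \leq N} \frac{(-1)^{|\alpha|}}{\alpha!\, t^{|\alpha|}} y^{\alpha} (\partial_{v}^{\alpha} f_{0})\!\left(y, \tfrac{x}{t}\right) + R_{N}(t,x,y),
\end{equation*}
where the integral-form remainder satisfies
\begin{equation*}
 |R_{N}(t,x,y)| \lesssim \frac{|y|^{N+1}}{t^{N+1}} \sum_{|\alpha| = N+1} \sup_{v' \in \R^{3}} \bigl| (\partial_{v}^{\alpha} f_{0})(y,v') \bigr|.
\end{equation*}
Integrating in $y$ then produces the claimed expansion with explicit universal constants $C_{\alpha} = (-1)^{|\alpha|}/\alpha!$.

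It remains to estimate $\int_{\R^{3}_y} |R_{N}(t,x,y)| \mathrm{d}y$. Here I would use the pointwise bound
\begin{equation*}
\sup_{v'} |\partial_{v}^{\alpha} f_{0}|(y,v') \;\leq\; \langle y \rangle^{-(N+5)} \sup_{(z,v') \in \R^{3} \times \R^{3}} \langle z \rangle^{N+5} |\partial_{v}^{\alpha} f_{0}|(z,v'),
\end{equation*}
so that the weighted integral reduces to $\int_{\R^{3}} |y|^{N+1} \langle y \rangle^{-(N+5)} \mathrm{d}y$, which is finite in dimension three (the integrand decays like $|y|^{-4}$ at infinity). This gives precisely the announced bound.

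The argument is essentially bookkeeping; the main thing to be careful about is choosing the right base point $x/t$ for the Taylor expansion (so that the remainder is a manifest power of $y/t$) and selecting the weight exponent $N+5$ so that the resulting $y$-integral converges while using as few derivatives/weights of $f_{0}$ as possible. No real obstacle is expected — everything is linear and explicit, and no commutator estimates or characteristic modifications are needed.
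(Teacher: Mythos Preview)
Your proof is correct and follows essentially the same approach as the paper: change variables $y=x-tv$, then Taylor expand $f_0(y,\cdot)$ about $x/t$ with increment $-y/t$, and control the remainder via the weight $\langle y\rangle^{N+5}$. The only cosmetic difference is that the paper expands the auxiliary map $z\mapsto f_0(y,(x-z)/t)$ about $z=0$ and invokes the Fa\`a di Bruno formula to pass to $\partial_v$-derivatives, whereas you expand directly in $v$; since the inner map is affine this amounts to the same computation and yields the same constants $C_\alpha=(-1)^{|\alpha|}/\alpha!$.
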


\begin{proof}
Taylor expanding the function $z\mapsto f_{0}(y,\frac{x-z}{t})$ around $z=0$, we obtain $$f_{0}\Big(y,\frac{x-y}{t}\Big)=\sum_{|\alpha|\leq N} \dfrac{y^{\alpha}}{\alpha!}D_z^{\alpha}\Big(f_{0}\Big(y,\frac{x-z}{t}\Big)\Big)\Big|_{z=0}+\sum_{|\beta|=N+1}R_\beta(y)y^{\beta},$$ where the coefficients $R_\beta(y)$ of the remainder satisfy 
$$|R_\beta(y)|\leq \frac{1}{\beta!} \sup_{|\gamma|=|\beta|} \, \sup_{|z'| \leq |x|} \Big|D_z^{\gamma}\Big(f_{0}\Big(y,\frac{x-z}{t}\Big)\Big) \Big|_{z=z'}\Big|.$$ 
By the Faà di Bruno formula, there exist coefficients $C_{\alpha}\in\R$ such that 
$$\sum_{|\alpha|\leq N} \dfrac{y^{\alpha}}{\alpha!}D_z^{\alpha}\Big(f_{0}\Big(y,\frac{x-z}{t}\Big)\Big)\Big|_{z=0}=\sum_{|\alpha|\leq N}\dfrac{C_{\alpha}}{t^{|\alpha|}} y^{\alpha}\partial^{\alpha}_{v}f_{0}\Big(y,\frac{x}{t}\Big).$$ 
Applying the Faà di Bruno formula again, the remainder is bounded by 
\begin{align*}
|R_\beta(y)y^{\beta}|&\lesssim  \sup_{|\gamma|=|\beta|} \sup_{|z'| \leq |x|}\Big|D_z^{\gamma}\Big(f_{0}\Big(y,\frac{x-z}{t}\Big)\Big) \Big|_{z=z'}\Big||y|^{N+1}\lesssim \dfrac{1}{t^{N+1}}\sum_{|\alpha| = N+1} \sup_{v\in\R^3}\langle y\rangle^{N+1}|\partial_v^{\alpha}f_{0}|(y,v).
\end{align*}
Finally, we obtain the estimate 
\begin{align*}
\bigg|t^3\int_{\R^3_v}f(t,x,v)\dr v-&\sum_{|\alpha|\leq N}\dfrac{C_{\alpha}}{t^{|\alpha|}}\int_{\R^3_y} y^{\alpha}\partial^{\alpha}_{v}f_{0}\Big(y,\frac{x}{t}\Big)\dr y\bigg|\\
&\lesssim \dfrac{1}{t^{N+1}}\sum_{|\alpha| = N+1} \sup_{(z,v)\in \R^3_x\times\R^3_v}  \langle z\rangle^{N+5}|\partial_v^{\alpha}f_{0}|(z,v)\int_{\R^3_y}\dfrac{\dr y}{\langle y\rangle^4}\\
&\lesssim \dfrac{1}{t^{N+1}}\sum_{|\alpha| = N+1} \sup_{(z,v)\in \R^3_x\times\R^3_v}  \langle z\rangle^{N+5}|\partial_v^{\alpha}f_{0}|(z,v).
\end{align*}
\end{proof}

Applying Proposition \ref{Prolin} to the distribution $G^{\beta}f$ we obtain the following result.

\begin{corollary}
Let $N\in\N$. Let $f_{0}$ be a regular initial data for the Vlasov equation \eqref{vlasov_linear_flow} on $\R^3_x\times\R^3_v$. Then, there exist universal constants $C_{\alpha}\in\R$ such that the solution $f$ to the Vlasov equation satisfies 
$$\bigg|t^3\!\int_{\R^3_v} \!G^{\beta}\!f(t,x,v)\dr v- \! \sum_{|\alpha|\leq N} \!\dfrac{C_{\alpha}}{t^{|\alpha|}} \! \int_{\R^3_y} y^{\alpha}\partial^{\alpha+\beta}_{v}f_{0}\Big(y,\frac{x}{t}\Big)\dr y\bigg|\lesssim \dfrac{1}{t^{N+1}}\!\sum_{|\alpha|=N+1} \sup_{(z,v)\in\R^3\times\R^3}  \langle z\rangle^{N+5}|\partial_v^{\alpha+\beta}f_{0}|(z,v),$$ 
for all $t\geq 2$ and $ x \in \R^3_x$.
\end{corollary}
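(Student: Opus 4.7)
The plan is to reduce the statement to a direct application of Proposition \ref{Prolin}, using $G^{\beta}f$ in place of $f$. By Lemma \ref{lemma_commutators_Vlasov_external_potential}, every vector field $Z \in \boldsymbol{\lambda}$ commutes with the linear transport operator $\T_0$, so $G^{\beta}f$ is itself a regular solution to the Vlasov equation \eqref{vlasov_linear_flow}. Once this is established, Proposition \ref{Prolin} immediately applies to $G^{\beta}f$, provided we can identify its initial data.

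The only real content of the proof is the identification of $G^{\beta}f(0,x,v)$. Since $G_i = t\partial_{x^i}+\partial_{v^i}$, each Galilean vector field reduces at $t=0$ to the velocity derivative $\partial_{v^i}$. A short induction on $|\beta|$ shows that $G^{\beta}f(t,x,v)$ is a polynomial in $t$ whose constant term (in $t$) equals $\partial_v^{\beta}f(t,x,v)$; evaluating at $t=0$ thus yields
\[
 G^{\beta}f(0,x,v) = \partial_v^{\beta}f_{0}(x,v).
\]
This identification is the only step that is not completely routine, and it is very quick; I do not expect any genuine obstacle.

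With this in hand, I apply Proposition \ref{Prolin} to the solution $G^{\beta}f$ of \eqref{vlasov_linear_flow} with initial data $\partial_v^{\beta}f_{0}$. Using $\partial_v^{\alpha}(\partial_v^{\beta}f_{0})=\partial_v^{\alpha+\beta}f_{0}$ in both the main term and the remainder, we obtain exactly
\[
 \bigg|t^3\int_{\R^3_v}G^{\beta}f(t,x,v)\dr v - \sum_{|\alpha|\leq N}\frac{C_{\alpha}}{t^{|\alpha|}}\int_{\R^3_y} y^{\alpha}\partial_v^{\alpha+\beta}f_{0}\Big(y,\tfrac{x}{t}\Big)\dr y\bigg| \lesssim \frac{1}{t^{N+1}} \sum_{|\alpha|=N+1}\sup_{(z,v)}\langle z\rangle^{N+5}|\partial_v^{\alpha+\beta}f_{0}|(z,v),
\]
with the same universal constants $C_{\alpha}$ as in Proposition \ref{Prolin}. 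This finishes the proof.
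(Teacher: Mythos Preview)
Your proof is correct and follows essentially the same approach as the paper: the paper's proof simply notes $G^{\beta}f(0,x,v)=\partial_v^{\beta}f_{0}$ and then applies Proposition~\ref{Prolin} to the solution $G^{\beta}f$ of the Vlasov equation. Your version just spells out slightly more detail (the commutation with $\T_0$ and the evaluation of $G^\beta$ at $t=0$).
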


\begin{proof}
Note that $G^{\beta}f(0,x,v)=\partial^{\beta}_{v}f_{0}$. The result follows by Proposition \ref{Prolin} to the solution $G^{\beta}f$ of the Vlasov equation.
\end{proof}

\subsection{Hierarchy of conservation laws}

Let $f\colon \R_t\times \R^3_x\times\R^3_v\to \R$ be a regular solution for the free Vlasov equation on $\R^3_x\times\R^3_v$. Let $\alpha,\beta \in \N^3$ be multi-indices. We consider the \emph{weighted spatial averages} $\mathcal{A}^{\alpha}_{\beta}(f):\R^3_v\to \R$ given by 
\begin{equation*}
\mathcal{A}^{\alpha}_{\beta}(f)(v):=\int_{\R^3_x} (x-vt)^{\alpha}(t\partial_x+\partial_v)^{\alpha+\beta}f(t,x,v)\dr x.
\end{equation*}
We use the convention $\mathcal{A}^{\alpha}(f):=\mathcal{A}^{\alpha}_{0}(f)$. We first show that $\mathcal{A}^{\alpha}_{\beta}(f)$ is well-defined. In other words, we prove that for a Vlasov field on $\R^3_x\times \R^3_v$, the weighted spatial averages $\mathcal{A}^{\alpha}_{\beta}(f)$ are conserved in time.

\begin{proposition}\label{prop_cons_laws_linear}
Let $f\colon \R_t\times \R^3_x\times\R^3_v\to \R$ be a regular solution to the Vlasov equation on $\R^3_x\times\R^3_v$. Then, the weighted spatial average $\mathcal{A}^{\alpha}_{\beta}(f)$ satisfies
\begin{equation*}
\mathcal{A}^{\alpha}_{\beta}(f)(v)=\int_{\R^3_x} x^{\alpha}\partial_v^{\alpha+\beta}f_{0}(x,v)\dr x,
\end{equation*}
for every $\alpha,\beta \in \N^3$, and all $(t,v)\in \R_t\times \R^3_v$.
\end{proposition}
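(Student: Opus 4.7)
The strategy is to identify the integrand $(x-vt)^{\alpha}(t\partial_x+\partial_v)^{\alpha+\beta} f$ as itself a solution of the free Vlasov equation, so that its spatial integral is conserved, and then to evaluate at $t=0$.

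First, I would observe that each Galilean weight $z_i = x^i - v^i t$ belongs to $\mathbf{k}$, hence satisfies $\T_0(z_i) = 0$. Since $\T_0$ is a first-order derivation, the Leibniz rule gives $\T_0(z^{\alpha}) = 0$ for every multi-index $\alpha$. In parallel, each $G_i \in \boldsymbol{\lambda}$ commutes with $\T_0$ on solutions by Lemma \ref{lemma_commutators_Vlasov_external_potential}, so iterating yields $\T_0(G^{\alpha+\beta} f) = 0$. Combining these two facts, and denoting $g(t,x,v) := z^{\alpha}\, G^{\alpha+\beta} f$, we obtain
$$ \T_0(g) \;=\; \T_0(z^{\alpha})\, G^{\alpha+\beta} f \;+\; z^{\alpha}\, \T_0(G^{\alpha+\beta} f) \;=\; 0. $$

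Next, I would integrate the identity $\partial_t g + v \cdot \nabla_x g = 0$ against $\mathrm{d}x$. Since $f$ is regular and small data solutions of \eqref{vlasov_linear_flow} have the explicit representation $f(t,x,v) = f_{0}(x-vt,v)$, the weighted quantity $g(t,\cdot,v)$ inherits the spatial decay of $f_{0}$ (and of its derivatives), so $v \cdot \nabla_x g$ is integrable in $x$ for each fixed $(t,v)$ and the boundary term in the integration by parts vanishes. Consequently,
$$ \frac{\mathrm{d}}{\mathrm{d}t} \int_{\R^3_x} g(t,x,v)\, \mathrm{d}x \;=\; -\int_{\R^3_x} v \cdot \nabla_x g(t,x,v)\, \mathrm{d}x \;=\; 0, $$
so $\mathcal{A}^{\alpha}_{\beta}(f)(v)$ is independent of time.

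Finally, evaluating at $t=0$ gives $z^{\alpha}|_{t=0} = x^{\alpha}$ and $G^{\alpha+\beta}|_{t=0} = \partial_v^{\alpha+\beta}$, whence
$$ \mathcal{A}^{\alpha}_{\beta}(f)(v) \;=\; \mathcal{A}^{\alpha}_{\beta}(f)(v)\big|_{t=0} \;=\; \int_{\R^3_x} x^{\alpha}\, \partial_v^{\alpha+\beta} f_{0}(x,v)\, \mathrm{d}x, $$
which is the desired identity. The proof is entirely structural and essentially mechanical once one notices the compatibility between $\mathbf{k}$ and $\boldsymbol{\lambda}$ with respect to $\T_0$; the only mild technical point is the justification of the boundary term vanishing, but this follows immediately from the explicit linear flow formula combined with any standard decay hypothesis embedded in the word \emph{regular}.
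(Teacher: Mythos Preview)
Your proof is correct and follows essentially the same approach as the paper: you recognise that $(x-vt)^{\alpha}G^{\alpha+\beta}f$ is itself a Vlasov solution (via the weights in $\mathbf{k}$ and the commuting vector fields in $\boldsymbol{\lambda}$), conclude that its spatial average is conserved by integration by parts, and then evaluate at $t=0$. The paper organises this by first treating the base case $\mathcal{A}^{0}_{0}$ explicitly and then reducing the general case to it, whereas you do everything in one pass, but the argument is identical in substance.
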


\begin{proof}
We first prove the conservation law for $\mathcal{A}^{0}_{0}(f)$. By the Vlasov equation, we have
\begin{equation*}
\dfrac{\dr}{\dr t}\mathcal{A}^{0}_{0}(f(t))=\int_{\R^3_x} \partial_t f(t,x,v)\dr x=-\int_{\R^3_x} v\cdot \nabla_x f(t,x,v)\dr x=0,
\end{equation*}
In particular, the weighted spatial average $\mathcal{A}^{0}_{0}(f)$ is constant in time. For the general case, we note that $(x-vt)^{\alpha}(t\partial_x+\partial_v)^{\alpha+\beta}f(t,x,v)$ is also a solution of the Vlasov equation since $x-vt$ is conserved along the characteristic flow, and $t\partial_x+\partial_v$ is a commuting vector field. Thus, the conservation law is obtained similarly as for $\mathcal{A}^{0}_{0}(f)$. 
\end{proof}

We next show that $\mathcal{A}^{\alpha}_{\beta}(f)$ is controlled by a weighted $L^{\infty}_{x,v}$ norm of the initial distribution function $f_{0}$.

\begin{proposition}\label{prop_def_cons_laws}
Let $\alpha,\beta \in \N^3$. There exists $C>0$ such that for every regular Vlasov field $f\colon \R_t \times\R^3_x\times\R^3_v\to \R$, the weighted spatial average $\mathcal{A}^{\alpha}_{\beta}(f)$ satisfies 
\begin{equation*}
\|\mathcal{A}^{\alpha}_{\beta}(f)\|_{L^{\infty}_v}\leq C \big\|\langle x \rangle^{4}x^{\alpha}\partial_v^{\alpha+\beta}f_{0}\big\|_{L^{\infty}_{x,v}}.
\end{equation*}
\end{proposition}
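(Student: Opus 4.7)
The plan is to reduce the statement to a purely static estimate on the initial data using the conservation law already established in Proposition \ref{prop_cons_laws_linear}, after which only a one-line absolutely convergent integral bound remains.

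First, I would apply Proposition \ref{prop_cons_laws_linear}, which gives the identity
$$ \mathcal{A}^{\alpha}_{\beta}(f)(v) = \int_{\R^3_x} x^{\alpha}\, \partial_v^{\alpha+\beta} f_0(x,v)\, \dr x $$
for every $(t,v) \in \R_t \times \R^3_v$. In particular, the right-hand side is already $t$-independent, so the estimate reduces to a uniform bound on an integral involving only $f_0$.

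Next, I would insert and pull out the weight $\langle x \rangle^{-4}$, using that by assumption the quantity $\langle x \rangle^{4}\, x^{\alpha}\, \partial_v^{\alpha+\beta} f_0(x,v)$ is bounded in $L^\infty_{x,v}$. This yields
$$ \big| \mathcal{A}^{\alpha}_{\beta}(f)(v) \big| \leq \bigl\| \langle x \rangle^{4}\, x^{\alpha}\, \partial_v^{\alpha+\beta} f_0 \bigr\|_{L^\infty_{x,v}} \int_{\R^3_x} \frac{\dr x}{\langle x \rangle^4}. $$
Since $\R^3_x$ has dimension three, the integral $\int_{\R^3_x} \langle x \rangle^{-4}\, \dr x$ is a finite universal constant $C > 0$, and taking the supremum in $v$ on the left-hand side finishes the argument.

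There is no real obstacle in this proposition since the hard work has already been performed in Proposition \ref{prop_cons_laws_linear}; the only choice worth highlighting is the weight exponent $4$, which is the smallest integer power strictly greater than $3=\dim \R^3_x$ and is therefore optimal for obtaining an absolutely convergent integral with a crude pointwise bound. Any larger weight would work identically; a smaller weight would require a more refined decomposition of $\R^3_x$.
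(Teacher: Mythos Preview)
Your proof is correct and follows exactly the same approach as the paper: both invoke the conservation-law identity from Proposition \ref{prop_cons_laws_linear} and then insert the weight $\langle x\rangle^{-4}$ to reduce to the finite integral $\int_{\R^3_x}\langle x\rangle^{-4}\,\dr x$. The paper's proof is simply a one-line version of what you wrote.
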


\begin{proof}
Putting the weighted initial distribution function in $L^{\infty}_{x,v}$, we obtain
\begin{equation*}
\forall v\in \R^3_v,\qquad |\mathcal{A}^{\alpha}_{\beta}(f)(v)|=\Big|\int_{\R^3_x} x^{\alpha}\partial_v^{\alpha+\beta}f_{0}(x,v)\dr x\Big|\leq \int_{\R^3_x}\frac{\dr x}{\langle x \rangle^{4}} \|\langle x \rangle^{4}x^{\alpha}\partial_v^{\alpha+\beta}f\|_{L^{\infty}_{x,v}}.
\end{equation*}
\end{proof}

We conclude this section setting the hierarchy of conservation laws for the Vlasov equation.

\begin{definition}
Let $f\colon\R_t\times \R^3_x\times\R^3_v\to \R$ be a regular solution for the Vlasov equation on $\R^3_x\times\R^3_v$. The \emph{hierarchy of conservation laws for the Vlasov field} $f$ is $$\mathcal{A}(f):=\Big\{\mathcal{A}^{\alpha}_{\beta}(f)(v)=\int_{\R^3_x} (x-vt)^{\alpha}(t\partial_x+\partial_v)^{\alpha+\beta}f(t,x,v)\dr x : \alpha,\beta \in \N^3\Big\}.$$
\end{definition}

We recall that the hierarchy of conservation laws $\mathcal{A}(f)$ for a Vlasov field $f$ is well-defined by the conservation in time of the averages $\mathcal{A}^{\alpha}_{\beta}(f)(v)$. From now on, we often write $\mathcal{A}^{\alpha}_{\beta}$ to refer to $\mathcal{A}^{\alpha}_{\beta}(f)$ without making reference to the distribution function $f$

\subsection{Late-time tails for the spatial density}

We begin writing the tails of the self-similar profile $$\mathcal{A}^{\alpha}_{\beta}\Big(\frac{x}{t}\Big)=\int_{\R^3_y} y^{\alpha}\partial_v^{\alpha+\beta}f_{0}\Big(y,\frac{x}{t}\Big)\dr y$$ in terms of the conservation laws on the zero velocity $\mathcal{A}^{\alpha}_{\beta+\gamma}(0)$.

\begin{lemma}\label{lemexpconslawprofillin}
Let $N\in\N$ and $\alpha,\beta \in \N^3$. Let $f\colon [0,\infty)\times  \R^3_x\times\R^3_v\to \R$ be a regular solution of the Vlasov equation. Then, the weighted spatial average $\mathcal{A}^{\alpha}_{\beta}$ satisfies   
$$\bigg|\mathcal{A}^{\alpha}_{\beta}\Big(\frac{x}{t}\Big)-\sum_{|\gamma|\leq N} \frac{1}{\gamma !}\mathcal{A}^{\alpha}_{\beta+\gamma}(0)\frac{x^{\gamma}}{t^{|\gamma|}}\bigg|\lesssim  \frac{|x|^{N+1}}{t^{N+1}}\sum_{|\gamma|=N+1}\sup_{(z,v)\in \R^3_x\times\R^3_v}\big| \langle z \rangle^{4}z^{\alpha}\partial_v^{\alpha+\beta+\gamma}f_{0}(z,v)\big|,$$ for all $x\in\R^3_x$, and all $t\geq 2$.
\end{lemma}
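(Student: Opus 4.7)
The plan is to view the quantity $\mathcal{A}^{\alpha}_{\beta}$ as a smooth function of the velocity variable and Taylor-expand it around $v=0$, then specialise to $v = x/t$. Since, by Proposition \ref{prop_cons_laws_linear}, the weighted spatial average is conserved in time, we may evaluate it at $t=0$ to get the convenient representation
\[
 \mathcal{A}^{\alpha}_{\beta}(v) = \int_{\R^3_x} x^{\alpha} \, \partial_v^{\alpha+\beta} f_{0}(x,v) \, \dr x.
\]
This reduces the lemma to a Taylor expansion problem for an explicit function of $v$.

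First I would differentiate under the integral sign in $v$, which is justified by the $\langle z \rangle^4$-weighted $L^\infty$ hypothesis on the integrand (the integrability in $x$ being uniform in $v$). This gives the identity $\partial_v^\gamma \mathcal{A}^{\alpha}_{\beta}(v) = \mathcal{A}^{\alpha}_{\beta+\gamma}(v)$, and in particular $\partial_v^\gamma \mathcal{A}^{\alpha}_{\beta}(0) = \mathcal{A}^{\alpha}_{\beta+\gamma}(0)$. Applying the multivariable Taylor formula with integral remainder to $v \mapsto \mathcal{A}^{\alpha}_{\beta}(v)$ at the origin yields
\[
 \mathcal{A}^{\alpha}_{\beta}(v) = \sum_{|\gamma| \leq N} \frac{v^\gamma}{\gamma !} \mathcal{A}^{\alpha}_{\beta+\gamma}(0) + \mathcal{R}_N(v), \qquad |\mathcal{R}_N(v)| \lesssim |v|^{N+1} \sup_{|\gamma|=N+1} \, \sup_{v' \in \R^3_v} |\mathcal{A}^{\alpha}_{\beta+\gamma}(v')|.
\]
Specialising this to $v = x/t$ immediately produces the polynomial part of the claimed expansion together with the prefactor $|x|^{N+1}/t^{N+1}$ in the error.

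It remains to bound the suprema appearing in the remainder by the weighted $L^\infty$ norm of $f_0$ from the statement. Following the same argument as in the proof of Proposition \ref{prop_def_cons_laws}, we estimate, for any $v' \in \R^3_v$ and $|\gamma|=N+1$,
\[
 |\mathcal{A}^{\alpha}_{\beta+\gamma}(v')| \leq \int_{\R^3_x} \frac{\dr z}{\langle z \rangle^{4}} \, \big\| \langle z \rangle^{4} z^{\alpha} \partial_v^{\alpha+\beta+\gamma} f_{0} \big\|_{L^{\infty}_{z,v}} \lesssim \big\| \langle z \rangle^{4} z^{\alpha} \partial_v^{\alpha+\beta+\gamma} f_{0} \big\|_{L^{\infty}_{z,v}},
\]
which matches the right-hand side of the lemma exactly.

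There is essentially no analytic obstacle here; the only step that requires any care is checking that the differentiation under the integral sign can be iterated up to order $N+1$ and that the integrability in $z$ is uniform in $v$, both of which follow from the $\langle z \rangle^4$ weight on the $L^\infty$ norm.
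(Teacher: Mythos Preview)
Your proposal is correct and follows essentially the same approach as the paper: Taylor expand $v \mapsto \mathcal{A}^{\alpha}_{\beta}(v)$ at the origin, identify the derivatives via $\partial_v^{\gamma}\mathcal{A}^{\alpha}_{\beta}=\mathcal{A}^{\alpha}_{\beta+\gamma}$, specialise to $v=x/t$, and bound the remainder exactly as in Proposition~\ref{prop_def_cons_laws}. Your discussion of differentiation under the integral sign is a small added justification, but the argument is otherwise identical.
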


\begin{proof}
Taylor expanding the function $v\mapsto \mathcal{A}^{\alpha}_{\beta}(v)$ around $v=0$, we obtain $$\mathcal{A}^{\alpha}_{\beta}\Big(\frac{x}{t}\Big)=\sum_{|\gamma|\leq N} \frac{1}{\gamma!}\dfrac{x^{\gamma}}{t^{|\gamma|}}\partial_v^{\gamma}\mathcal{A}^{\alpha}_{\beta}(0)+\sum_{|\gamma|=N+1}R_\gamma\Big(\frac{x}{t}\Big)\frac{x^{\gamma}}{t^{|\gamma|}},$$ where the coefficients $R_\gamma(\frac{x}{t})$ of the remainder satisfy 
$$\Big|R_\gamma\Big(\frac{x}{t}\Big)\Big|\leq \frac{1}{\gamma!} \sup_{|\gamma|=N+1} \sup_{v\in\R^3_v } |\partial_v^{\gamma}\mathcal{A}^{\alpha}_{\beta}|.$$
 By $\partial_v^{\gamma}\mathcal{A}^{\alpha}_{\beta}=\mathcal{A}^{\alpha}_{\beta+\gamma}$, and the estimate in Proposition \ref{prop_def_cons_laws}, the remainder is bounded by 
\begin{align*}
\Big|\sum_{|\gamma|=N+1}R_\gamma\Big(\frac{x}{t}\Big)\frac{x^{\gamma}}{t^{|\gamma|}}\Big|&\lesssim \frac{|x|^{N+1}}{t^{N+1}} \sum_{|\gamma|=N+1}\sup_{(z,v)\in \R^3_x\times\R^3_v}\big| \langle z \rangle^{4}z^{\alpha}\partial_v^{\alpha+\beta+\gamma}f_{0}(z,v)\big|.
\end{align*}
The result follows by putting the previous estimates together, and using $\partial_v^{\gamma}\mathcal{A}^{\alpha}_{\beta}(0)=\mathcal{A}^{\alpha}_{\beta+\gamma}(0)$.
\end{proof}

Finally, we obtain the late-time tails for the spatial density in terms of the hierarchy of conservation laws on the zero velocity $\mathcal{A}(0)$.

\begin{theorem}\label{thm_latetime_linear}
Let $N\in\N$. Then, there exist universal constants $C_{\alpha}\in\R$ such that the solution $f$ to the Vlasov equation satisfies
\begin{align*}
\bigg|t^3&\int_{\R^3_v}f(t,x,v)\dr v-\sum_{|\alpha|\leq N,}\sum_{|\gamma|\leq N-|\alpha|} \frac{C_{\alpha}}{\gamma !}\mathcal{A}^{\alpha}_{\gamma}(0)\frac{x^{\gamma}}{t^{|\gamma|+|\alpha|}}\bigg|\\
&\lesssim \dfrac{1}{t^{N+1}}\bigg(\sum_{|\alpha|\leq N+1} \big\|  \langle z\rangle^{N+4}\partial_v^{\alpha}f_{0} \big\|_{L^{\infty}_{z,v}}+\sum_{|\alpha|\leq N,}\sum_{|\gamma|=N+1-|\alpha|}|x|^{N+1-|\alpha|} \big\| \langle z \rangle^{4}z^{\alpha}\partial_v^{\alpha+\gamma}f_{0} \big\|_{L^{\infty}_{z,v}}\bigg).
\end{align*}
for all $t\geq 2$ and $x \in \R^3_x$.
\end{theorem}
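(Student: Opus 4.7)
The plan is to combine Proposition \ref{Prolin} (which gives a Taylor-type expansion of $t^3\rho(f)$ in negative powers of $t$, with the self-similar profiles $y^\alpha \partial_v^\alpha f_0$) with Lemma \ref{lemexpconslawprofillin} (which expands the profile $v \mapsto \mathcal{A}^\alpha(v)$ around $v=0$ into powers of $v$ with coefficients $\mathcal{A}^\alpha_\gamma(0)$). Since $\int_{\R^3_y} y^\alpha \partial_v^\alpha f_0(y,v)\,\dr y = \mathcal{A}^\alpha_0(f)(v) = \mathcal{A}^\alpha(v)$ by Proposition \ref{prop_cons_laws_linear} and the definition, evaluating the profile at $v = x/t$ is exactly what connects the two expansions.

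First, I apply Proposition \ref{Prolin} at order $N$ to obtain universal constants $C_\alpha$ with
\begin{equation*}
\Bigl| t^3 \!\! \int_{\R^3_v} \!\! f(t,x,v)\,\dr v - \sum_{|\alpha|\leq N} \frac{C_\alpha}{t^{|\alpha|}} \mathcal{A}^\alpha\!\Bigl(\frac{x}{t}\Bigr) \Bigr| \lesssim \frac{1}{t^{N+1}} \sum_{|\alpha|=N+1} \bigl\| \langle z \rangle^{N+5} \partial_v^\alpha f_0 \bigr\|_{L^\infty_{z,v}},
\end{equation*}
which will supply the first piece of the error term on the right-hand side of the theorem (up to a harmless discrepancy with the weight power). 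Next, for each multi-index $\alpha$ with $|\alpha|\leq N$, I apply Lemma \ref{lemexpconslawprofillin} with $\beta = 0$ and at order $N - |\alpha|$ to get
\begin{equation*}
\Bigl| \mathcal{A}^\alpha\!\Bigl(\frac{x}{t}\Bigr) - \sum_{|\gamma| \leq N-|\alpha|} \frac{1}{\gamma!} \mathcal{A}^\alpha_\gamma(0) \, \frac{x^\gamma}{t^{|\gamma|}} \Bigr| \lesssim \frac{|x|^{N+1-|\alpha|}}{t^{N+1-|\alpha|}} \sum_{|\gamma|=N+1-|\alpha|} \bigl\| \langle z \rangle^4 z^\alpha \partial_v^{\alpha+\gamma} f_0 \bigr\|_{L^\infty_{z,v}}.
\end{equation*}

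Multiplying this inequality by $|C_\alpha| / t^{|\alpha|}$, summing over $|\alpha| \leq N$, and adding the error from Proposition \ref{Prolin}, the triangle inequality yields precisely the bound stated in the theorem. Indeed, the product $t^{-|\alpha|} \cdot t^{-(N+1-|\alpha|)} = t^{-(N+1)}$ combines with the factor $|x|^{N+1-|\alpha|}$ to produce the second piece of the error term, while the polynomial coefficients rearrange to give the claimed main term $\sum_{|\alpha|\leq N} \sum_{|\gamma|\leq N - |\alpha|} \frac{C_\alpha}{\gamma!} \mathcal{A}^\alpha_\gamma(0) \frac{x^\gamma}{t^{|\gamma|+|\alpha|}}$.

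There is no real obstacle: the proof is a direct assembly of the two previously established expansions, and the only care required is bookkeeping of the indices and the orders of the two Taylor expansions so that the total error is genuinely of size $t^{-(N+1)}$ modulated by a suitable polynomial in $|x|$. In particular, one truncates the outer expansion (in $t^{-1}$) at order $N$ and then balances the inner Taylor expansion (in $v$) at the complementary order $N - |\alpha|$ for each term; this is precisely what makes both error contributions land at the same $t^{-(N+1)}$ scale.
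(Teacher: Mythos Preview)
Your proposal is correct and follows essentially the same approach as the paper: apply Proposition~\ref{Prolin} to expand $t^3\rho(f)$ in terms of the profiles $\mathcal{A}^\alpha(x/t)$, then apply Lemma~\ref{lemexpconslawprofillin} (with $\beta=0$, order $N-|\alpha|$) to Taylor-expand each profile about $v=0$, and combine via the triangle inequality. Your observation about the weight-power discrepancy ($\langle z\rangle^{N+5}$ from Proposition~\ref{Prolin} versus $\langle z\rangle^{N+4}$ in the theorem statement) is accurate and indeed harmless for the argument.
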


\begin{proof}
By Lemma \ref{lemexpconslawprofillin}, we have 
\begin{align*}
\bigg|\sum_{|\alpha|\leq N}\dfrac{C_{\alpha}}{t^{|\alpha|}}\int_{\R^3_y} y^{\alpha}&\partial^{\alpha}_{v}f_{0}\Big(y,\frac{x}{t}\Big)\dr y-\sum_{|\alpha|\leq N,}\sum_{|\gamma|\leq N-|\alpha|} \frac{C_{\alpha}}{\gamma !}\mathcal{A}^{\alpha}_{\gamma}(0)\frac{x^{\gamma}}{t^{|\gamma|+|\alpha|}}\bigg|\\
&\lesssim \frac{1}{t^{N+1}}\sum_{|\alpha|\leq N,} \, \sum_{|\gamma|=N+1-|\alpha|}C_{\alpha}|x|^{N+1-|\alpha|}\sup_{(z,v)\in\R^3_x\times\R^3_v }| \langle z \rangle^{4}z^{\alpha}\partial_v^{\alpha+\gamma}f_{0}(z,v)|.
\end{align*}
The result follows by using the self-similar expansion for the spatial density in Proposition \ref{Prolin}.
\end{proof}

Applying Theorem \ref{thm_latetime_linear} to the distribution $G^{\beta}f$ we obtain the following result.

\begin{corollary}
Let $N\in\N$. Then, there exist universal constants $C_{\alpha}\in\R$ such that the solution $f$ to the Vlasov equation satisfies
\begin{align*}
&\bigg|t^3\int_{\R^3_v}G^{\beta}f(t,x,v)\dr v-\sum_{|\alpha|\leq N,}\sum_{|\gamma|\leq N-|\alpha|} \frac{C_{\alpha}}{\gamma !}\mathcal{A}^{\alpha}_{\gamma+\beta}(0)\frac{x^{\gamma}}{t^{|\gamma|+|\alpha|}}\bigg|\\
&\quad \lesssim \dfrac{1}{t^{N+1}}\bigg(\sum_{|\alpha|\leq N+1} \big\|  \langle z \rangle^{N+4}\partial_v^{\alpha+\beta}f_{0}\big\|_{L^{\infty}_{z,v}}+\sum_{|\alpha|\leq N,} \, \sum_{|\gamma|=N+1-|\alpha|}|x|^{N+1-|\alpha|} \big\| \langle z \rangle^{4}z^{\alpha}\partial_v^{\alpha+\gamma +\beta}f_{0} \big\|_{L^{\infty}_{z,v}}\bigg).
\end{align*}
for all $t\geq 2$ and $x \in \R^3_x$.

\end{corollary}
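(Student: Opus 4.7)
The plan is to apply Theorem \ref{thm_latetime_linear} directly to the field $G^\beta f$, which by Lemma \ref{lemma_commutators_Vlasov_external_potential} is itself a solution of the free Vlasov equation \eqref{vlasov_linear_flow}. Since $G_i = t\partial_{x^i}+\partial_{v^i}$ reduces to $\partial_{v^i}$ at $t=0$, the initial data of $G^\beta f$ equals $\partial_v^\beta f_0$. Substituting this initial datum into the conclusion of Theorem \ref{thm_latetime_linear} and pulling the error terms through produces exactly the claimed remainder, with each occurrence of $\partial_v^\alpha f_0$ (resp. $\partial_v^{\alpha+\gamma} f_0$) replaced by $\partial_v^{\alpha+\beta} f_0$ (resp. $\partial_v^{\alpha+\gamma+\beta} f_0$).

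The remaining task is to identify the main term. The self-similar coefficients arising from Theorem \ref{thm_latetime_linear} applied to $G^\beta f$ are the weighted spatial averages $\mathcal{A}^\alpha_\gamma(G^\beta f)(0)$, and I must show these coincide with $\mathcal{A}^\alpha_{\gamma+\beta}(f)(0)$. Since $G^\beta = (t\partial_x+\partial_v)^\beta$ and all operators in $\boldsymbol{\lambda}$ commute pairwise (Lemma \ref{lemma_commuting_in_lambda}), while $x-vt$ is preserved along the linear flow (so multiplication by $(x-vt)^\alpha$ commutes with evaluation under the integral at any fixed $v$), the definition of the hierarchy gives
\begin{align*}
\mathcal{A}^\alpha_\gamma(G^\beta f)(v)
&= \int_{\R^3_x} (x-vt)^\alpha (t\partial_x+\partial_v)^{\alpha+\gamma} G^\beta f(t,x,v) \,\dr x \\
&= \int_{\R^3_x} (x-vt)^\alpha (t\partial_x+\partial_v)^{\alpha+\gamma+\beta} f(t,x,v) \,\dr x = \mathcal{A}^\alpha_{\gamma+\beta}(f)(v),
\end{align*}
so evaluation at $v=0$ yields $\mathcal{A}^\alpha_{\gamma+\beta}(0)$, matching the main sum in the statement.

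There is essentially no obstacle here: the corollary is a direct consequence of Theorem \ref{thm_latetime_linear} together with the commutation of the Galilean vector fields with the free transport operator. The only subtlety is consistent book-keeping of the index shift $\alpha \mapsto \alpha+\beta$ in every norm appearing in the remainder bound; once that substitution is performed and the identity $\mathcal{A}^\alpha_\gamma(G^\beta f) = \mathcal{A}^\alpha_{\gamma+\beta}(f)$ is invoked, the stated inequality follows without further work.
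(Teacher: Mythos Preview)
Your proposal is correct and takes essentially the same approach as the paper, which simply states that the result follows by applying Theorem \ref{thm_latetime_linear} to the distribution $G^{\beta}f$. Your explicit verification that $\mathcal{A}^{\alpha}_{\gamma}(G^{\beta}f)=\mathcal{A}^{\alpha}_{\gamma+\beta}(f)$ and that $(G^{\beta}f)|_{t=0}=\partial_v^{\beta}f_0$ is exactly the book-keeping needed to make that one-line reduction precise.
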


\subsection{Weak convergence of the Vlasov field}

We next show the weak convergence of the normalised distribution function $t^3G^{\beta}f$ for solutions to the Vlasov equation in terms of the derivatives $\partial_v^{\beta}f_{0}$ of the initial data. 

\subsubsection{Concentration in the zero velocity set}

In this subsection, we show that $t^{3}f(t,x,v)$ converges weakly to the Dirac mass $(\int f_{0}(x,0)\mathrm{d}x)\delta_{v=0}(v)$. Let $\varphi\in C^{\infty}_{x,v}$ be a compactly supported test function. We first perform the change of variables $y=x-vt$ in the integral $\int g(x-vt,v)\varphi(x,v)\mathrm{d}x\mathrm{d}v$.

\begin{lemma}\label{lemma_change_variables_decay_weakconvlin}
Let $g\colon  \R^3_x\times \R^3_v\to\R$ be a regular distribution, and let $\varphi\in C^{\infty}_{x,v}$ be a compactly supported test function. Then, for all $t\in [2,\infty)$ we have $$\! \int_{\R^3_x\times \R_v^3}t^{3}g(x-vt,v)\varphi(x,v)\mathrm{d}x\mathrm{d}v=\!\int_{\R^3_x\times \R^3_y} g\Big(y,\frac{x-y}{t}\Big)\varphi\Big(x,\frac{x-y}{t}\Big)\mathrm{d}x\mathrm{d}y.$$
\end{lemma}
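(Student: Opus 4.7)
The plan is to perform a change of variables $y = x - vt$ in the velocity integral, with $x$ held fixed. Since the map $v \mapsto x - vt$ from $\R_v^3$ to $\R_y^3$ has Jacobian determinant $(-t)^3$ of absolute value $t^3$, we have $\mathrm{d}y = t^3 \mathrm{d}v$, so this change of variables absorbs exactly the factor $t^3 > 0$ (note $t \geq 2$) present on the left-hand side. Under this substitution, $v$ becomes $(x-y)/t$, transforming $g(x-vt,v)$ into $g(y,(x-y)/t)$ and $\varphi(x,v)$ into $\varphi(x,(x-y)/t)$.

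More precisely, I would first write the left-hand side using Fubini (the integrand is continuous and compactly supported in $x$ by the hypothesis on $\varphi$, and is integrable in $v$ on $\operatorname{supp}(\varphi_x)$ for each fixed $x$ because $\varphi$ has compact support in $v$) as
\begin{equation*}
\int_{\R^3_x} \bigg( \int_{\R^3_v} t^3 g(x - vt, v) \varphi(x,v) \, \mathrm{d}v \bigg) \mathrm{d}x.
\end{equation*}
Then, for each fixed $x \in \R^3_x$, I would apply the diffeomorphism $\Phi_x : \R^3_v \to \R^3_y$, $\Phi_x(v) := x - vt$, whose inverse is $y \mapsto (x-y)/t$, to the inner integral. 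The change of variables formula yields
\begin{equation*}
\int_{\R^3_v} t^3 g(x-vt,v) \varphi(x,v) \, \mathrm{d}v = \int_{\R^3_y} g\Big(y, \tfrac{x-y}{t}\Big) \varphi\Big(x, \tfrac{x-y}{t}\Big) \, \mathrm{d}y,
\end{equation*}
where the factor $t^3$ has been cancelled by $|\det D\Phi_x^{-1}| = t^{-3}$ (equivalently, $\mathrm{d}y = t^3 \mathrm{d}v$). Applying Fubini once more to the resulting double integral gives the claimed identity.

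There is no real obstacle here; the only subtlety is checking the integrability needed for Fubini, which follows immediately from the compact support of $\varphi$ together with the regularity (and hence local boundedness) of $g$. The statement is essentially a book-keeping exercise that reinterprets the normalisation factor $t^3$ as the Jacobian of the characteristic shear $v \mapsto x - vt$, and it sets the stage for the subsequent weak convergence arguments where the profile $g(y,(x-y)/t)$ concentrates near $v = 0$ as $t \to \infty$.
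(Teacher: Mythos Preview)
Your proof is correct and follows exactly the approach indicated in the paper: the change of variables $y = x - vt$ in the inner velocity integral with $x$ fixed, which absorbs the factor $t^3$ as the Jacobian. The paper does not spell out the details beyond the hint ``perform the change of variables $y=x-vt$'', so your write-up is a faithful expansion of that.
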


Next, we prove a technical lemma that will be used to show the weak convergence result.

\begin{lemma}\label{lem_asympt_exp_spatial_density_estimate_weak_conv_first_order}
Let $g\colon \R^3_x\times \R^3_v\to\R$ be a regular distribution and $\varphi\in C^{\infty}_{c}(\R^3_x \times \R^3_v)$. Then, for all $t\in [2,\infty)$ we have 
\begin{align*}
\bigg| \int_{\R_x^3\times\R_v^3}t^{3}g(x-vt,v)\varphi (x,v)\mathrm{d}x\mathrm{d}v~-&\int_{\R_x^3} \varphi \Big(x,\frac{x}{t}\Big) \int_{\R_y^3}g\Big(y,\frac{x}{t}\Big)\mathrm{d}y\mathrm{d}x\bigg|\lesssim \frac{1}{t}\sup_{(x,v)\in\R^3_x\times \R^3_v}\langle x \rangle^5(|g|+|\nabla_vg|).
\end{align*}
\end{lemma}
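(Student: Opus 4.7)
The plan is to start from the change of variables in Lemma \ref{lemma_change_variables_decay_weakconvlin}, which rewrites
\[
\int_{\R^3_x\times \R_v^3}t^{3}g(x-vt,v)\varphi(x,v)\,\mathrm{d}x\mathrm{d}v = \int_{\R^3_x\times \R^3_y} g\Big(y,\tfrac{x-y}{t}\Big)\varphi\Big(x,\tfrac{x-y}{t}\Big)\mathrm{d}x\mathrm{d}y.
\]
After this rewriting, the second term on the LHS of the inequality is obtained by replacing $(x-y)/t$ by $x/t$ inside both $g$ and $\varphi$. The difference is therefore controlled by a first-order Taylor expansion in the $v$-argument along the segment $v_s := (x-sy)/t$, $s\in [0,1]$.

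The key calculation is an application of the fundamental theorem of calculus. Setting $h_s(x,y) := g(y,v_s)\varphi(x,v_s)$, we have
\[
h_1(x,y) - h_0(x,y) = -\frac{1}{t}\int_0^1 y\cdot\Big[\nabla_v g(y,v_s)\,\varphi(x,v_s) + g(y,v_s)\,\nabla_v \varphi(x,v_s)\Big]\mathrm{d}s,
\]
whence pointwise
\[
|h_1(x,y) - h_0(x,y)| \leq \frac{|y|}{t}\sup_{s \in [0,1]}\Big(|\nabla_v g(y,v_s)|\,|\varphi(x,v_s)| + |g(y,v_s)|\,|\nabla_v \varphi(x,v_s)|\Big).
\]

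To integrate this estimate, I would use the decay weight from the RHS supremum norm in the form
\[
|g(y,v_s)| + |\nabla_v g(y,v_s)| \;\leq\; \frac{1}{\langle y\rangle^5}\sup_{(x,v)\in\R^3\times\R^3}\langle x\rangle^5\bigl(|g| + |\nabla_v g|\bigr),
\]
which provides the decay in $y$. The compact support of $\varphi$ in $(x,v)$ implies that $\int_{\R^3_x}|\varphi(x,v_s)|\,\mathrm{d}x$ and $\int_{\R^3_x}|\nabla_v \varphi(x,v_s)|\,\mathrm{d}x$ are bounded uniformly in $(y,s)$. Integrating in $x$ first and then in $y$, and using the integrability $\int_{\R^3_y} |y|/\langle y\rangle^5\,\mathrm{d}y < \infty$ (the integrand decays like $|y|^{-4}$ at infinity), yields the claimed bound of $t^{-1}\sup_{(x,v)}\langle x\rangle^5(|g|+|\nabla_v g|)$.

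There is no deep obstacle here; the only subtlety is that one should not try to use the compact support of $\varphi$ in its $v$-argument to constrain $y$ (since $v_s = (x-sy)/t$ depends on both variables and the resulting restriction interacts awkwardly with $s\to 0$). Instead, the decay in $y$ should be extracted solely from the weight $\langle x\rangle^5$ on $g$, while the compact support of $\varphi$ is used only to bound its $L^1_x$ norm uniformly in $(y,s)$.
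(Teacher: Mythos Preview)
Your proof is correct and follows essentially the same approach as the paper: apply Lemma \ref{lemma_change_variables_decay_weakconvlin}, use the mean value theorem (or equivalently the fundamental theorem of calculus along the segment $v_s$) to bound the difference pointwise, then extract the decay in $y$ from the weight $\langle y\rangle^5$ on $g$ and use the compact support of $\varphi$ to integrate in $x$. The paper phrases the integrability in $y$ as $\langle y\rangle^{-4}\in L^1(\R^3_y)$, which is exactly your $\int |y|\langle y\rangle^{-5}\,\mathrm{d}y<\infty$.
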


\begin{proof}
Applying the mean value theorem, we have 
\begin{align*}
\Big|g\Big(y,\frac{x-y}{t}\Big)&\varphi\Big(x,\frac{x-y}{t}\Big)
-g\Big(y,\frac{x}{t}\Big)\varphi\Big(x,\frac{x}{t}\Big)\Big| \lesssim \frac{1}{t}\sup_{v\in \R^3_v}|y| \big(|\nabla_v\varphi|(x,v)|g|(y,v)+|\nabla_vg|(y,v)|\varphi|(x,v) \big).
\end{align*}
Then, as $\varphi\in C^{\infty}_{c}(\R^3_x \times \R^3_v)$ and $y \mapsto \langle y \rangle^{-4}$ belongs to $L^1(\R^3_y)$, the difference $$\bigg|\int_{\R^3_x\times \R^3_y}g\Big(y,\frac{x-y}{t}\Big)\varphi\Big(x,\frac{x-y}{t}\Big)\mathrm{d}x\mathrm{d}y-\int_{\R^3_x\times \R^3_y}g\Big(y,\frac{x}{t}\Big)\varphi\Big(x,\frac{x}{t}\Big)\mathrm{d}x\mathrm{d}y\bigg|$$ satisfies the corresponding time decay estimate. Finally, we apply Lemma \ref{lemma_change_variables_decay_weakconvlin}.  
\end{proof}

We now prove the main result of this subsection.

\begin{proposition}\label{propconce_lin}
Let $\varphi\in C^{\infty}_{x,v}$ be a compactly supported test function. Then, the Vlasov field $f$ satisfies $$\lim_{t\to\infty}\int_{\R^3_x\times \R^3_v}t^3f(t,x,v)\varphi(x,v)\dr x\dr v= \int_{\R^3_x\times \R^3_v} \Big(\mathcal{A}_{0}(0)\delta_{v=0}(v)\Big)\varphi(x,v)\dr v\dr x.$$ In other words, the distribution $t^3f(t,x,v)$ converges weakly to $\mathcal{A}_{0}(0)\delta_{v=0}(v)$ as $t \to \infty$.
\end{proposition}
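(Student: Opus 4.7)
The plan is to combine the explicit formula $f(t,x,v) = f_0(x-vt,v)$ for the linear Vlasov flow with the preparatory Lemma \ref{lem_asympt_exp_spatial_density_estimate_weak_conv_first_order}, and then pass to the limit via dominated convergence. First, I would apply that lemma with $g = f_0$ to obtain
$$\int_{\R^3_x\times\R^3_v} t^3 f(t,x,v)\varphi(x,v)\,\dr x\,\dr v \;=\; \int_{\R^3_x} \varphi\Big(x,\tfrac{x}{t}\Big)\,\mathcal{A}_{0}\Big(\tfrac{x}{t}\Big)\,\dr x \;+\; O\big(t^{-1}\big),$$
where $\mathcal{A}_0(v) := \int_{\R^3_y} f_0(y,v)\,\dr y$ is precisely the conservation law $\mathcal{A}^{0}_{0}(f)(v)$ of Proposition \ref{prop_cons_laws_linear}. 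The implicit constant in the error is controlled by $\sup\langle x \rangle^5 (|f_0| + |\nabla_v f_0|)$, which is finite under the standing regularity assumptions on $f_0$.

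Next, I would pass to the limit in the main term. Since $\varphi \in C^\infty_c(\R^3_x \times \R^3_v)$, there exists $R > 0$ such that $\varphi$ is supported in $\{|x| \leq R\}$ (with respect to its first argument), so the outer $x$-integral effectively runs over a bounded set. On this set, $x/t \to 0$ uniformly as $t \to \infty$, and the continuity of both $\varphi$ and $\mathcal{A}_0$ (the latter following from the regularity and decay of $f_0$) gives
$$\varphi\Big(x,\tfrac{x}{t}\Big)\,\mathcal{A}_{0}\Big(\tfrac{x}{t}\Big) \;\longrightarrow\; \varphi(x,0)\,\mathcal{A}_{0}(0)$$
uniformly in $x$ on $\{|x| \leq R\}$. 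Dominated convergence then yields
$$\lim_{t \to \infty} \int_{\R^3_x} \varphi\Big(x,\tfrac{x}{t}\Big)\,\mathcal{A}_{0}\Big(\tfrac{x}{t}\Big)\,\dr x \;=\; \mathcal{A}_{0}(0) \int_{\R^3_x} \varphi(x,0)\,\dr x.$$

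To conclude, I would rewrite the right-hand side as the pairing of the Dirac measure on $\{v=0\}$ with the test function $\varphi$:
$$\mathcal{A}_{0}(0) \int_{\R^3_x} \varphi(x,0)\,\dr x \;=\; \int_{\R^3_x \times \R^3_v} \big(\mathcal{A}_{0}(0)\,\delta_{v=0}(v)\big)\,\varphi(x,v)\,\dr x\,\dr v,$$
which combined with the previous two displays gives the statement of the proposition. No real obstacle is expected: the substantive content is already packaged in Lemma \ref{lem_asympt_exp_spatial_density_estimate_weak_conv_first_order}, which via the change of variables $y = x - vt$ cancels the $t^3$ prefactor and localises the $v$-dependence at $v = x/t$. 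Weak concentration on the zero velocity set is then merely the observation that $x/t \to 0$ on the support of $\varphi$.
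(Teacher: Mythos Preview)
Your proof is correct and follows essentially the same approach as the paper: apply Lemma \ref{lem_asympt_exp_spatial_density_estimate_weak_conv_first_order} with $g=f_0$ to reduce to the main term $\int_{\R^3_x}\varphi(x,x/t)\,\mathcal{A}_0(x/t)\,\dr x$, then pass to the limit by dominated convergence. Your justification of the limit via the compact $x$-support of $\varphi$ is slightly more explicit than the paper's, but the argument is the same.
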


\begin{proof}
Applying the previous lemma to the distribution $g(x,v)=f_{0}(x,v)$, we have
\begin{align*}
\Big| \int_{\R_x^3\times\R_v^3}t^{3}f(t,x,v)\varphi (x,v)\mathrm{d}x\mathrm{d}v~-&\int_{\R_x^3} \varphi \Big(x,\frac{x}{t}\Big) \int_{\R_y^3}f_{0}\Big(y,\frac{x}{t}\Big)\mathrm{d}y\mathrm{d}x\Big| \lesssim \frac{1}{t}\sup_{(x,v)\in\R^3_x\times \R^3_v}\langle x \rangle^{5}(|f_{0}|+|\nabla_vf_{0}|).
\end{align*}

Finally, we apply Fubini and the dominated convergence theorem to obtain
\begin{align*}
\lim_{t\to\infty}\int_{\R^3_x}\varphi\Big(x,\frac{x}{t}\Big)\int_{\R^3_y}f_{0}\Big(y,\frac{x}{t}\Big)\dr y\dr x=\int_{\R^3_x}\varphi(x,0)\dr x\int_{\R^3_y}f_{0}(y,0)\dr y.
\end{align*}
\end{proof}

Applying Proposition \ref{propconce_lin} to the distribution $G^{\beta}f$ we obtain the following result.

\begin{corollary}
Let $\beta \in \N^3$. Let $\varphi\in C^{\infty}_{x,v}$ be a compactly supported test function. Then, the Vlasov field $f$ satisfies $$\lim_{t\to\infty}\int_{\R^3_x\times \R^3_v}t^3G^{\beta}f(t,x,v)\varphi(x,v)\dr x\dr v= \int_{\R^3_x\times \R^3_v}\Big(\mathcal{A}_{\beta}(0)\delta_{v=0}(v)\Big) \varphi(x,v)\dr v\dr x.$$ In other words, the distribution $t^3G^{\beta}f(t,x,v)$ converges weakly to $\mathcal{A}_{\beta}(0)\delta_{v=0}(v)$ as $t \to \infty$.
\end{corollary}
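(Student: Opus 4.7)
The plan is to apply Proposition \ref{propconce_lin} directly to the commuted distribution $G^{\beta} f$. First I would observe that by Lemma \ref{lemma_commutators_Vlasov_external_potential}, since each vector field $G_i = t\partial_{x^i} + \partial_{v^i}$ commutes with the free transport operator $\T_0$, the field $h := G^{\beta} f$ is itself a regular global solution of \eqref{vlasov_linear_flow}. Evaluating at $t=0$, where the Galilean vector fields reduce to the velocity derivatives, we have $h(0,x,v) = \partial_v^{\beta} f_0(x,v)$, so $h$ is the free Vlasov evolution arising from the initial data $\partial_v^{\beta} f_0$.

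Next I would invoke Proposition \ref{propconce_lin} applied to $h$ in place of $f$: writing $h_0 := \partial_v^{\beta} f_0$ for its initial data, we obtain
\[
\lim_{t\to\infty}\int_{\R^3_x\times \R^3_v} t^3 h(t,x,v)\varphi(x,v)\,\dr x\,\dr v \;=\; \int_{\R^3_x\times \R^3_v}\Big(\mathcal{A}_0(h)(0)\,\delta_{v=0}(v)\Big)\varphi(x,v)\,\dr v\,\dr x.
\]
By the conservation statement in Proposition \ref{prop_cons_laws_linear} with $\alpha = 0$, the zeroth conservation law of $h$ is
\[
\mathcal{A}_0(h)(v) \;=\; \int_{\R^3_x} h_0(x,v)\,\dr x \;=\; \int_{\R^3_x} \partial_v^{\beta} f_0(x,v)\,\dr x \;=\; \mathcal{A}_{\beta}(f)(v),
\]
where the last identity is again Proposition \ref{prop_cons_laws_linear} applied to $f$ with multi-indices $(\alpha,\beta) = (0,\beta)$. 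Evaluating at $v=0$ gives $\mathcal{A}_0(h)(0) = \mathcal{A}_{\beta}(0)$, which matches the mass appearing in the right-hand side of the claim.

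The only routine check is that the hypotheses of Proposition \ref{propconce_lin} are satisfied for $h$; concretely, one needs regularity and decay of $h_0 = \partial_v^{\beta} f_0$ sufficient to apply Lemma \ref{lem_asympt_exp_spatial_density_estimate_weak_conv_first_order}, but this follows from the assumed smoothness and spatial decay of $f_0$ together with the fact that only one additional velocity derivative is taken. I do not anticipate a genuine obstacle here: the proof is a one-line reduction to the scalar case by commuting $G^{\beta}$ through the linear flow and identifying the limit mass via the explicit formula for the conservation laws.
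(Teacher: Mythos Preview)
Your proposal is correct and is essentially the same as the paper's approach: the paper simply states that the corollary follows by applying Proposition~\ref{propconce_lin} to the distribution $G^{\beta}f$, and your argument spells this out in detail, including the identification $G^{\beta}f(0,x,v)=\partial_v^{\beta}f_0$ and the resulting equality $\mathcal{A}_0(G^{\beta}f)=\mathcal{A}_{\beta}(f)$.
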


\subsubsection{Shearing of the Hamiltonian flow}

Let $\bar{v}\in \R^3_v$. In this subsection, we show that $t^{3}f(t,x+t\bar{v},v+\bar{v})$ converges weakly to the Dirac mass $(\int f_{0}(x,\bar{v})\mathrm{d}x)\delta_{v=0}(v)$.

\begin{proposition}\label{prop_shearing_linear}
Let $\varphi\in C^{\infty}_{x,v}$ be a compactly supported test function. Let $\bar{v}\in \R^3_v$. Then, the Vlasov field $f$ satisfies $$\lim_{t\to\infty}\int_{\R^3_x\times \R^3_v}t^3f(t,x,v)\varphi(x-t\bar{v},v)\dr x\dr v=\int_{\R^3_x\times \R^3_v} \Big(\mathcal{A}_{0}(\bar{v})\delta_{v=\bar{v}}(v)\Big)\varphi(x,v)\dr v\dr x.$$ In other words, the distribution $t^{3}f(t,x+t\bar{v},v+\bar{v})$ converges weakly to $\mathcal{A}_{0}(\bar{v})\delta_{v=\bar{v}}(v)$ as $t \to \infty$.
\end{proposition}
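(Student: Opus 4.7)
The plan is to reduce Proposition \ref{prop_shearing_linear} to Proposition \ref{propconce_lin} by two elementary changes of variables that absorb the two shifts appearing in the statement. The key observation is that, after translating, the resulting integrand is again the free evolution of an explicit initial datum, so no new quantitative estimate will be required beyond the one already carried out in Proposition \ref{propconce_lin}.

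The first step would be to translate in $x$ by $t\bar v$ (which has Jacobian one and is independent of $v$) to rewrite
\[
\int_{\R^3_x\times \R^3_v} t^3 f(t,x,v)\varphi(x-t\bar v, v)\dr x\dr v = \int_{\R^3_x\times \R^3_v} t^3 f(t, x+t\bar v, v)\varphi(x, v)\dr x\dr v,
\]
and then to substitute $w = v - \bar v$. Using the explicit formula $f(t,x,v)=f_0(x-vt,v)$ for the free flow, the integrand $f(t,x+t\bar v, v)$ becomes $f_0(x-(v-\bar v)t, v) = f_0(x-wt, w+\bar v)$, which is precisely the free evolution $g(t,x,w)$ of the shifted initial datum
\[
g_0(y, w) := f_0(y, w + \bar v), \qquad (y,w) \in \R^3_y\times\R^3_w,
\]
which inherits the regularity and decay of $f_0$. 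Setting $\psi(x,w) := \varphi(x, w+\bar v) \in C^\infty_c(\R^3_x\times \R^3_w)$, the original integral becomes
\[
\int_{\R^3_x\times \R^3_w} t^3 g(t,x,w)\psi(x,w)\dr x\dr w,
\]
to which Proposition \ref{propconce_lin} directly applies.

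The second step would be to apply Proposition \ref{propconce_lin} to $g$ with test function $\psi$, yielding the limit
\[
\Big(\int_{\R^3_y} g_0(y,0)\dr y\Big)\int_{\R^3_x}\psi(x,0)\dr x,
\]
and then to recognise, using Proposition \ref{prop_cons_laws_linear} applied to $f$, that $\int g_0(y,0)\dr y = \int f_0(y, \bar v)\dr y = \mathcal{A}_0(f)(\bar v) = \mathcal{A}_0(\bar v)$, while $\int \psi(x,0)\dr x = \int \varphi(x,\bar v)\dr x$. This produces exactly the claimed weak limit $\mathcal{A}_0(\bar v)\delta_{v=\bar v}(v)$. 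The only real obstacle is notational: keeping track of the two translations and confirming that the $\mathcal{A}_0$ of the shifted initial datum, evaluated at $w=0$, coincides with $\mathcal{A}_0(f)(\bar v)$. Everything else is inherited from the proof of Proposition \ref{propconce_lin}.
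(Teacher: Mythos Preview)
Your proposal is correct and follows essentially the same approach as the paper: the paper's proof is the one-line observation that Proposition~\ref{propconce_lin} applies to the shifted Vlasov field $f_{\bar v}(t,x,v):=f(t,x+t\bar v,v+\bar v)$, whose initial datum is $f_0(x,v+\bar v)$. Your two changes of variables simply spell out this reduction in detail, and your identification of the limiting constants via Proposition~\ref{prop_cons_laws_linear} is exactly right.
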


\begin{proof}
We apply Proposition \ref{propconce_lin} to $f_{\bar{v}}(t,x,v):=f(t,x+t\bar{v},v+\bar{v})=f_0(x,v+\bar{v})$.
\end{proof}

Applying Proposition \ref{prop_shearing_linear} to the distribution $G^{\beta}f$ we obtain the following result.

\begin{corollary}
Let $\beta \in \N^3$. Let $\varphi\in C^{\infty}_{x,v}$ be a compactly supported test function. Then, the Vlasov field $f$ satisfies $$\lim_{t\to\infty}\int_{\R^3_x\times \R^3_v}t^3G^{\beta}f(t,x,v)\varphi(x-t\bar{v},v)\dr x\dr v=\int_{\R^3_x\times \R^3_v} \Big(\mathcal{A}_{\beta}(\bar{v})\delta_{v=\bar{v}}(v)\Big)\varphi(x,v)\dr v\dr x.$$ In other words, the distribution $t^{3}G^{\beta}f(t,x+t\bar{v},v+\bar{v})$ converges weakly to $\mathcal{A}_{\beta}(\bar{v})\delta_{v=\bar{v}}(v)$ as $t \to  \infty$.
\end{corollary}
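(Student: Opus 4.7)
The plan is to apply Proposition \ref{prop_shearing_linear} directly to the distribution $G^{\beta}f$, viewing it as a solution of the free Vlasov equation in its own right. By Lemma \ref{lemma_commutators_Vlasov_external_potential}, the operator $G^{\beta}\in\boldsymbol{\lambda}^{|\beta|}$ commutes with $\T_0$, so $h:=G^{\beta}f$ satisfies $\T_0 h=0$. Moreover, since each Galilean vector field reduces to $\partial_{v^{i}}$ at $t=0$, the initial data of $h$ is $h_{0}(x,v)=\partial_v^{\beta}f_{0}(x,v)$.

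Applying Proposition \ref{prop_shearing_linear} with $f$ replaced by $h$, I would conclude that $t^{3}h(t,x+t\bar{v},v+\bar{v})$ converges weakly to $\mathcal{A}_{0}(h)(\bar{v})\,\delta_{v=\bar{v}}(v)$ as $t\to\infty$, where
\begin{equation*}
\mathcal{A}_{0}(h)(\bar{v})=\int_{\R^{3}_x}h_{0}(x,\bar{v})\,\dr x=\int_{\R^{3}_x}\partial_v^{\beta}f_{0}(x,\bar{v})\,\dr x=\mathcal{A}^{0}_{\beta}(f)(\bar{v})=\mathcal{A}_{\beta}(\bar{v}),
\end{equation*}
using the convention $\mathcal{A}_{\beta}:=\mathcal{A}^{0}_{\beta}$ introduced earlier. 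Unpacking the testing against $\varphi(x-t\bar{v},v)$ through the change of variable $v\mapsto v+\bar{v}$ then yields the claimed limit. There is no genuine obstacle here: the statement is a formal corollary, and the only step requiring minor care is matching the two notational conventions (the one for $\mathcal{A}_{0}$ applied to $h$ versus $\mathcal{A}_{\beta}$ applied to $f$), which is handled by the identity at $t=0$ between $G^{\beta}$ and $\partial_v^{\beta}$.
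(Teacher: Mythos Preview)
Your proof is correct and follows the same route as the paper, which simply states that the corollary is obtained by applying Proposition \ref{prop_shearing_linear} to the distribution $G^{\beta}f$. Your additional care in identifying $h_0=\partial_v^{\beta}f_0$ and matching $\mathcal{A}_{0}(h)=\mathcal{A}_{\beta}(f)$ is exactly the verification the paper leaves implicit.
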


\section{Global existence of small data solutions}\label{section_small_data_global_existence}
In this section we prove the first part of Theorem \ref{thm_detail_modified_scattering}, the global existence of small data solutions of \eqref{eq:VP} with respect to a weighted $L_{x,v}^\infty$ norm. The estimates obtained here are the starting point for the enhanced modified scattering results in Theorem \ref{ThLatetimeFull}. These latter results are derived below in Sections \ref{SecModiscat}--\ref{SEcLatetime}.

\subsection{The bootstrap argument}
Let $N\geq 1$. We consider an initial data $f_{0}$ satisfying the smallness assumption \eqref{eq:smallnessassump} of Theorem \ref{thm_detail_modified_scattering}. By a standard local well-posedness argument, there exists a unique maximal solution $f$ to the Vlasov--Poisson system \eqref{eq:VP} arising from this initial data. Let $T_{\mathrm{max}}\in (0,\infty]$ be the maximal time such that the solution $f$ to the Vlasov--Poisson system is defined on $[0,T_{\mathrm{max}})$. By continuity, there exists a largest time $T\in (0,T_{\mathrm{max}}]$ and a constant $C_{\mathrm{boot}}>0$ such that the following bootstrap assumptions hold:
\begin{enumerate}[label = (BA\arabic*)]
\item For all $(t,x)\in [0,T)\times \R^3_x$ and any $ |\beta|\leq N-1$, we have $$\Big|\int_{\R^3_v}Z^{\beta}f(t,x,v)\mathrm{d}v\Big| \leq \dfrac{C_{\mathrm{boot}}\e}{\langle t+|x| \rangle^{3}}.$$ \label{boot2}
\item For all $(t,x)\in [0,T)\times \R^3_x$ and any $ |\beta| = N$, we have 
\begin{equation*}
\Big|\int_{\R^3_v}Z^{\beta}f(t,x,v)\mathrm{d}v\Big| \leq \frac{ C_{\mathrm{boot}}\epsilon \, \langle t \rangle^{\frac{1}{4}} }{\langle t+|x| \rangle^{3}}.
\end{equation*}
\label{boot3}
\end{enumerate}
We will improve these estimates when $\e>0$ is small enough, for $C_{\mathrm{boot}}>0$ chosen large enough. \\

\textbf{Structure of the proof of small data global existence}
\begin{enumerate}[label = (\alph*)]
\item As a first step, we prove decay estimates in time for the force field $\nabla_x\phi$ and its derivatives $\nabla_xZ^\gamma\phi$.
\item Then, we consider a suitable modification of the linear weight $\langle x-vt \rangle$ denoted by $\z$, which is preserved by the non-linear Vlasov equation. We prove that the correction $\z~-~\langle x-vt\rangle$ grows logarithmically in time.
\item We prove that for every $|\beta|\leq N$, a weighted $L^{\infty}_{x,v}$ norm of $Z^{\beta}f$ grows at most as $t^{\frac{1}{4}}$. This will suffice to improve the bootstrap assumption \ref{boot3}.
\item Finally, we control uniformly in time the weighted spatial averages of $Z^{\beta}f$ with $|\beta| \leq N-1$. These estimates will allow us to obtain optimal time decay for the velocity averages $\rho(Z^{\beta} f)$ for every $|\beta|\leq N-1$. In particular, we will improve the bootstrap assumption \ref{boot2}.
\end{enumerate}

\subsection{Pointwise decay estimates for the force field}
We start by controlling an integral term related to the Green function of the Laplacian on $\R^3_x$. 

\begin{lemma}\label{lemma_uniform_integral_bound_kernel_convolution_duan}
There holds
\begin{equation*}
    \int_{\R_y^3}\dfrac{\mathrm{d}y}{|y|^2(1+|x+y|)^3} \leq 7 \pi.
\end{equation*}
\end{lemma}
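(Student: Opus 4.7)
The plan is to split the domain of integration in a symmetric way according to which of $|y|$ or $|x+y|$ is the larger quantity, so that on each piece the integrand can be bounded pointwise by a function that does not depend on $x$.

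I would write $\R^3 = S_1 \cup S_2$ with $S_1 := \{y : |y| \leq |x+y|\}$ and $S_2 := \{y : |y| > |x+y|\}$. On $S_1$, the inequality $|x+y| \geq |y|$ yields $(1+|x+y|)^{-3} \leq (1+|y|)^{-3}$, so the integrand is pointwise dominated by the $x$-independent function $|y|^{-2}(1+|y|)^{-3}$. On $S_2$, I would use instead $|y|^{-2} < |x+y|^{-2}$ to bound the integrand by $|x+y|^{-2}(1+|x+y|)^{-3}$; after the change of variables $z = x+y$, this becomes $|z|^{-2}(1+|z|)^{-3}$ integrated over the half-space $\{z : |z-x| > |z|\} \subset \R^3$.

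Enlarging each domain to the full space and computing in spherical coordinates gives, for each piece, the same explicit bound
\[
\int_{\R^3} \frac{\mathrm{d} y}{|y|^2(1+|y|)^3} \,=\, 4\pi \int_0^\infty \frac{\mathrm{d} r}{(1+r)^3} \,=\, 2\pi,
\]
using the antiderivative $r \mapsto -\tfrac{1}{2}(1+r)^{-2}$. Summing the two contributions yields $4\pi$, comfortably below the stated $7\pi$.

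I do not anticipate any serious obstacle: the symmetric splitting collapses the integrand to the same radial profile on both regions, after which the estimate reduces to an elementary one-dimensional computation. The only step requiring attention is verifying that the post-substitution region on $S_2$ is contained in $\R^3$, which is immediate since enlarging the domain only increases the integral of the non-negative integrand.
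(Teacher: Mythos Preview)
Your argument is correct and in fact sharper than the paper's: you obtain the uniform bound $4\pi$, whereas the paper gets roughly $6.3\pi$. The approaches are genuinely different. The paper splits the integral into a near region $\{|y|\le 1\}$, where it simply drops the factor $(1+|x+y|)^{-3}$, and a far region $\{|y|\ge 1\}$, where it applies Cauchy--Schwarz to separate the two weights $|y|^{-4}$ and $(1+|x+y|)^{-6}$ before translating the latter. Your splitting by the bisecting half-space $\{|y|\le |x+y|\}$ is more symmetric: on each side the integrand is pointwise dominated by the same radial profile $|y|^{-2}(1+|y|)^{-3}$, so no Cauchy--Schwarz is needed and the computation reduces to a single elementary one-dimensional integral. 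The advantage of your method is a cleaner proof and a better constant; the paper's near/far decomposition, on the other hand, is a more generic template that would adapt to situations where the two weights lack this exact symmetry.
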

\begin{proof}
We first remark that
$$ \int_{|y| \leq 1 }\dfrac{\mathrm{d}y}{|y|^2(1+|x+y|)^3} \leq \int_{r=0}^1 \int_{\theta=0}^{ \pi} \int_{\varphi=0}^{2\pi} \frac{r^2 \, \mathrm{d} r}{r^2} \dr \theta \dr \varphi \leq 4 \pi.$$
We deal with the remaining region by applying the Cauchy-Schwarz inequality
\begin{align*}
\int_{|y| \geq 1 }\dfrac{\mathrm{d}y}{|y|^2(1+|x+y|)^3} &\leq \bigg| \int_{|y| \geq 1 }\dfrac{\mathrm{d}y}{|y|^4} \bigg|^{\frac{1}{2}} \, \bigg| \int_{|y| \geq 1 }\dfrac{\mathrm{d}y}{(1+|x+y|)^6}   \bigg|^{\frac{1}{2}} \leq 2 \sqrt{\pi}\bigg|\int_{\R^3_z} \frac{\mathrm{d} z}{(1+|z|)^6}\bigg|^{\frac{1}{2}}  \leq \frac{4\pi}{\sqrt{3}}.
\end{align*}
\end{proof}

By Lemma \ref{lemma_uniform_integral_bound_kernel_convolution_duan}, we obtain decay in time for the integral term
\begin{equation}\label{remark_uniform_integral_bound_kernel_convolution_duan}
    \int_{\R^3_y}\dfrac{\mathrm{d}y}{|y|^2(1+t+|x-y|)^3} =\dfrac{1}{t^2}\int_{\R^3_{z}} \dfrac{\mathrm{d}z}{|z|^2(t^{-1}+1+|z-\frac{x}{t}|)^3}\leq \dfrac{7 \pi}{t^2},
\end{equation}
by using the change of variables $y=t z$. We use the estimate \eqref{remark_uniform_integral_bound_kernel_convolution_duan} to obtain decay for $\nabla_x Z^\gamma\phi$. 

\begin{proposition}\label{proposition_estimate_phi}
For any $|\kappa|\leq N-1$ and all $(t,x)\in [0,T)\times \R^3_x$, we have 
$$ t^{|\kappa|} \big|\nabla_x \partial_x^\kappa \phi \big| (t,x) \lesssim \dfrac{\e}{\langle t \rangle^{2}}.$$ 
For the top order derivatives $|\kappa|=N$, there holds
$$t^{|\kappa|} \big|\nabla_x \partial_{x}^{\kappa} \phi \big| (t,x) \lesssim \dfrac{\e }{\langle t \rangle^{\frac{7}{4}}}.$$
\end{proposition}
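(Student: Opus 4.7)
The plan is to combine the Newtonian convolution representation of $\nabla_x \phi$ with an identity that transfers the $|\kappa|$ spatial derivatives acting on $\phi$ into Galilean derivatives acting on $f$, producing a gain of $t^{-|\kappa|}$ for $t > 0$. First I would write, from $\Delta_x \phi = \rho(f)$ and the Newtonian kernel,
$\nabla_x \partial_x^\kappa \phi (t,x) = -\frac{1}{4\pi} \int_{\R^3_y} \frac{x-y}{|x-y|^3} \, \partial_y^\kappa \rho(f)(t,y)\,\mathrm{d}y.$

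The crucial ingredient is the identity $\partial_x^\kappa \rho(f) = t^{-|\kappa|} \rho(G^\kappa f)$, valid for $t > 0$. In the base case $|\kappa|=1$, using $G_i = t\partial_{x^i} + \partial_{v^i}$, integration in $v$ kills the $\partial_{v^i} f$ contribution by the velocity decay $\langle v\rangle^7$ built into the bootstrap hypotheses, leaving $\rho(G_i f) = t\partial_{x^i}\rho(f)$. The multi-index case then follows by iteration, using the commutativity of the $G_i$'s (Lemma \ref{lemma_commuting_in_lambda}).

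Inserting this identity into the convolution formula and bounding $|\rho(G^\kappa f)|$ via the bootstrap yields $|\nabla_x \partial_x^\kappa \phi(t,x)| \lesssim \e \, t^{-|\kappa|} \langle t \rangle^{\delta} \int \frac{\mathrm{d}y}{|x-y|^2 \langle t+|y|\rangle^3}$, with $\delta = 0$ under \ref{boot2} when $|\kappa|\le N-1$ and $\delta = 1/4$ under \ref{boot3} when $|\kappa|=N$. The estimate \eqref{remark_uniform_integral_bound_kernel_convolution_duan} gives the integral bound $\lesssim t^{-2}$ for $t \ge 1$, while Lemma \ref{lemma_uniform_integral_bound_kernel_convolution_duan} provides a uniform bound for $t \in [0,1]$ (since then $\langle t+|x-z|\rangle\gtrsim\langle|x-z|\rangle$); combined, the integral is $\lesssim \langle t \rangle^{-2}$ uniformly in $t\ge 0$. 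Multiplying by $t^{|\kappa|}$ then delivers the claimed $\e \langle t \rangle^{-2+\delta}$ decay. The value $t=0$ with $|\kappa|\ge 1$ is trivial since $t^{|\kappa|}=0$, and the $|\kappa|=0$ case follows directly from the same convolution bound without invoking the transfer identity.

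The main obstacle is really just the transfer identity, which is a clean algebraic manipulation once one observes that $\partial_{v^i} f$ integrates to zero in $v$; the remainder of the argument is a routine application of the bootstrap hypotheses combined with the integral estimates already at hand.
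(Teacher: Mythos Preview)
Your proposal is correct and follows essentially the same approach as the paper: both use the transfer identity $\partial_x^\kappa \rho(f) = t^{-|\kappa|}\rho(G^\kappa f)$ (since the $\partial_v$-contributions vanish after $v$-integration), plug this into the Newtonian convolution, and then bound the resulting integral via the bootstrap assumptions \ref{boot2}--\ref{boot3} together with the kernel estimate \eqref{remark_uniform_integral_bound_kernel_convolution_duan}. The only cosmetic difference is a sign in your kernel (it should be $+\tfrac{1}{4\pi}$), which is irrelevant for the magnitude bound.
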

\begin{proof}
Let $(t,x) \in [0,T_{\mathrm{max}}) \times \R^3_x$. Using $[\Delta, \partial_{x^i}]=0$ and the vector fields $G_i$, we obtain
$$\Delta_x \partial_x^\kappa \phi=\partial_x^\kappa \rho(f)= \rho \big( \partial_x^\kappa f \big)= t^{-|\kappa|}\rho \big(G^\kappa f \big) , $$ for every $|\kappa| \leq N$. We use the Green function for the Poisson equation on $\R^3$ to write the gradient of the solution to the commuted Poisson equation as 
\begin{equation}\label{nabla_phi}
\nabla_x \partial_x^{\kappa} \phi(t,x)= \frac{1}{4 \pi t^{|\kappa|}}\int_{\R_y^3} \dfrac{y}{|y|^3}\rho(G^{\kappa}f)(t,x-y)\mathrm{d}y ,
\end{equation} 
for every $|\kappa| \leq N$. Thus, for any $|\kappa| \leq N$ we obtain
 \begin{align*} t^{|\kappa|}|\nabla_x \partial_x^\kappa \phi(t,x)|& \lesssim  \int_{\R_y^3} \dfrac{\mathrm{d}y}{|y|^3\langle t+|x-y| \rangle^3} \big\| \langle t+|x| \rangle^3 \rho \big( G^\kappa f\big) (t,x) \big\|_{L^\infty(\R^3_x)} \\ 
 &\lesssim \frac{1}{\langle t \rangle^2} \big\| \langle t+|x| \rangle^3 \rho \big( G^\kappa f\big) (t,x) \big\|_{L^\infty(\R^3_x)},
 \end{align*} 
by using \eqref{remark_uniform_integral_bound_kernel_convolution_duan}. The estimate on the lower order derivatives $|\kappa| \leq N-1$ follows from the bootstrap assumption \ref{boot2}, whereas the one for the top order derivatives $|\kappa| =N$ ensue from  \ref{boot3}. 
\end{proof}

\subsection{The modified weights}

Since we expect $x-tv$ to grow as $\log(t)$ along the nonlinear flow, we will work with the following modification of this linear weight.
\begin{definition}
Let $\varphi=(\varphi^1,\varphi^2,\varphi^3): [0,T) \times \R^3_x \times \R^3_v \to \R^3$ be the unique solution to
$$\mathbf{T}_\phi (\varphi^i)=- \mathbf{T}_\phi (x^i-tv^i), \qquad \varphi(0,x , v) =0.$$
We define the \emph{modified weight function} $\mathbf{z}_{\mathrm{mod}}$ as
$$\mathbf{z}_{\mathrm{mod}}(t,x,v) := \big\langle x-tv+\varphi(t,x,v) \big\rangle.$$
\end{definition}

One important property of the weight $\mathbf{z}_{\mathrm{mod}}$ is that it is, by definition, constant along the nonlinear flow. In order to exploit this property, we need to show that $\z$ does not deviate too much from the linear weight $x-tv$. 

\begin{lemma}\label{lemma_varphi}
There holds $\T_\phi (\mathbf{z}_{\mathrm{mod}} )=0$. Moreover, the correction $\varphi$ satisfies that
$$ \forall \, (t,x,v) \in [0,T) \times \R^3_x \times \R^3_v, \qquad |\varphi|(t,x,v) \lesssim \epsilon \log \langle t \rangle , \qquad |\nabla_v\varphi|(t,x,v) \lesssim \epsilon \, \langle t \rangle .$$
\end{lemma}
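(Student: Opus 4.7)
The plan is to first verify that $\mathbf{T}_\phi \mathbf{z}_{\mathrm{mod}} = 0$ by inspection, then bound $|\varphi|$ by direct integration along the nonlinear Hamiltonian flow, and finally obtain the bound on $|\nabla_v \varphi|$ by commuting $\partial_{x^j}$ and $\partial_{v^j}$ through $\mathbf{T}_\phi$ and running a coupled Gronwall estimate.

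For the invariance, a direct computation gives $\mathbf{T}_\phi(x^i - tv^i) = -v^i + v^i + \mu t\partial_{x^i}\phi = \mu t\partial_{x^i}\phi$, which is exactly cancelled by $\mathbf{T}_\phi(\varphi^i)$ through the defining equation for $\varphi$. The chain rule then yields $\mathbf{T}_\phi \mathbf{z}_{\mathrm{mod}} = 0$. To bound $|\varphi|$, I would integrate the relation $\mathbf{T}_\phi(\varphi^i) = -\mu t\partial_{x^i}\phi$ along the characteristics of $\mathbf{T}_\phi$ starting from the vanishing initial data, so that $|\varphi|(t,x,v) \leq \int_0^t s|\nabla_x \phi|(s,X(s))\,\mathrm{d}s$. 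Plugging in the bound $s|\nabla_x \phi| \lesssim \epsilon \langle s\rangle^{-1}$ from Proposition \ref{proposition_estimate_phi} with $|\kappa|=0$ yields the logarithmic estimate.

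For $\nabla_v \varphi$, I would commute derivatives with $\mathbf{T}_\phi$. A short computation gives $[\mathbf{T}_\phi, \partial_{v^j}] = -\partial_{x^j}$, and Lemma \ref{LemComfirstorder} yields $[\mathbf{T}_\phi, \partial_{x^j}] = \mu\partial_{x^j}\nabla_x \phi \cdot \nabla_v$. Since $\phi$ is independent of $v$, we obtain the coupled system
\begin{equation*}
\mathbf{T}_\phi(\partial_{v^j}\varphi^i) = -\partial_{x^j}\varphi^i, \qquad \mathbf{T}_\phi(\partial_{x^j}\varphi^i) = -\mu t\,\partial_{x^i}\partial_{x^j}\phi + \mu\,\partial_{x^j}\nabla_x \phi \cdot \nabla_v \varphi^i,
\end{equation*}
with vanishing initial data. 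Integrating each equation along the characteristics and running a bootstrap with the ansatz $\|\nabla_x \varphi\|_{L^\infty} \lesssim \epsilon$ and $\|\nabla_v \varphi\|_{L^\infty} \lesssim \epsilon\langle t\rangle$, the main obstacle is handling the feedback term $\mu\,\partial_{x^j}\nabla_x \phi \cdot \nabla_v \varphi^i$, whose naive bound grows in $t$ through $\nabla_v \varphi$. This is resolved by the additional decay factor in $t|\nabla_x^2 \phi| \lesssim \epsilon \langle t\rangle^{-2}$ from Proposition \ref{proposition_estimate_phi}: the feedback contributes $\int_0^t \epsilon \langle s\rangle^{-2}\,\mathrm{d}s = O(\epsilon^2)$ on the ansatz, subleading to the forcing term $\mu t\,\partial_{x^i}\partial_{x^j}\phi$ which integrates to $O(\epsilon)$, so the ansatz for $\nabla_x \varphi$ closes. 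The factor of $t$ in the bound on $\nabla_v \varphi$ then follows from integrating $|\partial_{x^j}\varphi^i|\lesssim\epsilon$ over $[0,t]$.
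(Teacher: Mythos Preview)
Your proof is correct. The invariance and the bound on $|\varphi|$ are handled exactly as in the paper. For $|\nabla_v\varphi|$, both arguments run a bootstrap on first derivatives of $\varphi$, but you and the paper choose different pairs of vector fields to commute through $\mathbf{T}_\phi$.

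The paper works with $\partial_{x^j}$ and the Galilean vector field $G_j = t\partial_{x^j} + \partial_{v^j}$, establishing $|\nabla_x\varphi| \lesssim \epsilon$ and the sharper estimate $|G\varphi| \lesssim \epsilon\log\langle t\rangle$; the claimed bound then follows from $\nabla_v = G - t\nabla_x$. You instead commute $\partial_{x^j}$ and $\partial_{v^j}$ directly, obtaining the coupled system $\mathbf{T}_\phi(\partial_{v^j}\varphi^i) = -\partial_{x^j}\varphi^i$ and $\mathbf{T}_\phi(\partial_{x^j}\varphi^i) = -\mu t\,\partial_{x^i}\partial_{x^j}\phi + \mu\,\partial_{x^j}\nabla_x\phi\cdot\nabla_v\varphi^i$. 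Your bootstrap closes because the feedback term is controlled by $|\nabla_x^2\phi|\,|\nabla_v\varphi| \lesssim \epsilon\langle s\rangle^{-3}\cdot\epsilon\langle s\rangle = \epsilon^2\langle s\rangle^{-2}$ (there is a small typo in your integrand, which should read $\epsilon^2\langle s\rangle^{-2}$), so $|\nabla_x\varphi| \lesssim \epsilon$, and then integrating the first equation gives $|\nabla_v\varphi| \lesssim \epsilon t$. Both routes also yield $|\nabla_x\varphi| \lesssim \epsilon$, which is used later for the change of variables in \eqref{keva:zmodint}. The paper's choice is more in line with its commuting-vector-field framework and produces the additional logarithmic bound on $G\varphi$ as a byproduct; your approach is more elementary and perfectly adequate for the statement as written.
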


\begin{proof}
The first property is straightforward, since we have defined $\varphi$ so that $\T_\phi (\mathbf{z}_{\mathrm{mod}} )=0$. Next, we have
$$  \left|\mathbf{T}_\phi (x^i-t v^i)\right|  = \big| \nabla_x \phi (t,x) \cdot \nabla_v (x^i-tv^i ) | \lesssim t | \nabla_x \phi |(t,x) \lesssim  \epsilon \, \langle t \rangle^{-1},$$ for $i\in\{1,2,3\}$. We then get $|\mathbf{T}_\phi (\varphi)| \lesssim \epsilon \, \langle t \rangle^{-1}$ on $[0,T) \times \R^3_x \times \R^3_v$, which implies the estimate for $\varphi$ according to Duhamel formula.

To conclude the proof it suffices to show, as $\nabla_v=G-t \nabla_x$, that there exists $C>0$ such that
\begin{align}
 |\nabla_x \varphi|(t,x,v) &\leq C  \epsilon , \label{eq:boot1} \\
  |G\varphi|(t,x,v)  &\leq C  \epsilon  \log\langle t \rangle , \label{eq:boot2}
 \end{align}
for all $(t,x,v) \in [0,T) \times \R^3_x \times \R^3_v$. By continuity, there exists a maximal time $0<T_{\mathrm{boot}} \leq T$ such that \eqref{eq:boot1}--\eqref{eq:boot2} holds on $[0,T_{\mathrm{boot}}) \times \R^3_x \times \R^3_v$. Let us prove by a bootstrap argument that $T_{\mathrm{boot}}=T$. Consider $Z \in \boldsymbol{\lambda}$ and apply the commutation formula in Proposition \ref{lemma_commuted_nonliner_Vlasov} to show
\begin{align*}
 \T_\phi (Z \varphi)&=[\T_\phi, Z](\varphi)+Z \T_\phi(\varphi)=-\nabla_x Z_x\phi \cdot \nabla_v \varphi+ t \nabla_x Z_x \phi+t [Z_x,\nabla_x]\phi .
 \end{align*}
 Writing again $\nabla_v=G-t \nabla_x$, we obtain
 \begin{align*}
\big| \T_\phi (Z \varphi) \big| & \lesssim t  \big|\nabla_x Z_x \phi\big| \big(1+\big|\nabla_x \varphi \big| \big)+\big|\nabla_x Z_x \phi\big|\big|G \varphi \big| , \\
 \big| \T_\phi (\partial_{x^i} \varphi) \big| & \lesssim t \big|\nabla_x \partial_{x^i}\phi\big| \big(1+\big| \nabla_x \varphi \big|+\big|G \varphi \big| \big).
 \end{align*}
We then deduce from the pointwise decay estimates of Proposition \ref{proposition_estimate_phi} as well as from the bootstrap assumptions \eqref{eq:boot1}--\eqref{eq:boot2} for the derivatives of $\varphi$ that, for all $(t,x,v) \in [0,T_{\mathrm{boot}}) \times \R^3_x \times \R^3_v$ we have
\begin{align*}
 \left| \T_\phi (Z \varphi) \right|(t,x,v) & \lesssim \epsilon  \langle t \rangle^{-1}\left( 1+C  \epsilon  + C  \epsilon \langle t \rangle^{-1} \log  \langle t \rangle  \right) \lesssim \epsilon \langle t \rangle^{-1}(1+C \epsilon) , \\
  \left| \T_\phi (\partial_{x^i} \varphi) \right|(t,x,v)& \lesssim \epsilon \langle t \rangle^{-2}\left(1+  \epsilon \log  \langle t \rangle  \right) \lesssim \epsilon \langle t \rangle^{-\frac{3}{2}} (1+C\epsilon) .
  \end{align*}
Hence, if $C$ is chosen large enough and if $\epsilon$ is small enough, we have 
$$ \left| \T_\phi (Z \varphi) \right|(t,x,v) \leq \frac{C}{2}\epsilon \, \langle t \rangle^{-1}  , \qquad \left| \T_\phi (\partial_{x^i}  \varphi) \right|(t,x,v) \leq \frac{C}{2}\epsilon \langle t \rangle^{-\frac{3}{2}} .$$
Using that $\varphi$ initially vanishes and Duhamel's formula, we improve \eqref{eq:boot1}--\eqref{eq:boot2} on $[0,T_{\mathrm{boot}})$. As a result, we have $T_{\mathrm{boot}}=T$ as well as the stated estimate for $\nabla_v \varphi$.
\end{proof}

We then introduce the \emph{modified velocity weight function}.

\begin{definition}
Let $w=(w^1,w^2,w^3)\colon [0,T) \times \R^3_x \times \R^3_v \to \R^3$ be the unique solution to
$$\mathbf{T}_\phi (w^i)=- \mathbf{T}_\phi (v^i), \qquad w(0,x , v) =0.$$
We define the \emph{modified velocity weight function} $\mathbf{v}_{\mathrm{mod}}(t,x,v) := \langle v+w(t,x,v) \rangle$.
\end{definition}

The next result implies that $v$ is almost conserved by the nonlinear flow, that is $|v-\mathbf{v}_{\mathrm{mod}}| \lesssim \epsilon$.

\begin{lemma}\label{Lemvelociweight}
We have $\T_\phi (\mathbf{v}_{\mathrm{mod}} )=0$ and $|w|(t,x,v) \lesssim \epsilon$ for all $(t,x,v) \in [0,T) \times \R^3_x \times \R^3_v$.
\end{lemma}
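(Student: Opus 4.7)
The proof is expected to be shorter and simpler than that of Lemma \ref{lemma_varphi}, since the source term driving $w$ decays at an integrable rate and only a $C^0$ bound (not derivative bounds) is claimed.

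The plan is as follows. First, the identity $\T_\phi (\mathbf{v}_{\mathrm{mod}})=0$ will follow immediately from the definition of $w$: since $\T_\phi(v^i+w^i)=0$ for each $i \in \{1,2,3\}$ and $\mathbf{v}_{\mathrm{mod}}$ is a smooth function of the $v^i+w^i$, the chain rule gives
$$\T_\phi(\mathbf{v}_{\mathrm{mod}}) = \sum_{i=1}^3 \frac{v^i+w^i}{\langle v+w \rangle}\, \T_\phi(v^i+w^i)=0.$$

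For the pointwise bound on $w$, I would first compute
$$\T_\phi(v^i) = -\mu \nabla_x \phi \cdot \nabla_v(v^i) = -\mu \, \partial_{x^i}\phi,$$
so that $\T_\phi(w^i) = \mu\, \partial_{x^i}\phi$. By Proposition \ref{proposition_estimate_phi} applied with $|\kappa|=0$, one has $|\nabla_x \phi|(t,x) \lesssim \epsilon \langle t \rangle^{-2}$ on $[0,T) \times \R^3_x$, hence $|\T_\phi(w^i)|(t,x,v) \lesssim \epsilon \langle t \rangle^{-2}$. Since $w$ vanishes at $t=0$, Duhamel's formula along the characteristics of $\T_\phi$ then yields
$$|w|(t,x,v) \lesssim \int_0^t \epsilon \, \langle s \rangle^{-2} \, \mathrm{d} s \lesssim \epsilon,$$
uniformly in $(t,x,v) \in [0,T)\times \R^3_x \times \R^3_v$, as required.

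The key point to emphasise, and the reason this is simpler than Lemma \ref{lemma_varphi}, is that $\T_\phi(v^i)=-\mu\,\partial_{x^i}\phi$ decays like $\epsilon\langle t\rangle^{-2}$, which is integrable in time, so no logarithmic loss appears and no bootstrap on derivatives of $w$ is needed. By contrast, $\T_\phi(x^i-tv^i)$ only decayed as $\epsilon\langle t\rangle^{-1}$, forcing the logarithmic growth of $\varphi$ and a more delicate argument.
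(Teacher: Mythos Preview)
Your proof is correct and follows essentially the same approach as the paper: both compute $|\T_\phi(w^i)|=|\partial_{x^i}\phi|\lesssim\epsilon\langle t\rangle^{-2}$ via Proposition \ref{proposition_estimate_phi}, then integrate along characteristics using $w(0,\cdot,\cdot)=0$. Your additional commentary on why this is simpler than Lemma \ref{lemma_varphi} is accurate but not part of the paper's proof.
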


\begin{proof}
The first relation ensues from $\T_\phi (v+w)=0$. For the second one, we remark that for $ 1 \leq i \leq 3$ we have
\begin{equation}\label{eq:estiv}
\left| \T_{\phi} (w^i) \right|=\left| \T_{\phi} (v^i) \right|=| \partial_{x^i} \phi |(t,x) \lesssim \epsilon \langle t \rangle^{-2}
\end{equation}
according to the decay estimate for the force field in Proposition \ref{proposition_estimate_phi}. The result then follows from Duhamel formula as well as $w(0, \cdot , \cdot ) =0$.
\end{proof}

\subsection{Pointwise estimates for the distribution function and its derivatives}

We are now able to prove upper bounds for the weighted derivatives $\langle v \rangle^{M_v} \mathbf{z}_{\mathrm{mod}}^{M_z} Z^{\beta}f$. Recall the initial norm $\mathbb{E}_N^{N_x,N_v}[f_{0}]$ introduced in \eqref{eq:defininorm} and that $\mathbb{E}_N^{8,7}[f_{0}] \leq \epsilon$. For simplicity, we will consider $N\geq 2$. We sketch the proof of the case $N=1$ in Remark \ref{RqNequal1}.

\begin{proposition}\label{prop_point_bound_deriv_distrib}
Let $N_x \geq 0$, $N_v \geq 0$, $N \geq 2$, and assume that $\mathbb{E}_N^{N_x,N_v}[f_{0}]<\infty$. Then, there exists a constant $\varepsilon_N>0$ depending only on $N$, such that the following statement holds. If $\epsilon \leq \varepsilon_N$, then, for all $(t,x,v)\in [0,T)\times \R^3_x\times \R^3_v $, we have
\begin{alignat}{2}
\label{machintruc} & \color{white} \square \square \, \color{black} \mathbf{v}_{\mathrm{mod}}^{N_v} \,  \mathbf{z}_{\mathrm{mod}}^{N_x} \big| \partial_x^{\beta_x} G^{\beta_v} f \big|(t,x,v) \leq 3\mathbb{E}_N^{N_x,N_v}[f_{0}] \log^{|\beta_v|}(2+t), \qquad &&\textrm{ if } |\beta_x|+|\beta_v| \leq N-1, \\
\nonumber & \color{white} \square \square \qquad \color{black} \mathbf{v}_{\mathrm{mod}}^{N_v} \,  \mathbf{z}_{\mathrm{mod}}^{N_x} \big| Z^{\beta}f \big|(t,x,v) \leq 3\mathbb{E}_N^{N_x,N_v}[f_{0}] \langle t \rangle^{\frac{1}{4}}, \qquad && \textrm{ if } |\beta| = N.
\end{alignat}
\end{proposition}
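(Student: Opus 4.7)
The plan is to establish the two bounds simultaneously via a bootstrap argument combined with an induction on $|\beta_v|$ and, within each such level, on the total order $|\beta|$. On some maximal subinterval $[0,T_\star) \subseteq [0,T)$, postulate both estimates of \eqref{machintruc} with the constant $3\mathbb{E}_N^{N_x,N_v}[f_{0}]$ replaced by $6\mathbb{E}_N^{N_x,N_v}[f_{0}]$, and aim to recover the sharper constants; continuity then forces $T_\star = T$. The central identity, granted by Lemmas \ref{lemma_varphi} and \ref{Lemvelociweight}, is
$$\T_\phi \bigl( \mathbf{v}_{\mathrm{mod}}^{N_v} \, \z^{N_x} \, \partial_x^{\beta_x} G^{\beta_v} f \bigr) = \mathbf{v}_{\mathrm{mod}}^{N_v} \, \z^{N_x} \, \bigl[ \T_\phi , \, \partial_x^{\beta_x} G^{\beta_v} \bigr] f,$$
and Lemma \ref{lemma_commuted_nonliner_Vlasov} writes the commutator as a linear combination of products $\nabla_x Z^\gamma_x\phi \cdot \nabla_v Z^\alpha f$ with $\gamma + \alpha = \beta$ and $|\gamma|\geq 1$. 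Since $\nabla_v$ commutes with every $Z \in \boldsymbol{\lambda}$ and $\nabla_v = G - t\nabla_x$, the second factor decomposes as $\nabla_v Z^\alpha f = Z^\alpha Gf - t\, Z^\alpha \nabla_x f$, so each source term involves derivatives of $f$ of order at most $|\beta|$. Integrating along the characteristics of $\T_\phi$ and using $\mathbf{v}_{\mathrm{mod}}|_{t=0}=\langle v \rangle$, $\z|_{t=0}=\langle x \rangle$ supplies the initial bound $\mathbb{E}_N^{N_x,N_v}[f_{0}]$.

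The decay $|\nabla_x Z^\gamma_x \phi | \lesssim \epsilon \langle t \rangle^{-|\gamma_x|-2}$ from Proposition \ref{proposition_estimate_phi} (improved to $\epsilon \langle t \rangle^{-|\gamma_x|-7/4}$ at top order), paired with the extra factor of $t$ that accompanies $-t\, Z^\alpha \nabla_x f$, produces a source decaying as $\epsilon \langle t \rangle^{-2}$ whenever $|\gamma_x|\geq 1$, and only as $\epsilon \langle t \rangle^{-1}$ in the worst case $|\gamma_x|=0, |\gamma_v|\geq 1$. For the top-order case $|\beta|=N$, the worst contribution combines the top-order decay $\epsilon \langle t \rangle^{-7/4}$ with the bootstrap factor $\langle t \rangle^{1/4}$, producing a source of order $\epsilon^2 \langle t\rangle^{-3/4}$ whose time primitive is controlled by $C \epsilon^2 \langle t \rangle^{1/4}$, which improves the bootstrap for $\epsilon$ small. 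For $|\beta|\leq N-1$ and $|\beta_v|=0$, the constraint $\gamma_v=0$ forces $|\gamma_x|\geq 1$, hence the source is fully integrable $\epsilon^2\langle t\rangle^{-2}$, yielding a uniform bound. For $|\beta_v|\geq 1$, the growth is driven by the term $\nabla_x Z^\gamma_x\phi \cdot (-t\, Z^\alpha \nabla_x f)$ with $|\gamma_x|=0$, in which $|\alpha_v|\leq |\beta_v|-1$ and the inductive hypothesis supplies a $\log^{|\beta_v|-1}(t)$ bound on the second factor; integrating $\epsilon^2 \log^{|\beta_v|-1}(s)/s$ from $0$ to $t$ yields exactly $\log^{|\beta_v|}(t)$, matching the statement up to a prefactor $\epsilon$.

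The main subtlety I anticipate is the \emph{diagonal} contribution where $|\gamma_v|=1, |\gamma_x|=0$ pairs with $Z^\alpha G f$, which places $|\alpha_v|+1 = |\beta_v|$ Galilean vector fields on the second factor and thus returns the \emph{same} log power $|\beta_v|$ as the left-hand side. Fortunately this term is not accompanied by the extra factor of $t$, so it only carries the integrable force-field decay $\epsilon \langle t \rangle^{-2}$; its time primitive is merely $O(\epsilon^2 \log^{|\beta_v|}(t))$ and is absorbed by smallness of $\epsilon$. The threshold $\varepsilon_N$ must be allowed to depend on $N$ because the combinatorial constants and the number of source terms accumulate with the order; this is handled by treating all diagonal contributions as a linear perturbation with small coefficient and closing the integrated inequality via a Grönwall-type absorption.
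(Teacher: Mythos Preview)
Your argument is correct and follows the same route as the paper's: simultaneous bootstrap, the identity $\T_\phi(\mathbf{v}_{\mathrm{mod}}^{N_v}\z^{N_x}Z^\beta f)=\mathbf{v}_{\mathrm{mod}}^{N_v}\z^{N_x}[\T_\phi,Z^\beta]f$, the splitting $\nabla_v=G-t\nabla_x$, and the hierarchy in $|\beta_v|$ via the dichotomy ``either $|\gamma_x|\geq 1$ or $|\alpha_v|\leq|\beta_v|-1$''. Two small corrections of wording. First, at top order the pairing ``force decay $\epsilon\langle t\rangle^{-7/4}$ with bootstrap factor $\langle t\rangle^{1/4}$'' never occurs: when $|\gamma|=N$ one has $|\alpha|=0$, so $\nabla_x f$ is order $1$ and uniformly bounded, giving $t\cdot\epsilon\langle t\rangle^{-7/4}\cdot 1\sim\epsilon\langle t\rangle^{-3/4}$; conversely, when the distribution factor is genuinely top order ($|\alpha|+1=N$), the force carries at most $N-1$ derivatives and decays as $\epsilon\langle t\rangle^{-2}$, yielding $t\cdot\epsilon\langle t\rangle^{-2}\cdot\langle t\rangle^{1/4}\sim\epsilon\langle t\rangle^{-3/4}$. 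Both routes give your claimed source, but the second is self-referential at top order, which is precisely why the paper invokes Gr\"onwall there (you allude to absorption at the end, which is equivalent). Second, your ``induction on $|\beta_v|$'' must really be the simultaneous bootstrap you already postulate, because the source $GZ^\alpha f$ with $\gamma_v=0$ carries $|\beta_v|+1$ Galilean vector fields; the accompanying constraint $|\gamma_x|\geq 1$ then supplies the extra $\langle t\rangle^{-1}$ that keeps this term integrable, exactly as the paper records in \eqref{eq:TZbetabis}.
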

\begin{remark}
By Lemma \ref{Lemvelociweight} and the inequality $(a+b)^z\leq 2^{z-1}(a^z+b^z)$, there exists $c>0$ such that $\langle v \rangle^{N_v} \leq 2^{\frac{N_v}{2}-1} \, \mathbf{v}_{\mathrm{mod}}^{N_v}+ 2^{\frac{N_v}{2}-1}| c \epsilon|^{N_v}$. 
\end{remark}
\begin{proof}
By continuity, there exists $0 < T_0 \leq T$ such that \eqref{machintruc} holds on $[0,T_0)$, and let us prove that we can improve these two estimates if $\epsilon$ is small enough, which would imply that they in fact hold on $[0,T)$. The starting point of the analysis consists in writing, for a fixed $|\beta| \leq N$,
\begin{align}
\T_{\phi}\left(\mathbf{v}_{\mathrm{mod}}^{N_v} \, \mathbf{z}_{\mathrm{mod}}^{N_x}  Z^{\beta}f  \right)&=\mathbf{v}_{\mathrm{mod}} ^{N_v} \, \mathbf{z}_{\mathrm{mod}}^{N_x}\T_{\phi}(Z^{\beta}f), \label{formula_transport_applied_weighted_distribution}
\end{align}
where we used $\T_\phi(\mathbf{v}_{\mathrm{mod}})=\T_\phi(\mathbf{z}_{\mathrm{mod}})=0$. By the commutation formula in Lemma \ref{lemma_commuted_nonliner_Vlasov} and $\nabla_v=G-t\nabla_x$, we have 
\begin{align} \nonumber
\big|\T_{\phi} \big(Z^{\beta}f \big) \big| & \lesssim  \sup_{|\alpha| \leq |\beta|-1,} \, \sup_{\gamma= \beta - \alpha}  |\nabla_x Z^{\gamma}_x\phi\cdot  \nabla_v Z^{\alpha}f| \\
& \leq   \sup_{|\alpha| \leq |\beta|-1,} \, \sup_{\gamma= \beta - \alpha} t|\nabla_x Z^{\gamma}_x\phi||\nabla_x Z^\alpha f |+|\nabla_x Z^{\gamma}_x\phi||G Z^\alpha f |. \label{eq:TZbeta}
 \end{align}
Fix $\gamma$ and $\alpha$ such that $|\alpha| \leq |\beta|-1$ and $\gamma+\alpha = \beta$. Then, we have
 $$Z^\beta = \partial_x^{\beta_x} G^{\beta_v}, \qquad Z^\gamma_x = t^{|\gamma_v|}\partial_x^{\gamma_x} \partial_x^{\gamma_v}, \qquad Z^\alpha = \partial_x^{\alpha_x} G^{\alpha_v},$$ where $\beta_x=\gamma_x+\alpha_x$, and $\beta_v=\gamma_v+\alpha_v$. We deal first with the case $|\beta| \leq N-1$. The goal consists in identyfing a hierarchy, indexed by $\beta_v$, in the different derivatives $Z^\beta f$. We start by controlling the easiest term in \eqref{eq:TZbeta}. According to Proposition \ref{proposition_estimate_phi}, we have
 \begin{equation}\label{eq:TZbetabis}
\big|\T_{\phi} \big(Z^{\beta}f \big) \big|  \lesssim \frac{\epsilon}{\langle t \rangle^2} \sup_{\substack{|\xi_x|+|\xi_v| \leq |\beta| \\ |\xi_v| \leq |\beta_v|+1}} |\partial_x^{\xi_x} G^{\xi_v}f|+  \sup_{|\alpha| \leq |\beta|-1,} \, \sup_{\gamma= \beta - \alpha} t|\nabla_x Z^{\gamma}_x\phi||\nabla_x Z^\alpha f |. 
 \end{equation}
 For the first terms on the RHS, even if the number of Galilean vector fields can be larger than $|\beta_v|$, the decay rate $t^{-2}$ is strong enough in order to handle them. For the most problematic terms, we will crucially exploit the condition $\gamma+\alpha = \beta$. Then,
 \begin{itemize}
 \item either $|\gamma_x| \geq 1$, in which case $|\nabla_x Z^\gamma_x \phi|(t,x) \lesssim \epsilon \, \langle t \rangle^{-3}$ by Proposition \ref{proposition_estimate_phi},
 \item or $|\alpha_v| \leq |\beta_v|-1$. Indeed, as $\gamma+\alpha=\beta$ and $|\alpha| \leq |\beta|-1$, we have $|\gamma| \geq 1$. So $|\gamma_x|=0$ implies that $|\gamma_v| \geq 1$ and it remains to use $\gamma_v+\alpha_v=\beta_v$.
 \end{itemize}
 Consequently, there exists a constant $C(N)>0$, depending only on $N$, such that
  \begin{equation*}
\big|\T_{\phi} \big(Z^{\beta}f \big) \big|  \leq \frac{\epsilon C(N)}{\langle t \rangle^2} \sup_{\substack{|\xi_x|+|\xi_v| \leq |\beta| \\ |\xi_v| \leq |\beta_v|+1}} \big|\partial_x^{\xi_x} G^{\xi_v}f \big|+ \frac{\epsilon C(N)}{\langle t \rangle} \sup_{\substack{|\alpha_x|+|\alpha_v| \leq |\beta| \\ |\alpha_v| \leq |\beta_v|-1 }} \big|\partial_x^{\alpha_x} G^{\alpha_v}f \big|. 
 \end{equation*}
 We then deduce from \eqref{formula_transport_applied_weighted_distribution} and the bootstrap assumption \eqref{machintruc} that there exists $\overline{C}(N)>0$ such that, for all $(t,x,v) \in [0,T_0)\times \R^3_x \times \R^3_v$ we have
 $$ \Big| \T_{\phi}\left(\mathbf{v}_{\mathrm{mod}} ^{N_v} \, \mathbf{z}_{\mathrm{mod}}^{N_x}  Z^{\beta}f  \right) \Big| (t,x,v) \leq  \epsilon \overline{C}(N) \mathbb{E}_N^{N_x,N_v}[f_{0}] \bigg(\frac{\log^{|\beta_v|+1}(2+t)}{\langle t \rangle^2} +\frac{\log^{|\beta_v|-1}(2+t)}{\langle t \rangle} \bigg).$$
 We then deduce from Duhamel's principle that, for all $(t,x,v) \in [0,T_0)\times \R^3_x \times \R^3_v$
 $$  \mathbf{v}_{\mathrm{mod}}^{N_v} \, \mathbf{z}_{\mathrm{mod}}^{N_x} \big| Z^\beta f \big|(t,x,v) \leq \mathbb{E}_N^{N_x,N_v}[f_{0}]+\epsilon \widetilde{C}(N) \mathbb{E}_N^{N_x,N_v}[f_{0}] \log^{|\beta_v|}(2+t),$$
 where $\widetilde{C}(N) >0$ depends only on $N$. If $\epsilon$ is small enough, this improves the bootstrap assumption \eqref{machintruc}. 
 
 We now focus on the top order derivatives $|\beta|=N$. The difference here is that the case $|\gamma|=N$ is allowed and that the top order derivatives of the force field decay at a slightly slower decay rate, 
 $$ | \nabla_x Z^\gamma_x \phi| (t,x) = t^{|\gamma_v|}|\nabla_x\partial_x^{\gamma_x} \partial_x^{\gamma_v}\phi|(t,x) \lesssim t^{-\frac{7}{4}-|\gamma_x|}.$$ Note however that $|\gamma|=N$ implies $|\alpha| =0$ in \eqref{eq:TZbeta}, and $\nabla_x f$ is uniformly bounded. We then get according to \eqref{machintruc} and Proposition \ref{proposition_estimate_phi} that
\begin{align*}
  \sup_{|\beta| = N} \big|\T_{\phi}\big( \mathbf{v}_{\mathrm{mod}}^{N_v} \, \mathbf{z}_{\mathrm{mod}}^{N_x}  Z^{\beta}f  \big)\big|(t,x,v)& \lesssim  \frac{\epsilon \log^{N-1}(2+t)}{\langle t \rangle^{\frac{7}{4}}}\mathbb{E}_N^{N_x,N_v}[f_{0}]+\frac{\epsilon}{\langle t \rangle^{\frac{3}{4}}}\mathbb{E}_N^{N_x,N_v}[f_{0}] \\
  & \quad +\frac{\epsilon}{ \langle t \rangle} \sup_{|\kappa| =N}  \mathbf{v}_{\mathrm{mod}}^{N_v} \, \mathbf{z}_{\mathrm{mod}}^{N_x} |Z^\kappa f|(t,x,v).
  \end{align*}
The estimate for the top order derivatives then ensues from Duhamel's principle and the Gronwall lemma for $\epsilon$ small enough.
\end{proof}
\begin{remark}\label{RqNequal1}
The case $N=1$ could be handled by slightly modifying the previous proof. We would have to prove first that $|\nabla_x f |$ remains uniformly bounded using $|\T_\phi (\nabla_x f)|(t,x,v) \lesssim \langle t \rangle^{-3/2}$. Then, $Gf$ can be controlled as we bounded $Z^\beta f$ for $|\beta|=N$.
\end{remark} 

Recall that $g_0(t,x,v):=f(t,x+tv,v)$, so that $\partial_x^{\kappa_x} \partial_v^{\kappa_v} g_0(t,x,v)=[\partial_x^{\kappa_x} G^{\kappa_v} f](t,x+tv,v)$. We then obtain the next result by applying Proposition \ref{prop_point_bound_deriv_distrib} as well as Lemma \ref{lemma_varphi}. We also use $\mathbf{z}_{\mathrm{mod}}(t,x+tv,v)-\langle x \rangle \leq |\varphi|(t,x,v) \lesssim \epsilon \log \langle t \rangle$.

\begin{corollary}\label{Corboundg0}
For any $|\kappa_x|+|\kappa_v| \leq N$ and all $(t,x,v)\in [0,T)\times \R^3_x\times \R^3_v$, we have 
$$
  \langle x \rangle^8 \langle v \rangle^7 \big| \partial_{x}^{\kappa_x}  \partial_v^{\kappa_v} g_0 \big|(t,x,v) \lesssim \left\{ 
	\begin{array}{ll}
        \epsilon \log^{|\kappa_v|+8}(2+t) & \mbox{if $|\kappa_x|+|\kappa_v| \leq N-1$, } \\
       \epsilon \, \langle t \rangle^{\frac{1}{4}} \log^{8}(2+t) & \mbox{if $|\kappa_x|+|\kappa_v|=N$}.
    \end{array} 
\right. .$$
\end{corollary}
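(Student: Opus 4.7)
The plan is to reduce the corollary to Proposition \ref{prop_point_bound_deriv_distrib} by two elementary manipulations: a chain-rule identity converting $x$- and $v$-derivatives of $g_0$ into $\partial_x$- and $G$-derivatives of $f$ along the linear flow, and a weight translation replacing the modified weights $\mathbf{z}_{\mathrm{mod}}$ and $\mathbf{v}_{\mathrm{mod}}$ by the standard Japanese brackets $\langle x \rangle$ and $\langle v \rangle$.

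First, I would establish by induction on $|\kappa_v|$ (using $\partial_{v^i}[f(t,x+tv,v)] = [t\partial_{x^i}f + \partial_{v^i}f](t,x+tv,v) = [G_i f](t,x+tv,v)$ and the commutativity of the $G_i$ and $\partial_{x^j}$ from Lemma \ref{lemma_commuting_in_lambda}) the identity
\begin{equation*}
  \partial_x^{\kappa_x} \partial_v^{\kappa_v} g_0(t,x,v) \;=\; \bigl[\partial_x^{\kappa_x} G^{\kappa_v} f\bigr](t,\,x+tv,\,v).
\end{equation*}

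Next, I would translate the weights evaluated at the shifted point $(x+tv,v)$. By Lemma \ref{Lemvelociweight}, $|w|(t,x+tv,v) \lesssim \epsilon$, which gives $\langle v \rangle \leq \mathbf{v}_{\mathrm{mod}}(t,x+tv,v) + C\epsilon \leq (1+C\epsilon)\,\mathbf{v}_{\mathrm{mod}}(t,x+tv,v)$ since $\mathbf{v}_{\mathrm{mod}} \geq 1$, and hence $\langle v \rangle^7 \lesssim \mathbf{v}_{\mathrm{mod}}(t,x+tv,v)^7$ for $\epsilon$ small. For the spatial weight, the definition gives $\mathbf{z}_{\mathrm{mod}}(t,x+tv,v) = \langle x + \varphi(t,x+tv,v)\rangle$, so Lemma \ref{lemma_varphi} yields $\langle x \rangle \leq \mathbf{z}_{\mathrm{mod}}(t,x+tv,v) + C\epsilon \log\langle t \rangle$. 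Using $\mathbf{z}_{\mathrm{mod}} \geq 1$ and $(a+b)^8 \lesssim a^8+b^8$, this gives
\begin{equation*}
  \langle x \rangle^8 \;\lesssim\; \mathbf{z}_{\mathrm{mod}}(t,x+tv,v)^8 \,\bigl(1+\epsilon^8\log^8\langle t\rangle\bigr) \;\lesssim\; \mathbf{z}_{\mathrm{mod}}(t,x+tv,v)^8 \,\log^8(2+t).
\end{equation*}

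Finally, combining these two ingredients with Proposition \ref{prop_point_bound_deriv_distrib} applied with $(N_x,N_v)=(8,7)$, and using the smallness assumption $\mathbb{E}_N^{8,7}[f_0]\leq \epsilon$, I obtain
\begin{equation*}
  \langle x \rangle^8 \langle v \rangle^7 \bigl|\partial_x^{\kappa_x}\partial_v^{\kappa_v} g_0\bigr|(t,x,v) \;\lesssim\; \log^8(2+t)\cdot \mathbf{z}_{\mathrm{mod}}^8 \mathbf{v}_{\mathrm{mod}}^7 \bigl|\partial_x^{\kappa_x} G^{\kappa_v} f\bigr|(t,x+tv,v),
\end{equation*}
which is bounded by $\epsilon\,\log^{|\kappa_v|+8}(2+t)$ in the subtop-order range and by $\epsilon\,\langle t\rangle^{1/4}\log^8(2+t)$ at top order. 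There is no substantive obstacle here; the only subtlety is keeping track of the extra $\log^8(2+t)$ factor that arises from the $\varphi$-correction in $\mathbf{z}_{\mathrm{mod}}$, which explains why the stated bound has $\log^{|\kappa_v|+8}$ rather than just $\log^{|\kappa_v|}$.
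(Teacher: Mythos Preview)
Your proposal is correct and follows essentially the same approach as the paper: the paper simply states that the corollary follows from Proposition \ref{prop_point_bound_deriv_distrib}, Lemma \ref{lemma_varphi}, and the inequality $\mathbf{z}_{\mathrm{mod}}(t,x+tv,v)-\langle x\rangle \leq |\varphi| \lesssim \epsilon\log\langle t\rangle$, which is exactly the weight translation you carry out in detail. Your write-up makes explicit the chain-rule identity $\partial_x^{\kappa_x}\partial_v^{\kappa_v}g_0=[\partial_x^{\kappa_x}G^{\kappa_v}f](t,x+tv,v)$ and the passage from $\langle v\rangle$ to $\mathbf{v}_{\mathrm{mod}}$ via Lemma \ref{Lemvelociweight}, both of which the paper leaves implicit.
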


\subsection{Uniform boundedness of spatial averages}
We proceed to show uniform boundedness in time of the spatial averages 
\begin{equation}\label{introduce_stable_averages_for_decay}
\int _{\R^3_x} G^{\beta}f(t,x,v)\mathrm{d}x.
\end{equation} 

\begin{remark}
We restrict the analysis to the derivatives $G^\beta$ since the spatial average of $\partial_x^\kappa G^\beta f$ vanishes as soon as $|\kappa| \geq 1$. This follows from integration by parts, the spatial decay properties of $f$, and the bounds for its derivatives given in Proposition \ref{prop_point_bound_deriv_distrib}.
\end{remark}

\begin{proposition}\label{prop_derivative_time_x_average}
Let $|\beta|\leq N-1$. Then, for all $(t,v)\in [0,T)\times \R^3_v$, we have 
$$ \bigg| \partial_t\int_{\R^3_x}  Z^{\beta}f(t,x,v) \mathrm{d}x \bigg|\lesssim \frac{\e}{ \langle t \rangle^{\frac{7}{4}}}\sup_{|\kappa|\leq |\beta|+1}\sup_{x\in\R^3_x }\big|\mathbf{z}^{4}_{\mathrm{mod}}Z^{\kappa}f(t,x,v)\big|.$$
If $|\beta| \leq N-2$, then the factor $\langle t \rangle^{-\frac{7}{4}}$ can be upgraded to $\langle t \rangle^{-2}$.
\end{proposition}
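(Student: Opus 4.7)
My approach is essentially the one sketched in Step 1 of Subsection \ref{Subsecstrategymodifiedscatt}, extended to higher-order derivatives. Without loss of generality I may assume $Z^\beta = G^\beta$, since if $Z^\beta$ contains any translation vector field $T_i=\partial_{x^i}$, then $\int_{\R^3_x} Z^\beta f\, \dr x$ vanishes identically by integration by parts and the spatial decay of $Z^\kappa f$ from Proposition \ref{prop_point_bound_deriv_distrib}. Using the commuted Vlasov equation from Lemma \ref{lemma_commuted_nonliner_Vlasov}, interchanging $\partial_t$ with $\int\cdot\, \dr x$, and noting that $\int v\cdot\nabla_x Z^\beta f\, \dr x=0$, I obtain an expression involving the terms $\nabla_x \phi \cdot \nabla_v Z^\beta f$ and $\nabla_x Z^\gamma_x \phi \cdot \nabla_v Z^\alpha f$ integrated in $x$. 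The key manipulation is then to write $\nabla_v = G - t\nabla_x$ and integrate by parts in $x$ on the $t\nabla_x$ pieces, converting them via the Poisson equations $\Delta_x\phi=\rho(f)$ and $\Delta_x Z^\gamma_x\phi=\rho(Z^\gamma f)$ (Lemma \ref{lemma_commuted_poisson_equation}) to arrive at
\begin{equation*}
\partial_t \!\int\! Z^\beta f\, \dr x = \mu\!\int\! \nabla_x\phi \cdot GZ^\beta f\, \dr x + \mu t\!\int\! \rho(f)\, Z^\beta f\, \dr x + \sum_{\alpha,\gamma} C^\beta_{\alpha\gamma}\Big[\!\int\! \nabla_x Z^\gamma_x\phi \cdot GZ^\alpha f\, \dr x + t\!\int\!\rho(Z^\gamma f)\, Z^\alpha f\, \dr x\Big].
\end{equation*}

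I would bound the four resulting integrals using: (i) the pointwise decay of $\nabla_x Z^\gamma_x\phi$ from Proposition \ref{proposition_estimate_phi}; (ii) the bootstrap assumption \ref{boot2} applied to $\rho(Z^\gamma f)$ for $|\gamma|\leq |\beta|\leq N-1$, yielding $|\rho(Z^\gamma f)|\lesssim \epsilon \langle t\rangle^{-3}$; and (iii) the pointwise bound $|Z^\kappa f(t,x,v)|\leq \mathbf{z}_{\mathrm{mod}}^{-4}(t,x,v)\, B$ with $B:=\sup_{|\kappa|\leq|\beta|+1}\sup_x|\mathbf{z}_{\mathrm{mod}}^{4} Z^\kappa f(t,x,v)|$, together with the uniform bound $\int_{\R^3_x} \mathbf{z}_{\mathrm{mod}}^{-4}(t,x,v)\, \dr x \lesssim 1$. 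This last estimate follows from the change of variables $y=x-tv+\varphi(t,x,v)$, whose Jacobian $I+\nabla_x\varphi$ is close to the identity thanks to Lemma \ref{lemma_varphi}. Each of the four contributions is then bounded by $\epsilon B/\langle t\rangle^{2}$, except for the borderline commutator term with $|\gamma|=|\beta|=N-1$: in that case $\nabla_x Z^\gamma_x\phi$ contains an $N$-th order spatial derivative of $\phi$, for which Proposition \ref{proposition_estimate_phi} only supplies the weaker decay $\langle t\rangle^{-7/4}$, giving the announced rate. When $|\beta|\leq N-2$, every $\gamma$ in the commutator satisfies $|\gamma|\leq N-2$, so all $\phi$-derivatives are sub-top-order and the improvement to $\langle t\rangle^{-2}$ is automatic.

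The main obstacle is this single borderline contribution: the $\langle t\rangle^{-7/4}$ rate is essentially sharp and is inherited from the corresponding $\langle t\rangle^{1/4}$ loss in the top-order pointwise bound of Proposition \ref{prop_point_bound_deriv_distrib}. Apart from this, the only technical point worth checking carefully is the uniform $L^1_x$-integrability of $\mathbf{z}_{\mathrm{mod}}^{-4}(t,x,v)$, for which the small perturbation estimate $|\nabla_x\varphi|\lesssim \epsilon$ from Lemma \ref{lemma_varphi} is precisely what is needed to reduce matters, via the change of variables $y=x-tv+\varphi$, to the elementary fact $\int \langle y\rangle^{-4}\, \dr y <\infty$.
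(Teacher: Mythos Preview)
Your proof is correct and follows essentially the same strategy as the paper: compute $\partial_t\int Z^\beta f\,\dr x$ via the commuted equation, rewrite $\nabla_v=G-t\nabla_x$, integrate by parts in $x$, and use $\int \mathbf{z}_{\mathrm{mod}}^{-4}\,\dr x\lesssim 1$ via the change of variables $y=x-tv+\varphi$. The reduction to $Z^\beta=G^\beta$ is a clean observation.

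There is a small difference and a small confusion worth noting. After integrating by parts you invoke the Poisson relation $\Delta_x Z^\gamma_x\phi=\rho(Z^\gamma f)$ and bound $\rho(Z^\gamma f)$ by the bootstrap assumption \ref{boot2}. The paper instead bounds the resulting term schematically as $t\int|\nabla_x\nabla_x Z^\gamma_x\phi|\,|Z^\kappa f|\,\dr x$ and applies Proposition~\ref{proposition_estimate_phi} directly to the second derivative of the force field. This is where the $\langle t\rangle^{-7/4}$ actually enters in the paper: when $|\gamma|=N-1$, the quantity $\nabla_x\nabla_x Z^\gamma_x\phi$ is of the form $\nabla_x\partial_x^\kappa\phi$ with $|\kappa|=N$, which is the top-order case. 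Your route via $\rho(Z^\gamma f)$ avoids this and gives $\langle t\rangle^{-2}$ for every $|\beta|\le N-1$, which is strictly stronger than the stated bound.

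Consequently, your identification of the ``borderline'' term is off. The term $\nabla_x Z^\gamma_x\phi$ with $|\gamma|=N-1$ is $t^{|\gamma_v|}\nabla_x\partial_x^\kappa\phi$ with $|\kappa|=|\gamma|=N-1$, so Proposition~\ref{proposition_estimate_phi} still yields the good rate $\langle t\rangle^{-2}$ for it; the top-order case there is $|\kappa|=N$, not $N-1$. In your argument no term actually incurs the $7/4$ loss, so the displayed rate $\langle t\rangle^{-7/4}$ for $|\beta|=N-1$ is simply a weakening of what you have proved.
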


\begin{proof}
Fix $(t,v)\in  [0,T)\times \R^3_v$ and $|\beta|\leq N-1$. Integrating the commutation formula of Proposition \ref{lemma_commuted_nonliner_Vlasov} for $Z^{\beta}f$ and performing integration by parts in $x$, we have,
\begin{align*}
 \partial_t\int _{\R^3_x}  Z^{\beta}f \mathrm{d}x &=\int _{\R^3_x} \big(  \partial_t+v \cdot \nabla_x \big)  Z^{\beta}f(t,x,v) \mathrm{d}x\\
&= \mu \int_{\R^3_x} \nabla_x \phi \cdot \nabla_v Z^\beta f \dr x+\mu \sum_{|\alpha| \leq |\beta|-1} \, \sum_{\gamma = \beta-\alpha} C^{\beta}_{\alpha \gamma} \int_{\R^3_x} \nabla_x Z^\gamma_x \phi \cdot \nabla_v Z^\alpha f \dr x.
\end{align*} 
Using $\nabla_v=G-t\nabla_x$ and performing again integration by parts in $x$, we get
$$
\bigg| \partial_t\int _{\R^3_x}  Z^{\beta}f  \mathrm{d}x \bigg| \lesssim \sum_{|\gamma|+|\kappa| \leq |\beta|} \int _{\R^3_x}  \big| \nabla_x Z^{\gamma}_x\phi \big| \big| GZ^\kappa f \big|\mathrm{d}x+t\int _{\R^3_x}  \big| \nabla_x \nabla_x Z^{\gamma}_x\phi \big| \big| Z^\kappa f \big|\mathrm{d}x .
$$

We next use the time decay of the force field in Proposition \ref{proposition_estimate_phi} to obtain that for all $ (t,v) \in [0,T) \times \R^3_v$, we have
$$ 
 \bigg| \partial_t\int_{\R^3_x}  Z^{\beta}f(t,x,v) \mathrm{d}x \bigg|(t,v) \lesssim \frac{\epsilon}{ \langle t \rangle^{\frac{7}{4}}} \sup_{|\kappa|\leq |\beta|+1,}\sup_{x\in\R^3_x }  \big| \mathbf{z}_{\mathrm{mod}}^4 Z^{\kappa}f(t,x,v)\big|  \int_{\R^3_y} \frac{\dr y}{\mathbf{z}_{\mathrm{mod}}^4(t,y,v)}.
$$ 
It remains us to prove that
\begin{equation}\label{keva:zmodint}
\int_{\R^3_y} \frac{\dr y}{\mathbf{z}_{\mathrm{mod}}^4(t,y,v)} \lesssim \int_{\R^3_z} \frac{\dr z}{\langle z \rangle^4} \leq 4.
\end{equation}
For this, we recall the estimate $|\nabla_x \varphi| \lesssim \epsilon$, that allows us to perform the change of variables $z=y-tv+\varphi(t,y,v)$ for $\epsilon$ small enough. The better estimate in the case $|\beta| \leq N-2$ is related to the better decay properties verified by the lower order derivatives of the force field $\nabla_x Z^\xi \phi$ for $|\xi| \leq N-1$. 
\end{proof}

Combining Proposition \ref{prop_derivative_time_x_average} with the pointwise estimates in Proposition \ref{prop_point_bound_deriv_distrib}, we obtain the following.

\begin{corollary}\label{cor_bound_small_integral_x}
For all $(t,v)\in [0,T)\times \R_v^3$ and every $|\beta|\leq N-1$, we have
$$ \langle v \rangle^{3}\bigg| \int_{\R^3_x}Z^{\beta}f(t,x,v)\mathrm{d}x \bigg|\lesssim \epsilon.$$
\end{corollary}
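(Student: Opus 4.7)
The plan is to combine the time-derivative estimate of Proposition \ref{prop_derivative_time_x_average} with the pointwise bounds of Proposition \ref{prop_point_bound_deriv_distrib}, integrated in time against the initial contribution. Since $\mathbb{E}_N^{4,3}[f_{0}]\leq \mathbb{E}_N^{8,7}[f_{0}] \leq \epsilon$, Proposition \ref{prop_point_bound_deriv_distrib} applied with $N_x=4$ and $N_v=3$ yields, for every $|\kappa|\leq N$ and all $(t,x,v)\in [0,T)\times \R^3_x\times \R^3_v$,
$$ \mathbf{v}_{\mathrm{mod}}^{3}\, \mathbf{z}_{\mathrm{mod}}^{4} \big| Z^{\kappa}f \big|(t,x,v) \lesssim \epsilon \, \langle t \rangle^{\frac{1}{4}}.$$
Using Lemma \ref{Lemvelociweight}, which gives $|\mathbf{v}_{\mathrm{mod}}-\langle v\rangle|\lesssim \epsilon$, we deduce that $\langle v\rangle^3 \lesssim \mathbf{v}_{\mathrm{mod}}^3$, and hence
$$ \sup_{|\kappa|\leq |\beta|+1}\, \sup_{x\in \R^3_x}  \big| \mathbf{z}_{\mathrm{mod}}^{4} Z^{\kappa}f \big|(t,x,v) \lesssim \frac{\epsilon \langle t \rangle^{\frac{1}{4}}}{\langle v\rangle^{3}}.$$

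Plugging this bound into Proposition \ref{prop_derivative_time_x_average} gives, for every $|\beta|\leq N-1$,
$$ \bigg| \partial_t \int_{\R^3_x}  Z^{\beta}f(t,x,v) \dr x \bigg| \lesssim \frac{\epsilon^2}{\langle v\rangle^3 \langle t \rangle^{\frac{3}{2}}},$$
which is integrable on $[0,T)$. It remains to control the initial value. Writing $Z^\beta = \partial_x^{\beta_x} G^{\beta_v}$ and evaluating at $t=0$, where $G_i=\partial_{v^i}$, integration by parts in $x$ kills the average as soon as $|\beta_x|\geq 1$; in the remaining case,
$$ \bigg| \int_{\R^3_x} \partial_{v}^{\beta_v} f_0(x,v) \dr x \bigg| \leq \| \langle x\rangle^{4}\langle v\rangle^{3} \partial_v^{\beta_v} f_0 \|_{L^\infty_{x,v}} \langle v\rangle^{-3} \int_{\R^3_x} \frac{\dr x}{\langle x\rangle^{4}} \lesssim \frac{\epsilon}{\langle v\rangle^{3}}.$$

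Integrating the time-derivative bound from $0$ to $t$ and adding the initial contribution gives
$$ \bigg| \int_{\R^3_x} Z^{\beta}f(t,x,v) \dr x \bigg| \lesssim \frac{\epsilon}{\langle v\rangle^3} + \frac{\epsilon^2}{\langle v\rangle^3} \int_0^t \frac{\dr s}{\langle s \rangle^{\frac{3}{2}}} \lesssim \frac{\epsilon}{\langle v\rangle^3},$$
which is the claim. There is no real obstacle here: the whole point of Proposition \ref{prop_derivative_time_x_average} and Proposition \ref{prop_point_bound_deriv_distrib} was to produce exactly the two ingredients needed, and the only minor care is to verify that $\mathbf{z}_{\mathrm{mod}}^{4}$ and $\mathbf{v}_{\mathrm{mod}}^{3}$ may be substituted by the unmodified weights, which follows from the estimates on $\varphi$ and $w$ in Lemmas \ref{lemma_varphi}--\ref{Lemvelociweight}.
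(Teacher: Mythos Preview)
Your proof is correct and follows precisely the approach the paper indicates (the paper simply states that the result follows by combining Proposition~\ref{prop_derivative_time_x_average} with Proposition~\ref{prop_point_bound_deriv_distrib}). You have supplied the details the paper omits: applying the pointwise bound with $(N_x,N_v)=(4,3)$, converting $\mathbf{v}_{\mathrm{mod}}^3$ to $\langle v\rangle^3$ via Lemma~\ref{Lemvelociweight}, integrating the resulting $\langle t\rangle^{-3/2}$ decay, and controlling the initial contribution.
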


\subsection{Pointwise decay estimates for velocity averages}

In this subsection, we show that the decay rate of $\rho(Z^{\beta}f)$ for $|\beta| \leq N -1$, coincides with the one of the linearised system. In particular, we improve the bootstrap assumption \ref{boot2}. The starting point of the analysis consists in performing the change of variables $y=x-tv$.
\begin{proposition}\label{Prodecayvelocitav}
For any $|\beta| \leq N-1$, we have that for all $(t,x) \in [2,T) \times \R^3_x$,
$$  \bigg| t^3 \int_{\R^3_v} Z^\beta f(t,x,v) \dr v- \int_{\R^3_y} Z^\beta f \Big(t,y,\frac{x}{t} \Big) \dr y \bigg| \lesssim   \left\{ 
	\begin{array}{ll}
        \epsilon \, \langle x/ t \rangle^{-3} \, t^{-1} \log^{N}(t) & \mbox{if $|\beta| \leq N-2$, } \\[2pt]
        \epsilon \, \langle x/ t \rangle^{-3} \, t^{-2/3} & \mbox{if $|\beta|=N-1$}.
    \end{array} 
\right.
$$
\end{proposition}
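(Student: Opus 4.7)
My plan is to apply the change of variables $v = (x-y)/t$ in the left-hand integral, interpret the right-hand integral analogously, and then estimate the resulting difference via a first order Taylor expansion of the profile $g_0$ in the velocity variable. First, by commuting the vector fields (Lemma \ref{lemma_commuting_in_lambda}) I write $Z^\beta = \partial_x^{\beta_x} G^{\beta_v}$. Using $f(t,x',v) = g_0(t,x'-tv,v)$ and Lemma \ref{lem_properties_linear_change_variable_trivial}, each Galilean vector field acts as $\partial_v$ on the profile, so
$$Z^\beta f(t,x',v) = h(t,x'-tv,v), \qquad h := \partial_x^{\beta_x}\partial_v^{\beta_v} g_0.$$
The change of variables $y = x - tv$ (with Jacobian $t^{-3}$) gives
$$t^3 \int_{\R^3_v} Z^\beta f(t,x,v)\, \dr v = \int_{\R^3_y} h\Big(t, y, \frac{x-y}{t}\Big)\, \dr y,$$
while the change $z = y - x$ gives $\int_{\R^3_y} Z^\beta f(t,y,x/t)\, \dr y = \int_{\R^3_z} h(t,z,x/t)\, \dr z$. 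Thus the quantity to estimate equals $\int_{\R^3_y} [h(t,y,(x-y)/t) - h(t,y,x/t)]\, \dr y$.

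Second, the mean value theorem in the velocity argument yields
$$h\Big(t, y, \frac{x-y}{t}\Big) - h\Big(t, y, \frac{x}{t}\Big) = -\frac{1}{t}\int_0^1 y \cdot (\nabla_v h)\Big(t, y, \frac{x-sy}{t}\Big)\, \dr s.$$
Since $\nabla_v h = \partial_x^{\beta_x}\partial_v^{\beta_v + 1} g_0$ is a derivative of $g_0$ of total order $|\beta|+1$, Corollary \ref{Corboundg0} provides a pointwise bound of the form
$$\langle y\rangle^8\, \langle v\rangle^7\, |\nabla_v h|(t,y,v) \lesssim \epsilon\, \omega(t),$$
where $\omega(t)$ is polylogarithmic in $t$, controlled by $\log^N(t)$ when $|\beta|\leq N-2$, and where $\omega(t)\lesssim \langle t\rangle^{1/4}\log^8(2+t)$ when $|\beta|=N-1$. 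The matter is therefore reduced to controlling
$$\mathcal{I}(t,x,s) := \int_{\R^3_y} |y|\, \langle y\rangle^{-8}\, \Big\langle\frac{x-sy}{t}\Big\rangle^{-7}\dr y$$
uniformly in $s \in [0,1]$ by a quantity $\lesssim \langle x/t\rangle^{-3}$.

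The main technical point is this final integral bound; once obtained, the statement follows immediately by combining it with the two previous steps. I will split according to the size of $|x|$ versus $t$. When $|x|\leq 2t$ one has $\langle x/t\rangle^{-3}\gtrsim 1$ and $\mathcal{I}\lesssim \int |y|\langle y\rangle^{-8}\,\dr y <\infty$. When $|x|\geq 2t$, I further split the $y$-integral at $|y|=|x|/2$: on $\{|y|\leq|x|/2\}$ one has $|x-sy|/t\geq |x|/(2t)$, so the velocity weight contributes $(t/|x|)^7$ times a convergent $y$-integral; on $\{|y|\geq|x|/2\}$ one uses $|y|\langle y\rangle^{-8}\leq \langle y\rangle^{-7}$, whose integral over $\{|y|\geq|x|/2\}$ is $\lesssim |x|^{-4}$. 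Both contributions are $\lesssim (t/|x|)^3\sim \langle x/t\rangle^{-3}$ in the regime $|x|\geq 2t$. Assembling everything produces the stated bound at rate $\epsilon\langle x/t\rangle^{-3}\,t^{-1}\log^N(t)$ when $|\beta|\leq N-2$, and at rate $\epsilon\langle x/t\rangle^{-3}\,t^{-3/4}\leq \epsilon\langle x/t\rangle^{-3}\,t^{-2/3}$ when $|\beta|=N-1$. The delicate step is the case analysis that extracts $\langle x/t\rangle^{-3}$ from $\mathcal{I}$; the remainder is a routine combination of the change of variables, the mean value theorem, and the a priori weighted bounds already established.
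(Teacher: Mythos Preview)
Your approach---change of variables $y=x-tv$, mean value theorem in $v$, then a weighted pointwise bound on the remainder---is the same backbone as the paper's. The only methodological difference is in how $\langle x/t\rangle^{-3}$ is extracted: the paper works with the nonlinear weight $\mathbf{z}_{\mathrm{mod}}$ and the pointwise inequality $3\langle (x-\tau y)/t\rangle+\mathbf{z}_{\mathrm{mod}}(t,y+tv,v)\geq\langle x/t\rangle$ together with $|y|\leq\mathbf{z}_{\mathrm{mod}}+\epsilon\log(t)$, whereas you use the linear weights from Corollary~\ref{Corboundg0} and a direct case split on the integral $\mathcal{I}$. Your splitting argument for $\mathcal{I}\lesssim\langle x/t\rangle^{-3}$ is correct and is a clean elementary alternative.

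There is, however, a quantitative discrepancy in the $|\beta|\leq N-2$ case. Corollary~\ref{Corboundg0} already pays a $\log^{8}$ penalty (from converting $\mathbf{z}_{\mathrm{mod}}$ to $\langle x\rangle$), so it only gives $\omega(t)\lesssim\log^{|\beta_v|+9}(t)\leq\log^{N+7}(t)$, not $\log^{N}(t)$ as you claim. Your argument therefore yields $\epsilon\langle x/t\rangle^{-3}t^{-1}\log^{N+7}(t)$, missing the stated exponent. The paper avoids this loss by invoking Proposition~\ref{prop_point_bound_deriv_distrib} with the modified weights directly (contributing only $\log^{|\beta_v|+1}$) and picking up a single extra $\log(t)$ from $|y|\leq\mathbf{z}_{\mathrm{mod}}+\epsilon\log(t)$. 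For $|\beta|=N-1$ your bound $t^{-3/4}\log^{8}(t)\lesssim t^{-2/3}$ is unaffected by this issue.
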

\begin{proof}
Let $|\beta| \leq N-1$ and recall that $\partial_{x,v}^\beta g_0(t,x,v)= Z^\beta f(t,x+tv,v)$. Fix $(t,x) \in [0,T) \times \R^3_x$ and perform the change of variables $y=x-tv$ to get
\begin{align*}
 t^3 \! \int_{\R^3_v} Z^\beta f(t,x,v) \dr v & = \int_{\R^3_v} \big[\partial_{x,v}^\beta g_0 \big](t,x-tv,v) \dr v = \int_{\R^3_y} \big[\partial_{x,v}^\beta g_0 \big] \Big( t,y, \frac{x-y}{t} \Big) \dr y \\
 & =  \int_{\R^3_y} \big[\partial_{x,v}^\beta g_0 \big] \Big( t,y, \frac{x}{t} \Big) \dr y+\int_{\tau=0}^1 \int_{\R^3_y} \frac{y}{t} \cdot \big[ \nabla_v \partial_{x,v}^\beta g_0 \big] \Big( t,y, \frac{x-\tau y}{t} \Big) \dr y \dr \tau.
 \end{align*}
We remark that the spatial average of $\partial_{x,v}^\beta g_0 $ is equal to the one of $Z^\beta f$. It only remains to deal with the last term on the RHS of the previous equality. For this, note that
\begin{equation*}
\forall \, \tau \in [0,1 ], \qquad 3\Big\langle \frac{x-\tau y}{t} \Big\rangle+ \big\langle y+\varphi(t,y+tv,v) \big\rangle = \Big\langle \frac{x-\tau y}{t} \Big\rangle+ \mathbf{z}_{\mathrm{mod}}(t,y+tv,v)  \geq  \Big\langle \frac{x}{t} \Big\rangle .
\end{equation*}
Indeed, either  $|y+\varphi(t,y+tv,v)| \geq \frac{1}{2}|x|$, in which case there is nothing to prove. Otherwise, it suffices to use that
$$2|x-\tau y|-|x| \geq -2\tau |\varphi(t,y+tv,v)| \gtrsim -\epsilon \log(1+t) \gtrsim -\epsilon t.$$
As $|y| \leq \mathbf{z}_{\mathrm{mod}}(t,y+tv,v)+\epsilon \log(t)$ and $y \mapsto \mathbf{z}_{\mathrm{mod}}^{-4}(t,y+tv,v) \in L^1(\R^3_y)$ by \eqref{keva:zmodint}, we then deduce
$$ \bigg| \int_{\tau=0}^1 \int_{\R^3_y} \frac{y}{t} \cdot \nabla_v \partial_{x,v}^\beta g_0 \Big( t,y, \frac{x-\tau y}{t} \Big) \dr y  \dr \tau \bigg| \lesssim \frac{\log(t)}{t\langle x/ t \rangle^3} \! \sup_{(z,v) \in \R^3\times\R^3} \!\langle v \rangle^3  \big|\mathbf{z}_{\mathrm{mod}}^8 G Z^\beta f \big|(t,z,v).$$
We finally bound the RHS by applying Proposition \ref{prop_point_bound_deriv_distrib} for $(N_x,N_v)=(8,3)$.
\end{proof}

Proposition \ref{Prodecayvelocitav} allows us to deduce the following pointwise decay estimates.

\begin{corollary}\label{Cordecayaveragev}
For any $|\beta| \leq N-1$, we have
$$ \forall \, (t,x) \in [0,T) \times \R^3_x, \qquad  \bigg|\int_{\R^3_v} Z^\beta f(t,x,v) \dr v \bigg| \lesssim \frac{\epsilon}{\langle t+|x| \rangle^3}.$$
\end{corollary}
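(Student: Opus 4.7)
The result follows by combining Proposition \ref{Prodecayvelocitav} with Corollary \ref{cor_bound_small_integral_x}, treating the regimes $t \geq 2$ and $t \leq 2$ separately.

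For $t \geq 2$, my plan is to evaluate the approximate identity of Proposition \ref{Prodecayvelocitav} at the specific velocity $v = x/t$. Combining
$$\bigg| t^3 \int_{\R^3_v} Z^\beta f(t,x,v) \dr v - \int_{\R^3_y} Z^\beta f\Big(t,y,\tfrac{x}{t}\Big) \dr y \bigg| \lesssim \frac{\epsilon}{\langle x/t \rangle^3},$$
which is valid for every $|\beta| \leq N-1$ (taking the weaker of the two error rates in Proposition \ref{Prodecayvelocitav}), with Corollary \ref{cor_bound_small_integral_x} applied at $v = x/t$,
$$\bigg| \int_{\R^3_y} Z^\beta f\Big(t, y,\tfrac{x}{t}\Big) \dr y \bigg| \lesssim \frac{\epsilon}{\langle x/t \rangle^3},$$
yields $t^3 \big|\int_{\R^3_v} Z^\beta f \, \dr v\big|(t,x) \lesssim \epsilon \langle x/t \rangle^{-3}$. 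One then concludes by noting that for $t \geq 2$,
$$t^3 \Big\langle \frac{x}{t} \Big\rangle^3 \sim \big(t^2 + |x|^2\big)^{3/2} \sim \langle t + |x| \rangle^3,$$
which yields the desired bound after dividing through by $t^3$.

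For $t \leq 2$, I would instead rely directly on the pointwise estimate of Proposition \ref{prop_point_bound_deriv_distrib} with $(N_x, N_v) = (8, 7)$, giving $|Z^\beta f|(t,x,v) \lesssim \epsilon \, \mathbf{z}_{\mathrm{mod}}^{-8}(t,x,v) \, \langle v \rangle^{-7}$. Lemma \ref{lemma_varphi} ensures $|\varphi| \lesssim \epsilon$ for bounded $t$, so the triangle inequality applied to $\mathbf{z}_{\mathrm{mod}} = \langle x - tv + \varphi \rangle$ yields $\max(\mathbf{z}_{\mathrm{mod}}, \langle v \rangle) \gtrsim \langle x \rangle$, whence $\mathbf{z}_{\mathrm{mod}}^3 \langle v \rangle^3 \gtrsim \langle x \rangle^3$. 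Extracting this factor from the product of weights and integrating the residual integrable decay in $v$ produces $\big|\int_{\R^3_v} Z^\beta f \, \dr v\big|(t,x) \lesssim \epsilon \langle x \rangle^{-3} \sim \epsilon \langle t + |x| \rangle^{-3}$, as required.

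No serious obstacle arises, since the two preceding results have already performed the substantive work. The only point requiring care is the bookkeeping of the two factors of $\langle x/t \rangle^{-3}$, one coming from the Taylor error and one from the uniform control of the spatial average, so that after cancellation with $t^3$ one recovers the sharp joint decay $\langle t + |x| \rangle^{-3}$ rather than a weaker product of separate spatial and temporal rates.
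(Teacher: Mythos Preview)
Your proof is correct and follows essentially the same approach as the paper: split into $t \leq 2$ and $t \geq 2$, handle the latter by combining Proposition \ref{Prodecayvelocitav} with Corollary \ref{cor_bound_small_integral_x} and the elementary bound $t\langle x/t\rangle \gtrsim \langle t+|x|\rangle$, and treat the former directly from the weighted pointwise control of Proposition \ref{prop_point_bound_deriv_distrib}. Your handling of the small-time case is slightly more explicit than the paper's (which simply asserts $\langle x\rangle^3 \langle v\rangle^4 |Z^\beta f| \lesssim \mathbf{z}_{\mathrm{mod}}^3 \langle v\rangle^4 |Z^\beta f| \lesssim \epsilon$), but the content is the same.
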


\begin{proof}
Let $(t,x) \in [0,T) \times \R^3_x$ and remark that if $t \leq 2$, one only has to use 
$$\langle x \rangle^3 \, \langle v \rangle^4 \big |Z^\beta f \big|(t,x,v)\lesssim \mathbf{z}_{\mathrm{mod}}^3(t,x,v) \, \langle v \rangle^4 \big|Z^\beta f \big|(t,x,v)\lesssim \epsilon  .$$ Assume now that $t \geq 2$. According to Corollary \ref{cor_bound_small_integral_x}, we have
$$ \bigg| \int_{\R^3_x}Z^{\beta}f \Big( t,x, \frac{x}{t} \Big) \mathrm{d}x \bigg|\lesssim \e  \langle x/t \rangle^{-3}.$$
The result ensues from Proposition \ref{Prodecayvelocitav} and $2t \langle x/t \rangle \geq t+|x|$.
\end{proof}

We now improve the bootstrap assumption \ref{boot3}. For this, we will use the following lemma.

\begin{lemma}\label{Lemdecay1}
Let $h:[0,T) \times \R^3_x \times \R^3_v \to \R$ be a regular function. Then, for all $(t,x) \in [0,T) \times \R^3_x$, we have
$$  \int_{\R^3_v} |h|(t,x,v) \mathrm{d} v \lesssim \frac{1}{\langle t+|x| \rangle^3}\sup_{(y,v) \in \R^3_y \times \R^3_v} \langle v \rangle^7\big|  \mathbf{z}_{\mathrm{mod}}^7 \, h \big|(t,y,v).$$
\end{lemma}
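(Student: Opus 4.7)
The plan is to pull the supremum out of the integral and reduce the lemma to the pointwise integral estimate
\begin{equation*}
I(t,x) := \int_{\R^3_v} \frac{\dr v}{\langle v\rangle^7 \, \mathbf{z}_{\mathrm{mod}}^7(t,x,v)} \lesssim \frac{1}{\langle t+|x|\rangle^3},
\end{equation*}
via the pointwise bound $|h|(t,x,v) \leq M \langle v\rangle^{-7}\mathbf{z}_{\mathrm{mod}}^{-7}(t,x,v)$, where $M$ denotes the supremum on the right-hand side of the statement. It remains to bound $I(t,x)$.

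To estimate $I(t,x)$, I would perform the change of variables $y = x-tv+\varphi(t,x,v)$ at fixed $(t,x)$. The Jacobian $\det(-tI+\nabla_v\varphi) = (-t)^3(1+O(\epsilon))$ is uniformly bounded away from $0$ and $\infty$ thanks to the estimate $|\nabla_v\varphi| \lesssim \epsilon \, \langle t\rangle$ of Lemma \ref{lemma_varphi} and the smallness of $\epsilon$. A Hadamard-type argument, combining the local invertibility with the coercivity of the linear part $v \mapsto x-tv$, then shows that the map is a global $C^1$-diffeomorphism of $\R^3_v$ onto $\R^3_y$. Its inverse satisfies $|v(y)-(x-y)/t|\lesssim \epsilon\, t^{-1}\log\langle t\rangle$, so $\langle v(y)\rangle \simeq \langle (x-y)/t\rangle$ up to a constant depending only on $\epsilon$. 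Using the identity $\langle (x-y)/t\rangle^{-7} = t^7(t^2+|x-y|^2)^{-7/2}$, this yields
\begin{equation*}
I(t,x) \lesssim t^4 \int_{\R^3_y} \frac{\dr y}{(t^2+|x-y|^2)^{7/2} \, \langle y\rangle^7}.
\end{equation*}

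To bound this integral I would distinguish two cases. When $|x|\leq t$, the crude estimate $(t^2+|x-y|^2)^{7/2}\geq t^7$ gives $I(t,x)\lesssim t^{-3} \simeq \langle t+|x|\rangle^{-3}$ directly. When $|x|\geq t$, I would split $\R^3_y$ into $A=\{|x-y|\geq |x|/2\}$ and $B=\{|y|\geq |x|/2\}$, which cover $\R^3_y$ by the triangle inequality. On $A$, $(t^2+|x-y|^2)^{7/2}\gtrsim |x|^7 \simeq (t+|x|)^7$, so the contribution is $\lesssim t^4(t+|x|)^{-7}\int\langle y\rangle^{-7}\dr y \lesssim (t+|x|)^{-3}$ (using $t\leq |x|$). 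On $B$, $\langle y\rangle^{-7}\lesssim (t+|x|)^{-7}$ and $\int(t^2+|x-y|^2)^{-7/2}\dr y\lesssim t^{-4}$, so the contribution is $\lesssim (t+|x|)^{-7}\lesssim (t+|x|)^{-3}$. The small-time case $t\leq 2$ is easier since then $|\varphi|\lesssim \epsilon$ gives $\mathbf{z}_{\mathrm{mod}}\gtrsim \langle x-tv\rangle$ and $\langle t+|x|\rangle\simeq \langle x\rangle$, reducing to the analogous estimate on the unperturbed weight.

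The main technical obstacle is verifying that $v\mapsto x-tv+\varphi(t,x,v)$ is a global diffeomorphism with Jacobian bounded from below, which crucially uses the smallness of $\epsilon$ and the quantitative bound on $\nabla_v\varphi$; the remaining work is a routine splitting of a $3$-dimensional integral.
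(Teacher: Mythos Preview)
Your proof is correct. The change of variables $y=x-tv+\varphi(t,x,v)$ and the Jacobian control via $t^{-1}|\nabla_v\varphi|\lesssim\epsilon$ are exactly what the paper uses (the paper factors this as $y'=x-tv$ followed by $w=y'+\varphi(t,x,(x-y')/t)$, which composes to your map). Where you differ is in extracting the spatial decay: you keep track of $\langle v(y)\rangle\simeq\langle(x-y)/t\rangle$ after the change of variables and then split the $y$-integral into near/far regions relative to $x$. The paper instead uses the pointwise weight inequality
\[
|x|\le |x-tv|+t|v|\lesssim \langle t\rangle\big(\mathbf{z}_{\mathrm{mod}}(t,x,v)+\langle v\rangle\big)
\]
\emph{before} integrating: it first proves the pure time-decay bound $\int|h|\,\dr v\lesssim \langle t\rangle^{-3}\sup(\mathbf{z}_{\mathrm{mod}}^4+\langle v\rangle^4)|h|$ using only four powers of each weight, and then applies this bound to $\mathbf{z}_{\mathrm{mod}}^3 h$ and $\langle v\rangle^3 h$ to pick up the additional factor $\langle x/t\rangle^{-3}$. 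The paper's route is shorter and avoids the explicit region-splitting, at the cost of being slightly less transparent about where the $\langle t+|x|\rangle^{-3}$ actually comes from; your splitting argument is more hands-on and makes the balance between $t$ and $|x|$ explicit.
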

\begin{proof}
The case $t \leq 1$ and $|x| \leq 1$ is straightforward since the map $v \mapsto \langle v \rangle^{-4}$ belongs to $L^1(\R^3_v)$. Let $t \geq 1$. We observe that
$$ \int_{\R^3_v} |h(t,x,v)| \mathrm{d} v \leq \sup_{(y,v) \in \R^3_y \times \R^3_v} \big| \mathbf{z}_{\mathrm{mod}}^4 \, h \big|(t,y,v) \int_{ \R^3_v} \frac{\mathrm{d} v}{\langle x-tv+\varphi(t,x,v) \rangle^4} .$$
The change of variables $y= x-tv$ yields 
$$
t^3\int_{\R^3_v} |h(t,x,v)| \mathrm{d} v \lesssim \sup_{(y,v) \in \R^3_y \times \R^3_v} \big| \mathbf{z}_{\mathrm{mod}}^4 \,  h \big|(t,y,v) \int_{\R^3_y} \frac{\mathrm{d} y}{\langle y+\varphi (t,x, (x-y)t^{-1} ) \rangle^4}.
$$
Let $\phi : y \mapsto y+\varphi (t,x, (x-y)t^{-1} )$ and $\psi = \mathrm{Id}-\phi$. In order to perform the change of variables $w= \phi(y)$, we will prove that $\frac{1}{2 } \leq |\det( \mathrm{d} \phi(y))| \leq 2$ for all $y \in \R^3$, provided $\epsilon$ is small enough. This property is satisfied since 
$$ \forall \, (t,x,y) \in [0,T) \times \R^3_x\times\R^3_y , \qquad |\mathrm{d} \psi (y) | \leq t^{-1} |\nabla_v \varphi | \big(t,x, (x-y)t^{-1} \big) \lesssim \epsilon,$$
according to Lemma \ref{lemma_varphi}. We then deduce $$ \int_{\R^3_y} \frac{\mathrm{d} y}{\langle y+\varphi (t,x, (x-y)t^{-1} ) \rangle^{4}} \leq  2\int_{\R^3_w} \frac{\mathrm{d} w}{\langle w \rangle^{4}} \leq 4.$$ 
We have then proved that 
\begin{equation}\label{eq:htimedecay}
\int_{\R^3_v} |h|(t,x,v) \mathrm{d} v \lesssim  \langle t \rangle^{-3}\sup_{(y,v) \in \R^3_y \times \R^3_v} \big|  \mathbf{z}_{\mathrm{mod}}^4 \, h \big|(t,y,v)+\langle v \rangle^4 |h|(t,y,v) ,
\end{equation}
for all $(t,x) \in [0,T) \times \R^3_x$. To obtain the spatial decay and conclude the proof, we remark that
$$
 |x| \leq  |x-tv|+t|v| \leq 2 \log(3+t) \mathbf{z}_{\mathrm{mod}}(t,x,v)+t \langle v \rangle \leq 2\langle t \rangle \big( \mathbf{z}_{\mathrm{mod}}(t,x,v)+ \langle v \rangle  \big)
$$
in view of Lemma \ref{lemma_varphi}. We finally apply \eqref{eq:htimedecay} to $\langle v\rangle^3 h$ as well as to $\mathbf{z}_{\mathrm{mod}}^3h$.
\end{proof}
\begin{remark}\label{rklineardecay}
Note that a simpler proof provides 
$$ \forall \, (t,x) \in [0,T) \times \R^3_x, \qquad  \int_{\R^3_v} |h|(t,x,v) \mathrm{d} v \lesssim \frac{1}{\langle t+|x| \rangle^3}\sup_{(y,v) \in \R^3_y \times \R^3_v} \langle y-tv \rangle^7 \langle v \rangle^7 |h|(t,y,v).$$
\end{remark}
Applying Lemma \ref{Lemdecay1} to $h(t,x,v)=  Z^{\beta}f(t,x,v)$ and then Proposition \ref{prop_point_bound_deriv_distrib}, we obtain the following estimates.

\begin{corollary}\label{cor_decay_losing_spatial_density}
For any $|\beta|\leq N$ and all $(t,x)\in [0,T)\times \R^3_x$, we have 
$$ \int _{\R^3_v} \big| Z^{\beta}f \big|(t,x,v)\mathrm{d}v\lesssim \dfrac{\e \, \langle t \rangle^{\frac{1}{4}}}{\langle t+|x| \rangle^{3}} .$$
\end{corollary}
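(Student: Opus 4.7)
The plan is to combine the two technical results stated immediately before the corollary: Lemma \ref{Lemdecay1} converts a velocity integral of $|Z^\beta f|$ into a weighted $L^\infty_{x,v}$-norm with the correct spatial decay factor $\langle t+|x|\rangle^{-3}$, and Proposition \ref{prop_point_bound_deriv_distrib} controls that norm by either a logarithmic loss or $\langle t \rangle^{1/4}$ depending on the order of $\beta$. Chaining the two gives exactly the stated bound.

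Concretely, applying Lemma \ref{Lemdecay1} with $h := Z^\beta f$ yields
$$ \int_{\R^3_v} |Z^\beta f|(t,x,v) \, \dr v \;\lesssim\; \frac{1}{\langle t+|x|\rangle^3} \sup_{(y,v) \in \R^3_y \times \R^3_v} \langle v \rangle^7 \, \mathbf{z}_{\mathrm{mod}}^7 \, |Z^\beta f|(t,y,v). $$
To bound the right-hand side, I would invoke Proposition \ref{prop_point_bound_deriv_distrib} with $(N_x, N_v) = (7,7)$; this requires $\mathbb{E}_N^{7,7}[f_0] < \infty$, which is automatic from the smallness assumption \eqref{eq:smallnessassump} by monotonicity in $N_x$, namely $\mathbb{E}_N^{7,7}[f_0] \leq \mathbb{E}_N^{8,7}[f_0] \leq \epsilon$. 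Rewriting any $Z^\beta$ as $\partial_x^{\beta_x} G^{\beta_v}$ via the commutation property of Lemma \ref{lemma_commuting_in_lambda}, the proposition yields
$$ \mathbf{v}_{\mathrm{mod}}^7 \, \mathbf{z}_{\mathrm{mod}}^7 \, |Z^\beta f|(t,y,v) \;\lesssim\; \epsilon \log^{N-1}(2+t) \qquad \text{if } |\beta| \leq N-1, $$
and
$$ \mathbf{v}_{\mathrm{mod}}^7 \, \mathbf{z}_{\mathrm{mod}}^7 \, |Z^\beta f|(t,y,v) \;\lesssim\; \epsilon \, \langle t \rangle^{1/4} \qquad \text{if } |\beta| = N. $$
In both cases the right-hand side is dominated by $\epsilon \langle t \rangle^{1/4}$. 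Finally, to exchange $\mathbf{v}_{\mathrm{mod}}^7$ for $\langle v \rangle^7$ I would use Lemma \ref{Lemvelociweight}: since $|w| \lesssim \epsilon$ and $\mathbf{v}_{\mathrm{mod}} \geq 1$, one has $\langle v \rangle \lesssim \mathbf{v}_{\mathrm{mod}}$ for $\epsilon$ small enough, hence $\langle v \rangle^7 |Z^\beta f| \lesssim \mathbf{v}_{\mathrm{mod}}^7 |Z^\beta f|$ up to a harmless constant.

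There is no genuine obstacle here, as the real work has already been done in the two preceding statements. The only points requiring care are choosing $(N_x,N_v)=(7,7)$ so that the weight matches the $\langle v \rangle^7 \mathbf{z}_{\mathrm{mod}}^7$ produced by Lemma \ref{Lemdecay1}, ensuring that the relevant initial data norm is indeed controlled by $\epsilon$ via monotonicity in $N_x$, and observing that the logarithmic growth in the lower-order case is harmlessly absorbed by the $\langle t \rangle^{1/4}$ factor needed at top order.
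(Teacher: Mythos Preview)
Your proof is correct and follows exactly the approach the paper takes: apply Lemma \ref{Lemdecay1} with $h=Z^\beta f$ and then bound the resulting weighted sup-norm via Proposition \ref{prop_point_bound_deriv_distrib}. The extra details you supply (choosing $(N_x,N_v)=(7,7)$, converting $\mathbf{v}_{\mathrm{mod}}$ to $\langle v\rangle$ via Lemma \ref{Lemvelociweight}, and absorbing the logarithm into $\langle t\rangle^{1/4}$) are all correct and merely flesh out what the paper leaves implicit.
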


If $C_{\mathrm{boot}}$ is chosen large enough, Corollary \ref{cor_decay_losing_spatial_density} improves the bootstrap assumption \ref{boot3} and concludes the proof of the global existence part of Theorem \ref{thm_detail_modified_scattering}.

\section{Modified scattering}\label{SecModiscat}

We follow here the strategy outlined in Section \ref{Subsecstrategymodifiedscatt}. We consider a global solution $f$ to \eqref{eq:VP} arising from data satisfying the assumptions of Theorem \ref{thm_detail_modified_scattering}. Although all the results of this section could be adapted to the derivatives of order $N-1$, we restrict our analysis to the lower order derivatives for convenience. We assume here that $N \geq 2$.  

\subsection{Convergence of the spatial averages}

Recall $g_0(t,x,v):=f(t,x+tv,v)$. The first step consists in proving the next result.

\begin{proposition}\label{ProconvQ}
There exists $Q_\infty  \in C^{N-2}(\R^3_v)$ such that, for any $|\beta| \leq N-2$, we have
$$ \forall \, (t,v) \in [2,\infty) \times \R^3_v,  \qquad \langle v \rangle^{4} \bigg| \int_{\R^3_x} \partial_v^\beta g_0 (t,x,v) \dr x - \partial_v^\beta Q_\infty (v) \bigg| \lesssim  
        \epsilon \langle t \rangle^{-1}\log^{N-1}(t) .$$
\end{proposition}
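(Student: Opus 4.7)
\medskip

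The proof will follow the scheme of Step 1 in Section \ref{Subsecstrategymodifiedscatt}: establish that the time derivative of the relevant spatial average is integrable, so that the spatial average is Cauchy and converges with a quantitative rate. The key observation is that by performing the change of variables $y=x+tv$,
\[
\int_{\R^3_x} \partial_v^\beta g_0(t,x,v)\,\dr x \;=\; \int_{\R^3_x} G^\beta f(t,x+tv,v)\,\dr x \;=\; \int_{\R^3_x} G^\beta f(t,x,v)\,\dr x,
\]
so I am reduced to studying the spatial averages of $G^\beta f = Z^\beta f$.

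First I would apply Proposition \ref{prop_derivative_time_x_average} (in its improved form, which applies since $|\beta|\leq N-2$) to obtain
\[
\Big|\partial_t \int_{\R^3_x} Z^\beta f(t,x,v)\,\dr x \Big| \lesssim \frac{\epsilon}{\langle t \rangle^{2}}\sup_{|\kappa|\leq |\beta|+1}\sup_{x\in \R^3_x}\big|\z^4 Z^\kappa f\big|(t,x,v).
\]
Since $|\beta|+1\leq N-1$, I can control the supremum using Proposition \ref{prop_point_bound_deriv_distrib} with $(N_x,N_v)=(4,4)$, which gives $\mathbf{v}_{\mathrm{mod}}^4 \z^4 |Z^\kappa f|(t,x,v)\lesssim \epsilon\log^{N-1}(2+t)$. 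In view of $\mathbf{v}_{\mathrm{mod}} \gtrsim \langle v\rangle$ (Lemma \ref{Lemvelociweight}), this yields
\[
\langle v\rangle^4 \Big|\partial_t \int_{\R^3_x} Z^\beta f(t,x,v)\,\dr x \Big| \lesssim \frac{\epsilon\log^{N-1}(2+t)}{\langle t \rangle^{2}},
\]
a quantity that is integrable in $t$ on $[2,\infty)$.

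Next, Duhamel-type integration from $t$ to $+\infty$ produces a limit $\mathcal{Q}_\beta(v):=\lim_{t\to\infty}\int_{\R^3_x} Z^\beta f(t,x,v)\,\dr x$ satisfying
\[
\langle v\rangle^{4}\Big|\int_{\R^3_x} Z^\beta f(t,x,v)\,\dr x - \mathcal{Q}_\beta(v)\Big| \lesssim \int_t^\infty \frac{\epsilon\log^{N-1}(s)}{s^2}\dr s \;\lesssim\; \frac{\epsilon\log^{N-1}(t)}{t},
\]
which is exactly the desired quantitative bound. The bounds being uniform in $v$ on compact sets, each $\mathcal{Q}_\beta$ is continuous.

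It remains to check that the family $\{\mathcal{Q}_\beta\}_{|\beta|\leq N-2}$ is generated by a single function $Q_\infty \in C^{N-2}(\R^3_v)$. For this I would set $Q_\infty := \mathcal{Q}_0$ and argue by induction on $|\beta|$: integrating by parts in $x$ (using the spatial decay from Proposition \ref{prop_point_bound_deriv_distrib}) kills every occurrence of $t\partial_{x^i}$ inside $G^\beta$, so that $\int_{\R^3_x} G^\beta f(t,x,v)\,\dr x = \partial_v^\beta \int_{\R^3_x} f(t,x,v)\,\dr x$. Combining this identity with the uniform convergence (in $v$, on compact subsets) of $\partial_v^{\beta'} \int f\,\dr x$ for all $|\beta'|\leq N-2$ lets me pass derivatives to the limit and conclude $\mathcal{Q}_\beta=\partial_v^\beta Q_\infty$ with $Q_\infty\in C^{N-2}(\R^3_v)$. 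The only subtlety, and the mildly technical point of the proof, is justifying the interchange of limit and differentiation, which is precisely handled by the uniform convergence estimate above applied successively to $|\beta|=0,1,\ldots,N-2$.
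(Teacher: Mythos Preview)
Your proof is correct and follows essentially the same route as the paper: identify $\int \partial_v^\beta g_0\,\dr x$ with $\int Z^\beta f\,\dr x$, apply Proposition~\ref{prop_derivative_time_x_average} in its improved form (decay $\langle t\rangle^{-2}$ since $|\beta|\le N-2$), bound the weighted supremum via Proposition~\ref{prop_point_bound_deriv_distrib}, and integrate in time. The paper's proof is considerably terser and leaves the regularity statement $Q_\infty\in C^{N-2}$ implicit; your added discussion of passing derivatives to the limit via uniform convergence is the standard justification and is correct.
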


\begin{proof}
As $[G^\beta f](t,x+tv,v)=\partial_v^\beta g_0(t,x,v)$, the decay estimate in Proposition \ref{prop_derivative_time_x_average} implies
$$ \langle v \rangle^{4} \bigg| \partial_t\int_{\R^3_x}  \partial_v^\beta g_0 (t,x,v) \mathrm{d}x \bigg|\lesssim \epsilon \langle t \rangle^{-2}\sup_{|\kappa|\leq |\beta|+1}\sup_{x\in\R^3_x\times \R^3_v } \, \langle v \rangle^{4}\big|\mathbf{z}^{4}_{\mathrm{mod}}Z^{\kappa}f(t,x,v)\big| \lesssim \epsilon  \langle t \rangle^{-2} \log^{N-1}(t),$$
where we used Proposition \ref{prop_point_bound_deriv_distrib} in the last step. This implies the result.
\end{proof}

\subsection{Leading contribution of the spatial density}

Next, we prove that $t^3 \rho (G^\beta f)$ has an asymptotic self-similar profile for any $|\beta| \leq N-2$. For this, we first apply Propositions \ref{Prodecayvelocitav} and \ref{ProconvQ}, and then remark that $t^3 \, \langle x/t \rangle^3 \gtrsim \langle t+|x| \rangle^3$ for $t \geq 2$.

\begin{proposition}\label{Prorhosimilar}
Let $|\beta| \leq N-2$. Then, for all $(t,x) \in [2,\infty) \times \R^3_x$, we have
$$ \Big| t^3 \rho \big( G^\beta f \big) (t,x) - \partial_v^\beta Q_\infty \Big( \frac{x}{t} \Big) \Big| \lesssim  \frac{\epsilon \, t^2 \log^{N}(t)}{\langle t+|x| \rangle^3} .$$
\end{proposition}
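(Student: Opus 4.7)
The proof is essentially the recipe already sketched just before the statement: combine the two convergence results previously established, and then rewrite the decay factor in the form advertised on the right-hand side. Let me outline the steps in more detail.

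First, I would fix $|\beta| \leq N-2$ and $(t,x) \in [2,\infty) \times \R^3_x$, and apply Proposition \ref{Prodecayvelocitav} with $Z^\beta = G^\beta$. This yields
\begin{equation*}
\Big| t^3 \rho(G^\beta f)(t,x) - \int_{\R^3_y} G^\beta f \Big(t,y,\tfrac{x}{t}\Big) \dr y \Big| \lesssim \epsilon \, \langle x/t \rangle^{-3}\, t^{-1} \log^N(t).
\end{equation*}
Second, since $\partial_v^\beta g_0(t,y,v) = [G^\beta f](t,y+tv,v)$, the translation invariance of the Lebesgue measure in $y$ gives
\begin{equation*}
\int_{\R^3_y} G^\beta f\Big(t,y,\tfrac{x}{t}\Big) \dr y = \int_{\R^3_y} \partial_v^\beta g_0\Big(t,y,\tfrac{x}{t}\Big) \dr y,
\end{equation*}
so that evaluating Proposition \ref{ProconvQ} at $v=x/t$ produces
\begin{equation*}
\Big| \int_{\R^3_y} G^\beta f\Big(t,y,\tfrac{x}{t}\Big) \dr y - \partial_v^\beta Q_\infty\Big(\tfrac{x}{t}\Big)\Big| \lesssim \epsilon \, \langle x/t \rangle^{-4}\, t^{-1} \log^{N-1}(t).
\end{equation*}

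The triangle inequality then yields
\begin{equation*}
\Big| t^3 \rho(G^\beta f)(t,x) - \partial_v^\beta Q_\infty\Big(\tfrac{x}{t}\Big)\Big| \lesssim \epsilon \, \langle x/t \rangle^{-3}\, t^{-1}\log^N(t),
\end{equation*}
since the first contribution dominates. It remains to rewrite this bound. For $t \geq 2$, we have $t^2 \langle x/t \rangle^2 = t^2 + |x|^2 \geq \tfrac{1}{2}(t+|x|)^2$, and $\langle t+|x| \rangle \leq 2(t+|x|)$ because $t+|x| \geq 2$. Hence $t \langle x/t \rangle \gtrsim \langle t+|x| \rangle$, from which
\begin{equation*}
\frac{1}{t \, \langle x/t \rangle^3} = \frac{t^2}{(t \langle x/t \rangle)^3} \lesssim \frac{t^2}{\langle t+|x| \rangle^3}.
\end{equation*}
Inserting this into the previous estimate gives the stated inequality.

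The only subtlety is making sure that the extra factor $\langle v \rangle^{-4}$ of Proposition \ref{ProconvQ}, when evaluated at $v=x/t$, is better than what is needed for matching against the first estimate; the computation above shows it provides an extra $\langle x/t\rangle^{-1}$, which is harmless. No real obstacle is expected in this proof, it is purely a combination of two already-proved convergence statements with an elementary inequality.
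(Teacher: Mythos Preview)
Your proof is correct and follows exactly the approach the paper uses: apply Propositions \ref{Prodecayvelocitav} and \ref{ProconvQ}, combine via the triangle inequality, and convert the decay factor using $t\langle x/t\rangle \gtrsim \langle t+|x|\rangle$ for $t\geq 2$. You have simply spelled out the details (the translation invariance step and the explicit inequality for the Japanese bracket) that the paper leaves implicit.
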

\begin{remark}
Proposition \ref{Prorhosimilar} identifies the leading order contribution of the spatial derivatives of $\rho(f)$ since $t^{3+|\beta|} \partial_x^\beta \rho (f) =t^3\rho (G^\beta f)$.
\end{remark}

\subsection{Asymptotic behaviour of the force field}\label{subsecasym_force}

We are then able to derive the asymptotic self-similar behaviour of the force field and its derivatives. For this, we exploit the precise asymptotic behaviour of $\rho (f)$. 

\begin{proposition}\label{Proasympselfsiphi}
Let $\phi_{\infty}\in C^{N-2}(\R^3_v)$ be defined as
$$  \Delta_v \phi_{\infty} = Q_\infty   .$$
Then, for any $|\kappa| \leq N-2$, we have
$$ \forall \, (t,x) \in [2,\infty) \times \R^3_x, \qquad \Big| t^{2+|\kappa|} \nabla_x  \partial_x^\kappa \phi (t,x)- \nabla_v \partial_v^\kappa \phi_{\infty} \Big( \frac{x}{t} \Big) \Big| \lesssim  \epsilon \, \frac{\log^{N}(t)}{t}.$$
Moreover, we have $\|  \nabla_v \phi_{\infty}  \|_{W^{N-2,\infty}(\R^3_v)} \lesssim \epsilon$.
\end{proposition}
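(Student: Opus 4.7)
\medskip

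\noindent\textbf{Proof plan.} The strategy is to compare the force field, written as a convolution of $\rho(G^\kappa f)$ with the kernel of $\Delta_x^{-1}$, against an analogous convolution expression for $\nabla_v \partial_v^\kappa \phi_\infty$, and to use Proposition \ref{Prorhosimilar} to control the integrand pointwise. The starting point is the representation already derived in \eqref{nabla_phi}, namely
\[ \nabla_x \partial_x^{\kappa} \phi(t,x)= \frac{1}{4 \pi t^{|\kappa|}}\int_{\R_y^3} \dfrac{y}{|y|^3}\rho(G^{\kappa}f)(t,x-y)\dr y,\]
so that multiplying by $t^{2+|\kappa|}$ produces a convolution of $t^2 \rho(G^\kappa f)$ with $y/|y|^3$. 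On the other side, from Proposition \ref{ProconvQ} we know $Q_\infty \in C^{N-2}(\R^3_v)$ with $\langle v\rangle^4|\partial_v^\beta Q_\infty| \lesssim \epsilon$ for $|\beta|\leq N-2$. I define $\phi_\infty$ through the standard Newtonian potential representation
\[ \phi_\infty(v):=-\frac{1}{4\pi}\int_{\R^3_w} \frac{Q_\infty(w)}{|v-w|}\dr w,\]
which is well-defined and $C^{N-2}$ thanks to the $\langle \cdot\rangle^{-4}$ decay of $\partial_v^\beta Q_\infty$, solves $\Delta_v \phi_\infty = Q_\infty$, and yields
\[ \nabla_v \partial_v^\kappa \phi_\infty(v) = \frac{1}{4\pi}\int_{\R^3_w}\frac{w}{|w|^3}\,\partial_v^\kappa Q_\infty(v-w)\dr w.\]

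\noindent The core computation is then a change of variables: setting $w=y/t$ in the expression for the force field,
\[ t^{2+|\kappa|} \nabla_x \partial_x^\kappa \phi(t,x) = \frac{1}{4\pi}\int_{\R^3_w}\frac{w}{|w|^3}\,t^3 \rho(G^\kappa f)\big(t,\,x-tw\big)\,\dr w.\]
Subtracting $\nabla_v \partial_v^\kappa \phi_\infty(x/t)$ written in the same form produces
\[ t^{2+|\kappa|} \nabla_x \partial_x^\kappa \phi(t,x) - \nabla_v \partial_v^\kappa \phi_\infty\Big(\frac{x}{t}\Big) = \frac{1}{4\pi}\int_{\R^3_w}\frac{w}{|w|^3}\,\Big[t^3 \rho(G^\kappa f)(t,x-tw) - \partial_v^\kappa Q_\infty\Big(\frac{x-tw}{t}\Big)\Big]\dr w.\]
By Proposition \ref{Prorhosimilar}, the bracketed term is bounded in absolute value by $\epsilon\, t^2 \log^N(t)/\langle t+|x-tw|\rangle^3$. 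Undoing the change of variables (i.e.\ putting back $y=tw$, $\dr y=t^3 \dr w$) reduces the remainder to the integral
\[ \frac{\epsilon \log^N(t)}{t}\int_{\R^3_y} \frac{\dr y}{|y|^2\,\langle t+|x-y|\rangle^3},\]
which is $\lesssim \epsilon t^{-1}\log^N(t)\cdot t^{-2}\cdot t^2 = \epsilon \log^N(t)/t$ by the kernel bound \eqref{remark_uniform_integral_bound_kernel_convolution_duan}. This yields the claimed asymptotic estimate.

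\medskip

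\noindent For the bound $\| \nabla_v \phi_\infty\|_{W^{N-2,\infty}(\R^3_v)}\lesssim \epsilon$, I apply the convolution formula for $\nabla_v \partial_v^\kappa \phi_\infty$ and split the integration into $|w|\leq 1$ and $|w|\geq 1$: on the first piece I use $\|\partial_v^\kappa Q_\infty\|_{L^\infty}\lesssim \epsilon$ and integrability of $|w|^{-2}$ near the origin, while on the second I exploit the decay $|\partial_v^\kappa Q_\infty(v-w)|\lesssim \epsilon \langle v-w\rangle^{-4}$ from Proposition \ref{ProconvQ}, together with an estimate of the same type as Lemma \ref{lemma_uniform_integral_bound_kernel_convolution_duan}.

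\medskip

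\noindent The only mildly delicate point is ensuring that $\phi_\infty$ defined by the Newtonian potential really coincides (up to an irrelevant constant) with any distributional solution of $\Delta_v \phi_\infty = Q_\infty$ in a class where $\nabla_v \phi_\infty$ is bounded; this is guaranteed by the $\langle v\rangle^{-4}$ decay of $Q_\infty$. Beyond this, the proof is a convolution estimate: the main obstacle is simply the bookkeeping in the change of variables $w=y/t$, which must be carried out carefully so that the spatial decay factor $\langle t+|x|\rangle^{-3}$ in Proposition \ref{Prorhosimilar} translates correctly into the kernel bound \eqref{remark_uniform_integral_bound_kernel_convolution_duan}.
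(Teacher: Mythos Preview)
Your proof is correct and follows essentially the same route as the paper: both reduce the estimate to Proposition~\ref{Prorhosimilar} combined with the kernel bound \eqref{remark_uniform_integral_bound_kernel_convolution_duan}. The paper's presentation is marginally cleaner in that it writes the Poisson equation for the \emph{difference} $t^{1+|\kappa|}\partial_x^\kappa\phi - \partial_v^\kappa\phi_\infty(x/t)$ directly and then applies the Green function estimate once, rather than expanding both convolutions separately and subtracting; but this is purely cosmetic. One small slip: after undoing $y=tw$ the prefactor should be $\epsilon\, t\log^N(t)$, not $\epsilon\log^N(t)/t$ (your informal ``$\cdot\, t^2$'' correction compensates for this), so the final bound $\epsilon\log^N(t)/t$ is right but the displayed intermediate line is off by $t^2$.
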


\begin{proof}
Let $|\kappa| \leq N-2$, and set $\psi^\kappa (t,x):= \partial_v^\kappa \phi_{\infty} (\frac{x}{t})$. Then, $t^2\Delta_x \psi^\kappa = \partial_v^\kappa Q_\infty (\frac{x}{t})$, so 
$$ t^2 \Delta_x \big[ t^{1+|\kappa|} \partial_x^\kappa \phi -  \psi^\kappa \big](t,x) = t^3 \rho \big( G^\kappa f \big)(t,x)-\partial_v^\kappa Q_\infty \Big( \frac{x}{t} \Big).$$
According to Proposition \ref{Prorhosimilar}, this quantity is bounded by $\epsilon \, t^{2} \log^{N}(t) \langle t+|x| \rangle^{-3}$. It remains to use the estimate \eqref{remark_uniform_integral_bound_kernel_convolution_duan}. 
\end{proof}

We are then able to describe the asymptotic behaviour of $t^2 \nabla_x \phi $ along the spatial characteristics of the linearised system $t \mapsto x+tv$.

\begin{corollary}\label{Coralonglin}
Let $|\kappa| \leq N-2$. Then, for all $(t,x,v) \in [2,\infty) \times \R^3_x \times \R^3_v$, we have
$$  \big| t^{2+|\kappa|} \nabla_x \partial_x^\kappa \phi (t,x+tv) - \nabla_v \partial_v^{\kappa} \phi_\infty (v) \big| \lesssim \epsilon \log^{N}(t) t^{-1}+\epsilon|x| t^{-1} .$$
\end{corollary}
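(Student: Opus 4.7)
The plan is to reduce this to Proposition \ref{Proasympselfsiphi} evaluated at the shifted spatial point $x+tv$, plus a first-order Taylor expansion of $\nabla_v \partial_v^\kappa \phi_\infty$ around $v$.

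First, I would apply Proposition \ref{Proasympselfsiphi} with the point $x$ replaced by $x+tv$. Since $(x+tv)/t = v+ x/t$, this yields directly
\begin{equation*}
\Big| t^{2+|\kappa|} \nabla_x  \partial_x^\kappa \phi (t,x+tv)- \nabla_v \partial_v^\kappa \phi_{\infty} \Big(v+ \frac{x}{t} \Big) \Big| \lesssim  \epsilon \, \frac{\log^{N}(t)}{t},
\end{equation*}
which takes care of the first error term $\epsilon \log^N(t) t^{-1}$ in the corollary.

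Next, I would estimate the difference between $\nabla_v \partial_v^\kappa \phi_\infty (v+x/t)$ and $\nabla_v \partial_v^\kappa \phi_\infty (v)$ using the fundamental theorem of calculus applied to $\tau \mapsto \nabla_v \partial_v^\kappa \phi_\infty(v+\tau x/t)$, giving
\begin{equation*}
 \Big|\nabla_v \partial_v^\kappa \phi_\infty\Big(v+\frac{x}{t} \Big) - \nabla_v \partial_v^\kappa \phi_\infty(v) \Big| \leq \frac{|x|}{t} \big\| \nabla_v^2 \partial_v^\kappa \phi_\infty \big\|_{L^\infty(\R^3_v)}.
\end{equation*}
The factor on the right is controlled by the regularity statement $\|\nabla_v \phi_\infty\|_{W^{N-2,\infty}} \lesssim \epsilon$ from Proposition \ref{Proasympselfsiphi} (together with elliptic regularity for $\Delta_v \phi_\infty = Q_\infty$ to absorb the top-order case if needed), yielding a bound of order $\epsilon |x|/t$.

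Combining both estimates by the triangle inequality gives the claimed bound $\epsilon \log^N(t) t^{-1} + \epsilon |x| t^{-1}$. No real obstacle arises; the only subtle point is ensuring that $\nabla_v \partial_v^\kappa \phi_\infty$ is Lipschitz in $v$ for all $|\kappa|\leq N-2$, which follows from the Sobolev control of $\nabla_v \phi_\infty$ (and, at the very top order, from the explicit representation of $\phi_\infty$ as a Newtonian potential of $Q_\infty$, which is $C^{N-2}\cap W^{N-2,\infty}$).
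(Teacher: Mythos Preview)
Your approach is very close to the paper's and the idea is sound, but there is a subtlety at the top order $|\kappa|=N-2$. The paper also splits the difference via the mean value theorem together with Proposition~\ref{Proasympselfsiphi}, but it applies the mean value theorem to $\nabla_x\partial_x^\kappa\phi(t,\cdot)$ between $x+tv$ and $tv$, using the decay $t^{|\kappa|+1}|\nabla_x\nabla_x\partial_x^\kappa\phi|\lesssim\epsilon t^{-2}$ from Proposition~\ref{proposition_estimate_phi}, and then invokes Proposition~\ref{Proasympselfsiphi} at the point $tv$. This works uniformly over $|\kappa|\le N-2$ because Proposition~\ref{proposition_estimate_phi} controls derivatives of $\phi$ up to order $N-1$.

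Your route instead applies the mean value theorem to $\nabla_v\partial_v^\kappa\phi_\infty$, which requires $\nabla_v^2\partial_v^\kappa\phi_\infty\in L^\infty$, i.e.\ $\nabla_v\phi_\infty\in W^{|\kappa|+1,\infty}$. For $|\kappa|=N-2$ this asks for $\nabla_v\phi_\infty\in W^{N-1,\infty}$, whereas Proposition~\ref{Proasympselfsiphi} only provides $\|\nabla_v\phi_\infty\|_{W^{N-2,\infty}}\lesssim\epsilon$. Elliptic regularity or the Newtonian representation do not manufacture the missing derivative here, since $Q_\infty$ is only known to lie in $C^{N-2}\cap W^{N-2,\infty}$ (see Proposition~\ref{ProconvQ}). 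So your argument is complete for $|\kappa|\le N-3$ but has a genuine gap at $|\kappa|=N-2$; the simplest fix is exactly the paper's: move the mean value theorem to the force field $\phi$ rather than to the asymptotic potential $\phi_\infty$.
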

\begin{proof}
By the mean value theorem and the estimates in Proposition \ref{proposition_estimate_phi}, we have
$$ t^{2+|\kappa|}\big|  \nabla_x \partial_x^\kappa \phi (t,x+tv)-\nabla_x \partial_x^\kappa \phi (t,tv) \big| \lesssim  \epsilon |x|  t^{-1} .$$
It remains to apply Proposition \ref{Proasympselfsiphi}. 
\end{proof}

\subsection{Modified scattering for the distribution function}\label{subsec_mod_scatt}

Now that we have identified the term responsible for the long-range effect of the force field, that is, the term preventing linear scattering to hold, we define the spatial modification of the characteristics as
$$ \X_1(t,x,v) := x+ \mu \log(t) \nabla_v \phi_\infty(v).$$
The modification $\X_1$ is a good approximation of the spatial nonlinear characteristics, as it can be observed in Lemma \ref{lemmaesti_f} below. Then, as $\T_\phi (f)=0$, we have
$$ \partial_t \big[f(t,\X_1+tv,v) \big] = \partial_t \big[ \X_1 \big] \cdot \nabla_x f(t,\X_1+tv,v)+\mu \nabla_x \phi (t,\X_1+tv) \cdot \nabla_v f(t,\X_1+tv,v),$$
where we have dropped the dependence in $(t,x,v)$ of $\X_1$. Writing $\nabla_v=G-t\nabla_x$ yields
\begin{align*}
 \partial_t \big[f(t,\X_1+tv,v) \big] & = \frac{\mu}{t}\Big( \! \nabla_v \phi_\infty(v)-t^2 \nabla_x \phi (t,\X_1+tv) \! \Big) \cdot \big[\nabla_x f \big](t,\X_1+tv,v)
 \\ & \quad +\mu \nabla_x \phi (t,\X_1+tv) \cdot \big[G f \big](t,\X_1+tv,v).
 \end{align*}
Then, we need to determine the asymptotic behaviour of the force field along the modified spatial characteristics. Since the corrections are lower order, it sufficies to apply Corollary \ref{Coralonglin} for $x=\X_1$. As $|\nabla_v \phi_\infty(v)| \lesssim \epsilon$, we obtain $|\X_1| \leq |x|+\log(t)$, so that
\begin{equation}\label{eq:toremind}
 \big| \partial_t \big[f(t,\X_1+tv,v) \big] \big| \lesssim \epsilon \, t^{-2}\big(\log^{N}(t)+\langle x \rangle\big)\big|\nabla_x f \big|(t,\X_1+tv,v)+\epsilon t^{-2} \big|G f \big|(t,\X_1+tv,v) .
 \end{equation}
We remark now that according to Corollary \ref{Corboundg0}, for any $Z \in \boldsymbol{\lambda}$ we have
\begin{equation}\label{eq:auxi}
 \langle x \rangle^{8} \, \langle v \rangle^{7}  \big|Z f \big| (t,\X_1+tv,v) \lesssim \big(\langle \X_1 \rangle^{8}+\log^{8}(t) \big) \, \langle v \rangle^{7}  \big|\nabla_{x,v} g_0 \big| (t,\X_1,v) \lesssim \epsilon \log^9(t).
 \end{equation}
Thus, we have $$\langle x \rangle^7 \, \langle v \rangle^{7}|\partial_t[f(t,\X_1+tv,v)] | \lesssim \epsilon^2 t^{-2}\log^{N+9}(t),$$ from which we deduce that $f$ enjoys a modified scattering dynamic.
\begin{proposition}\label{prop_mod_scattering_statem_high_reg_proof}
There exists a distribution function $\widetilde{f}_{\infty}\in C^{N-2}(\R^3_x\times\R^3_v)$ such that for every $|\kappa|\leq N-2$ and all $(t,x,v) \in [2,\infty ) \times \R^3_x \times \R^3_v$, we have $$ \langle x \rangle^7  \langle v \rangle^{7}\Big| \partial_{x,v}^{\kappa}\big[ f(t,\X_1(t,x,v)+tv,v) \big]-\partial_{x,v}^{\kappa} \widetilde{f}_{\infty}(x,v)\Big|\lesssim \e t^{-1}\log^{2N+8}(t).$$ 
\end{proposition}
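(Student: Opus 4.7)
The plan is to obtain a pointwise, time-integrable bound for $\partial_t \partial_{x,v}^\kappa g_1^{(0)}$, where $g_1^{(0)}(t,x,v):=f(t,\X_1(t,x,v)+tv,v)$, and conclude via the Cauchy criterion applied in a weighted $C^{N-2}$ topology. Integrating the resulting bound from $t$ to $\infty$ will both produce the limit $\widetilde{f}_\infty$ and yield the stated decay rate.

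For the case $\kappa = 0$, the work is essentially done just before the proposition. Combining the identity \eqref{eq:toremind}, Corollary \ref{Coralonglin} applied at $x:=\X_1$ (using $|\X_1| \lesssim \langle x \rangle + \epsilon \log(t)$), Proposition \ref{proposition_estimate_phi}, and the weighted estimate \eqref{eq:auxi}, yields
$$ \langle x \rangle^7 \langle v \rangle^7 \big| \partial_t g_1^{(0)} \big|(t,x,v) \lesssim \epsilon^2 \, t^{-2} \log^{N+9}(t),$$
which is integrable on $[2,\infty)$. Setting $\widetilde{f}_\infty(x,v) := \lim_{t \to \infty} g_1^{(0)}(t,x,v)$ and integrating from $t$ to $\infty$ yields the case $\kappa = 0$, since $N+9 \leq 2N+8$ when $N \geq 2$. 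For $1 \leq |\kappa| \leq N-2$, I commute $\partial_{x,v}^\kappa$ with $\partial_t$ and differentiate the formula \eqref{eq:toremind}. Every $\partial_v$ that lands on the argument $\X_1 + tv = x + \mu \log(t) \nabla_v \phi_\infty(v) + tv$ produces either a factor $t$ paired with a subsequent $\nabla_x$ (compensated by the stronger decay of the higher spatial derivatives of $\phi$ given by Proposition \ref{proposition_estimate_phi}) or a factor $\mu \log(t) \partial_v^\alpha \nabla_v \phi_\infty(v)$ bounded by $\epsilon \log(t)$ via Proposition \ref{Proasympselfsiphi}. Derivatives of the correction $\nabla_v \phi_\infty(v) - t^2 \nabla_x \phi(t,\X_1+tv)$ are estimated by applying Corollary \ref{Coralonglin} to $\nabla_x \partial_x^\gamma \phi$ for $|\gamma| \leq |\kappa| \leq N-2$, so the improved structure $O\!\left(\epsilon t^{-1} \log^N(t) + \epsilon \langle x \rangle t^{-1}\right)$ is preserved. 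Finally, derivatives of $f$ at $(\X_1 + tv, v)$ of total order at most $|\kappa|+1 \leq N-1$ are controlled through Corollary \ref{Corboundg0}, the decay $\langle \X_1+tv \rangle^{-8}$ being transferred to $\langle x \rangle^{-7}$ by splitting the cases $|x| \geq c\epsilon \log(t)$ and $|x| \leq c\epsilon \log(t)$ (in the latter, an extra $\log^8(t)$ is absorbed into the log count). Careful bookkeeping then gives
$$ \langle x \rangle^7 \langle v \rangle^7 \big| \partial_t \partial_{x,v}^\kappa g_1^{(0)} \big|(t,x,v) \lesssim \epsilon^2 \, t^{-2} \log^{2N+7}(t).$$

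Integrating from $t$ to $\infty$, the maps $(x,v) \mapsto \partial_{x,v}^\kappa g_1^{(0)}(t,x,v)$ form a Cauchy family in the weighted $L^\infty$ norm with weight $\langle x \rangle^7 \langle v \rangle^7$; uniform convergence of each derivative up to order $N-2$ then implies $\widetilde{f}_\infty \in C^{N-2}(\R^3_x \times \R^3_v)$ and the stated estimate, with the $t^{-1} \log^{2N+8}(t)$ budget absorbing the $\log^{2N+7}(t)$ coming from the time integration. The main obstacle is the combinatorial tracking of the logarithmic factors arising from chain-rule expansions: each $v$-derivative either pulls a $\log(t)$ out of $\X_1$ or raises the log exponent in the bound for derivatives of $f$ provided by Corollary \ref{Corboundg0}. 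Controlling this requires, rather than a naive $t^{-2-|\gamma|}$ estimate on $\nabla_x \partial_x^\gamma \phi$ composed with $\X_1+tv$, the use of Corollary \ref{Coralonglin} for all orders $|\gamma| \leq N-2$, so that the gain from the cancellation of the leading-order term of the force field is transmitted to all derivatives of the correction.
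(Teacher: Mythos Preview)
Your proposal is correct and follows essentially the same approach as the paper: for $|\kappa|=0$ the paper derives exactly your bound $\langle x\rangle^7\langle v\rangle^7|\partial_t g_1^{(0)}|\lesssim \epsilon^2 t^{-2}\log^{N+9}(t)$ via \eqref{eq:toremind} and \eqref{eq:auxi}, and for higher $|\kappa|$ it defers to the proof of Proposition~\ref{prop_mod_scatteringg_1}, which organises the chain-rule terms through Lemma~\ref{Lemformodscattderiv} and bounds them with Corollary~\ref{Coralongmod2} in the same way you describe. The only cosmetic difference is that the paper later replaces your use of Corollary~\ref{Corboundg0}/\eqref{eq:auxi} by the sharper Lemma~\ref{lemmaesti_f} (controlling $\z$ along $\X_1+tv$ directly, which avoids the $\log^8(t)$ loss and yields $\log^{2N}(t)$ instead of $\log^{2N+8}(t)$); for the present proposition the cruder bound suffices, as you correctly note.
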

\begin{remark}
By a slightly more precise analysis, one could obtain from \eqref{eq:toremind} and Corollary \ref{Corboundg0} that $| f(t,\X_1(t,x,v)+tv,v) -\widetilde{f}_{\infty}|\lesssim \e t^{-1}\log^{2}(t)$ as stated in Theorem \ref{thm_main_result_first_version}.  
\end{remark}

So far, we have only proved the convergence result in a weighted $L^\infty_{x,v}$ norm. For the purpose of deriving late-time asymptotics, we will require a similar result but for different modified characteristics. In order to avoid repetitions, we postpone the proof of the high order regularity statement, which can be obtained, either by following the proof of Proposition \ref{prop_mod_scatteringg_1} and by formally replacing $\V_1$ by $v$, or from \eqref{eq:partialtg1} and $\|\nabla_v  \phi_\infty \|_{W^{N-2,\infty}_v} \lesssim \epsilon$.

\section{Improved expansion for the spatial density}\label{SecRhoOrder2}

The purpose of this section consists in deriving a first order expansion in powers of $t^{-1}$ for the normalised spatial density $t^3\rho(f)$. We will also prove useful preparatory results for the proof of Theorem \ref{ThLatetimeFull}, our main result.

In view of the discussion carried out in Section \ref{Subsecsecondorde}, we introduce the modification
$$ \mathbf{V}_1(t,z,v) := v+\mu t^{-1} \nabla_v \phi_\infty (v)$$ 
and we set $g_1(t,z,v):= f(t,\X_1+t\V_1,\V_1)$, that is
$$g_1 (t,z,v) := f \big(t,z+tv+\mu \log(t) \nabla_v \phi_\infty(v)+\mu  \nabla_v \phi_\infty(v),v+\mu t^{-1} \nabla_v \phi_\infty(v) \big).$$
We first prove that $g_1$ converges to $$f_\infty(x,v)=\widetilde{f}_\infty(x+\mu \nabla_v \phi_\infty (v),v)$$ in a strong topology. In other words, we prove a modified scattering statement for $f$. Then, we prove an enhanced convergence estimate for the spatial average of $g_1$ to the one of $f_\infty$. This will allow us to express $\rho(f)$ in terms of $g_1$, and then obtain a first order expansion for $t^3 \rho (f)$.

\subsection{Convergence properties of $g_1$}\label{Subsubsecg1}

Let us assume that $N \geq 2$. All the results derived here will require to compute the time derivative of $g_1(t,x,v)=f(t,\X_1+t\V_1,\V_1)$. Applying the chain rule, we have
\begin{align*}
\partial_t g_1(t,x,v) &= \big[\partial_t f\big]\big(t,\X_1+t\V_1,\V_1\big)+\big(\V_1+t\partial_t \V_1+\partial_t \X_1 \big)\cdot \big[ \nabla_x f \big]\big(t,\X_1+t\V_1,\V_1\big) \\
& \quad +\partial_t\V_1 \cdot \big[ \nabla_v f \big]\big(t,\X_1+t\V_1,\V_1\big).
\end{align*}
We now rewrite $[\partial_t f](t,\X_1+t\V_1,\V_1)+\V_1 \cdot [ \nabla_x f ](t,\X_1+t\V_1,\V_1)$ using the Vlasov equation, and $\nabla_v=G -t\nabla_x$. We then obtain
\begin{align}
\partial_t g_1(t,x,v) &= \big(\partial_t \X_1-t\mu \nabla_x \phi (t,\X_1+t\V_1) \big)\cdot \big[ \nabla_x f \big]\big(t,\X_1+t\V_1,\V_1\big)\label{timederivfirstexptechlemm}\\
& \quad +\big( \mu \nabla_x \phi (t,\X_1+t\V_1)+\partial_t \V_1 \big) \cdot \big[ G f \big]\big(t,\X_1+t\V_1,\V_1\big).\nonumber
\end{align}

Then, we determine the asymptotic behaviour of the force field along the modified spatial characteristics. Since the corrections are lower order, a result similar to Corollary \ref{Coralonglin} holds.

\begin{corollary}\label{Coralongmod}
Let $|\kappa| \leq N-2$. Then, for all $(t,x,v) \in [2,\infty) \times \R^3_x \times \R^3_v$, we have
$$  \big| t^{2+|\kappa|} \big[\nabla_x \partial_x^\kappa \phi \big] \big(t,x+tv+\mu [ \log(t)+1] \nabla_v \phi_\infty (v) \big) - \nabla_v \partial_v^{\kappa} \phi_\infty (v) \big| \lesssim \epsilon \, \langle x \rangle \,\log^{N}(t) t^{-1}.$$
\end{corollary}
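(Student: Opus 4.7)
The plan is to reduce the statement to Corollary \ref{Coralonglin} by absorbing the extra modification $\mu[\log(t)+1]\nabla_v\phi_\infty(v)$ into the spatial argument. More precisely, set
$$ y := x+\mu[\log(t)+1]\nabla_v \phi_\infty(v), $$
so that $x+tv+\mu[\log(t)+1]\nabla_v \phi_\infty(v) = y+tv$. Then a direct application of Corollary \ref{Coralonglin} at the point $(t,y,v)$ yields, for every $|\kappa|\leq N-2$,
$$ \big| t^{2+|\kappa|} \big[\nabla_x \partial_x^\kappa \phi \big] \big(t,y+tv \big) - \nabla_v \partial_v^{\kappa} \phi_\infty (v) \big| \lesssim \epsilon \log^{N}(t) t^{-1}+\epsilon\, |y|\, t^{-1}. $$

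It then remains to control $|y|$ by the RHS of the target estimate. By the triangle inequality and the bound $\|\nabla_v\phi_\infty\|_{L^\infty_v}\lesssim \epsilon$ established in Proposition \ref{Proasympselfsiphi},
$$ |y| \leq |x|+[\log(t)+1]\,|\nabla_v \phi_\infty(v)| \lesssim \langle x \rangle + \epsilon \log(t). $$
Since $N\geq 2$ and $\epsilon \lesssim 1$, the contribution $\epsilon |y| t^{-1}$ is absorbed into $\epsilon \langle x \rangle \log^N(t) t^{-1}$, and similarly for the term $\epsilon \log^{N}(t) t^{-1}$ (which does not even require the factor $\langle x \rangle$). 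Putting the two estimates together yields the claimed bound.

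The main (and only) technical point is that the additional translation by $\mu[\log(t)+1]\nabla_v\phi_\infty(v)$ in the argument of $\nabla_x\partial_x^\kappa\phi$ is lower order: it produces at most an $\epsilon \log(t)\, t^{-1}$ error, which is harmless compared to the $\epsilon \langle x\rangle \log^N(t)\, t^{-1}$ budget on the right-hand side. No new mean value or Taylor expansion step is needed beyond the one already carried out in the proof of Corollary \ref{Coralonglin}.
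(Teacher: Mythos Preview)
Your proof is correct and follows essentially the same approach as the paper: both set $y = x+\mu[\log(t)+1]\nabla_v\phi_\infty(v)$, apply Corollary \ref{Coralonglin} at $(t,y,v)$, and then use $|y|\lesssim |x|+\log(t)$ (via $\|\nabla_v\phi_\infty\|_{L^\infty}\lesssim\epsilon$) to absorb the error into the stated bound.
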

\begin{proof}
We have $|x+\mu [\log(t)+1] \nabla_v \phi_\infty (v)| \lesssim |x|+\log(t)$, so it suffices to apply Corollary \ref{Coralonglin}. 
\end{proof}

More work is required to show that $g_1$ converges in $W^{N-2,\infty}_{x,v}$. We next compute and control the derivatives of the error terms in \eqref{timederivfirstexptechlemm}.

\begin{lemma}\label{Lemformodscattderiv}
Let $|\kappa| \leq N-2$. For all $(t,x,v) \in \R_+^* \times \R^3_x \times \R^3_v$, the derivative $\partial_t \partial_{x,v}^\kappa \big[f(t,\X_1+t\V_1,\V_1) \big]$ can be written as a linear combination of terms of the form
\begin{align*}
&\frac{1}{t}  \partial_{x,v}^\gamma \Big( \! \nabla_v \phi_\infty(v)-t^2 \nabla_x \phi (t,\X_1+t\V_1) \! \Big) \cdot \big[\nabla_x \partial_x^\alpha G^\beta f \big] \big(t,\X_1+t\V_1,\V_1 \big) \frac{\log^k(t)}{t^{p-k}} \prod_{1 \leq i \leq p}  \partial_v^{\xi_i} \phi_\infty^{k_i}(v), \\
& \frac{1}{t^2} \partial_{x,v}^\gamma \Big( \! \nabla_v \phi_\infty(v) -t^2\nabla_x \phi (t,\X_1+t\V_1)\! \Big) \cdot \big[G \partial_x^\alpha G^\beta f \big] \big(t,\X_1+t\V_1,\V_1 \big)\frac{\log^k(t)}{t^{p-k}} \prod_{1 \leq i \leq p}  \partial_v^{\xi_i} \phi_\infty^{k_i}(v), 
\end{align*}
where $ k \leq  |\alpha|$, $0  \leq k \leq  p \leq |\kappa|$, $|\gamma|+|\alpha|+|\beta| \leq |\kappa|$, $2 \leq |\xi_i| \leq |\kappa|+1$, and $k_i \in \llbracket 1 , 3 \rrbracket$.
\end{lemma}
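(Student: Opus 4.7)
The plan is to proceed by induction on $|\kappa|$, starting from identity \eqref{timederivfirstexptechlemm}. For the base case $|\kappa|=0$, I would compute $\partial_t\X_1=\mu t^{-1}\nabla_v\phi_\infty(v)$ and $\partial_t\V_1=-\mu t^{-2}\nabla_v\phi_\infty(v)$, so that the two prefactors appearing in \eqref{timederivfirstexptechlemm} factor as
$$\partial_t\X_1-t\mu\nabla_x\phi(t,\X_1+t\V_1)=\mu t^{-1}\bigl(\nabla_v\phi_\infty(v)-t^2\nabla_x\phi(t,\X_1+t\V_1)\bigr),$$
$$\mu\nabla_x\phi(t,\X_1+t\V_1)+\partial_t\V_1=-\mu t^{-2}\bigl(\nabla_v\phi_\infty(v)-t^2\nabla_x\phi(t,\X_1+t\V_1)\bigr).$$
This exhibits $\partial_t g_1$ as a linear combination of one term of each prescribed type with $\gamma=\alpha=\beta=0$, $p=k=0$, and empty $\phi_\infty$-product.

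For the inductive step I would apply a single further vector field, $\partial_{x^i}$ or $\partial_{v^i}$, to a prototype summand and verify that each resulting term still fits the prescribed structure. A $\partial_{x^i}$ derivative is routine: using $\partial_{x^i}(\X_1+t\V_1)=e_i$ and $\partial_{x^i}\V_1=0$, the chain rule on $[\nabla_x\partial_x^\alpha G^\beta f](t,\X_1+t\V_1,\V_1)$ only raises $|\alpha|$ by one, while a derivative falling on the prefactor raises $|\gamma|$ by one. The delicate case is $\partial_{v^i}$, for which
$$\partial_{v^i}(\X_1+t\V_1)^j=t\delta_{ij}+\mu(\log(t)+1)\partial_{v^i}\partial_{v^j}\phi_\infty,\qquad \partial_{v^i}\V_1^j=\delta_{ij}+\mu t^{-1}\partial_{v^i}\partial_{v^j}\phi_\infty,$$
and a naive application of the chain rule would produce a term with the dangerous factor $t\delta_{ij}\cdot[\partial_{v^j}\nabla_x\partial_x^\alpha G^\beta f]$.

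The main obstacle is precisely this spurious factor of $t$, and the heart of the argument is to absorb it by rewriting every $\partial_{v^j}$ that falls on $f$ through the identity $\partial_{v^j}=G_j-t\partial_{x^j}$. The leading $t\delta_{ij}$ contribution is then exactly cancelled by the $-t\delta_{ij}$ produced by $-t\partial_{x^j}$, leaving
\begin{align*}
\partial_{v^i}\bigl([\nabla_x\partial_x^\alpha G^\beta f](t,\X_1+t\V_1,\V_1)\bigr)&=\mu\log(t)\,\partial_{v^i}\partial_{v^j}\phi_\infty\cdot[\partial_{x^j}\nabla_x\partial_x^\alpha G^\beta f](t,\X_1+t\V_1,\V_1)\\
&\quad+\bigl(\delta_{ij}+\mu t^{-1}\partial_{v^i}\partial_{v^j}\phi_\infty\bigr)[G_j\nabla_x\partial_x^\alpha G^\beta f](t,\X_1+t\V_1,\V_1),
\end{align*}
where $G_j$ commutes with $\partial_x^\alpha$ and $G^\beta$ by Lemma \ref{lemma_commuting_in_lambda}. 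The first summand raises $|\alpha|$, $k$, and $p$ each by one while introducing an additional factor $\partial_v^\xi\phi_\infty$ of order $|\xi|=2$; the second raises $|\beta|$ by one, and on its last piece also $p$ by one thanks to the extra $t^{-1}$ and derivative of $\phi_\infty$. Derivatives falling on an existing factor in $\prod_i\partial_v^{\xi_i}\phi_\infty^{k_i}$ only raise one of the $|\xi_i|$ by one, preserving $|\xi_i|\leq|\kappa|+2$. A direct check then shows that the bookkeeping $|\gamma|+|\alpha|+|\beta|\leq|\kappa|$, $k\leq|\alpha|$, and $k\leq p\leq|\kappa|$ all propagate correctly from level $|\kappa|$ to level $|\kappa|+1$; the analogous analysis applied to the $t^{-2}$ family of terms is identical, closing the induction.
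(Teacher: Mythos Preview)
Your proposal is correct and follows essentially the same approach as the paper. Both start from identity \eqref{timederivfirstexptechlemm}, factor the two prefactors to extract $\nabla_v\phi_\infty(v)-t^2\nabla_x\phi(t,\X_1+t\V_1)$, and then iterate the chain rule identities for $\partial_{x^i}$ and $\partial_{v^i}$ applied to $h(t,\X_1+t\V_1,\V_1)$; the paper packages the cancellation $\partial_{v}=G-t\nabla_x$ directly into the stated $\partial_{v^i}$ identity, whereas you derive that cancellation explicitly and spell out the index bookkeeping, but the argument is the same.
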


\begin{remark}
The condition $k \leq |\alpha|$ is key to determine the asymptotic behaviour of the spatial average of the derivatives of $g_1$.
\end{remark}

\begin{proof}
We obtain the result from \eqref{timederivfirstexptechlemm}, and by iterating the relations
\begin{align*}
\partial_{x^i} \big[ h(t,\X_1+t\V_1,\V_1) \big] & = \big[\partial_{x^i} h \big] (t,\X_1+t\V_1,\V_1), \\
 \partial_{v^i} \big[ h(t,\X_1+t\V_1,\V_1) \big] & = \big[G_i h \big] (t,\X_1+t\V_1,\V_1)+\frac{\mu}{t}  \nabla_v \partial_{v^i}\phi_\infty(v) \cdot \big[G h \big] (t,\X_1+t\V_1,\V_1) \\
 &\qquad  \quad +\mu \log(t) \nabla_v \partial_{v^i}\phi_\infty(v) \cdot \big[\nabla_x h \big] (t,\X_1+t\V_1,\V_1),
\end{align*}
which hold for any distribution function $h : \R_+ \! \times \R^3_x \times \R^3_v \to \R$.
\end{proof}

To show the convergence of the derivatives of $g_1$, we need the following result to control the terms in Lemma \ref{Lemformodscattderiv}.

\begin{corollary}\label{Coralongmod2}
Let $|\kappa_x| \leq N-1$ and $|\kappa_v| \leq N-2$. For all $(t,x,v) \in [2,\infty) \times \R^3_x \times \R^3_v$, we have
\begin{align*}
\big| \partial_x^{\kappa_x} \big[ \nabla_x  \phi \big(t,x+tv+\mu [\log(t)+1] \nabla_v \phi_\infty  \big)\big] \big| & \lesssim \epsilon \, t^{-2-|\kappa_x|}, \\
  \big| t^{2} \partial_v^{\kappa_v} \big[ \nabla_x  \phi \big(t,x+tv+\mu [\log(t)+1] \nabla_v \phi_\infty  \big)\big] - \nabla_v \partial_v^{\kappa_v} \phi_\infty  \big| & \lesssim \epsilon \, \langle x \rangle\log^{N}(t) t^{-1}.
  \end{align*}
\end{corollary}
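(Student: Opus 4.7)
Both estimates are obtained by expanding the derivatives of the composition via the chain rule and then combining Proposition \ref{proposition_estimate_phi} with the asymptotic estimate of Corollary \ref{Coralongmod}. The key structural feature of the argument $y(t,x,v):=x+tv+\mu[\log(t)+1]\nabla_v\phi_\infty(v)$ is that it depends affinely on $x$ with identity Jacobian, while its $v$-Jacobian
$$ \Theta_i^j(t,v):=\partial_{v^i}y^j=t\,\delta_i^j+\mu[\log(t)+1]\partial^2_{v^iv^j}\phi_\infty(v) $$
is only a small perturbation of $t\,\mathrm{Id}$, since $\|\nabla_v\phi_\infty\|_{W^{N-2,\infty}_v}\lesssim\epsilon$ by Proposition \ref{Proasympselfsiphi}.

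The first estimate is immediate: since $\partial_{x^i}y^j=\delta_i^j$, the $x$-derivative commutes with evaluation, so $\partial_x^{\kappa_x}[\nabla_x\phi(t,y)]=[\partial_x^{\kappa_x}\nabla_x\phi](t,y)$, and Proposition \ref{proposition_estimate_phi} yields the bound $\epsilon\, t^{-2-|\kappa_x|}$ for all $|\kappa_x|\leq N-1$.

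For the second estimate, I would iterate the chain rule in $v$ as in Lemma \ref{Lemformodscattderiv} to obtain a decomposition
$$ \partial_v^{\kappa_v}\bigl[\nabla_x\phi(t,y)\bigr]=t^{|\kappa_v|}\bigl[\partial_x^{\kappa_v}\nabla_x\phi\bigr](t,y)+\sum_{|\alpha|\leq |\kappa_v|}R_{\kappa_v,\alpha}(t,v)\bigl[\partial_x^{\alpha}\nabla_x\phi\bigr](t,y), $$
where the leading term corresponds to using the $t\,\delta_i^j$ part of every $\Theta$-factor produced while peeling off $\partial_y$ from $\phi$ and never differentiating a $\Theta$. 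Every non-leading coefficient $R_{\kappa_v,\alpha}$ contains at least one factor of the form $\partial_v^{p}\bigl[(\log t+1)\nabla_v^2\phi_\infty\bigr]$, hence carries at least one factor of $\epsilon$, and one checks that $|R_{\kappa_v,\alpha}(t,v)|\lesssim \epsilon\, t^{|\alpha|-1}\log^{|\kappa_v|}(t)$ (the power of $t$ dropping by at least one since an extra $\log(t)\Psi$-factor replaces one $t\,\mathrm{Id}$-factor).

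After multiplying by $t^2$, the leading term is identified via Corollary \ref{Coralongmod} with $|\kappa|=|\kappa_v|\leq N-2$ as
$$ t^{2+|\kappa_v|}\bigl[\partial_x^{\kappa_v}\nabla_x\phi\bigr](t,y)=\nabla_v\partial_v^{\kappa_v}\phi_\infty(v)+O\!\bigl(\epsilon\langle x\rangle\log^{N}(t)\,t^{-1}\bigr), $$
while each remainder term is estimated using Proposition \ref{proposition_estimate_phi} by
$$ t^2\,|R_{\kappa_v,\alpha}|\,\bigl|\partial_x^{\alpha}\nabla_x\phi\bigr|(t,y)\lesssim t^2\cdot \epsilon\, t^{|\alpha|-1}\log^{|\kappa_v|}(t)\cdot \frac{\epsilon}{t^{2+|\alpha|}}\lesssim \frac{\epsilon^{2}\log^{|\kappa_v|}(t)}{t}, $$
which is absorbed into the target error $O(\epsilon\langle x\rangle\log^{N}(t)/t)$. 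The main obstacle is the bookkeeping of the coefficients $R_{\kappa_v,\alpha}$: one must confirm that every non-leading contribution inherits at least one factor of $\epsilon$ from a derivative of $\phi_\infty$, so that the remainder is strictly subleading; once this structural observation is in place, the estimate follows by direct substitution of Corollary \ref{Coralongmod}.
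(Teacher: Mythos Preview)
Your proposal is correct and follows essentially the same route as the paper. The paper also observes that $\partial_{x^i}(\X_1^j)=\delta_i^j$ for the first estimate, and for the second it writes the difference $\partial_v^{\kappa_v}[\nabla_x\phi(t,\X_1+t\V_1)]-t^{|\kappa_v|}[\nabla_x\partial_x^{\kappa_v}\phi](t,\X_1+t\V_1)$ as a linear combination of terms $t^{|\alpha|-p}[\nabla_x\partial_x^{\alpha}\phi](t,\X_1+t\V_1)\prod_{i=1}^p(\log(t)+1)\partial_v^{\xi_i}\phi_\infty$ with $p\geq 1$, bounds each by $\epsilon^{1+p}t^{-3}\log^p(t)$, and then invokes Corollary~\ref{Coralongmod} for the leading term---exactly your decomposition and estimates, just stated more tersely.
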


\begin{proof}
The first estimate follows from $\partial_{x^i} (\X_1^j)= \delta_i^j$ and Proposition \ref{proposition_estimate_phi}. For the second estimate, we note that $\partial_{v^i} (\X_1^j+t\V_1^j)=t\delta_{i,j}+\mu [\log(t) +1]\partial_{v^i} \partial_{v^j} \phi_\infty(v) $. Then, we write the difference
$$ \partial_v^{\kappa_v} \big[ \nabla_x  \phi \big(t,\X_1 +t\V_1\big)\big]-t^{|\kappa_v|}\big[\nabla_x \partial_x^{\kappa_v} \phi \big] (t,\X_1+t\V_1)$$
as a linear combination of terms of the form
$$ t^{|\alpha|-p}\big[\nabla_x \partial_x^{\alpha} \phi \big] (t,\X_1+t\V_1) \! \prod_{1 \leq i \leq p} (\log(t)+1) \partial_{v}^{\xi_i} \phi_\infty(v),$$ with $1 \leq p=|\alpha| \leq |\kappa_v|,$ and $2 \leq |\xi_i| \leq |\kappa_v|+1$. These last terms are bounded by $\epsilon^{1+p} t^{-3}  \log^p(t)$, so the result follows from Corollary \ref{Coralongmod}.
\end{proof}

Finally, we prove that $\X_1+tv$, and then $\X_1+t\V_1$, are good approximations to the nonlinear spatial characteristics. The following lemma allows to save $N_x$ powers of $\log(t)$ in the error terms.
 
\begin{lemma}\label{lemmaesti_f}
For all $(t,x,v) \in [2,\infty) \times \R^3_x \times \R^3_v$, we have
$$\big| \z (t,x,v)-\big\langle x-tv- \mu \log(t) \nabla_v \phi_\infty(v) \big\rangle \big| \leq 4 \, \z (t,x,v).$$
Then, for any $|\kappa_x|+|\kappa_v| \leq N-1$ and all $(t,x,v) \in [2,\infty) \times \R^3_x \times \R^3_v$, we have
\begin{align}
 \langle x \rangle^{N_x} \langle v \rangle^{N_v} \big| \partial_x^{\kappa_x} G^{\kappa_v} f \big| \big(t,\X_1(t,x,v)+tv,v \big) &\lesssim \log^{|\kappa_v|}(t),\nonumber\\
  \langle x \rangle^{N_x} \langle v \rangle^{N_v} \big| \partial_x^{\kappa_x} G^{\kappa_v} f \big| \big(t,\X_1(t,x,v)+t\V_1(t,x,v),\V_1(t,x,v) \big) &\lesssim \log^{|\kappa_v|}(t).\nonumber
  \end{align}
\end{lemma}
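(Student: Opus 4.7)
The content of the first estimate is that the nonlinear correction $\varphi$ introduced in Lemma \ref{lemma_varphi} agrees, up to a constant, with the explicit logarithmic shift $-\mu\log(t)\nabla_v\phi_\infty(v)$ produced by the modified scattering of Section \ref{subsec_mod_scatt}. I would establish this by starting from $\T_\phi(\varphi) = -\T_\phi(x-tv) = -\mu t\nabla_x\phi$ together with $\varphi(0,\cdot,\cdot)=0$, so that integration along the nonlinear characteristic $s\mapsto(X_\phi(s),V_\phi(s))$ reaching $(x,v)$ at time $t$ yields
\begin{equation*}
\varphi(t,x,v) = -\int_0^t \mu s\, \nabla_x\phi\bigl(s, X_\phi(s)\bigr)\,\mathrm{d}s.
\end{equation*}
Using Corollary \ref{Coralonglin} together with the fact that $X_\phi(s) = x_0 + sv_0 + O(\log s)$ and $V_\phi(s) = v_0 + O(s^{-1})$, one may replace $s^2\nabla_x\phi(s,X_\phi(s))$ by $\nabla_v\phi_\infty(v) + O(s^{-1}\log^N s)$ and integrate to obtain $\varphi(t,x,v) + \mu\log(t)\nabla_v\phi_\infty(v) = O(\epsilon)$. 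The triangle inequality for $\langle\cdot\rangle$ then delivers the first estimate.

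The second statement is a consequence of the first. At the shifted point $(\X_1+tv,v)$ one computes
\begin{equation*}
\z(t,\X_1+tv,v) = \bigl\langle \X_1 + \varphi(t,\X_1+tv,v)\bigr\rangle = \bigl\langle x + \mu\log(t)\nabla_v\phi_\infty(v) + \varphi(t,\X_1+tv,v)\bigr\rangle,
\end{equation*}
and applying the first estimate at this shifted argument gives $\z(t,\X_1+tv,v) \gtrsim \langle x\rangle$ provided $\epsilon$ is small. Since $\mathbf{v}_{\mathrm{mod}}(t,\X_1+tv,v)$ is comparable to $\langle v\rangle$ by Lemma \ref{Lemvelociweight}, Proposition \ref{prop_point_bound_deriv_distrib} yields
\begin{equation*}
\langle x\rangle^{N_x}\langle v\rangle^{N_v}\bigl|\partial_x^{\kappa_x}G^{\kappa_v}f\bigr|(t,\X_1+tv,v) \lesssim \z^{N_x}\,\mathbf{v}_{\mathrm{mod}}^{N_v}\bigl|\partial_x^{\kappa_x}G^{\kappa_v}f\bigr|(t,\X_1+tv,v) \lesssim \log^{|\kappa_v|}(t).
\end{equation*}
The version with $\V_1$ in place of $v$ is analogous, using $|\V_1 - v| \lesssim \epsilon/t$ from Proposition \ref{Proasympselfsiphi} so that $\langle\V_1\rangle$ is comparable to $\langle v\rangle$ and the weight $\z(t,\X_1+t\V_1,\V_1)$ is only perturbed by $O(\epsilon)$ compared to $\z(t,\X_1+tv,v)$ inside the bracket.

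The main obstacle is the pointwise identification $\varphi(t,x,v) + \mu\log(t)\nabla_v\phi_\infty(v) = O(\epsilon)$, because $\varphi$ is defined by integration along nonlinear characteristics while the self-similar asymptotic of $\nabla_x\phi$ is provided in Corollary \ref{Coralonglin} only along linearised characteristics. To bridge this gap, I would run a bootstrap argument: assume the weaker bound $|\varphi + \mu\log(t)\nabla_v\phi_\infty| \leq C$ for some large $C$, use it to control $X_\phi(s) - x_0 - sv_0$ up to $O(\log s)$ terms, invoke Corollary \ref{Coralonglin} to evaluate $s^2\nabla_x\phi(s,X_\phi(s))$ with an error integrable in $s^{-1}\mathrm{d}s$, and close the bootstrap with a constant strictly smaller than $C$. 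Alternatively, one may identify the asymptotic data $(x_0,v_0)$ via the modified scattering map of Proposition \ref{prop_mod_scattering_statem_high_reg_proof} and propagate the pointwise identity backwards in time.
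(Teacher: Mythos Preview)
Your approach to the second part of the lemma is correct and matches the paper's: once the first inequality is established, evaluating it at the shifted point $(\X_1+tv,v)$ gives $\langle x\rangle \leq 5\,\z(t,\X_1+tv,v)$, after which Proposition \ref{prop_point_bound_deriv_distrib} and Lemma \ref{Lemvelociweight} yield the estimates on $f$.

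For the first part, however, your claimed bound $\varphi(t,x,v)+\mu\log(t)\nabla_v\phi_\infty(v)=O(\epsilon)$ is too strong and in fact false when $|x-tv|$ is large. The reason is visible in your own integral: along the nonlinear characteristic one has $X_\phi(s)-sV_\phi(s)=(x-tv)+O(\epsilon\log t)$, so Corollary \ref{Coralonglin} applied at $X_\phi(s)$ produces an error term $\epsilon\,|x-tv|/s$; after multiplying by $s^{-1}$ and integrating in $s$ this contributes $O(\epsilon\,|x-tv|)=O(\epsilon\,\z)$, not $O(\epsilon)$. This weaker bound \emph{is} exactly what the lemma asserts, since $|\z-\langle z_{\mathrm{approx}}\rangle|\leq |\varphi+\mu\log(t)\nabla_v\phi_\infty|\lesssim\epsilon\,\z\leq 4\z$ for small $\epsilon$. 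So your route can be made to work, but the bootstrap hypothesis must be $|\varphi+\mu\log(t)\nabla_v\phi_\infty|\leq C\epsilon\,\z$ rather than $\leq C$.

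The paper takes a more direct route that sidesteps this issue entirely. Instead of integrating the explicit formula for $\varphi$, it sets $z_{\mathrm{approx}}:=x-tv-\mu\log(t)\nabla_v\phi_\infty(v)$ and computes $\T_\phi(z_{\mathrm{approx}})$ directly, obtaining
\[
|\T_\phi(z_{\mathrm{approx}})|\lesssim \epsilon\,t^{-2}|z_{\mathrm{approx}}|+\epsilon\,t^{-2}\log^N(t)
\]
via the mean value theorem and Proposition \ref{Proasympselfsiphi}. Since $\z$ is conserved along the nonlinear flow and $|\T_\phi(\z-\langle z_{\mathrm{approx}}\rangle)|\leq|\T_\phi(z_{\mathrm{approx}})|$, Gr\"onwall on $\langle z_{\mathrm{approx}}\rangle$ along the flow gives the multiplicative comparison $\langle z_{\mathrm{approx}}\rangle\lesssim\z$ immediately. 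This avoids having to track the characteristic trajectory explicitly and makes the $\z$-dependence of the error transparent from the outset.
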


\begin{proof}
Let $z_{\mathrm{approx}}(t,x,v):=x-tv-\mu \log(t) \nabla_v \phi_\infty(v)$. Then,
\begin{align*}
\big| \T_\phi (z_{\mathrm{approx}}) \big|(t,x,v) & \leq \big| t  \nabla_x \phi(t,x)-t\nabla_x \phi (t,tv)+t\nabla_x \phi (t,tv)-t^{-1}\nabla_v \phi_\infty(v) \big| \\
& \qquad +\log(t) \big| \nabla_x \phi(t,x) \cdot \nabla_v^2 \phi_\infty (v) \big| .
\end{align*}
Thus, by the mean value theorem, and Propositions \ref{proposition_estimate_phi} and \ref{Proasympselfsiphi}, we have
\begin{align*}
\big| \T_\phi (z_{\mathrm{approx}}) \big|(t,x,v) & \lesssim \epsilon t^{-2} |x-tv|+\epsilon t^{-2} \log^N(t) \lesssim \epsilon t^{-2} \big|z_{\mathrm{approx}}\big|(t,x,v)+\epsilon t^{-2} \log^N(t).
\end{align*}
Denote by $\mathcal{F}(t,s,x,v)$ the nonlinear characteristic flow induced by $\T_\phi$. If $\epsilon$ is small enough, for all $t \geq 2$ we have
$$  \langle z_{\mathrm{approx}} \rangle (t,\mathcal{F}(t,2,x,v)) \leq 2  \langle z_{\mathrm{approx}} \rangle \,(2,x,v) \leq 3\langle x-2v \rangle \leq 4 \z (2,x,v) = 4\z (t,\mathcal{F}(t,2,x,v)) ,$$
where, in the last step, we used that $\z$ is conserved along $\mathcal{F}$. We then obtain the desired estimate by applying Duhamel formula, since $$|\T_\phi (\z-\langle z_{\mathrm{approx}} \rangle)| \leq |\T_\phi ( z_{\mathrm{approx}} )|.$$

The first estimate for $f$ then follows from Proposition \ref{prop_point_bound_deriv_distrib}. Moreover, the second estimate holds since $|v-\V_1(t,x,v)| \lesssim \epsilon t^{-1}$.
\end{proof}

We are now able to prove a modified scattering result for $f$ in a strong topology. Recall for this that $g_1(t,x,v)=f(t,\X_1+t\V_1,\V_1)$.

\begin{proposition}\label{prop_mod_scatteringg_1}
There exists a scattering state $f_{\infty}\in C^{N-2}(\R^3_x\times\R^3_v)$ such that the modified profile $g_1$ converges to $f_\infty$. More precisely, for any $|\kappa|\leq N-2$ and all $(t,x,v) \in [2,\infty ) \times \R^3_x \times \R^3_v$, we have $$ \langle x \rangle^{N_x-1} \, \langle v \rangle^{N_v}\Big| \partial_{x,v}^{\kappa} g_1(t,x,v) - \partial_{x,v}^\kappa f_{\infty}(x,v)\Big|\lesssim \frac{\log^{2N}(t)}{t}.$$ 
\end{proposition}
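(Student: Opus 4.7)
The strategy is to show that $\partial_t \partial^\kappa_{x,v} g_1$ is integrable in $t$ in the weighted $L^\infty_{x,v}$ norm of the statement, and then to define $f_\infty(x,v) := \lim_{t \to \infty} g_1(t,x,v)$ via Duhamel's formula. I would first treat the case $|\kappa| = 0$ as a warm-up and then upgrade to all derivatives of order at most $N-2$ using Lemma \ref{Lemformodscattderiv}.

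For $|\kappa|=0$, observe that $\partial_t \X_1 = \mu t^{-1}\nabla_v \phi_\infty(v)$ and $\partial_t \V_1 = -\mu t^{-2} \nabla_v\phi_\infty(v)$, so the two coefficients appearing in \eqref{timederivfirstexptechlemm} can be rewritten as
$$ \partial_t\X_1 - t\mu \nabla_x \phi(t,\X_1+t\V_1) = \frac{\mu}{t}\bigl[\nabla_v \phi_\infty(v) - t^2 \nabla_x \phi(t,\X_1+t\V_1)\bigr], $$
and analogously the coefficient of $Gf$ equals $-\mu t^{-2}[\nabla_v \phi_\infty(v) - t^2\nabla_x \phi(t,\X_1+t\V_1)]$. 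Since $\X_1 + t\V_1 = x + tv + \mu[\log(t)+1]\nabla_v\phi_\infty(v)$, Corollary \ref{Coralongmod} controls the bracketed quantity by $\epsilon \langle x \rangle \log^N(t) t^{-1}$. Using Lemma \ref{lemmaesti_f} to bound $|\nabla_x f|(t,\X_1+t\V_1,\V_1)$ and $|Gf|(t,\X_1+t\V_1,\V_1)$, one obtains
$$\langle x\rangle^{N_x-1}\langle v\rangle^{N_v}\bigl|\partial_t g_1\bigr|(t,x,v) \lesssim \epsilon \,\log^{N+1}(t)\, t^{-2},$$
which is integrable in $t$. Hence $g_1$ is Cauchy as $t\to\infty$ in the weighted $L^\infty$ norm, converges to some limit $f_\infty$, and the pointwise convergence rate $\log^{N+1}(t)/t$ follows from integrating the time derivative from $t$ to $+\infty$.

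For general $|\kappa| \leq N-2$, I would apply Lemma \ref{Lemformodscattderiv} to expand $\partial_t \partial^\kappa_{x,v} g_1$ into a linear combination of terms of two types, each involving a factor $\partial^\gamma_{x,v}\bigl[\nabla_v \phi_\infty(v)-t^2 \nabla_x \phi(t,\X_1+t\V_1)\bigr]$ and a derivative $\partial^\alpha_x G^\beta$ of $f$ evaluated at $(t,\X_1+t\V_1,\V_1)$, multiplied by a polynomial in the $\partial_v^{\xi_i}\phi_\infty(v)$ and a factor $t^{k-p}\log^k(t)/t$ or $/t^2$. Each factor is bounded in turn: Corollary \ref{Coralongmod2} handles the force-field difference (when $\gamma_x=0$, the factor $\langle x \rangle \log^N(t) t^{-1}$ appears; when $|\gamma_x|\geq 1$, the decay $t^{-2-|\gamma_x|}$ from Proposition \ref{proposition_estimate_phi} is even more favorable); Lemma \ref{lemmaesti_f} bounds $|\partial^\alpha_x G^\beta f|(t,\X_1+t\V_1,\V_1)$ by $\log^{|\beta|}(t)\langle x\rangle^{-N_x}\langle v\rangle^{-N_v}$; and $\|\nabla_v\phi_\infty\|_{W^{N-2,\infty}}\lesssim \epsilon$ from Proposition \ref{Proasympselfsiphi}. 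Combining these, and using the constraint $k \leq |\alpha|$ from Lemma \ref{Lemformodscattderiv} to prevent the power $t^{-(p-k)}$ from counteracting the logarithmic gains, the worst summand is bounded by $\epsilon \log^{2N}(t)/t^2$ in the weighted norm $\langle x \rangle^{N_x-1}\langle v \rangle^{N_v}(\cdot)$. Integrating from $t$ to $+\infty$ yields the stated convergence rate $\log^{2N}(t)/t$ and identifies $\partial_{x,v}^\kappa f_\infty$ as the limit of $\partial_{x,v}^\kappa g_1$, so $f_\infty \in C^{N-2}(\R^3_x \times \R^3_v)$.

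The main obstacle is bookkeeping: one must carefully tally the logarithmic losses produced by differentiating both the characteristic modifications in $v$ (which generate factors of $\log(t)$ through $\partial_v(\X_1+t\V_1)$) and the Galilean derivatives of $f$ (which through Lemma \ref{lemmaesti_f} contribute $\log^{|\beta|}(t)$), to verify that the total logarithmic exponent stays below $2N$. The loss of one power of $\langle x \rangle$ in the weight reflects the mean-value-type step in the proof of Corollary \ref{Coralonglin}, and cannot be avoided by this method.
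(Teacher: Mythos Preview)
Your proposal is correct and follows essentially the same route as the paper: decompose $\partial_t \partial_{x,v}^\kappa g_1$ via Lemma \ref{Lemformodscattderiv}, bound the force-field differences by Corollary \ref{Coralongmod2}, bound the $f$-factors by Lemma \ref{lemmaesti_f}, and integrate in time. One small remark: the constraint $k \leq p$ already ensures $t^{-(p-k)} \leq 1$ is harmless; the role of $k \leq |\alpha|$ is rather to cap the total logarithmic exponent via $k + |\beta| \leq |\alpha| + |\beta| \leq |\kappa| \leq N-2$, which combined with the $\log^N(t)$ from Corollary \ref{Coralongmod2} yields the claimed $\log^{2N}(t)$.
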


 \begin{remark}
If $(N_x,N_v)=(8,7)$, then the RHS could be improved to $\epsilon t^{-1} \log^{2N}(t)$.
 \end{remark}
 
\begin{proof}
Lemma \ref{Lemformodscattderiv} and Corollary \ref{Coralongmod2} imply that for any $|\kappa| \leq N-2$ and all $(t,x,v) \in [2,\infty) \times \R^3_x \times \R^3_v$, we have
\begin{equation*}
 \big| \partial_t \partial_{x,v}^\kappa \big[ f(t,\X_1+t\V_1,\V_1) \big] \big| \lesssim  \epsilon \log^{N+|\alpha|}(t) t^{-2} \, \langle x \rangle \sup_{|\alpha|+|\beta| \leq N-1}   \big|\partial_x^\alpha G^\beta f \big| (t,\X_1+t\V_1,\V_1).
 \end{equation*}
We then deduce from Lemma \ref{lemmaesti_f} that, for any $|\kappa| \leq N-2$ and all $(t,x,v) \in [2,\infty) \times \R^3_x \times \R^3_v$, 
\begin{equation}\label{eq:partialtg1}
\langle x \rangle^{N_x-1} \, \langle v \rangle^{N_v}  \big| \partial_t \partial_{x,v}^\kappa \big[ f(t,\X_1+t\V_1,\V_1) \big] \big|  \lesssim  \log^{2N-1}(t) t^{-2},
 \end{equation}
This estimate implies that $g_1$ converges to a $C^{N-2} (\R^3_x \times \R^3_v)$ function with the stated rate of convergence. 
\end{proof}  

\begin{remark}
As $|\V_1-v| = \mu t^{-1} \nabla_v \phi_\infty (v)$, we have $f_\infty (x,v)=\widetilde{f}_{\infty} (x+\mu \nabla_v \phi_\infty (v),v)$ by the mean value theorem. Here, $\widetilde{f}_\infty$ is the limit function in Proposition \ref{prop_mod_scattering_statem_high_reg_proof}.
 \end{remark}

We now derive some direct consequences of Proposition \ref{prop_mod_scatteringg_1} that we will use below. In particular, we emphasise that the weighted spatial averages of $g_1$ converge, contrary to the ones of $g_0$.

\begin{corollary}\label{Corestig1}
Let $|\kappa| \leq N-2$. There exists $C>0$ such that for all $(x,v) \in \R^3_x \times \R^3_v$ and all $t \in [2,\infty]$, we have
$$\langle x \rangle^{N_x-1} \langle v \rangle^{N_v} | \partial_{x,v}^\kappa g_1 | (t,x,v) \leq C,$$ where we set $g_1(\infty,x,v):=f_\infty(x,v)$. As a result, for all $v \in  \R^3_v$ and all $t \in [2,\infty]$, we have
$$ \langle v \rangle^3 \int_{\R^3_z} \langle z \rangle^{N_x-5} \, | \partial_{x,v}^\kappa g_1 | (t,z,v) \dr z \lesssim C.$$ Finally, for any $|\xi| \leq N_x-5$, we have
$$ \forall \, (t,v) \in [2,\infty) \times \R^3_v , \qquad \langle v \rangle^3 \bigg|\int_{\R^3_z} z^\xi \partial_{x,v}^\kappa g_1  (t,z,v) \dr z-\int_{\R^3_z} z^\xi \partial_{x,v}^\kappa f_\infty (z,v) \dr z \bigg| \lesssim  \frac{\log^{2N}(t)}{t}.$$
\end{corollary}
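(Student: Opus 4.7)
\smallskip

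\noindent\textbf{Proof plan for Corollary \ref{Corestig1}.} The three statements are consequences of the convergence estimate in Proposition \ref{prop_mod_scatteringg_1} together with the standard fact that $\int_{\R^3_z} \langle z \rangle^{-4} \dr z$ is finite. The only mild subtlety is that the first bound is claimed at $t=\infty$ as well, so one must extract a weighted $L^\infty_{x,v}$ bound on $f_\infty$ itself before invoking the triangle inequality.

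First, I would establish the uniform bound on $f_\infty$. Pick any fixed $t_0 \geq 2$: Lemma \ref{lemmaesti_f} and the chain rule used to prove Lemma \ref{Lemformodscattderiv} yield $\langle x \rangle^{N_x-1}\langle v \rangle^{N_v}|\partial_{x,v}^\kappa g_1|(t_0,x,v) \lesssim 1$ (with a constant depending on $t_0$, $N$, and the logarithmic weights produced by the $\X_1, \V_1$ corrections). Combining this with Proposition \ref{prop_mod_scatteringg_1} applied at $t_0$, and noting that $\log^{2N}(t_0)/t_0$ is finite, gives $\langle x \rangle^{N_x-1}\langle v \rangle^{N_v}|\partial_{x,v}^\kappa f_\infty|(x,v) \leq C$ for some $C>0$. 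The triangle inequality
\[
\langle x \rangle^{N_x-1}\langle v \rangle^{N_v}|\partial_{x,v}^\kappa g_1|(t,x,v) \leq \langle x \rangle^{N_x-1}\langle v \rangle^{N_v}|\partial_{x,v}^\kappa f_\infty|(x,v) + \frac{\log^{2N}(t)}{t},
\]
valid for every $t \in [2,\infty)$ by Proposition \ref{prop_mod_scatteringg_1}, now yields the first claim uniformly in $t \in [2,\infty]$ since $\log^{2N}(t)/t$ is bounded on $[2,\infty)$.

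For the second claim, I would write $\langle z \rangle^{N_x-5} = \langle z \rangle^{-4}\cdot \langle z \rangle^{N_x-1}$ and pull $\langle z \rangle^{N_x-1}|\partial_{x,v}^\kappa g_1|(t,z,v) \leq C \langle v \rangle^{-N_v}$ out of the integral using the first claim. Since $N_v \geq 7$, one has $\langle v \rangle^3 \cdot \langle v \rangle^{-N_v} \leq \langle v \rangle^{-4} \leq 1$, so the remaining integral $\int_{\R^3_z} \langle z \rangle^{-4} \dr z$ is finite and the bound follows.

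For the third claim, observe that $|z^\xi| \leq \langle z \rangle^{|\xi|} \leq \langle z \rangle^{N_x-5}$. Applying Proposition \ref{prop_mod_scatteringg_1} pointwise inside the integral gives
\[
\langle v \rangle^3 \bigg|\int_{\R^3_z} z^\xi\bigl( \partial_{x,v}^\kappa g_1 - \partial_{x,v}^\kappa f_\infty \bigr)(t,z,v)\, \dr z\bigg| \lesssim \langle v \rangle^{3-N_v}\frac{\log^{2N}(t)}{t}\int_{\R^3_z}\frac{\dr z}{\langle z \rangle^{4}},
\]
and $N_v \geq 7$ again ensures that the $v$-factor is bounded by $1$. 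No new ideas beyond Proposition \ref{prop_mod_scatteringg_1} and the finiteness of $\int \langle z \rangle^{-4}\dr z$ are required, so the only routine obstacle is bookkeeping the powers of $\langle z \rangle$ and $\langle v \rangle$ needed to absorb the weights.
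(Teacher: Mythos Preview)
Your proposal is correct and follows essentially the same route as the paper. The only cosmetic difference is in the first claim: the paper integrates the time-derivative bound \eqref{eq:partialtg1} from $t=2$ to $t\in[2,\infty]$ (using Lemma \ref{lemmaesti_f} for the initial bound at $t=2$), whereas you first extract the weighted bound on $f_\infty$ and then use the triangle inequality via Proposition \ref{prop_mod_scatteringg_1}; since Proposition \ref{prop_mod_scatteringg_1} is itself obtained by integrating \eqref{eq:partialtg1}, the two arguments are equivalent.
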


\begin{proof}
The first part of the statement is obtained by integrating \eqref{eq:partialtg1} between $t=2$ and $t \in [2,\infty]$, together with Lemma \ref{lemmaesti_f}, since
$$ |\partial_{x,v}^\kappa g_1 (2,x,v) | \lesssim \sup_{|\alpha| \leq |\kappa|} \big| Z^\alpha f| \Big(2,x+2v+\mu[\log(2)+1]\nabla_v \phi_\infty,v+\frac{\mu}{2}\nabla_v \phi_\infty \Big).$$ We deduce the second estimate using $z \mapsto \langle z \rangle^{-4} \in L^1(\R^3_z)$. Finally, the third estimate follows similarly from Proposition \ref{prop_mod_scatteringg_1}.
\end{proof}

Before proving the expansion for $t^3 \rho (f)$, we need to improve the rate of convergence of the spatial average of $\partial_v^\kappa g_1$. For this, we now require $N \geq 3$.

\begin{proposition}\label{Proconvsecondorder}
For all $(t,v) \in [2,\infty) \times \R^3_v$, we have
 $$ \langle v \rangle^3 \bigg| \int_{\R^3_z} g_1 (t,z,v) \dr z- \int_{\R^3_z}\Big[ 1-  \mu\frac{1}{t} \Delta_v \phi_\infty (v) \Big] f_\infty (z,v) \dr z \bigg| \lesssim \epsilon\frac{\log^{2N}(t)}{t^{2}}.$$ Moreover, for any $|\kappa| \leq N-3$ there exist $n_{\beta,\gamma}^\kappa \in \mathbb{Z}$ such that
 \begin{align*}
  \langle v \rangle^3 \bigg| \int_{\R^3_z}\! \partial_v^\kappa g_1 (t,z,v) \dr z~- \int_{\R^3_z}\! \partial_v^\kappa f_\infty (z,v) \dr z~&-\sum_{|\gamma|+|\beta|\leq|\kappa|}\! \frac{n_{\beta,\gamma}^\kappa}{t} \!\int_{\R^3_z}\!  \Delta_v  \partial_v^\gamma \phi_\infty (v)   \partial_v^\beta f_\infty (z,v) \dr z  \bigg|  \lesssim  \epsilon t^{-2}\log^{2N}(t).
  \end{align*}
\end{proposition}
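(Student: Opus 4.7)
\textbf{Proof plan for Proposition \ref{Proconvsecondorder}.}

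\emph{First estimate.} The plan is to compute $\partial_t \int g_1 \, \dr z$ precisely, identify the leading $t^{-2}$-term, and then integrate in time. Starting from the identity \eqref{timederivfirstexptechlemm}, I integrate in $z$ over $\R^3_z$. Using $\partial_t \X_1 = \mu t^{-1} \nabla_v \phi_\infty(v)$, which is $z$-independent, together with $\int [\nabla_x f](t,\X_1+t\V_1,\V_1) \dr z = \int \nabla_z g_1 \dr z = 0$, the piece of the first term involving $\partial_t \X_1$ drops. For the remaining piece, I integrate by parts in $z$, which converts $-t\mu \int \nabla_x \phi(t,\X_1+t\V_1) \cdot \nabla_z g_1 \, \dr z$ into
$$ t\mu \int_{\R^3_z} \Delta_x \phi(t,\X_1+t\V_1) \, g_1(t,z,v) \, \dr z = t\mu \int_{\R^3_z} \rho(f)(t,\X_1+t\V_1) \, g_1(t,z,v) \, \dr z. $$
For the second term in \eqref{timederivfirstexptechlemm}, the key cancellation $\partial_t \V_1 = -\mu t^{-2}\nabla_v \phi_\infty(v)$ combined with Corollary \ref{Coralongmod} gives $|\mu \nabla_x \phi(t,\X_1+t\V_1) + \partial_t \V_1| \lesssim \epsilon \langle z \rangle \log^N(t) t^{-3}$. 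Using Lemma \ref{lemmaesti_f} to control $|Gf|$, this term contributes $O(\epsilon \langle v \rangle^{-N_v} \log^{N+1}(t) t^{-3})$.

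\emph{Extracting the main contribution.} Next, I apply Proposition \ref{Prorhosimilar}, which yields $\rho(f)(t,y) = t^{-3} Q_\infty(y/t) + O(\epsilon \log^N(t) t^{-1} \langle t+|y|\rangle^{-3})$. The error term contributes $O(\epsilon \log^N(t) t^{-3}) \int |g_1| \dr z$, which is acceptable by Corollary \ref{Corestig1}. For the main part, a first-order Taylor expansion of $Q_\infty$ about $v$ together with $(\X_1+t\V_1)/t = v + z/t + \mu (\log(t)+1) \nabla_v \phi_\infty(v)/t$ gives $Q_\infty((\X_1+t\V_1)/t) = Q_\infty(v) + O(\langle z \rangle/t + \log(t)/t)$. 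Using the weighted moment bounds from Corollary \ref{Corestig1} to integrate the $z$-dependent remainder, I obtain
\begin{equation*}
 \partial_t \int_{\R^3_z} g_1(t,z,v) \, \dr z = \frac{\mu \Delta_v \phi_\infty(v)}{t^{2}} \int_{\R^3_z} g_1(t,z,v) \, \dr z + O\bigl(\langle v \rangle^{-3} \epsilon \log^{2N}(t) \, t^{-3}\bigr).
\end{equation*}
I then integrate this identity from $t$ to $\infty$: using Proposition \ref{prop_mod_scatteringg_1} to replace $\int g_1(\tau,z,v)\dr z$ by $\int f_\infty(z,v) \dr z + O(\log^{2N}(\tau)/\tau)$ in the main term and bounding the error by $\int_t^\infty \tau^{-3}\log^{2N}(\tau) \dr \tau \lesssim \log^{2N}(t) t^{-2}$ produces the first estimate of the proposition.

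\emph{Higher-order derivatives.} For $|\kappa| \leq N-3$, I differentiate the above ODE in $v$ and use Leibniz, which produces, at leading order, the sum of $t^{-2} \Delta_v \partial_v^\gamma \phi_\infty(v) \, \partial_v^{\kappa-\gamma} \int g_1 \, \dr z$ for $\gamma \leq \kappa$. Using $\partial_v^{\kappa-\gamma} \int g_1 \, \dr z \to \int \partial_v^{\kappa-\gamma} f_\infty \, \dr z$ by Corollary \ref{Corestig1} at rate $\log^{2N}(t)/t$, and integrating in time as above, we recover the claimed expansion (the constants $n_{\beta,\gamma}^\kappa$ are the binomial coefficients produced by Leibniz). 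More rigorously, one directly computes $\partial_t \partial_v^\kappa g_1$ via Lemma \ref{Lemformodscattderiv} and classifies the resulting terms: the \emph{main} terms are those where the cancellation $\nabla_v \phi_\infty - t^2 \nabla_x \phi$ is balanced by Corollaries \ref{Coralongmod} and \ref{Coralongmod2} down to the Taylor-leading $\Delta_v \partial_v^\gamma \phi_\infty$-type coefficient; the \emph{remainder} terms have either $p > k$ (so carry extra $t^{-1}$), or $|\gamma| \geq 1$ with purely spatial derivatives (so gain via $|t^2 \nabla_x \partial_x^\alpha \phi| \lesssim \epsilon t^{-|\alpha|}$), or benefit from the extra $t^{-1}$ present in the $Gf$-type term.

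\emph{Main obstacle.} The principal technical difficulty lies in the bookkeeping for the higher-order case: the Leibniz expansion of $\partial_v^\kappa g_1$ (via Lemma \ref{Lemformodscattderiv}) produces many terms, and one must check that every term either (i) contributes to the explicit expansion with one of the coefficients $n_{\beta,\gamma}^\kappa$, or (ii) decays at time-derivative rate $t^{-3} \log^{2N}(t) \langle v \rangle^{-3}$. The integration by parts in $z$ which produces $\rho(f)$ from $\nabla_x \phi \cdot \nabla_x f$ must be iterated carefully in the higher order setting, since the coefficients in Lemma \ref{Lemformodscattderiv} carry $z$-dependence through $\nabla_x \phi(t,\X_1+t\V_1)$ whose $z$-derivative, controlled by Proposition \ref{proposition_estimate_phi}, provides the additional power of $t$ needed to close the estimate.
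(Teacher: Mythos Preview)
Your proof plan is correct and follows the same approach as the paper: both compute $\partial_t \int \partial_v^\kappa g_1 \,\dr z$ via \eqref{timederivfirstexptechlemm} (or Lemma \ref{Lemformodscattderiv}), integrate by parts in $z$ to convert the $\nabla_x\phi \cdot [\nabla_x \partial_x^\alpha G^\beta f]$ terms into $\partial_x^\alpha\Delta_x\phi \cdot [G^\beta f]$, extract the leading $t^{-2}$ contribution from the self-similar asymptotic of $\rho(f)=\Delta_x\phi$, and integrate in time. The paper's organizing device for the higher-order bookkeeping is the constraint $k \leq |\alpha|$ in Lemma \ref{Lemformodscattderiv}: after integration by parts the factor $t\,\partial_x^\alpha\Delta_x\phi$ decays like $t^{-2-|\alpha|}$, which absorbs the $\log^k(t)$ factor precisely because $k\leq |\alpha|$, so the only terms surviving at order $t^{-2}$ are those with $|\alpha|=p=k=0$; your final paragraph identifies this mechanism correctly, though your intermediate phrase ``$|\gamma|\geq 1$ with purely spatial derivatives'' is slightly off, since for $\partial_v^\kappa$ the index $\gamma$ in Lemma \ref{Lemformodscattderiv} carries only velocity derivatives and the gain comes through $\alpha$.
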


\begin{proof}
 Let $|\kappa| \leq N-3$ and $(t,v) \in [2,\infty) \times \R^3_v$. Combining Lemma \ref{Lemformodscattderiv}, Corollary \ref{Coralongmod2}, and Lemma \ref{lemmaesti_f}, we have
 \begin{align*}
 & \langle v \rangle^3 \bigg| \partial_t \partial_v^\kappa  \bigg( \int_{\R^3_z}\partial_v^\kappa g_1 (t,z,v) \dr z \bigg)- \sum_{|\gamma|+|\beta| \leq |\kappa|} n_{\beta,\gamma}^\kappa \mathfrak{I}^\kappa_{\gamma,0,\beta}(t,v)\bigg| \\
 & \qquad \qquad \qquad \qquad  \lesssim \epsilon \frac{ \log^{2N}(t)}{t^{3}} +\sum_{1 \leq |\alpha| \leq |\kappa|} \; \sum_{|\gamma|+|\alpha|+|\beta| \leq |\kappa|} \log^{|\alpha|}(t) \langle v \rangle^3\big|\mathfrak{I}^\kappa_{\gamma,\alpha,\beta}\big|(t,v),
 \end{align*}
 where
 $$ \mathfrak{I}^\kappa_{\gamma,\alpha, \beta} (t,v) := \int_{\R^3_x} \frac{1}{t}  \partial_{v}^\gamma \Big( \! \nabla_v \phi_\infty(v)-t^2 \nabla_x \phi (t,\X_1+t\V_1) \! \Big) \cdot \big[\nabla_x \partial_x^\alpha G^\beta f \big] \big(t,\X_1+t\V_1,\V_1 \big) \dr x.$$
Since $\X_1-x$, $\V_1$, and $\partial_t \X_1$, do not depend on $x$, we show by integration by parts that
 $$ \mathfrak{I}^\kappa_{\gamma,\alpha, \beta} (t,v) = t\int_{\R^3_x}  \partial_{v}^\gamma \partial_x^\alpha \Big(  \Delta_x \phi (t,\X_1+t\V_1)  \Big) \big[ G^\beta f \big] \big(t,\X_1+t\V_1,\V_1 \big) \dr x$$ do not depend on $x$ either. Thus, we have $$ \log^{|\alpha|}(t) \langle v \rangle^3\big|\mathfrak{I}^\kappa_{\gamma,\alpha,\beta}\big|(t,v) \lesssim \epsilon \log^{2N}(t) t^{-2-|\alpha|},$$ by Lemma \ref{lemmaesti_f} and Corollary \ref{Coralongmod2}. However, for $|\alpha|=0$ the term $\mathfrak{I}^\kappa_{\gamma,\alpha, \beta}$ does not decay faster than $t^{-2}$. Nonetheless, we have identified the leading order contribution of $\Delta_x \phi $, so we can find a correction to the spatial average of $g_1$ allowing for a stronger rate of convergence. 
 
We note first that by performing similar computations as in the proof of Corollary \ref{Coralongmod2}, we obtain
$$ \Big| \partial_{v}^\gamma \Big(  \Delta_x \phi \big(t,\X_1+t\V_1 \big)  \Big) -t^{|\gamma|} \big[ \Delta_x \partial_x^\gamma  \phi \big] \big(t,\X_1+t\V_1 \big) \Big| \lesssim \epsilon \log^{|\gamma|}(t) t^{-4} .$$ We then deduce, by peforming first the change of variables $z=x+tv+\mu[\log(t)+1] \nabla_v \phi_\infty (v)$ and using Lemma \ref{lemmaesti_f} that
 \begin{align*}
  \langle v \rangle^3& \bigg|\mathfrak{I}^\kappa_{\gamma,0,\beta}- \frac{1}{t^2}\Delta_v \partial_v^\gamma \phi_\infty(v) \int_{\R^3_z}  \big[ G^\beta f \big] \big(t,z,\V_1 \big) \dr z \bigg| \\
  & \; \; \lesssim \epsilon\frac{\log^{2N}(t)}{t^{3}}+ t \langle v \rangle^3 \! \int_{\R^3_x} \Big|\big[\Delta_x \partial_v^\gamma \big] \phi (t,\X_1+t\V_1)-\frac{1}{t^3}\Delta_v \partial_v^\gamma \phi_\infty(v) \Big| \big| Z^\beta f \big| \big(t,\X_1+t\V_1,\V_1 \big)  \dr x .
  \end{align*}
Finally, we bound the RHS of the previous inequality by $\epsilon  \log^{2N}(t) t^{-3}$ by applying Corollary \ref{Coralongmod} and Lemma \ref{lemmaesti_f}, as $|\gamma| \leq N-2$. One can then derive the result by using Proposition \ref{ProconvQ}. For the case $|\kappa|=0$, the coefficient $n^\kappa_{\gamma,\beta}$ is given by \eqref{timederivfirstexptechlemm}.
 \end{proof}

\subsection{First order expansion of $t^3 \rho (f)$}\label{Subsec72}

In this subsection, we assume $N \geq 4$. As we need to express $\rho (f)$ using the distribution $g_1$, we have to study the maps
$$ \Psi_t \colon(z,v) \mapsto \big(z+\mu \log(t) \nabla_v \phi_\infty(v),v+\mu t^{-1} \nabla_v \phi_\infty(v) \big), \qquad \Upsilon_t \colon (a,b) \mapsto (a+tb,b),$$ for $ t \geq 2$. In terms of these maps, we have $$g_1(t,\cdot , \cdot)= f(t, \Upsilon_t \circ \Psi_t).$$ For our purposes, we need to show that $\Psi_t$ is invertible and consider 
\begin{equation}\label{keyident_backwardsdistrib}
f(t,z,v) =  g_1 \big(t, \Psi_t^{-1} (z-tv,v ) \big)=g_1(t,z-tv +\mathfrak{X}_{t}(v),v+\mathfrak{V}_{t}(v) \big),
\end{equation}
where $(\mathfrak{X}_t,\mathfrak{V}_t):= \Psi_t^{-1}-\mathrm{id}$. In the following, we denote the space of matrices of size $d\times d$ with real coefficients by $\mathfrak{M}_d(\R) $. We begin proving a preparatory result.

\begin{lemma}\label{Lemdiffeo}
Let $d \geq 1$ and $k \geq 1$. Let $h \in C^k(\R^d,\R^d)$ be a map verifying $\| h \|_{W^{k,\infty}}\! < \!\infty$, and $\| \dr h \|_{L^\infty}  < 1$. Then, the map $H:=\mathrm{id}-h$ is a $C^k$-diffeomorphism of $\R^d$, and
\begin{equation}\label{eq:invH}
 \big[ \dr H \big]^{-1} = \mathrm{id}+\sum_{i \geq 1} \big[ \dr h \big]^k \quad \text{in $C^{k-1}\big(\R^d,\mathfrak{M}_d(\R ) \big)$},
 \end{equation}
where $[\dr h\big]$ is the Jacobian matrix of $ h$, and $[\dr h]_{ij}:=\partial_{x^j} h^i$ for all $1 \leq i, \, j \leq d$. 
\end{lemma}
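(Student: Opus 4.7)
The plan is to decompose the argument into three stages: bijectivity of $H$, local $C^k$-regularity of $H^{-1}$ via the inverse function theorem, and the Neumann series identity (with a convergence statement in $C^{k-1}$).

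First, I will establish that $H=\mathrm{id}-h$ is a bijection of $\R^d$. The hypothesis $\|\dr h\|_{L^\infty} =: \lambda <1$ implies, via the mean value inequality, that $h$ is $\lambda$-Lipschitz. For a fixed $y\in \R^d$, the map $T_y\colon x\mapsto y+h(x)$ is then a contraction on $\R^d$, so Banach's fixed point theorem yields a unique $x\in\R^d$ with $H(x)=y$. This gives bijectivity. For each $x\in \R^d$, the differential $\dr H(x)=\mathrm{id}-\dr h(x)$ satisfies $\|\dr h(x)\|\leq\lambda<1$, so $\dr H(x)$ is invertible in $\mathfrak{M}_d(\R)$ with Neumann series expansion
\begin{equation*}
[\dr H(x)]^{-1}=\sum_{i\geq 0}[\dr h(x)]^i=\mathrm{id}+\sum_{i\geq 1}[\dr h(x)]^i,
\end{equation*}
converging absolutely and uniformly in $x$. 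The inverse function theorem (applied locally at each point) then ensures that $H^{-1}$ is of class $C^k$ on $\R^d$, with $\dr H^{-1}(y)=[\dr H(H^{-1}(y))]^{-1}$.

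The remaining task is to upgrade the pointwise Neumann identity to convergence in $C^{k-1}(\R^d,\mathfrak{M}_d(\R))$. The main technical step is to bound $\|[\dr h]^i\|_{C^{k-1}}$ in a summable fashion. Set $M:=\|h\|_{W^{k,\infty}}<\infty$ and fix a multi-index $\alpha$ with $|\alpha|=m\leq k-1$. Applying the generalised Leibniz rule to the product of $i$ matrix-valued factors, $\partial^\alpha [\dr h]^i$ can be written as a finite sum of products of the form
\begin{equation*}
\partial^{\alpha_{j_1}}\dr h \;\cdots\; \partial^{\alpha_{j_r}}\dr h \;\cdot\; [\dr h]^{i-r},
\end{equation*}
where $\alpha_{j_1}+\dots+\alpha_{j_r}=\alpha$ with $|\alpha_{j_\ell}|\geq 1$, so in particular $r\leq m$. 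The number of such arrangements is bounded by $C(m,d)\,i^m$ (choosing the positions of the $r\leq m$ non-trivial factors among $i$ slots). Each factor of the form $\partial^{\alpha_{j_\ell}}\dr h$ has $L^\infty$-norm at most $M$, while the remaining $i-r\geq i-m$ factors $\dr h$ contribute at most $\lambda^{i-m}$. Consequently, for $i\geq m$,
\begin{equation*}
\big\|\partial^\alpha [\dr h]^i\big\|_{L^\infty}\leq C(k,d)\,M^m\, i^{k-1}\,\lambda^{i-k+1},
\end{equation*}
which is summable in $i$ since $\lambda<1$. Summing over $|\alpha|\leq k-1$ and over $i\geq 1$ yields the convergence of the Neumann series in the $C^{k-1}$ topology and hence the stated identity \eqref{eq:invH}.

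The only delicate point is the last estimate: one must carefully count the combinatorics of the Leibniz expansion to obtain a bound polynomial in $i$ times $\lambda^{i}$. This is the main obstacle, but it is standard and causes no real difficulty once the factorisation above is written out.
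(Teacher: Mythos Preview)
Your proof is correct. The main difference from the paper's argument lies in how bijectivity of $H$ is established. You obtain both injectivity and surjectivity in one stroke via the Banach fixed point theorem applied to the contraction $T_y\colon x\mapsto y+h(x)$; the paper instead proves injectivity by the elementary estimate $\bigl||x-y|-|H(x)-H(y)|\bigr|\leq |h(x)-h(y)|\leq \|\dr h\|_{L^\infty}|x-y|$, and surjectivity by showing that $H(\R^d)$ is open (local inverse function theorem) and closed (a Bolzano--Weierstrass argument using $H-\mathrm{id}\in L^\infty$). Your route is more direct and economical here. Conversely, the paper's open-and-closed template is what gets reused later in Lemma~\ref{Lemdiffeoordern}, where the map is only defined on a bounded domain and a genuine contraction-mapping argument would not immediately apply. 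For the $C^{k-1}$ convergence of the Neumann series, you give an explicit Leibniz-rule estimate of the form $\|\partial^\alpha[\dr h]^i\|_{L^\infty}\lesssim i^{k-1}\lambda^{i-k+1}$, whereas the paper simply asserts that the $C^0$ identity extends to $C^{k-1}$ because $\|\dr h\|_{W^{k-1,\infty}}<\infty$; your version makes this step fully transparent.
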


\begin{proof} For the first part of the statement, we have three steps:
\begin{itemize}
\item $H$ is injective. This follows from the inequality
$$ \color{white} \square \color{black} \qquad \big||x-y|-|H(x)-H(y)|\big|\!\leq \big|x-y-H(x)+H (y) \big| \leq |h(x)-h(y)| \leq \| \dr h \|_{L^\infty} |x-y|<|x-y|,$$ which relies on the mean value theorem.

\item $\mathrm{d}H$ is a local $C^{k}$ diffeomorphism. As $\|\dr h \|_{L^\infty} < 1$, the map $\dr H=\mathrm{id}-\dr h$ is invertible for all $x \in \R^d$, so \eqref{eq:invH} holds in $C^0(\R^d,\mathfrak{M}_d(\R))$. \eqref{eq:invH} holds as well in high regularity as $\| \dr h \|_{W^{k-1,\infty}} <\infty$.

\item $H$ is surjective. Let $(x_n)_{n \geq 0}$ be a sequence in $\R^d$ such that $H(x_n) \to y_\infty$ as $n \to  \infty$, with $y_\infty \in \R^d$. Since $H - \mathrm{id} \in L^\infty(\R^d,\R^d)$, we obtain that $(x_n)_{n \geq 0}$ is bounded. By the Bolzano-Weierstrass theorem and by continuity, there exists $x_\infty \in \R^d$ such that $H(x_\infty)=y_\infty$. As $H(\R^d)$ is open by the previous step, it yields $H(\R^d)=\R^d$.
\end{itemize}
The second part of the statement is a direct consequence of the first one.
\end{proof}

We are now able to study $\Psi_t$. In particular, we study fine properties of the components of $\Psi_t^{-1} - \mathrm{id}=(\mathfrak{X}_t,\mathfrak{V}_t)$, which will allow us to estimate $f$ by \eqref{keyident_backwardsdistrib}.

\begin{lemma}\label{LemPhiminus1}
For all $t \geq 2$, the map $\Psi_t$ is a $C^{N-2}$-diffeomorphism of $\R^3_x \times \R^3_v$. The spatial and velocity components of $\Psi_t^{-1} - \mathrm{id}=(\mathfrak{X}_t,\mathfrak{V}_t)$ are independent of the spatial variable. Moreover, these components verify for all $v \in \R^3_v$ that
\begin{align*}
 \mathfrak{X}_t(v) &=-\mu \log(t)\nabla_v \phi_\infty(v) +\sum_{1 \leq k \leq N-3} t^{-k}\log(t)\phi_k^X(v)+ \epsilon^{N-1} \, O \big( t^{-N+2} \log(t) \big), \\ 
\mathfrak{V}_t(v) &= -\mu t^{-1} \nabla_v \phi_\infty (v)+\sum_{2 \leq k \leq N-2} t^{-k} \phi^V_k(v) + \epsilon^{N-1} O\big(t^{-N+1} \big),
\end{align*}
where $\phi_{k-1}^X, \, \phi_{k}^V \in C^{N-1-k}(\R^3,\R^3)$, and their components are linear combination of terms of the form
$$ \prod_{1 \leq i \leq k} \partial_v^{\gamma_i} \phi_\infty^{k_i}(v), \qquad 1 \leq |\gamma_i| \leq k, \quad k_i \in \llbracket 1 , 3 \rrbracket .$$
Moreover, for any $|\kappa| \leq N-3$, we have
\begin{align*}
 \partial_v^\kappa \mathfrak{X}_t(v) &=-\mu \log(t)\nabla_v \partial_v^\kappa \phi_\infty(v) +\sum_{1 \leq k \leq N-3-|\kappa|}  t^{-k} \log(t)\partial_v^\kappa\phi_k^X(v)+ \epsilon^{N-1-|\kappa|} \, O \big( t^{-N+2+|\kappa|} \log(t) \big), \\ 
\partial_v^\kappa \mathfrak{V}_t(v) &= -\mu t^{-1} \nabla_v \partial_v^\kappa\phi_\infty (v)+\sum_{2 \leq k \leq N-2-|\kappa|} t^{-k} \partial_v^\kappa\phi^V_k(v) + \epsilon^{N-1-|\kappa|} O\big(t^{-N+1+|\kappa|} \big).
\end{align*}
Finally, $\|\mathfrak{X}\|_{\dot{W}^{N-2,\infty}} \lesssim \epsilon \log(t)$ and $\|\mathfrak{V}\|_{\dot{W}^{N-2,\infty}} \lesssim t^{-1} \epsilon$.
\end{lemma}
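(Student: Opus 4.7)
The plan is to exploit the triangular structure of $\Psi_t$, since its first component is an affine shift in $z$ whose slope depends only on $v$. Accordingly, I would first invert only the velocity map $\psi_t : \R^3_v \to \R^3_v$ defined by $\psi_t(v) := v + \mu t^{-1} \nabla_v \phi_\infty(v)$, writing it as $\psi_t = \mathrm{id} - h_t$ with $h_t(v) := -\mu t^{-1} \nabla_v \phi_\infty(v)$. Proposition \ref{Proasympselfsiphi} gives $\|\nabla_v \phi_\infty\|_{W^{N-2,\infty}} \lesssim \epsilon$, hence $\|h_t\|_{W^{N-2,\infty}} \lesssim \epsilon t^{-1}$ and in particular $\|\mathrm{d} h_t\|_{L^\infty} < 1$ for $\epsilon$ small. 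Lemma \ref{Lemdiffeo} then yields that $\psi_t$ is a $C^{N-2}$-diffeomorphism of $\R^3_v$ with $(\mathrm{d} \psi_t^{-1}) = \mathrm{id} + \sum_{k \geq 1} (\mathrm{d} h_t)^k$ in $C^{N-3}$. Solving $\Psi_t(z,v) = (x,w)$ then gives
$$\Psi_t^{-1}(x,w) = \bigl(x - \mu \log(t) \nabla_v \phi_\infty \bigl(\psi_t^{-1}(w)\bigr),\, \psi_t^{-1}(w)\bigr),$$
which proves that both components of $\Psi_t^{-1} - \mathrm{id}$ are independent of the spatial variable: $\mathfrak{V}_t(w) = \psi_t^{-1}(w) - w$ and $\mathfrak{X}_t(w) = -\mu \log(t) \nabla_v \phi_\infty(w + \mathfrak{V}_t(w))$.

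Next, applying $\psi_t$ to $\psi_t^{-1}(w) = w + \mathfrak{V}_t(w)$ yields the fixed-point identity
$$\mathfrak{V}_t(w) = -\mu t^{-1} \nabla_v \phi_\infty \bigl(w + \mathfrak{V}_t(w)\bigr),$$
which immediately gives $\|\mathfrak{V}_t\|_{L^\infty} \lesssim \epsilon t^{-1}$ and hence $\|\mathfrak{X}_t\|_{L^\infty} \lesssim \epsilon \log(t)$. To obtain the expansion of $\mathfrak{V}_t$, I would proceed by induction on $k \leq N-2$: assuming the expansion $\mathfrak{V}_t = \sum_{1 \leq j < k} t^{-j} \phi_j^V + O(t^{-k})$ is known (with $\phi_1^V = -\mu \nabla_v \phi_\infty$), substitute it into the RHS of the fixed-point equation and Taylor expand $\nabla_v \phi_\infty$ around $w$ to order $N-2$. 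The terms of order $t^{-k}$ produced this way, which are polynomials in $\phi_1^V, \ldots, \phi_{k-1}^V$ and derivatives of $\nabla_v \phi_\infty$ evaluated at $w$, define $\phi_k^V(v)$; the induction preserves the stated algebraic form as a linear combination of products of derivatives of $\phi_\infty$ satisfying the constraints $1 \leq |\gamma_i| \leq k$, since each nested Taylor expansion adds at most one derivative per factor and at most one new factor. The expansion of $\mathfrak{X}_t$ is then obtained by substituting the expansion of $\mathfrak{V}_t$ into $-\mu \log(t) \nabla_v \phi_\infty(w + \mathfrak{V}_t)$ and Taylor expanding once more: the zeroth order term gives $-\mu \log(t) \nabla_v \phi_\infty(v)$, and the corrections of order $\log(t) t^{-k}$ define $\phi_k^X$.

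Finally, for the derivative estimates I would differentiate the fixed-point equation and apply Faà di Bruno: for $|\kappa| \leq N-2$,
$$\partial_v^\kappa \mathfrak{V}_t(v) = -\mu t^{-1} \partial_v^\kappa \bigl[\nabla_v \phi_\infty\bigl(v + \mathfrak{V}_t(v)\bigr)\bigr],$$
and induction on $|\kappa|$ together with the chain rule, $\|\nabla_v \phi_\infty\|_{W^{N-2,\infty}} \lesssim \epsilon$, and the bounds on lower-order derivatives of $\mathfrak{V}_t$ yields $\|\partial_v^\kappa \mathfrak{V}_t\|_{L^\infty} \lesssim \epsilon t^{-1}$. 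The expansion for $\partial_v^\kappa \mathfrak{V}_t$ is obtained by differentiating each term of the expansion of $\mathfrak{V}_t$, where the legitimacy of term-by-term differentiation follows from the uniform-in-$t$ Lipschitz control of the Taylor remainders. The same reasoning applied to $\partial_v^\kappa \mathfrak{X}_t$ gives the announced bound $\|\mathfrak{X}_t\|_{\dot{W}^{N-2,\infty}} \lesssim \epsilon \log(t)$ and the corresponding expansion. The main obstacle is purely combinatorial: keeping track, through the induction, of the algebraic form of the coefficients $\phi_k^V$ and $\phi_k^X$ and verifying that the constraints on the multi-indices $\gamma_i$ and the index count $k_i$ are preserved at each iteration of the Taylor expansion in the fixed-point equation.
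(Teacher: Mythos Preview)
Your proposal is correct and follows essentially the same approach as the paper: your $\psi_t$ is the paper's $\Omega_t$, your fixed-point iteration $\mathfrak{V}_t(w) = -\mu t^{-1}\nabla_v\phi_\infty(w+\mathfrak{V}_t(w))$ is exactly the paper's bootstrap ``$|v-w|\lesssim\epsilon t^{-1}$, then $|v-\mu t^{-1}\nabla_v\phi_\infty(v)-w|\lesssim\epsilon^2 t^{-2}$, iterating the above'', and your formula for $\Psi_t^{-1}$ coincides with the paper's \eqref{eq:expressionPhiinv}. The only cosmetic differences are that the paper writes the decomposition $\Psi_t=\Phi_{t,1}\circ\Xi_t$ explicitly before inverting, and for the derivative expansions it invokes the uniqueness statement of Lemma~\ref{LemForexp} rather than your direct argument via uniform remainder control; both routes are valid and equally short.
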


 \begin{proof}
We fix $t \geq 2$ and decompose $$\Psi_t= \Phi_{t,1} \circ \Xi_t,$$ where
\begin{equation*}
\Phi_{t,1} (y,w)= \big(y,w+\mu t^{-1} \nabla_v \phi_\infty(w) \big), \qquad  \Xi_t (z,v)=\big( z+\mu \log(t) \nabla_v \phi_\infty(v),v \big).
 \end{equation*}
Clearly, $\Xi_t$ is a $C^{N-2}$ diffeomorphism of $\R^3 \times \R^3$ and 
$$ \Xi_t^{-1} (z,v) = \big( z-\mu \log(t) \nabla_v \phi_\infty(v),v \big).$$
Let us now prove that $\Phi_{t,1}$ is a $C^{N-2}$-diffeomorphism. For this, it suffices to study its velocity component 
$$\Omega_t :w \mapsto w +\mu t^{-1} \nabla_v \phi_\infty(w).$$ As $\| \nabla_v \phi_\infty \|_{W^{N-2,\infty}} \lesssim \epsilon$ by Proposition \ref{Proasympselfsiphi}, we show by applying Lemma \ref{Lemdiffeo} that $\Omega_t$ is a $C^{N-2}$-diffeomorphism, and 
\begin{equation}\label{eq:defandexpaOmega}
[\dr \Omega_t ]^{-1}=\mathrm{id}+\sum_{k \geq 1} \, [\dr h_t]^k \quad \text{in $C^{N-3} \big(\R^3_v,\mathfrak{M}_3(\R) \big)$}, \qquad [\dr h_t]_{ij}=-\mu t^{-1} \partial_{v^i} \partial_{v^j} \phi_\infty ,
\end{equation}
where $h_t(w):= - \mu t^{-1}\nabla_v \phi_\infty (w)$. Thus, $\Psi_t$ is a $C^{N-2}$-diffeomorphism, and
\begin{equation}\label{eq:expressionPhiinv}
 \Psi_t^{-1}(z,v)= \big( z-\mu \log(t) \nabla_v \phi_\infty \big(\Omega_t^{-1}(v) \big) ,\Omega_t^{-1}(v) \big).
 \end{equation}
The next step consists in deriving an expansion for $\Omega_t^{-1}$. Let $v \in \R^3$ and $w=\Omega_t^{-1}(v)$, so that $v=w+\mu t^{-1} \nabla_v \phi_\infty (w)$. Then, by applying the mean value theorem we have
$$ |v-w| \lesssim \epsilon t^{-1} , \qquad  \big|v-\mu t^{-1} \nabla_v \phi_\infty (v)-w \big| \lesssim t^{-1} \big\|\nabla_v^2 \phi_\infty \big\|_{L^\infty_v} |v-w| \lesssim \epsilon^2 t^{-2},$$ as $\| \nabla_v \phi_\infty\|_{W^{N-2,\infty}} \lesssim \epsilon$. Iterating the above, and using Taylor's theorem instead of the mean value theorem, we obtain
\begin{equation}\label{eq:defexpanOmeg00}
 \mathfrak{V}_t(v)= \Omega_t^{-1}(v)-v=-\mu t^{-1} \nabla_v \phi_\infty (v)+\sum_{2 \leq k \leq N-2} t^{-k} \phi^V_k(v) + \epsilon^{N-1} O\big(t^{-N+1} \big),
 \end{equation}
where $ \phi^V_k \in C^{N-1-k}(\R^3,\R^3)$. In fact, $\phi_k^V$ is a product of $k$ derivatives of $\partial_{v^i} \phi_\infty$. Together with the estimate $\|\nabla_v \phi_\infty \|_{W^{N-2,\infty}} \lesssim \epsilon$ and \eqref{eq:expressionPhiinv}, we obtain the expansion for $\mathfrak{X}_t$. We next remark that
$$ \dr \Psi_t^{-1}(z,v)=\dr \Xi_t^{-1} \big(z, \Omega_t^{-1}(v) \big) \circ \dr \Phi_{t,1}^{-1} (z,v), \qquad \dr \Phi^{-1}_{t,1}(z,v)\cdot (h,k)= \big(h, \big[\dr \Omega_t\big]^{-1}\big(\Omega_t^{-1}(v) \big) \cdot k \big),$$
where $(h,k)$ is a tangent vector in $\R^3_z \times \R^3_v$. The existence of the expansions for the derivatives of $(\mathfrak{X}_t,\mathfrak{V}_t)$ then ensue from an induction, \eqref{eq:defandexpaOmega}, \eqref{eq:defexpanOmeg00}, Leibniz rule, and Taylor's theorem. Lemma \ref{LemForexp} finally shows that the coefficients of these time expansions are the derivatives of the ones for $(\mathfrak{X}_t,\mathfrak{V}_t)$. 
 \end{proof}

The next step consists in performing the change of variables $y = x-tv+\mathfrak{X}_t(v)$ for fixed $x$. For this purpose, we study the map $v \mapsto z-tv+\mathfrak{X}(v)$, and its inverse.

 \begin{lemma}\label{LemYorder2}
Let $t\geq 2$ and $z\in \R^3_z$. The map $\Y_{t,z} : v \mapsto z-tv+\mathfrak{X}(v)$ is a $C^{N-2}(\R^3_v,\R^3)$-diffeomorphism. Moreover, the inverse $\Y^{-1}_{t,z}$ satisfies that for all $y \in \R^3$, we have
\begin{equation}\label{est11}
\Y^{-1}_{t,z}(y)  =   \frac{z-y}{t} -\mu \frac{\log(t)}{t} \nabla_v \phi_\infty \Big( \frac{z}{t}  \Big)+ \!  \sum_{\substack{2 \leq q \leq N-2 \\ |\alpha|+p \leq q}}\frac{y^\alpha \log^{p}(t)}{t^q} \mathbf{A}_{q,\alpha,p}\Big( \frac{z}{t}  \Big) + \epsilon \, O \bigg( \frac{ \langle y \rangle^{N-2}  \log^{N-1} (t) }{ t^{N-1}} \bigg) ,
 \end{equation}
where $\mathbf{A}_{q,\alpha,p} \in C^0(\R^3,\R^3)$ and $\| \mathbf{A}_{q,\alpha,p} \|_{L^\infty} \lesssim \epsilon^{1+p}$. The Jacobian determinant satisfies
\begin{equation}\label{estdet}
 t^{3} \big|\det \mathrm{d} \Y_{t,z}^{-1} \big|(y) =1-  \mu\frac{\log(t)}{t}  \Delta_v \phi_\infty\Big( \frac{z}{t} \Big) + \!  \sum_{\substack{2 \leq q \leq N-3 \\ |\alpha|+p \leq q}}\frac{y^\alpha \log^{p}(t)}{t^{q}} \mathbf{B}_{q,\alpha,p}\Big( \frac{z}{t}  \Big) + \epsilon \,  O \bigg( \frac{ \langle y \rangle^{N-3}  \log^{N-2}(t)}{ t^{N-2}} \bigg) ,
 \end{equation}
 where $\mathbf{B}_{q,\alpha,p} \in C^0(\R^3,\R^3)$ and $\| \mathbf{B}_{q,\alpha,p} \|_{L^\infty} \lesssim \epsilon^{1+p}$. Finally, we have
\begin{align}
\Y^{-1}_{t,z}(y)+\mathfrak{V}_t \circ \Y^{-1}_{t,z}(y)  &=   \frac{z}{t}-\frac{y}{t} -\mu \frac{\log(t)+1}{t} \nabla_v \phi_\infty \Big( \frac{z}{t}  \Big) \label{est22}
 \\ & \quad + \! \sum_{2 \leq q \leq N-2} \, \sum_{ |\alpha|+p \leq q}\frac{y^\alpha \log^{p}(t)}{t^q} \mathbf{\overline{A}}_{q,\alpha,p}\Big( \frac{z}{t}  \Big) + \epsilon \,  O \bigg( \frac{ \langle y \rangle^{N-2}  \log^{N-1}(t)}{ t^{N-1}} \bigg) ,\nonumber
 \end{align}
where $\mathbf{\overline{A}}_{q,\alpha,p} \in C^0(\R^3,\R^3)$ and $\| \mathbf{\overline{A}}_{q,\alpha,p} \|_{L^\infty} \lesssim \epsilon^{1+p}$. 
 \end{lemma}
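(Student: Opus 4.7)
The strategy is to reduce each statement to the polyhomogeneous expansions of $\mathfrak{X}_t$ and $\mathfrak{V}_t$ already established in Lemma \ref{LemPhiminus1}, combined with a contraction-mapping fixed-point argument.

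\textbf{Diffeomorphism property.} I would factor
\begin{equation*}
-t^{-1}\Y_{t,z}(v) = u - h_t(u), \qquad u := v-\tfrac{z}{t}, \qquad h_t(u) := t^{-1}\mathfrak{X}_t\bigl(u+\tfrac{z}{t}\bigr).
\end{equation*}
By Lemma \ref{LemPhiminus1}, $\|h_t\|_{W^{N-2,\infty}} \lesssim \epsilon\log(t)/t$, so in particular $\|\dr h_t\|_{L^\infty} < 1$ for $\epsilon$ small enough (uniformly in $t\geq 2$, since $\log(t)/t$ is bounded there). Lemma \ref{Lemdiffeo} then shows that $\mathrm{id}-h_t$, and hence $\Y_{t,z}$, is a $C^{N-2}$ diffeomorphism of $\R^3$.

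\textbf{Expansion \eqref{est11}.} The identity $\Y_{t,z}(v)=y$ is equivalent to the fixed-point equation $v=(z-y)/t+t^{-1}\mathfrak{X}_t(v)$. Starting from $v_0:=(z-y)/t$, the sequence $v_{k+1}:=(z-y)/t+t^{-1}\mathfrak{X}_t(v_k)$ is a contraction with rate $\epsilon \log(t)/t$, so $|v_k-\Y^{-1}_{t,z}(y)| \lesssim (\epsilon\log(t)/t)^{k+1}$. At each step I would insert the expansion of $\mathfrak{X}_t$ from Lemma \ref{LemPhiminus1}, producing a polyhomogeneous sum of terms $t^{-q}\log^p(t)\,\Psi_{q,p}((z-y)/t)$ with $p \leq q$, where each $\Psi_{q,p}$ is a finite product of derivatives of $\phi_\infty$. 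A final Taylor expansion of each $\Psi_{q,p}((z-y)/t)$ around $z/t$ in powers of $y/t$, carried out to a total order chosen so that the remainder fits the $\epsilon \langle y \rangle^{N-2}\log^{N-1}(t)\,t^{-N+1}$ bound, then yields the stated form. The $L^\infty$ bound $\|\mathbf{A}_{q,\alpha,p}\|_{L^\infty}\lesssim \epsilon^{1+p}$ follows by counting factors of derivatives of $\phi_\infty$ in each coefficient, noting from Lemma \ref{LemPhiminus1} that a factor $\log^p(t)$ is always accompanied by at least $p$ such derivatives.

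\textbf{Jacobian expansion \eqref{estdet}.} Differentiating $\Y_{t,z}\circ \Y^{-1}_{t,z}=\mathrm{id}$ yields
\begin{equation*}
\dr \Y^{-1}_{t,z}(y) = -t^{-1}\bigl(\mathrm{Id} - t^{-1}\dr\mathfrak{X}_t(\Y^{-1}_{t,z}(y))\bigr)^{-1},
\end{equation*}
so $t^3|\det \dr \Y^{-1}_{t,z}(y)| = |\det(\mathrm{Id} - t^{-1}\dr\mathfrak{X}_t(\Y^{-1}_{t,z}(y)))|^{-1}$. Using a Neumann series and the multilinear formula $\det(\mathrm{Id}-A)^{-1}=1+\mathrm{tr}(A)+\tfrac{1}{2}((\mathrm{tr} A)^2+\mathrm{tr}(A^2))+\cdots$ with $A=t^{-1}\dr\mathfrak{X}_t(\Y^{-1}_{t,z}(y))$, and substituting the expansion of $\dr\mathfrak{X}_t$ from Lemma \ref{LemPhiminus1} together with the expansion of $\Y^{-1}_{t,z}(y)$ from Step~2, the leading correction is $-\mu t^{-1}\log(t)\Delta_v\phi_\infty(\Y^{-1}_{t,z}(y))$. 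Taylor expanding $\Delta_v\phi_\infty$ and all other coefficients around $z/t$ and collecting gives \eqref{estdet}. The drop from $N-2$ to $N-3$ in the order of the remainder reflects the loss of one derivative upon differentiating $\Y^{-1}_{t,z}$.

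\textbf{Expansion \eqref{est22} and main obstacle.} The expansion \eqref{est22} is obtained by substituting \eqref{est11} into the expansion of $\mathfrak{V}_t$ from Lemma \ref{LemPhiminus1}, Taylor expanding the resulting coefficients around $z/t$, and summing with \eqref{est11}; the announced coefficient $-\mu(\log(t)+1)t^{-1}\nabla_v \phi_\infty(z/t)$ is precisely the sum of the logarithmic term from \eqref{est11} and the leading $-\mu t^{-1}\nabla_v\phi_\infty(z/t)$ term from $\mathfrak{V}_t$. The main obstacle throughout is the \emph{bookkeeping}: one must track the polyhomogeneous structure through every iteration, composition, and Taylor expansion, and verify that each operation preserves the target form $\sum t^{-q}\log^p(t)\,y^\alpha \mathbf{A}_{q,\alpha,p}(z/t)$ with $|\alpha|+p \leq q$ and $p \leq q$. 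The key structural fact making this manageable is that each additional iteration of the fixed-point equation costs a factor $\epsilon\log(t)/t$, and the Taylor expansion of a smooth function of $(z-y)/t$ around $z/t$ produces precisely the polynomial $y^\alpha$ dependence demanded by the statement.
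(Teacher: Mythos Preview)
Your proposal is correct and follows essentially the same route as the paper. The paper organises the argument slightly differently: it first introduces the $z$-independent auxiliary map $\mathfrak{Z}(v):=v-t^{-1}\mathfrak{X}_t(v)$, derives its inverse expansion \eqref{eq:defexpanOmeg} via the same fixed-point iteration you describe, and only then substitutes $v=(z-y)/t$ and Taylor expands around $z/t$; you bundle these two steps by iterating directly on $v_{k+1}=(z-y)/t+t^{-1}\mathfrak{X}_t(v_k)$. Both arguments rest on the same three ingredients --- Lemma~\ref{Lemdiffeo} for invertibility, the expansions of $(\mathfrak{X}_t,\mathfrak{V}_t)$ from Lemma~\ref{LemPhiminus1}, and the Neumann/multilinear treatment of the Jacobian --- so the difference is purely organisational.

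One small remark: your justification of the bound $\|\mathbf{A}_{q,\alpha,p}\|_{L^\infty}\lesssim\epsilon^{1+p}$ by ``counting $\log$-factors'' actually yields $\epsilon^{p}$ (each $\log(t)$ in the iteration carries exactly one factor of $\nabla_v\phi_\infty$, not two), so the counting argument as stated is off by one power of $\epsilon$; this does not affect the rest of the proof.
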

 \begin{proof}
Let $t \geq 2$ and $z \in \R^3$. The analysis can be reduced to the study of 
\begin{equation}\label{eq:expandZminus1}
\mathfrak{Z}(v):=v-t^{-1}\mathfrak{X}_t(v), \qquad \det \dr \Y_{t,z}(v) = -t^3 \det \dr \mathfrak{Z}(v).\end{equation}
Note that $y=\Y_{t,z}(v) $ if and only if $$\mathfrak{Z}(v)=\frac{z-y}{t}.$$ 

We next show estimates for $\mathfrak{Z}$ from where the stated estimates \eqref{est11}--\eqref{est22} are obtained. Applying Lemma \ref{Lemdiffeo} with $(H,h)=(\mathfrak{Z},t^{-1} \mathfrak{X})$, together with Lemma \ref{LemPhiminus1}, we get that $\mathfrak{Z}$ is a $C^{N-2}$-diffeomorphism of $\R^3$ and
$$ \big[\dr \mathfrak{Z} \big]^{-1} = \mathrm{id} + \sum_{k \geq 1} t^{-k} \big[ \dr \mathfrak{X}_t \big]^k \quad \text{in $C^{N-3} \big( \R^3,\mathfrak{M}_3(\R^3) \big)$}, $$ where $[ \dr \mathfrak{X}_t ]_{ij}=\partial_{v^j} \mathfrak{X}_t^i$. We now have to derive an expansion for $\mathfrak{Z}$. The strategy is similar to the one for $\Omega_t^{-1}$. Let $v \in \R^3$ and $w = \mathfrak{Z}^{-1}(v)$, so that $v=w-t^{-1}\mathfrak{X}_t (w)$. Using $\| \mathfrak{X}\|_{W^{N-2,\infty}} \lesssim \epsilon \log(t)$, we then get $|v-w| \lesssim \epsilon t^{-1}\log(t)$. By an induction, Taylor's theorem, and the expansion for $\mathfrak{X}_t$ given by Lemma \ref{LemPhiminus1}, we have
\begin{equation}\label{eq:defexpanOmeg}
 \mathfrak{Z}^{-1}(v)=v-\mu \frac{\log(t)}{t} \nabla_v \phi_\infty (v)+\sum_{2 \leq k \leq N-2} \frac{\log^k(t)}{t^k} \psi_k(v) + \epsilon^{N-1} O\bigg(\frac{\log^{N-1}(t)}{t^{N-1}} \bigg),
 \end{equation}
where $ \psi_k \in C^{N-1-k}(\R^3,\R^3)$ and $\|  \psi_k \|_{L^\infty} \lesssim \epsilon^k$. Two other applications of Taylor's theorem, with one of them relying on \eqref{eq:defexpanOmeg} and the expansion for $\mathfrak{V}$ given in Lemma \ref{LemPhiminus1}, provide the estimates for $\mathfrak{Z}^{-1} (\frac{z-y}{t})$ and $\mathfrak{Z}^{-1} (\frac{z-y}{t})+\mathfrak{V}_t \circ \mathfrak{Z}^{-1} (\frac{z-y}{t})$, which allow to show \eqref{est11} and \eqref{est22}. The expansion \eqref{estdet} for the Jacobian determinant of $\Y_{t,z}$ follows from \eqref{eq:expandZminus1}--\eqref{eq:defexpanOmeg}, Lemma \ref{LemPhiminus1}, the multi-linearity of the determinant, and $$ \dr \mathfrak{Z}^{-1}(y)= \big[ \dr \mathfrak{Z} \big]^{-1} \big( \mathfrak{Z}^{-1}(y) \big), \qquad   \det(I_3+M)=1+\mathrm{Tr}(M)+O(|M|^2).$$
 \end{proof}
 
We are now able to derive an improved expansion for the spatial density. The improved expansion for derivatives of the spatial density will be handled in the next section. We apply the previous results with $N=4$.

\begin{proposition}\label{ProfirstorderexpansionRho}
For all $(t,x) \in [2,\infty) \times \R^3_x$, there holds
\begin{align*}
t^3 \rho (f) (t,x) =& \bigg[1-\mu \frac{ \log(t)+1}{t} \Delta_v \phi_\infty \Big( \frac{x}{t} \Big) \bigg] \int_{\R^3_y} f_\infty \Big( y,\frac{x}{t}   \Big) \dr y \\ 
& \quad-\frac{1}{t}\int_{\R^3_y}\Big[ y +\mu \big[\log(t)+1\big] \nabla_v \phi_\infty \Big( \frac{x}{t}  \Big) \Big] \cdot \big[ \nabla_v f_\infty  \big] \Big( y,\frac{x}{t}  \Big) \dr y+ \epsilon \, O \bigg( \frac{\log^{8}(t)}{ t^{2}} \bigg).
\end{align*}
\end{proposition}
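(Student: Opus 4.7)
I would start from the backward change-of-variables identity \eqref{keyident_backwardsdistrib} to express the spatial density in terms of the modified profile $g_1$, namely
$$
\rho(f)(t,x) \,=\, \int_{\R^3_v} g_1\big(t,\, x-tv+\mathfrak{X}_t(v),\, v+\mathfrak{V}_t(v)\big)\,\dr v.
$$
Next, I would perform the change of variables $y := \Y_{t,x}(v) = x-tv+\mathfrak{X}_t(v)$, whose invertibility and asymptotic properties are provided by Lemma \ref{LemYorder2}. Inserting the Jacobian expansion \eqref{estdet} together with the velocity-argument expansion \eqref{est22}, taken for $N=4$, and setting $v_* := x/t$, the identity above becomes
\begin{align*}
t^3\rho(f)(t,x) \,&=\, \int_{\R^3_y} g_1\big(t,\,y,\, v_* + \tilde\delta(t,x,y)\big)\, J(t,x,y)\,\dr y, \\
\tilde\delta(t,x,y) \,&=\, -\frac{y}{t} - \mu\,\frac{\log(t)+1}{t}\nabla_v\phi_\infty(v_*) + E_1(t,x,y), \\
J(t,x,y) \,&=\, 1 - \mu\,\frac{\log(t)}{t}\Delta_v\phi_\infty(v_*) + E_2(t,x,y),
\end{align*}
where the remainders $E_1$ and $E_2$ (which absorb the order-$q=2$ sums in \eqref{est22}, \eqref{estdet}, as well as the error estimates therein) satisfy $|E_1|\lesssim \e\langle y\rangle^2\log^2(t)/t^2$ and $|E_2|\lesssim \e\langle y\rangle\log^2(t)/t^2$.

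\textbf{Taylor expansion and reduction to spatial averages of $g_1$.} I would then Taylor expand $g_1$ in its velocity slot to first order around $v_*$. Corollary \ref{Corestig1} provides $\langle y\rangle^{N_x-1}\langle v\rangle^{N_v}|\partial_{x,v}^\kappa g_1|\lesssim 1$ for $|\kappa|\leq N-2$. Since $N_x\geq 8$, the quadratic Taylor remainder $|\tilde\delta|^2\|\nabla_v^2 g_1\|$, together with the products $|E_1||\nabla_v g_1|$ and $|E_2||g_1|$, integrate in $y$ to a contribution of size $\e\log^{8}(t)/t^2$ (using that $\langle y\rangle^{-(N_x-1)}$ is integrable in dimension three with polynomial weights). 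Collecting the remaining terms of orders $1$, $\log(t)/t$, and $1/t$ gives
\begin{align*}
t^3\rho(f)(t,x) \,=\, &\Big[1-\mu\tfrac{\log(t)}{t}\Delta_v\phi_\infty(v_*)\Big]\!\int_{\R^3_y} g_1(t,y,v_*)\,\dr y \\
&-\frac{1}{t}\int_{\R^3_y}\big[y+\mu(\log(t)+1)\nabla_v\phi_\infty(v_*)\big]\cdot \nabla_v g_1(t,y,v_*)\,\dr y + \e\,O\!\left(\frac{\log^{8}(t)}{t^2}\right).
\end{align*}

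\textbf{Replacing $g_1$ by $f_\infty$.} To convert these spatial averages of $g_1$ into ones of $f_\infty$, I would invoke Corollary \ref{Corestig1} for the first-order averages, namely
$$
\int_{\R^3_y} \nabla_v g_1(t,y,v_*)\,\dr y \,=\, \int_{\R^3_y} \nabla_v f_\infty(y,v_*)\,\dr y + O\!\left(\tfrac{\log^{2N}(t)}{t}\right),
$$
and similarly for the weighted average $\int y\cdot\nabla_v g_1\,\dr y$, and then use the \emph{enhanced} convergence of Proposition \ref{Proconvsecondorder} for the zeroth-order average:
$$
\int_{\R^3_y} g_1(t,y,v_*)\,\dr y \,=\, \Big[1 - \mu\tfrac{1}{t}\Delta_v\phi_\infty(v_*)\Big]\!\int_{\R^3_y} f_\infty(y,v_*)\,\dr y + O\!\left(\tfrac{\log^{2N}(t)}{t^2}\right).
$$
Plugging these into the previous display and combining the two contributions to the coefficient of $\Delta_v\phi_\infty(v_*)\int f_\infty(y,v_*)\,\dr y$---namely $-\mu\log(t)/t$ from the Jacobian $J$ and $-\mu/t$ from the enhanced expansion of $\int g_1\,\dr y$---produces the announced factor $\log(t)+1$ in the main term.

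\textbf{Main obstacle.} The most delicate ingredient is the enhanced $t^{-2}$-rate of convergence of $\int g_1(t,y,v_*)\,\dr y$ provided by Proposition \ref{Proconvsecondorder}: the cruder $t^{-1}\log(t)$ rate of Proposition \ref{ProconvQ} would only yield a coefficient $\log(t)$ in front of $\Delta_v\phi_\infty(v_*)$, rather than the correct $\log(t)+1$. Beyond that, the proof is essentially a careful bookkeeping of the remainders generated by the change of variables, the Taylor expansion, and the passage from $g_1$ to $f_\infty$, all tamed by the weighted pointwise bounds of Corollary \ref{Corestig1}.
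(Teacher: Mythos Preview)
Your proposal is correct and follows essentially the same approach as the paper: express $\rho(f)$ in terms of $g_1$ via \eqref{keyident_backwardsdistrib}, perform the change of variables $y=\Y_{t,x}(v)$ using the expansions of Lemma \ref{LemYorder2} (with $N=4$), Taylor expand in the velocity slot around $x/t$, and then replace the spatial averages of $g_1$ by those of $f_\infty$ using Corollary \ref{Corestig1} for the weighted averages and the enhanced Proposition \ref{Proconvsecondorder} for the unweighted one. Your identification of Proposition \ref{Proconvsecondorder} as the crucial ingredient producing the extra $+1$ in the factor $\log(t)+1$ is exactly the point the paper's argument hinges on.
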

\begin{proof}
 Let $(t,x) \in [2,\infty) \times \R^3_x$. Performing the change of variables $y=\Y_{t,x}(v)$, allowed by Lemma \ref{LemYorder2}, we have
\begin{align*}
 \rho (f) (t,x) & =  \int_{\R^3_v} g_1 \big(t, x-tv+\mathfrak{X}_t(v),v+\mathfrak{V}_t(v) \big) \dr v =  \int_{\R^3_v} g_1 \Big(t, \Y_{t,x}(v), v+\mathfrak{V}_t(v) \Big) \dr v \\
& =  \int_{\R^3_y} g_1 \Big(t,y, \Y_{t,x}^{-1} (y)+\mathfrak{V}_t \big( \Y_{t,x}^{-1}(y) \big) \Big) \big| \det \dr \Y_{t,x}^{-1} \big|(y) \dr y.
\end{align*}
We now apply Taylor's theorem by using the expansion of Lemma \ref{LemYorder2} up to first order. This argument yields, by using the estimates of Corollary \ref{Corestig1} for $\partial_v^{\kappa_v}g_1$ with $|\kappa_v | \leq 2$, that
\begin{align*}
t^3 \rho (f) (t,x) & = \int_{\R^3_y} g_1 \Big( t,y,\frac{x}{t}   \Big) \dr y-\mu\frac{\log(t)}{t} \Delta_v \phi_\infty \Big( \frac{x}{t} \Big)  \int_{\R^3_y} g_1 \Big( t,y,\frac{x}{t}   \Big) \dr y  -\frac{1}{t}\int_{\R^3_y} y \cdot \big[ \nabla_v g_1  \big] \Big( t,y,\frac{x}{t}  \Big) \dr y
\\ & \qquad  -\mu \frac{\log(t)+1}{t} \nabla_v \phi_\infty \Big( \frac{x}{t}  \Big) \cdot \int_{\R^3_y}  \big[ \nabla_v g_1  \big] \Big( t,y,\frac{x}{t}  \Big) \dr y+ \epsilon \, O \bigg( \frac{\log^2(t)}{t^2} \bigg).
\end{align*}
Finally, we treat the terms on the RHS by applying Proposition \ref{Proconvsecondorder} for the first term, and Corollary \ref{Corestig1} for the remaining ones.
\end{proof}

\section{Late-time asymptotics}\label{SEcLatetime}

During this section, we set $N \geq 3$, $N_v \geq 7$ and $N_x \geq 2\sqrt{2N}+5$. We note that $N_x \geq 2n+8$ if $n$ satisfies $r_{n+1} \leq N$. We consider further a solution $f$ to the Vlasov-Poisson system verifying $\mathbb{E}_N^{8,7}[f_0] \leq \epsilon$ and $\mathbb{E}_N^{N_x,N_v}[f_0]<\infty$.

\subsection{Enhanced modified characteristics and asymptotics}

The purpose of this section consists in proving the properties stated in Proposition \ref{Proinduction} by an induction argument. The base case $n=0$ has been treated in Sections \ref{SecModiscat}--\ref{SecRhoOrder2}. In the course of the proof, we will derive the asymptotic self-similar polyhomogeneous expansions satisfied by the normalised spatial density $t^3 \!\rho (G^\beta \! f)$ and normalised force field $t^{2+|\beta|}\nabla_x \partial_x^\beta \phi$. 

For convenience, we will use two sequences $(r_n)_{n \geq 1}$ and $(S_n)_{n \geq 1}$ which satisfy the following conditions
\begin{alignat}{2}
 \nonumber r_{n+1}&=r_n+n+1, \qquad \qquad &&r_1=2, \\
 \nonumber S_{n+1} &\geq S_n+N+1-r_{n+1},  \qquad \qquad  &&S_1\geq 2N.
\end{alignat}
The sequence $(r_n)$, already introduced in \eqref{kev:defSn}, will express the number of derivatives required to derive expansions of order n for the solutions of the system. The sequence $(S_n)$ will express the logarithmical growth in the error terms at order $n$. One can check that a possible choice for $(S_n)$ is $S_n=N(n+2)$.

\begin{proposition}\label{Proinduction}
Let $n \in \mathbb{N}$ such that $r_n \leq N$, $ \X_0(t,x,v) := x,$ and $\V_0(t,x,v):=v$. There exist modifications of the linear characteristic flow
\begin{align}
\X_{n}(t,x,v) & =x+ \mu \nabla_v \phi_{\infty}(v) \log(t)+\sum_{1 \leq q \leq n-1} \, \sum_{|\alpha|+ p \leq q} \frac{x^\alpha \log^p(t)}{t^{q}} \mathbb{X}_{q,\alpha,p}(v), \label{eq:defCX }\\
\V_{n}(t,x,v) &=v+  \frac{\mu}{t}\nabla_v \phi_{\infty}(v)+\sum_{1 \leq q \leq n-1} \, \sum_{|\alpha|+ p \leq q} \frac{x^\alpha \log^p(t)}{t^{q+1}} \mathbb{V}_{q,\alpha,p}(v), \label{eq:defCV}
\end{align}
where $\mathbb{X}_{1,\alpha,0}=-\nabla_v \partial_v^\alpha \phi_\infty$ for $|\alpha|=1$, and $\mathbb{X}_{q,\alpha,p}, \; \mathbb{V}_{q,\alpha,p}\in C^{N-r_{q+1}}\cap W^{N-r_{q+1}} (\R^3_v)$, such that the following properties hold. For any $n \in \mathbb{N}$, if $r_{n+1} \leq N$ and $|\beta| \leq N-r_{n+1}$: 
\begin{enumerate}[label = (\alph*)] 
\item Modified scattering holds with an enhanced rate of convergence. Let $g_{n+1}:\R_+^*\times \R^3_x\times\R^3_v\to \R$ be defined by
$$ g_{n+1}(t,x,v) := f \big(t,\X_{n+1}(t,x,v)+t\V_{n+1}(t,x,v), \V_{n+1}(t,x,v) \big).$$ 
For any $ |\kappa| \leq N-r_{n+1}$ and all $(t,x,v) \in [2,\infty) \times \R^3_x \times \R^3_v$ such that $|x| \leq t$, we have
\begin{equation}\label{equa:induModscat}
  \langle x \rangle^{N_x-1-n} \, \langle v \rangle^{N_v} \big|\partial_{x,v}^\kappa \big( g_{n+1}(t,x,v)-f_\infty (x,v) \big) \big| \lesssim  t^{-n-1}\log^{S_{n+1}}(t).
  \end{equation}
\item The modified spatial average verifies an enhanced convergence estimate to the spatial average of $f_\infty$. For any $|\kappa| \leq N-1-r_{n+1}$, 
there exists $\mathbf{Q}_{p,\xi}^{\kappa,\beta} \in C^0 \cap L^\infty (\R^3_v) $ such that
\begin{align}
 \nonumber \color{white} \square \qquad \quad \color{black} \bigg| \int_{|x|<t } \partial_v^\kappa g_{n+1}(t,x,v) \dr x -\int_{\R^3_x} \partial_v^\kappa f_\infty (x,v) \dr x- \sum_{|\beta| \leq |\kappa|} &\sum_{p +|\xi| \leq n} \frac{\log^p(t)}{t^{n+1}} \mathbf{Q}_{p,\xi}^{\kappa,\beta} (v)\int_{\R^3_x} x^\xi \partial_v^\beta f_\infty (x,v) \dr x \bigg| \\
  \lesssim \frac{ \log^{S_{n+1}}(t)}{t^{n+2}}&, \qquad  \forall (t,v) \in [2,\infty) \times \R^3_v.\label{equa:induQn}
  \end{align}  
\end{enumerate}
\end{proposition}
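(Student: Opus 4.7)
The plan is to proceed by induction on $n$, with the base case $n=0$ provided by Propositions \ref{prop_mod_scatteringg_1}, \ref{Proconvsecondorder}, and \ref{ProfirstorderexpansionRho}. Assume the proposition holds at order $n-1$, so that $\X_n, \V_n, g_n$ are constructed and satisfy (a)--(b) at the previous order. The inductive step then consists in defining $\X_{n+1}, \V_{n+1}$ and upgrading the estimates by one order, which I would carry out in three steps.

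\emph{Step 1: asymptotic self-similar polyhomogeneous expansions for $t^3 \rho(G^\beta f)$ and $t^{2+|\beta|}\nabla_x \partial_x^\beta \phi$.} I would generalise the change-of-variables scheme from Section \ref{Subsec72} to order $n$: invert the map $(z,v)\mapsto(\X_n+t\V_n,\V_n)$ via Lemma \ref{Lemdiffeo} (the spatial inversion is immediate since $\X_n$ is affine in $x$ at leading order), perform the change of variables $y=\Y_{t,x}(v)$ as in Lemma \ref{LemYorder2}, and expand the Jacobian and the integrand $\partial_v^\kappa g_n(t,y,\cdot)$ around $x/t$ via Taylor up to order $n$, using the induction hypothesis \eqref{equa:induModscat} at order $n-1$ to control the remainder. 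Plugging in the enhanced spatial-average convergence \eqref{equa:induQn} at order $n-1$, applied to the resulting weighted spatial integrals, yields the self-similar polyhomogeneous expansion of $t^3\rho(G^\beta f)$ of order $n$ in the sense of Definition \ref{Defselsimexp}. Solving the Poisson equation coefficient by coefficient, by convolution with the Newtonian kernel and Lemma \ref{lemma_uniform_integral_bound_kernel_convolution_duan}, converts this into the corresponding expansion for $t^{2+|\beta|}\nabla_x \partial_x^\beta \phi$, and Lemmas \ref{LemForexp}--\ref{LemForexpbis} guarantee that it can be differentiated in $v$ and evaluated along any candidate modified characteristic of the required form, producing an asymptotic polyhomogeneous expansion of $\nabla_x \phi(t,\X_{n+1}+t\V_{n+1})$ of order $n$ on $|x|\le t$ in the sense of Definition \ref{defpolhomoexpforce}.

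\emph{Step 2: definition of $\X_{n+1}, \V_{n+1}$ and proof of (a).} Starting from the identity \eqref{timederivfirstexptechlemm}, the new coefficients $\mathbb{X}_{n,\alpha,p}, \mathbb{V}_{n,\alpha,p}$ are determined by demanding that the leading terms $\frac{x^\alpha \log^p(t)}{t^{n+1}}$ (with $|\alpha|+p\leq n$) in the expansions of $\partial_t\X_{n+1}-\mu t \nabla_x \phi(t,\X_{n+1}+t\V_{n+1})$ and of $\partial_t\V_{n+1}+\mu\nabla_x \phi(t,\X_{n+1}+t\V_{n+1})$ cancel identically. This is a triangular algebraic system in $(\mathbb{X}_{n,\alpha,p}, \mathbb{V}_{n,\alpha,p})$ whose solutions lie in $C^{N-r_{n+1}}\cap W^{N-r_{n+1},\infty}(\R^3_v)$, thanks to Step 1 and the regularity of $\phi_\infty$. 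With this choice, a computation parallel to Lemma \ref{Lemformodscattderiv} — responsible for the quadratic derivative loss $r_{n+1}-r_n=n+1$ via the identity $\nabla_v=G-t\nabla_x$ and the expansion of $\nabla_x\phi$ of Step 1 — yields $|\partial_t \partial_{x,v}^\kappa g_{n+1}|(t,x,v) \lesssim \langle x\rangle^{-N_x+1+n}\langle v\rangle^{-N_v}t^{-n-2}\log^{S_{n+1}}(t)$ for $|\kappa|\leq N-r_{n+1}$. Duhamel's formula on $[t,\infty)$ then defines the scattering state $f_\infty$ and yields \eqref{equa:induModscat}.

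\emph{Step 3: enhanced convergence of modified spatial averages.} This is the main obstacle. Set $\mathcal{I}_\kappa(t,v) := \int_{|x|<t}\partial_v^\kappa g_{n+1}(t,x,v)\,\dr x$. Its time derivative splits into a bulk integral of $\partial_v^\kappa \partial_t g_{n+1}$ over $\{|x|<t\}$ and a boundary flux on the sphere $|x|=t$. The boundary flux is bounded by $t^2$ times the pointwise decay of $g_{n+1}$ at $|x|=t$, which by \eqref{equa:induModscat} and the decay of $f_\infty$ is at most $t^{3-N_x+n}$; the assumption $N_x \geq 2\sqrt{2N}+5$ (together with $r_{n+1}\leq N$) ensures this is subleading compared to $t^{-n-3}$. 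For the bulk, I would insert the explicit form of $\partial_t g_{n+1}$ produced in Step 2, substitute the polyhomogeneous expansion of $\nabla_x\phi(t,\X_{n+1}+t\V_{n+1})$ from Step 1, integrate by parts in $x$ to convert the $G$ factors into polynomial weights $x^\xi$, and apply \eqref{equa:induModscat} to replace $g_{n+1}$ by $f_\infty$ up to a remainder of size $t^{-n-3}\log^{S_{n+1}}(t)$. The leading contributions organise themselves as $\sum \frac{\log^p(t)}{t^{n+2}} \widetilde{\mathbf{Q}}^{\kappa,\beta}_{p,\xi}(v) \int_{\R^3_x} x^\xi \partial_v^\beta f_\infty \, \dr x$, and integration in time from $t$ to $\infty$, together with Lemmas \ref{LemForexp}--\ref{LemForexpbis}, delivers \eqref{equa:induQn} with coefficients $\mathbf{Q}^{\kappa,\beta}_{p,\xi}$ obtained by primitivation. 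The hardest bookkeeping is to track how each logarithmic factor propagates through the three steps while remaining within the budget $S_{n+1}\geq S_n+N+1-r_{n+1}$; that this budget suffices reflects the fact that each gain of one power of $t^{-1}$ in the expansion costs at most $N+1-r_{n+1}$ additional powers of $\log(t)$ coming from the derivative losses in Step 2.
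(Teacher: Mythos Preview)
Your three-step outline matches the paper's inductive scheme (Sections \ref{Subsecrho}--\ref{Subsecspatialgnplus1}) closely, and Steps 1--2 are essentially what the paper does. Two points are worth flagging.

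In Step 1 you say the spatial inversion is ``immediate since $\X_n$ is affine in $x$ at leading order.'' For $n\ge 2$ the corrections in $\X_n$ contain genuine monomials $x^\alpha$ with $|\alpha|\ge 2$, so $(z,v)\mapsto(\X_n+t\V_n,\V_n)$ need not be globally invertible; this is precisely why the paper restricts to a domain $\{|x|<c_n t\}$ (Lemma \ref{Lemdiffeoordern} and Remark \ref{Rkdomain}) and why the statements in part (a) are confined to $|x|\le t$. Your sketch ends up in the right place but understates this obstruction.

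The more substantive imprecision is in Step 3. You write ``integrate by parts in $x$ to convert the $G$ factors into polynomial weights $x^\xi$'' and then ``apply \eqref{equa:induModscat} to replace $g_{n+1}$ by $f_\infty$.'' That is not the mechanism. In Lemma \ref{LemderivPhialongXnplus} the first-kind terms carry the factor $[\nabla_x\partial_x^\alpha G^\beta f](t,\X_{n+1}+t\V_{n+1},\V_{n+1})$; this is \emph{not} a spatial derivative of $g_{n+1}$ because $\X_{n+1}$ depends on $x$. The paper first reduces from $(\X_{n+1},\V_{n+1})$ to $(\X_1,\V_1)$---crucially exploiting that $\X_1-x$ and $\V_1$ are $x$-independent---then integrates by parts to move $\nabla_x\partial_x^\nu$ onto the error factor $\partial_v^\gamma(\partial_t\X_{n+1}-t\mu\nabla_x\phi)$. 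Since by construction the polyhomogeneous expansion of this error factor has all coefficients of order $\le n+1$ vanishing, what remains after differentiation is an explicit sum $\sum_{p+|\xi|\le n}\frac{z^\xi\log^p(t)}{t^{n+2}}\mathbf{K}_p^\xi(v)$ plus a remainder of order $t^{-n-3}$; the weights $z^\xi$ come from \emph{this} expansion, not from the $G$ factors. The resulting weighted spatial averages are then handled via the $n=0$ convergence of $g_1$ (Corollary \ref{Corestig1}), not via \eqref{equa:induModscat} for $g_{n+1}$. Your route can be made to work---the commutators generated by the $x$-dependence of $\X_{n+1}$ are $O(t^{-1})$ relative to the main term and hence $O(t^{-n-3})$ overall---but it is both more laborious and obscures where the polynomial weights actually originate.
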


\begin{remark}\label{Rkdomain}
The domain of integration of the spatial average in \eqref{equa:induQn} is restricted to $|x| \leq t$ since $x \mapsto \X_n(t,x,v)+t\V_n(t,x,v)$ may not be injective on $\R^3_x$ for $n \geq 2$. Note that $f(t,x+tv,v)$ and its derivatives enjoy strong decay estimates on the complementary region $|x| \geq t$ by Corollary \ref{Corboundg0}.
\end{remark}

\textbf{Structure of the proof of late-time asymptotics.}
The strategy of the proof relies on an induction argument. Fix an integer $n$ such that $r_{n+1} \leq N$. The base case $n=0$ has been treated in Sections \ref{SecModiscat}--\ref{SecRhoOrder2}. See in particular Propositions \ref{Prorhosimilar}, \ref{Proasympselfsiphi}, \ref{prop_mod_scatteringg_1} and \ref{Proconvsecondorder}. We then assume $n \geq 1$ and that the statement holds at any order $k \in \llbracket 0, n-1 \rrbracket$.

\begin{enumerate}
\item First, we derive the improved self-similar asymptotic expansion for $\rho(f)$ and its derivatives in Section \ref{Subsecrho}. For this, we make use of the convergence properties of $g_{n}$, $Q_{n}$, and their derivatives.
\item The first step allows us to show an improved polyhomogeneous asymptotic expansion of the force field along the $n^{\mathrm{th}}$-order modifications of the linear characteristics in Section \ref{SubsecforcefieldAxpan}.
\item In Section \ref{SubsecImproscatmod}, we define $\X_{n+1}$ and $\V_{n+1}$ from where the improved modified scattering statement hold. Introducing $\V_{n+1}$ is not required in this step but we do it to reduce the number of computations in the perspective of addressing part $(b)$ of Proposition \ref{Proinduction}.
\item Finally, we prove that the spatial average of $g_{n+1}$ satisfies a strong convergence estimate in Section \ref{Subsecspatialgnplus1}. With this estimate, we complete the induction.
\end{enumerate}

\subsection{Late-time asymptotics for the spatial density}\label{Subsecrho}

We begin the proof of the late-time asymptotics for the spatial density by writing the distribution $G^\beta f$ in terms of the modified profile $g_n$.

\subsubsection{Step 1: $G^\beta f$ in terms of derivatives of $g_n$}

The asymptotic self-similar polyhomogeneous expansion for $t^3\rho (G^\beta f)$ will be obtained through the convergence estimates satisfied by the weighted and non-weighted spatial averages of $\partial_v^\kappa g_n$. We then express $G^\beta f$ in terms of $g_n$ and its derivatives. For this, we recall that
$$g_n(t,x,v)= f \big( \X_n(t,x,v)+t \V_n(t,x,v),  \V_n(t,x,v) \big),$$
so we will have to invert the map $(x,v) \mapsto (\X_n+t\V_n,\V_n)$ for all $t \geq 2$. As mentioned earlier, we will have to restrict our analysis to a subdomain of $\R^3_x \times \R^3_v$.

To facilitate its study, we will write $(\X_n,\V_n)$ as the composition of three maps. Recall the definitions \eqref{eq:defCX }--\eqref{eq:defCV} of $(\X_n,\V_n)$. We first rewrite \eqref{eq:defCX }--\eqref{eq:defCV} as
\begin{align*}
\X_{n}(t,x,v) & = x+\mu \nabla_v \phi_{\infty}(v) \log(t)+\sum_{1 \leq q \leq n-1} \, \sum_{|\alpha|+ p \leq q} \frac{[ x+\mu \nabla_v \phi_{\infty}(v) \log(t) ]^\alpha \log^p(t)}{t^{q}} \overline{\mathbb{X}}_{q,\alpha,p}(v), \\
\V_{n}(t,x,v) &= v+ \frac{\mu}{t}\nabla_v \phi_{\infty}(v)+\sum_{1 \leq q \leq n-1} \, \sum_{|\alpha|+ p \leq q} \frac{[ x+\mu \nabla_v \phi_{\infty}(v) \log(t) ]^\alpha \log^p(t)}{t^{q+1}} \overline{\mathbb{V}}_{q,\alpha,p}(v),
\end{align*}
where $\overline{\mathbb{X}}_{1,\alpha,0}=-\nabla_v \partial_v^\alpha \phi_\infty$ for $|\alpha|=1$, and $\overline{\mathbb{X}}_{q,\alpha,p}, \; \overline{\mathbb{V}}_{q,\alpha,p} \in C^{N-r_{q+1}}\cap W^{N-r_{q+1}} (\R^3_v)$. It is the condition $|\alpha|+p \leq q$ which allows $(\X_n,\V_n)$ to be written in this alternative form.

This leads us to consider the decomposition
$$ (\X_n +t\V_n, \V_n )= \Upsilon_t \circ \Phi_{t,n} \circ \Xi_t,$$
where we recall
$$ \Upsilon_t (a,b):=(a+tb,b), \qquad \Xi_t (x,v):= \big( x+\mu \log(t) \nabla_v \phi_\infty(v),v \big),$$
and we define $\Phi_{t,n}$ by
$$ \Phi_{t,n}(y,w):= \mathrm{id}(y,w)+ \big(\mathcal{X}^n_t(y,w),\mathcal{V}_t^n(y,w)\big),$$
with
$$ \mathcal{X}^n_t (x,v) := \sum_{\substack{1 \leq q \leq n-1 \\ |\alpha|+ p \leq q }} \frac{x^\alpha \log^p(t)}{t^q}\overline{\mathbb{X}}_{q,\alpha,p}(v),  \qquad  \mathcal{V}^n_t (x,v) :=\frac{\mu}{t} \nabla_v \phi_\infty (v)+ \sum_{\substack{1 \leq q \leq n-1 \\ |\alpha|+ p \leq q }} \frac{x^\alpha \log^p(t)}{t^{q+1}}\overline{\mathbb{V}}_{q,\alpha,p}(v).$$
Since $\Upsilon_t$ and $\Xi_t$ are diffeomorphisms of $\R^3 \times \R^3$ and that their inverse can be easily computed, we will focus on $\Phi_{t,n}$. We note that for all $(t,x,v) \in [2,\infty)  \times \R^3_x \times \R^3_v$ such that $|x| \leq 2t$, we have
\begin{equation}\label{keva:estiXV}
  \big| \mathcal{X}_t^n \big|(t,x,v) \lesssim  \frac{\epsilon |x|+\log(t)}{t}+\frac{|x|^2}{t^2}, \qquad   \big| \mathcal{V}_t^n \big|(t,x,v) \lesssim \frac{1}{t}.
  \end{equation}
  The constant $\epsilon>0$ in the estimate for $\mathcal{X}_t^n$ comes from $\overline{\mathbb{X}}_{1,\alpha,0}=-\nabla_v \partial_v^\alpha \phi_\infty$. 
  
\begin{lemma}\label{Lemdiffeoordern}
There exists $T_n \! \geq \! 2$ and $0  < \! c_n \! <  1$ such that for all $t \geq T_n$, there exists an open set $\mathcal{U}_{t,n} \subset \{ |x|<2c_nt \} \times \R^3_v$ such that the restriction of $\Phi_{t,n}$ to $\mathcal{U}_{t,n}$ is a $C^{N-r_n}$-diffeomorphism from $\mathcal{U}_{t,n}$ to $\{ |x|<c_nt \} \times \R^3_v$.
\end{lemma}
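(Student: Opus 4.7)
The plan is to set $\Phi_{t,n} = \mathrm{id} + h_{t,n}$ with $h_{t,n} := (\mathcal{X}_t^n, \mathcal{V}_t^n)$ and apply a variant of Lemma \ref{Lemdiffeo} on the convex set $D_{t,n} := \{|y| < 2c_n t\} \times \R^3_v$ (convexity of $D_{t,n}$ will be essential so that the mean value inequality yields injectivity). The regularity of $\Phi_{t,n}$ is automatic from the fact that $\overline{\mathbb{X}}_{q,\alpha,p}, \overline{\mathbb{V}}_{q,\alpha,p} \in C^{N-r_{q+1}}(\R^3_v) \subset C^{N-r_n}(\R^3_v)$ when $q \leq n-1$.

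The first key step is to establish that, for $c_n$ sufficiently small and $T_n$ sufficiently large,
$$\| \mathrm{d} h_{t,n} \|_{L^\infty(D_{t,n})} \leq \tfrac{1}{2}, \qquad \forall \, t \geq T_n.$$
For this, I would differentiate termwise the generic summand $\frac{y^\alpha \log^p(t)}{t^q} \overline{\mathbb{X}}_{q,\alpha,p}(w)$ of $\mathcal{X}_t^n$, use the bound $|y| \leq 2c_n t$ on $D_{t,n}$, and exploit the fundamental inequality $|\alpha|+p \leq q$, which gives
$$ |y|^{|\alpha|-1} \frac{\log^p(t)}{t^q} \lesssim (2 c_n)^{|\alpha|-1}\frac{\log^{q-|\alpha|}(t)}{t^{q-|\alpha|+1}} \lesssim \frac{(2c_n)^{|\alpha|-1}}{t^{1/2}}, \qquad |y|^{|\alpha|}\frac{\log^p(t)}{t^q} \lesssim (2c_n)^{|\alpha|} \frac{\log^{q-|\alpha|}(t)}{t^{q-|\alpha|}},$$
so that the $y$- and $w$-derivatives of $\mathcal{X}_t^n$ are uniformly controlled by a power of $c_n$ plus an $o_t(1)$ remainder. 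The derivatives of $\mathcal{V}_t^n$ carry an extra factor $t^{-1}$, and the remaining contribution $t^{-1}\nabla_v^2 \phi_\infty(w)$ is bounded by $\epsilon t^{-1}$ thanks to Proposition \ref{Proasympselfsiphi}. Choosing $c_n$ small (relative to the $W^{1,\infty}$ norms of the coefficients) and $T_n$ large then yields the desired bound.

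Once this is achieved, injectivity of $\Phi_{t,n}|_{D_{t,n}}$ follows from the convexity of $D_{t,n}$ and the mean value inequality, while the invertibility of $\mathrm{d}\Phi_{t,n} = I_6 + \mathrm{d} h_{t,n}$ follows from a Neumann series as in the proof of Lemma \ref{Lemdiffeo}. For surjectivity onto $\{|z| < c_n t\} \times \R^3_v$, I would fix $(z,u)$ in this set and apply Banach's fixed point theorem to
$$ F_{z,u}(y,w) := \bigl(z - \mathcal{X}_t^n(y,w), \, u - \mathcal{V}_t^n(y,w)\bigr)$$
on the complete metric space $\overline{D_{t,n}}$: $F_{z,u}$ is a contraction by the bound on $\mathrm{d} h_{t,n}$, and maps $\overline{D_{t,n}}$ into $D_{t,n}$ since, by \eqref{keva:estiXV} on $D_{t,n}$,
$$ |z - \mathcal{X}_t^n(y,w)| \leq |z| + C\bigl( \epsilon c_n + c_n^2 + t^{-1}\log(t) \bigr) < 2 c_n t$$
for $c_n$ small and $t \geq T_n$ large enough.

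Finally, setting $\mathcal{U}_{t,n} := \Phi_{t,n}^{-1}\bigl( \{|z|<c_n t\} \times \R^3_v \bigr) \cap D_{t,n}$ gives an open subset of $D_{t,n}$ (by continuity of $\Phi_{t,n}$) on which $\Phi_{t,n}$ is a $C^{N-r_n}$ bijection onto $\{|z|<c_n t\} \times \R^3_v$; the inverse is $C^{N-r_n}$ by the inverse function theorem applied locally and the global invertibility of the differential. The main technical obstacle is the combinatorial bookkeeping in the first step: one must simultaneously gain smallness in $c_n$, in $t^{-1}$, or in powers of $\log(t)/t$, from every term in the expansions of $\mathcal{X}_t^n, \mathcal{V}_t^n$ and their derivatives, and this is precisely where the structural constraint $|\alpha|+p \leq q$ built into the definition of $\X_n, \V_n$ plays the decisive role.
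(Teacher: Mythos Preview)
Your proof is correct and follows essentially the same strategy as the paper: bound $h_{t,n}=(\mathcal{X}_t^n,\mathcal{V}_t^n)$ and its first derivatives on $D_{t,n}=\{|y|<2c_nt\}\times\R^3_v$ using the constraint $|\alpha|+p\leq q$, then deduce injectivity and local invertibility as in Lemma~\ref{Lemdiffeo}. The one methodological difference lies in the surjectivity step: the paper shows $\{|z|<c_nt\}\times\R^3_v\subset\Phi_{t,n}(D_{t,n})$ via a connectedness argument (the intersection is open by the local diffeomorphism property and closed by a compactness argument using the $L^\infty$ bound $|(\mathcal{X}_t^n,\mathcal{V}_t^n)|\leq\tfrac12 c$), whereas you use the Banach fixed point theorem for $F_{z,u}$ on $\overline{D_{t,n}}$. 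Your approach is slightly more constructive and avoids the topological detour; the paper's argument has the minor advantage of recycling verbatim the structure of Lemma~\ref{Lemdiffeo}. Both are standard and equally valid here.
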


\begin{proof} We introduce the family of sets
$$ \mathcal{A}_t^c := \big\{x \in \R^3_x \; : \; |x|<ct \big\} \times \R^3_v \subset \R^3_x \times \R^3_v ,$$ where $0< c <2$. Then, by \eqref{keva:estiXV} there holds
\begin{equation*}
\big\| \big( \mathcal{X}_t^n,\mathcal{V}_t^n \big) \big\|_{W^{N-r_n,\infty}(\mathcal{A}^{2c}_t )} \lesssim  \epsilon c+\log(t)t^{-1}+c^2,
\end{equation*}
where $  (\mathcal{X}_t^n,\mathcal{V}_t^n) \in C^{N-r_n}(\R^3_x \times \R^3_v,\R^3_x \times \R^3_v)$. Thus, for $c<1$ small enough and $t$ large enough so that 
\begin{equation}\label{eq:smallPsicondi}
\big\| \big( \mathcal{X}_t^n,\mathcal{V}_t^n \big) \big\|_{W^{N-r_n,\infty}(\mathcal{A}^{2c}_t )} \leq \frac{1}{2}c,
\end{equation}
the next properties hold by proceeding as in the proof of Lemma \ref{Lemdiffeo}:
\begin{itemize}
\item $\Phi_{t,n}$ is injective on $\mathcal{A}^{2c}_t$.
\item $\Phi_{t,n}$ is a local $C^{N-r_n}$-diffeomorhism on $\mathcal{A}^{2c}_t$. Note that $N-r_n \geq 1$.
\item $\mathcal{A}^c_t \subset \Phi_{t,n} (\mathcal{A}_t^{2c} )$ holds. We already know that $\mathcal{A}^c_t \cap \Phi_{t,n} (\mathcal{A}_t^{2c} )$ is open in the connected set $\mathcal{A}^c_t$. Let $(y_n,w_n)_{n \geq 0}$ be a sequence in $\mathcal{A}^c_t \cap \Phi_{t,n} (\mathcal{A}_t^{2c} )$ converging to $(y,w) \in \mathcal{A}_t^c$. Let $(z_n,v_n) \in \mathcal{A}^{2c}_t$ be such that $\Phi_{t,n}(z_n,v_n)=(y_n,w_n)$. By \eqref{eq:smallPsicondi}, we have
$$ \forall \, n \in \mathbb{N}, \qquad |(y_n-z_n,w_n-v_n)| =\big| \big( \mathcal{X}_t^n,\mathcal{V}_t^n \big) \big |(z_n,w_n)  \leq \frac{1}{2}c,$$
so that $(z_n,v_n) \in \mathcal{A}^{\frac{3}{2}c}_t$ and $(z_n,v_n)_{n \geq 0}$ is bounded. Consequently, there exists a subsequence that converges to $(z,v) \in \mathcal{A}^{2c}_t$, from which we get that $\mathcal{A}^c_t \cap \Phi_{t,n} (\mathcal{A}_t^{2c} )$ is closed in $\mathcal{A}^c_t$.
\end{itemize}
We then deduce the result with $\mathcal{U}_{t,n} := \Phi_{t,n}^{-1} \big(\mathcal{A}^c_t \cap \Phi_{t,n} ( \mathcal{A}^{2c}_t) \big)$. By \eqref{eq:smallPsicondi}, we have $\mathcal{U}_{t,n} \subset \mathcal{A}^{2c}_t$.
\end{proof}

\begin{remark}\label{Rkforipp}
In order to integrate by parts over $\{ |x| \leq c_nt \}$, we will require $\Phi^{-1}_{t,n}-\mathrm{id}$ to be bounded on $\{ |x| \leq c_nt \} \times \R^3_v$. Since the statement in Lemma \ref{Lemdiffeoordern} holds in fact for any $0<c'_n \leq c_n$, we can assume it is indeed the case by considering a smaller $c_n$ if necessary.
\end{remark}

Thus, for $(t,x,v) \in [T_n,\infty) \times \R^3_x \times \R^3_v$ in the domain of invertibility, we can write
$$ f(t,x,v)=g_n \big( t, \Xi_t^{-1} \circ \Phi^{-1}_{t,n} (x-tv,v) \big)  . $$
For our purposes, we need to derive an expansion for $\Xi_t^{-1} \circ \Phi^{-1}_{t,n}$ and its derivatives. We begin with the next intermediary result.
\begin{lemma}\label{LemexpaPhitn}
Let $t \geq T_n$ and $(\mathfrak{Z}_t^n,\mathfrak{W}_t^n)$ be the spatial and velocity components of $\Phi_{t,n}^{-1}-\mathrm{id}$. For any $\kappa=(\kappa_x,\kappa_v)$ such that $|\kappa| \leq N-1-r_n$, there holds
\begin{align*}
 \bigg|t^{|\kappa_x|} \partial_x^{\kappa_x} \partial_v^{\kappa_v} \mathfrak{Z}_t^n(z,v)- \sum_{1 \leq q \leq N+n-2-r_n-|\kappa| } \, \sum_{|\alpha|+p\leq q}  \frac{z^\alpha \log^p(t)}{t^{q}} \mathbf{Z}^{\kappa}_{\alpha,q,p}(v)  \bigg| & \lesssim \frac{\langle z ,\log(t)\rangle^{N+n-1-r_n-|\kappa|}}{t^{N+n-1-r_n-|\kappa|}}, \\
   \bigg| t^{|\kappa_x|} \partial_x^{\kappa_x} \partial_v^{\kappa_v} \mathfrak{W}_t^n(z,v)- \sum_{ q \leq N+n-2-r_n-|\kappa| } \, \sum_{|\alpha|+p\leq q}  \frac{z^\alpha \log^p(t)}{t^{q+1}}   \mathbf{W}^{\kappa}_{\alpha,q,p}(v)  \bigg| & \lesssim \frac{\langle  z ,\log(t) \rangle^{N+n-1-r_n-|\kappa|}}{t^{N+n-r_n-|\kappa|}},
   \end{align*}
for all $|z| <c_nt$, and $v \in\R^3_v$. Here, $\mathbf{Z}^{\kappa}_{\alpha,q,p}, \, \mathbf{W}^{\kappa}_{\alpha,q,p} \in C^{0}\cap L^{\infty}(\R^3_v)$. 
\end{lemma}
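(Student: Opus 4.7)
The plan is to construct $(\mathfrak{Z}_t^n, \mathfrak{W}_t^n)$ as the solution to the implicit fixed-point system obtained from $\Phi_{t,n} \circ \Phi_{t,n}^{-1} = \mathrm{id}$, namely
\begin{align*}
\mathfrak{Z}_t^n(z,v) &= -\mathcal{X}_t^n\big(z + \mathfrak{Z}_t^n(z,v),\, v + \mathfrak{W}_t^n(z,v)\big),\\
\mathfrak{W}_t^n(z,v) &= -\mathcal{V}_t^n\big(z + \mathfrak{Z}_t^n(z,v),\, v + \mathfrak{W}_t^n(z,v)\big),
\end{align*}
whose solvability on $\{|z| < c_n t\} \times \R^3_v$ is guaranteed by Lemma \ref{Lemdiffeoordern}, together with uniform boundedness of $(\mathfrak{Z}_t^n,\mathfrak{W}_t^n)$ thanks to Remark \ref{Rkforipp} and \eqref{keva:estiXV}. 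Since $\mathcal{X}_t^n$ and $\mathcal{V}_t^n$ are themselves explicit polyhomogeneous sums in $(z,v,t)$, the expansion of $(\mathfrak{Z}_t^n,\mathfrak{W}_t^n)$ will be derived iteratively: starting from the zeroth-order approximation $\mathfrak{Z}_t^n \approx -\mathcal{X}_t^n(z,v)$ and $\mathfrak{W}_t^n \approx -\mathcal{V}_t^n(z,v)$, I substitute the current approximation into the right-hand side and Taylor expand around $(z,v)$. Each such iteration gains one power of $t^{-1}$ in the error term, because the terms of $\mathcal{X}_t^n$ and $\mathcal{V}_t^n$ carry prefactors $t^{-q}$ and because the substitution $(\mathfrak{Z}_t^n, \mathfrak{W}_t^n) = O(t^{-1} \log(t))$ in absolute value on the domain considered. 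Iterating $N+n-1-r_n-|\kappa|$ times provides the expansion at the required order, with the Taylor remainder controlled by the top-order term times a weight in $\langle z, \log(t) \rangle$.

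The second main point is a closure property: the class of polyhomogeneous sums $\sum \frac{z^\alpha \log^p(t)}{t^q} \Theta_{\alpha,q,p}(v)$ with constraint $|\alpha|+p \leq q$ is stable under the operations involved in the iteration. Indeed, the product of two such terms with indices $(\alpha_i,p_i,q_i)$ for $i=1,2$ yields an index $(\alpha_1+\alpha_2, p_1+p_2, q_1+q_2)$ which still satisfies $|\alpha_1+\alpha_2|+(p_1+p_2) \leq q_1+q_2$; analogously, composing with $z \mapsto z + \mathfrak{Z}_t^n(z,v)$ and Taylor expanding does not violate the constraint, since $\mathfrak{Z}_t^n$ itself obeys the structure. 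This combinatorial check, together with the fact that each $\overline{\mathbb{X}}_{q,\alpha,p}, \overline{\mathbb{V}}_{q,\alpha,p} \in C^{N-r_{q+1}}$, identifies the coefficients $\mathbf{Z}^\kappa_{\alpha,q,p}(v)$ and $\mathbf{W}^\kappa_{\alpha,q,p}(v)$, which inherit boundedness and continuity from their building blocks.

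For the derivative estimates, I will differentiate both sides of the fixed-point equations and argue by induction on $|\kappa|$. The key observation is that one spatial derivative applied to a term of the form $\frac{z^\alpha \log^p(t)}{t^q} \Theta_{\alpha,q,p}(v)$ lowers $|\alpha|$ by one while keeping the prefactor $t^{-q}$; hence the multiplication by $t^{|\kappa_x|}$ on the left-hand side of the claim restores the normalization $|\alpha|+p \leq q$. Velocity derivatives only act on the coefficients, which explains the regularity budget $|\kappa| \leq N-1-r_n$ (roughly, each $\partial_v$ costs one derivative of $\overline{\mathbb{X}}_{q,\alpha,p}$, the least regular building block being of class $C^{N-r_n}$). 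The composite derivatives are handled via the Faà di Bruno formula, applied to the composition $\mathcal{X}_t^n \circ (\mathrm{id}+(\mathfrak{Z}_t^n,\mathfrak{W}_t^n))$. The main obstacle I anticipate is the bookkeeping of the logarithmic weights: substituting $\mathfrak{Z}_t^n$ into a polynomial in $z$ generates extra $\log(t)$ factors which compound at each iteration, and a careful accounting of these is what produces the mixed weight $\langle z, \log(t)\rangle^{N+n-1-r_n-|\kappa|}$ in the remainder. This requires one to propagate uniformly the bound $|\mathfrak{Z}_t^n|(z,v) \lesssim \langle z, \log(t)\rangle/t$ and its derivative analogues through the inductive step.
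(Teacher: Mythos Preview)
Your proposal is correct and follows essentially the same approach as the paper: iterate the fixed-point relation $\mathfrak{Z}_t^n = -\mathcal{X}_t^n(z+\mathfrak{Z}_t^n, v+\mathfrak{W}_t^n)$ via Taylor expansion to build the polyhomogeneous expansion, then handle derivatives by differentiating the implicit relation. The only cosmetic difference is that the paper packages the derivative step using the Neumann series $[\dr\Phi_{t,n}]^{-1} = \mathrm{id} + \sum_{k\geq 1}(-1)^k[\dr(\mathcal{X}_t^n,\mathcal{V}_t^n)]^k$ from Lemma~\ref{Lemdiffeo} together with $\dr\Phi_{t,n}^{-1}(z,v) = [\dr\Phi_{t,n}]^{-1}(\Phi_{t,n}^{-1}(z,v))$, rather than invoking Fa\`a di Bruno directly, but this is the same computation.
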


\begin{proof}
Let $|z|<c_nt$, $v \in \R^3_v$, and $(y,w)=\Phi^{-1}_{t,n}(z,v)$, so that
$$z=y+\mathcal{X}_t^n(y,w), \quad v=w+\mathcal{V}_t^n(y,w),$$ and  
$$y=z+\mathfrak{Z}_t^n(z,v), \quad w=v+\mathfrak{W}_t^n(z,v).$$ 
We then get from \eqref{keva:estiXV} that 
$$|z-y| \lesssim |y| t^{-1} +t^{-1}\log(t) , \qquad |v-w| \lesssim t^{-1}.$$ 
This allows us to show, according to the mean value theorem and $|y|<2c_nt$, that
$$ \big|z-\mathcal{X}_t^2(z,v)-y \big| \lesssim  \langle z ,\log(t) \rangle^2 \,t^{-2}, \qquad  \big|v-\mu t^{-1}\nabla_v \phi_\infty (v)-w \big| \lesssim  \langle z ,\log(t) \rangle \,t^{-2}.$$
Recall that $\mathbb{X}_{q,\alpha,p}$ and $\mathbb{V}_{q,\alpha,p}$ are of class $C^{N-r_{q+1}}$ for any $1 \leq q \leq n-1$. Then, iterating the above by using Taylor's theorem instead of the mean value theorem, we obtain the stated expansions for $\mathfrak{Z}_t^n$ and $\mathfrak{W}_t^n$. The cases with derivatives are handled as in the proof of Lemma \ref{LemPhiminus1}. The key identities are
\begin{equation*}
\big[\dr \Phi_{t,n} \big]^{-1}=\mathrm{id}+\sum_{k \geq 1} \,(-1)^k \big[\dr \big(\mathcal{X}_t^n,\mathcal{V}_t^n \big) \big]^k \quad \text{in $C^{N-1-r_n} \Big(\{|x|<c_nt\} \times\R^3_v,~\mathfrak{M}_6(\R) \Big)$},
\end{equation*}
where $[\dr(\mathcal{X}_t^n,\mathcal{V}_t^n)]$ is the Jacobian matrix of $(\mathcal{X}_t^n,\mathcal{V}_t^n)=\Phi_{t,n}-\mathrm{id}$. We also use that
$$ \dr \Phi_{t,n}^{-1} (z,v) = \big[ \dr \Phi_{t,n} \big]^{-1} \big( \Phi_{t,n}^{-1}(z,v) \big) .$$
The existence of the expansions for derivatives of $(\mathfrak{Z}_t^n,\mathfrak{W}_t^n)$ ensues from an induction, Leibniz rule, Taylor's theorem, and the expansions for $(\mathfrak{Z}_t^n,\mathfrak{W}_t^n)$. 
\end{proof}

Using $\Xi_t^{-1} (x,v)=(x-\mu \log(t) \nabla_v \phi_\infty (v),v)$, the chain rule, and Taylor's theorem, the previous Lemma \ref{Lemdiffeoordern} allows us to deduce the next corollary.

\begin{corollary}\label{CorexpanXVn}
Let $t \geq T_n$ and $(\mathfrak{X}_t^n,\mathfrak{V}_t^n)$ be the spatial and velocity components of $\Xi_t^{-1} \circ \Phi_{t,n}^{-1}-\mathrm{id}$. For any $\kappa=(\kappa_x,\kappa_v)$ such that $|\kappa| \leq N-1-r_n$, there holds
\begin{align*}
 \bigg|t^{|\kappa_x|} \partial_x^{\kappa_x} \partial_v^{\kappa_v} \mathfrak{X}_t^n(z,v)+\delta_{0,|\kappa_x|} &\mu \log(t)\nabla_v  \partial_v^{\kappa_v}  \phi_\infty(v)- \sum_{1 \leq q \leq N+n-2-r_n-|\kappa| } \, \sum_{|\alpha|+p\leq q}  \frac{z^\alpha \log^p(t)}{t^{q}} \mathbf{R}^\kappa_{\alpha,q,p}(v)  \bigg| \\
 & \qquad \qquad  \lesssim \frac{\langle z ,\log(t)\rangle^{N+n-1-r_n-|\kappa|}}{t^{N+n-1-r_n-|\kappa|}},
 \end{align*}
 and
 \begin{align*}
   \bigg| t^{|\kappa_x|} \partial_x^{\kappa_x} \partial_v^{\kappa_v} \mathfrak{V}_t^n(z,v)- \sum_{ q \leq N+n-2-r_n-|\kappa| } \, \sum_{|\alpha|+p\leq q}  \frac{z^\alpha \log^p(t)}{t^{q+1}}   \mathbf{S}^\kappa_{\alpha,q,p}(v)  \bigg| & \lesssim \frac{\langle  z ,\log(t) \rangle^{N+n-1-r_n-|\kappa|}}{t^{N+n-r_n-|\kappa|}},
   \end{align*}
for all $|z| <c_nt$ and $v \in\R^3_v$. Here, $\mathbf{R}^\kappa_{\alpha,q,p}, \, \mathbf{S}^\kappa_{\alpha,q,p} \in C^{0}\cap L^{\infty}(\R^3_v)$.
\end{corollary}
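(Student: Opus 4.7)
The starting observation is that $\Xi_t^{-1}(a,b)=(a-\mu \log(t)\nabla_v\phi_\infty(b),b)$, so composing with $\Phi_{t,n}^{-1}(z,v)=(z+\mathfrak{Z}_t^n(z,v),v+\mathfrak{W}_t^n(z,v))$ yields the explicit identities
\begin{align*}
\mathfrak{V}_t^n(z,v) &= \mathfrak{W}_t^n(z,v), \\
\mathfrak{X}_t^n(z,v) &= \mathfrak{Z}_t^n(z,v) - \mu\log(t)\,\nabla_v\phi_\infty\bigl(v+\mathfrak{W}_t^n(z,v)\bigr).
\end{align*}
Thus the expansion for $\mathfrak{V}_t^n$ and its derivatives is a direct translation of the expansion of $\mathfrak{W}_t^n$ given by Lemma \ref{LemexpaPhitn}, with $\mathbf{S}^\kappa_{\alpha,q,p}=\mathbf{W}^\kappa_{\alpha,q,p}$, so there is nothing more to do in this case.

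For $\mathfrak{X}_t^n$, the plan is to Taylor expand $\nabla_v\phi_\infty$ around $v$ and then substitute the expansion of $\mathfrak{W}_t^n$. Since $|\mathfrak{W}_t^n|(z,v)\lesssim t^{-1}\langle z,\log(t)\rangle$ by Lemma \ref{LemexpaPhitn} and $\nabla_v\phi_\infty\in C^{N-2}\cap W^{N-2,\infty}(\R^3_v)$ by Proposition \ref{Proasympselfsiphi}, Taylor's theorem at order $M=N+n-2-r_n$ gives
\[
\nabla_v\phi_\infty\bigl(v+\mathfrak{W}_t^n(z,v)\bigr) = \sum_{|\gamma|\leq M}\frac{[\mathfrak{W}_t^n(z,v)]^\gamma}{\gamma!}\nabla_v\partial_v^\gamma \phi_\infty(v) + O\!\left(\frac{\langle z,\log(t)\rangle^{M+1}}{t^{M+1}}\right),
\]
where the regularity budget $M+1\leq N-2$ is ensured by the assumption $r_n\geq 2$ together with $r_{n+1}\leq N$. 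Plugging the polyhomogeneous expansion of $\mathfrak{W}_t^n$ into each factor, expanding the resulting products and regrouping terms by the monomial $z^\alpha \log^p(t)/t^q$, produces an expansion for $\log(t)\,\nabla_v\phi_\infty(v+\mathfrak{W}_t^n(z,v))$ of the same shape as the one claimed for $\mathfrak{X}_t^n$ (the extra logarithm is absorbed into the powers $\log^p(t)$ by incrementing $p$ by one, which is permitted because the substitution of $\mathfrak{W}_t^n$ raises $q$ at least by one except for the trivial $\gamma=0$ term). Adding the expansion of $\mathfrak{Z}_t^n$ from Lemma \ref{LemexpaPhitn} yields the claimed expansion of $\mathfrak{X}_t^n$, with leading $-\mu\log(t)\nabla_v\phi_\infty(v)$ (coming from the $\gamma=0$ Taylor term) isolated outside the sum.

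For the derivatives, write $\mathcal{N}(z,v):=\nabla_v\phi_\infty(v+\mathfrak{W}_t^n(z,v))$ and observe that $\partial_{x^i}$ kills the purely $v$-dependent leading contribution $\nabla_v\phi_\infty(v)$: this is what accounts for the factor $\delta_{0,|\kappa_x|}$ in the statement. Applying $\partial_x^{\kappa_x}\partial_v^{\kappa_v}$ to $\mathcal{N}$ via the chain and Leibniz rules (or the Faà di Bruno formula) produces a finite linear combination of products of the form
\[
\bigl[\partial_v^{\gamma_0}\nabla_v\phi_\infty\bigr]\!\bigl(v+\mathfrak{W}_t^n(z,v)\bigr) \prod_{j}\partial_x^{\eta_j}\partial_v^{\sigma_j}\mathfrak{W}_t^n(z,v),
\]
with $\sum\eta_j=\kappa_x$ and $|\gamma_0|+\sum(|\sigma_j|-1)_+=|\kappa_v|$; each factor has an expansion supplied by the previously-handled step (or by Lemma \ref{LemexpaPhitn} for the factors of $\mathfrak{W}_t^n$), and multiplying the expansions preserves the polyhomogeneous structure \eqref{eq:Expan}. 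Adding the expansion of $\partial_x^{\kappa_x}\partial_v^{\kappa_v}\mathfrak{Z}_t^n$ concludes the proof. The one-loss in the order of the remainder (from $N+n-1-r_n-|\kappa|$ to $N+n-2-r_n-|\kappa|$ in the sums) is precisely due to the appearance of the prefactor $\log(t)$ in front of $\nabla_v\phi_\infty$.

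The main obstacle is purely bookkeeping: one must check that every time a factor $\mathfrak{W}_t^n$ (of size $t^{-1}\langle z,\log(t)\rangle$) is differentiated in $x$, the resulting loss of a factor $t^{-1}$ (recorded in the $t^{|\kappa_x|}$ on the left-hand side) is compensated by the gain obtained in Lemma \ref{LemexpaPhitn}; and that the regularity of $\phi_\infty$ is sufficient to Taylor expand at order $M+1$, which reduces to verifying $M+1\leq N-2$, equivalent to $n\leq r_n-1$, a trivial consequence of $r_n=1+n(n+1)/2$.
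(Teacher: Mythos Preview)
Your proof is correct and follows the same approach the paper sketches in one line: use the explicit form $\Xi_t^{-1}(a,b)=(a-\mu\log(t)\nabla_v\phi_\infty(b),b)$, the chain rule, and Taylor's theorem applied to $\nabla_v\phi_\infty(v+\mathfrak{W}_t^n)$, on top of the expansions from Lemma~\ref{LemexpaPhitn}. Your identification $\mathfrak{V}_t^n=\mathfrak{W}_t^n$ and the decomposition $\mathfrak{X}_t^n=\mathfrak{Z}_t^n-\mu\log(t)\nabla_v\phi_\infty(v+\mathfrak{W}_t^n)$ are exactly what is needed, and your bookkeeping for the index constraint $|\alpha|+p\leq q$ after multiplication by $\log(t)$ is accurate.

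One small correction: your closing remark about a ``one-loss in the order of the remainder'' is not quite right. Comparing the statements, the sum for $\mathfrak{X}_t^n$ runs over the same range $1\leq q\leq N+n-2-r_n-|\kappa|$ as for $\mathfrak{Z}_t^n$, and the remainder has the same decay $t^{-(N+n-1-r_n-|\kappa|)}$; the extra $\log(t)$ is absorbed (as you correctly argue earlier) by the shift $q\mapsto q+1$ coming from each factor of $\mathfrak{W}_t^n$, so no order is lost. This does not affect the validity of your argument.
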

\begin{remark}\label{lastRk}
Note also that $N-1-r_n \geq n$ and Lemma \ref{LemForexp} gives 
$$\forall \, q \in \llbracket 1, n-1 \rrbracket, \qquad \mathbf{R}^0_{\alpha,q,p}\in C^{n}\cap W^{n,\infty}(\R^3_v).$$
More generally, if $|\kappa| \leq N-r_{n+1}$ and $q \in \llbracket 1, n \rrbracket$, we have $\mathbf{R}^\kappa_{\alpha,q,p}, \, \mathbf{S}^\kappa_{\alpha,q-1,p} \in C^{n+1-q}\cap W^{n+1-q,\infty}(\R^3_v)$. These properties will be used in order to apply Taylor's theorem below.
\end{remark}

\subsubsection{Step 2: The high order change of variables}

We now know that if $t \geq T_n$ and $|x-tv| < c_nt$, then
\begin{equation}\label{ident_distrib_terms_gn}
  f(t,x,v)=g_n \big( t,x-tv+\mathfrak{X}_t^n(x-tv,v),v+\mathfrak{V}_t^n(x-tv,v) \big),
\end{equation}
and we have determined the asymptotic behaviour of $(\mathfrak{X}_t^n,\mathfrak{V}_t^n)$. So, we wish to perform the change of variables $y=\Y_{t,z}^n(v)$, where the map $\Y_{t,x}^n \colon   \mathcal{D}_{t,x}^{c_n}   \to  \R^3$ defined on $$\mathcal{D}_{t,x}^{c_n}:= \big\{ v \in \R^3_v \; : \; |x-tv|<c_nt \big\} $$ is given by
$$\Y_{t,x}^n  (v) :=x-tv+\mathfrak{X}_t^n(x-tv,v).$$
This change of variables would allow us to obtain an expansion in terms of the spatial averages of $g_n$. 

The analysis of $\Y_{t,x}^n$ can be reduced to the study of
\begin{equation*}
\mathfrak{Z}_{t,x}(v):=v-t^{-1}\mathfrak{X}_t^n(x-tv,v), \quad \qquad \det \dr \Y_{t,x}^n(v) = -t^3 \det \dr \mathfrak{Z}_{t,x}(v).
\end{equation*}
Note that 
\begin{equation}\label{linkYZ}
\Y_{t,x}^n(v)=y  \qquad \textrm{if and only if} \qquad \mathfrak{Z}_{t,x}(v)=\frac{x-y}{t}.
\end{equation}

\begin{lemma}\label{LemExpZn}
Let $t\geq T_n$ and $x\in \R^3_x$. Then, $\mathfrak{Z}_{t,x}$ is a $C^{1}$-diffeomorphism from $\mathcal{D}_{t,x}^{c_n}$ to $\mathfrak{Z}_{t,x}(\mathcal{D}^{c_n}_{t,x} )$. Moreover, for all $v \in \mathfrak{Z}_{t,x}(\mathcal{D}^{c_n}_{t,x} )$, we have
\begin{equation*}
\bigg| \mathfrak{Z}^{-1}_{t,x}(v)-v+\mu \frac{\log(t)}{t} \nabla_v \phi_\infty (v)-\sum_{2 \leq q \leq n} \; \sum_{|\alpha|+p \leq q} \frac{(x-tv)^\alpha \log^p(t)}{t^q} \Psi_{q,\alpha,p}(v)\bigg| \lesssim \frac{\langle x-tv,\log(t) \rangle^{n+1}}{t^{n+1}},
 \end{equation*}
where $ \Psi_{q,\alpha,p} \in C^{n+2-q}\cap W^{n+2-q,\infty} \big( \mathfrak{Z}_{t,x}(\mathcal{D}_{t,x}^{c_n} ) \big)$. 
\end{lemma}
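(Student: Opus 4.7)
The strategy parallels Lemma \ref{LemYorder2} but works with the higher-order expansions of Corollary \ref{CorexpanXVn}. The map $\mathfrak{Z}_{t,x}$ is a perturbation of the identity: $\mathfrak{Z}_{t,x}(w) = w - t^{-1}\mathfrak{X}_t^n(x-tw,w)$. Controlling this perturbation and its derivative yields both the diffeomorphism property and the desired expansion.

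First, I would establish that $\mathfrak{Z}_{t,x}$ is a $C^1$-diffeomorphism onto its image. Differentiating gives
\[ \dr \mathfrak{Z}_{t,x}(w) = I + \big[\partial_z \mathfrak{X}_t^n\big](x-tw,w) - t^{-1}\big[\partial_v \mathfrak{X}_t^n\big](x-tw,w). \]
Applying Corollary \ref{CorexpanXVn} with $(|\kappa_x|,|\kappa_v|) = (1,0)$ and $(0,1)$, together with the constraint $|x-tw|<c_n t$, yields $|\dr \mathfrak{Z}_{t,x}(w) - I| \lesssim c_n + t^{-1}\log(t)$. For $c_n$ sufficiently small (shrinking the value of Lemma \ref{Lemdiffeoordern} if necessary, which is permitted by Remark \ref{Rkforipp}) and $T_n$ large enough, the inverse function theorem gives that $\mathfrak{Z}_{t,x}$ is a local $C^1$-diffeomorphism. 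Injectivity on $\mathcal{D}_{t,x}^{c_n}$ follows from the same smallness and the mean value argument in Lemma \ref{Lemdiffeo}.

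Next, for the expansion of the inverse, I would bootstrap as in the proof of Lemma \ref{LemYorder2}. Setting $w=\mathfrak{Z}_{t,x}^{-1}(v)$, the defining equation reads $w=v+t^{-1}\mathfrak{X}_t^n(x-tw,w)$. Corollary \ref{CorexpanXVn} (case $\kappa=0$) provides
\[ t^{-1}\mathfrak{X}_t^n(z,w) = -\frac{\mu\log(t)}{t}\nabla_v\phi_\infty(w) + \sum_{1 \leq q \leq n-1} \sum_{|\alpha|+p\leq q}\frac{z^\alpha\log^p(t)}{t^{q+1}}\mathbf{R}^0_{\alpha,q,p}(w) + O\!\left(\frac{\langle z,\log(t)\rangle^{n}}{t^{n}}\right). \]
The zeroth iteration gives $|w-v|\lesssim \log(t)/t$; substituting $x-tw=(x-tv)-t(w-v)$ into the right-hand side and Taylor-expanding the smooth coefficients $\nabla_v\phi_\infty$ and $\mathbf{R}^0_{\alpha,q,p}$ around $v$ (whose regularity $C^{n+1-q}$ is provided by Remark \ref{lastRk}) yields an improved expression for $w$ with one additional order of accuracy in powers of $(x-tv)/t$ and $\log(t)/t$. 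Iterating this bootstrap $n$ times produces the claimed expansion, with remainder $O(\langle x-tv,\log(t)\rangle^{n+1}/t^{n+1})$. The regularity claim $\Psi_{q,\alpha,p}\in C^{n+2-q}\cap W^{n+2-q,\infty}$ follows, since to generate a coefficient at order $q$ one needs to Taylor-expand coefficients of order at most $q-1$ to order $q-\ell$, and the available regularity of $\mathbf{R}^0_{\alpha,q-\ell,p}$ from Remark \ref{lastRk} exceeds this.

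The main obstacle will be the combinatorial bookkeeping of the iterated Taylor expansions: at each step, terms in the variable $(x-tw)$ must be re-expressed in $(x-tv)$, generating new cross-terms from Taylor remainders as well as products of lower-order coefficients. The key observation making this tractable is that at each iteration one gains either a factor $t^{-1}$ (from the prefactor of $\mathfrak{X}_t^n$) or a factor $(|x-tv|+\log(t))/t$ (from the size of $w-v$), so that after $n$ iterations the remainder is of the desired size $\langle x-tv,\log(t)\rangle^{n+1}/t^{n+1}$. Uniqueness and regularity of the coefficients $\Psi_{q,\alpha,p}$ then follow from the argument of Lemma \ref{LemForexp} applied on the open set $\mathfrak{Z}_{t,x}(\mathcal{D}_{t,x}^{c_n})$.
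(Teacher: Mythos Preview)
Your approach is essentially the same as the paper's: bound $\mathrm{d}\mathfrak{Z}_{t,x}-I$ via Corollary~\ref{CorexpanXVn} to get the diffeomorphism property (the paper obtains directly $|\mathrm{d}\mathfrak{Z}_{t,x}-I|\lesssim t^{-1}\log(t)$ from \eqref{equatio:justpourla}, so only $T_n$ large is needed and shrinking $c_n$ is unnecessary), then iterate the fixed-point relation $w=v+t^{-1}\mathfrak{X}_t^n(x-tw,w)$ with Taylor's theorem and Remark~\ref{lastRk}. One slip to correct: the remainder in your displayed expansion of $t^{-1}\mathfrak{X}_t^n$ should be $O\big(\langle z,\log(t)\rangle^{n}/t^{n+1}\big)$ rather than $O\big(\langle z,\log(t)\rangle^{n}/t^{n}\big)$, since the prefactor $t^{-1}$ contributes an extra power; without this the iteration would stall one order short of the claimed remainder $\langle x-tv,\log(t)\rangle^{n+1}/t^{n+1}$.
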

\begin{proof}
Note from Corollary \ref{CorexpanXVn} that, for any $|\kappa_v| \leq 1$, we have
\begin{equation}\label{equatio:justpourla}
 \forall \, v \in \mathcal{D}_{t,x}^{c_n}, \qquad t^{-1}\big| \partial_v^{\kappa_v} \big[ \mathfrak{X}_t^n(x-tv,v) \big] \big| \lesssim t^{-1}\log(t).
 \end{equation}
Consequently, if $T_n$ is chosen large enough, we get as in the proof of Lemma \ref{Lemdiffeo} that
\begin{itemize}
\item $\mathfrak{Z}_{t,x}$ is injective on $\mathcal{D}_{t,x}^{c_n}$,
\item $\mathfrak{Z}_{t,x}$ is a local $C^{1}$-diffeomorphism on $\mathcal{D}_{t,x}^{c_n}$,
\end{itemize}
These properties imply the first part of the result. 

We now focus on the expansion, so we consider $w \in \mathcal{D}_{t,x}^{c_n}$ and $v = \mathfrak{Z}_{t,x}(w)$. By definition $v=w-t^{-1}\mathfrak{X}_t^{n} (x-tw,w)$, so that $|v-w| \lesssim  t^{-1}\log(t)$. By an induction, Taylor's theorem, and the expansion for $\mathfrak{X}_t^n$ given by Corollary \ref{CorexpanXVn}, we obtain the stated expansion for $\mathfrak{Z}^{-1}_{t,x}$. We have used Remark \ref{lastRk} in order to apply Taylor's theorem at the required orders.

Finally, the regularity of $ \Psi_{q,\alpha,p}$ is obtained by Remark \ref{lastRk}. Indeed, one can check that $\Psi_{q,\alpha,p}$ is a linear combination of product of quantities of the form $\partial_v^{\xi}\mathbf{R}^0_{q',\alpha',p'}$, where $q' \geq 1$ and $|\xi|+q'\leq q-1$, or $\partial_v^\kappa \phi_\infty$ with $1 \leq |\kappa| \leq q$.
\end{proof}

We are now able to derive expansions for the two quantities related to $\Y_{t,x}^{n,-1}$ that we are interested in.

\begin{corollary}\label{CorYordern}
Let $t\geq T_n$ and $x \! \in \R^3_x$. The map $\Y_{t,x}^n \! : v \mapsto x-tv+\mathfrak{X}_t^n(x-tv,v)$ is a $C^{1}$-diffeomorphism from $\mathcal{D}_{t,x}^{c_n}$ to $\Y_{t,x}^n (\mathcal{D}_{t,x}^{c_n})$, which verifies 
\begin{equation}\label{recalltruc}
\Big\{ y \in \R^3 \, : \,  |y|<\frac{1}{2}c_nt \Big\} \subset \Y_{t,x}^n (\mathcal{D}_{t,x}^{c_n}) \subset \Big\{ y \in \R^3 \, : \,  |y|< 2c_nt \Big\}.
\end{equation}
 Moreover, the inverse $\Y^{n,-1}_{t,x}$ satisfies that for all $y \in \Y_{t,x}^n (\mathcal{D}_{t,x}^{c_n})$,
\begin{equation*}
\bigg|\Y^{n,-1}_{t,x}(y) -   \frac{x}{t}- \sum_{1 \leq q \leq n} \, \sum_{ |\alpha|+p \leq q}\frac{y^\alpha \log^{p}(t)}{t^q} \mathbf{D}_{q,\alpha,p}\Big( \frac{x}{t}  \Big) \bigg| \lesssim  \frac{ \langle y, \log (t) \rangle^{n+1}}{ t^{n+1}} ,
\end{equation*}
where $\mathbf{D}_{q,\alpha,p} \in C^0\cap L^\infty(\mathcal{D}_{t,x}^{c_n})$, and
\begin{equation*}
\bigg|\Y^{n,-1}_{t,x}(y)+\mathfrak{V}_t^n \big( x-t\Y^{n,-1}_{t,x}(y) ,\Y^{n,-1}_{t,x}(y)\big)  -   \frac{x}{t}- \sum_{1 \leq q \leq n} \, \sum_{ |\alpha|+p \leq q}\frac{y^\alpha \log^{p}(t)}{t^q} \overline{\mathbf{D}}_{q,\alpha,p}\Big( \frac{x}{t}  \Big) \bigg| \lesssim  \frac{ \langle y, \log (t) \rangle^{n+1}}{ t^{n+1}} ,
\end{equation*}
where $\overline{\mathbf{D}}_{q,\alpha,p} \in C^0\cap L^\infty(\mathcal{D}_{t,x}^{c_n})$. Moreover, the Jacobian determinant satisfies
\begin{equation*}
\bigg| t^{3} \big|\det \mathrm{d} \Y_{t,x}^{n,-1} \big|(y) -1-   \sum_{1 \leq q \leq n} \, \sum_{ |\alpha|+p \leq q}\frac{y^\alpha \log^{p}(t)}{t^{q}} \mathbf{G}_{q,\alpha,p}\Big( \frac{x}{t}  \Big)\bigg| \lesssim \frac{ \langle y,  \log(t) \rangle^{n+1}}{ t^{n+1}} ,
 \end{equation*}
 where $\mathbf{G}_{q,\alpha,p} \in C^0\cap L^\infty(\mathcal{D}_{t,x}^{c_n})$.
 \end{corollary}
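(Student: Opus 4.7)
\textbf{Proof plan for Corollary \ref{CorYordern}.}

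The key observation is the factorisation $\Y^n_{t,x}(v) = x - t\,\mathfrak{Z}_{t,x}(v)$, which follows directly from the definitions. Consequently, $\Y^n_{t,x}$ is a $C^1$-diffeomorphism onto its image as a direct consequence of Lemma \ref{LemExpZn}, and its inverse is given by the pivotal identity
\begin{equation*}
 \Y^{n,-1}_{t,x}(y) = \mathfrak{Z}^{-1}_{t,x}\!\left(\tfrac{x-y}{t}\right) .
\end{equation*}
To establish the inclusions \eqref{recalltruc}, I would apply \eqref{equatio:justpourla} of Corollary \ref{CorexpanXVn} (the $|\kappa_v|=0$ case) to get $|\Y^n_{t,x}(v) - (x-tv)| = |\mathfrak{X}^n_t(x-tv,v)| \lesssim \log(t)$. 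For $T_n$ large enough, this implies that $v \in \mathcal{D}^{c_n}_{t,x}$ yields $|\Y^n_{t,x}(v)| \leq c_n t + O(\log t) < 2c_n t$; conversely, $|y| < c_nt/2$ forces the pre-image $v$ (if any) to satisfy $|x-tv| \leq |y| + O(\log t) < c_nt$, ensuring surjectivity onto $\{|y|<c_nt/2\}$ by the continuity/closed-range argument used in the proof of Lemma \ref{Lemdiffeoordern}.

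For the first expansion, I would substitute $v = (x-y)/t$ into the formula of Lemma \ref{LemExpZn}. At this substitution the weights $(x-tv)^\alpha$ become simply $y^\alpha$, so directly
\begin{equation*}
  \Y^{n,-1}_{t,x}(y) = \frac{x-y}{t} - \mu \frac{\log(t)}{t}\nabla_v \phi_\infty\!\left(\tfrac{x-y}{t}\right) + \sum_{2 \leq q \leq n} \sum_{|\alpha|+p\leq q} \frac{y^\alpha \log^p(t)}{t^q}\Psi_{q,\alpha,p}\!\left(\tfrac{x-y}{t}\right) + O\!\left(\tfrac{\langle y, \log(t)\rangle^{n+1}}{t^{n+1}}\right).
\end{equation*}
Then I would Taylor expand each coefficient $\Psi_{q,\alpha,p}$ and $\nabla_v \phi_\infty$ around $v=x/t$, to order $n+1-q$. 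This is permitted by the regularity provided in Lemma \ref{LemExpZn} (namely $\Psi_{q,\alpha,p} \in C^{n+2-q}$) and by $\phi_\infty \in C^{N-2}(\R^3_v)$ with $N-2 \geq n$. Each Taylor term contributes an expression of the form $\partial_v^\gamma \Psi_{q,\alpha,p}(x/t)\,y^\gamma \log^p(t)/t^{q+|\gamma|}$, which fits the required form after regrouping by total powers of $y$, $\log(t)$, and $1/t$; this defines the coefficients $\mathbf{D}_{q,\alpha,p}$ and establishes their continuity and boundedness. For the second expansion, I would apply the expansion of $\mathfrak{V}^n_t$ from Corollary \ref{CorexpanXVn} at the point $(z,v) = (x - t\Y^{n,-1}_{t,x}(y),\, \Y^{n,-1}_{t,x}(y))$; the first expansion gives $|z| \lesssim \langle y, \log(t)\rangle$, so the error is acceptable, and combining the two expansions with further Taylor expansion at $x/t$ yields the stated form.

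For the Jacobian, I would start from the explicit formula $\dr \mathfrak{Z}_{t,x}(w) = I + \partial_x \mathfrak{X}^n_t(x-tw,w) - t^{-1} \partial_v \mathfrak{X}^n_t(x-tw,w)$, substituting the derivative expansions from Corollary \ref{CorexpanXVn} (cases $(|\kappa_x|, |\kappa_v|) = (1,0)$ and $(0,1)$). This presents $\dr \mathfrak{Z}_{t,x}$ as $I$ plus a small matrix admitting a polyhomogeneous expansion. Using multilinearity of the determinant and $(1+u)^{-1} = \sum_{k\geq 0}(-u)^k$ term by term, I obtain a polyhomogeneous expansion for $(\det \dr \mathfrak{Z}_{t,x})^{-1}$ evaluated at the relevant point. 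Finally, $|\det \dr \Y^{n,-1}_{t,x}|(y) = t^{-3}/|\det \dr \mathfrak{Z}_{t,x}(\Y^{n,-1}_{t,x}(y))|$, and substituting the first expansion for $\Y^{n,-1}_{t,x}(y)$ followed by a Taylor expansion at $x/t$ produces the claimed form for the coefficients $\mathbf{G}_{q,\alpha,p}$.

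The main obstacle is the bookkeeping of Taylor remainders: one must verify that every remainder term combines to give a clean $O(\langle y, \log(t)\rangle^{n+1}/t^{n+1})$ bound, and that the regularity of every intermediate coefficient is sufficient for the Taylor expansion at the order needed. The budget is tight but consistent: $\phi_\infty$ is of class $C^{N-2}$ while $N-2 \geq n$ because $r_{n+1} \leq N$, and Remark \ref{lastRk} combined with Lemma \ref{LemForexp} guarantees that the coefficients produced along the way never fall below $C^0 \cap L^\infty$.
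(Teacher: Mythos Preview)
Your proposal is correct and follows essentially the same route as the paper: reduce to $\mathfrak{Z}_{t,x}$ via the identity $\Y^{n,-1}_{t,x}(y)=\mathfrak{Z}^{-1}_{t,x}((x-y)/t)$, read off the inclusions from \eqref{equatio:justpourla}, substitute into the expansion of Lemma \ref{LemExpZn} (so that $(x-tv)^\alpha$ becomes $y^\alpha$) and Taylor expand the coefficients around $x/t$, then handle the Jacobian through the Neumann series for $[\dr\mathfrak{Z}_{t,x}]^{-1}$ combined with the multilinearity of the determinant and a final Taylor expansion. The only cosmetic difference is that the paper packages the Jacobian step via the identity $t^3\det\dr\Y^{n,-1}_{t,x}(y)=-\det[\dr\mathfrak{Z}_{t,x}]^{-1}(\Y^{n,-1}_{t,x}(y))$ rather than expanding $(\det\dr\mathfrak{Z}_{t,x})^{-1}$ directly, but this is the same computation.
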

 
 \begin{proof}
 For the first part of the statement, we use \eqref{linkYZ} and the previous Lemma \ref{LemExpZn}. The inclusions \eqref{recalltruc} are implied by \eqref{equatio:justpourla}. For the first expansion, we exploit the one satisfied by $\mathfrak{Z}_{t,x}^{-1}$ in Lemma \ref{LemExpZn} and we use \eqref{linkYZ} as well as Taylor's theorem. For the second one, we use additionally the expansion verified by $\mathfrak{V}_t^n$ in Corollary \ref{CorexpanXVn} and $N-1-r_n \geq n$. Finally, note that \eqref{equatio:justpourla} implies
\begin{equation*}
 \big[\dr \mathfrak{Z}_{t,x} \big]^{-1}(v) = \mathrm{id} + \sum_{k \geq 1} t^{-k} \big[ \dr \big( \mathfrak{X}_t^n(x-tv,v) \big) \big]^k, 
 \end{equation*}
where $[ \dr ( \mathfrak{X}_t^n(x-tv,v) ) ]_{ij}=-t\partial_{x^j} \mathfrak{X}_t^{n,i}(x-tv,v)+\partial_{v^j} \mathfrak{X}_t^{n,i}(x-tv,v)$. We then obtain the expansion for the Jacobian determinant using the multi-linearity of the determinant, Corollary \ref{CorexpanXVn}, Taylor's theorem and
$$ t^3 \det \dr \Y_{t,x}^{n,-1}(y)= - \det  \big[\dr \mathfrak{Z}_{t,x} \big]^{-1} \big( \Y_{t,x}^{n,-1}(y)\big) .$$
 \end{proof}

\subsubsection{Step 3: Expansion of $t^3 \rho (G^\beta f)$ in terms of the modified spatial averages} In order to lighten the presentation, we introduce the following terminology.

\begin{definition}
Let $h_1, \; h_2 : [T_n,\infty) \times \R^3_x \to \R^d$ with $d \in \mathbb{N}^*$. We say that $h_1$ and $h_2$ are \emph{$n$--equivalent} if there exists $C>0$ such that
\begin{equation}\label{equat:kequiv}
 \forall \, (t,x) \in [T_n,\infty) \times \R_x^3, \qquad  | h_1-h_2 |(t,x) \leq C \frac{\log^{S_{n+1}}(t)}{\langle t+|x| \rangle^3 \, t^{n-2}}.
\end{equation}
\end{definition}

In the rest of this subsection $\mathbf{K}$ will always denote a general function belonging to $C^0 \cap L^\infty (\R^3_v)$, which may be different from line to line.

Our goal consists in proving the next result by exploiting the previous two steps.

\begin{proposition}\label{Profromg1togn}
Let $(t,x) \in [T_n,\infty) \times \R^3_x$ and $|\beta| \leq N-r_{n+1}$. Then, $t^3 \rho (G^\beta f)$ is $n$--equivalent to a linear combination of terms of the form
\begin{equation}\label{equat:forRk2}
 \frac{\log^p(t)}{t^{q}} \mathbf{K} \Big(\frac{x}{t} \Big) \!\int_{|z| < t}  z^\alpha \big[ \partial_{v}^{\gamma_v} g_n \big] \Big( t,z,\frac{x}{t} \Big) \dr z  , 
 \end{equation}
with $|\gamma_v| \leq N-1-r_n$, and $p+|\alpha| \leq q \leq n$.
\end{proposition}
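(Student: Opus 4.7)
The strategy is to decompose the velocity integral defining $\rho(G^\beta f)$ into two parts, carry out the change of variables $y = \Y^n_{t,x}(v)$ on the good region, and then Taylor expand every factor using the asymptotic expansions already established in Corollaries \ref{CorexpanXVn} and \ref{CorYordern}.

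First, I would write $\rho(G^\beta f)(t,x) = I_{\mathrm{in}}(t,x) + I_{\mathrm{out}}(t,x)$, where $I_{\mathrm{in}}$ is the integral over $\{v : |x-tv| < c_n t\}$ and $I_{\mathrm{out}}$ over its complement. Using $G^\beta f(t,x,v) = \partial_v^\beta g_0(t,x-tv,v)$ together with the weighted bound on $g_0$ from Corollary \ref{Corboundg0} (with $N_x \geq 2n+8$), one checks that $t^3 I_{\mathrm{out}}$ is $n$--equivalent to $0$, because on this region $\langle x-tv \rangle \gtrsim t$ produces the extra factor $t^{-(N_x-8)}$ which absorbs the required decay up to a logarithmic loss.

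Next, on the good region I would substitute the identity \eqref{ident_distrib_terms_gn}, $f(t,x,v) = g_n(t, x-tv + \mathfrak{X}^n_t(x-tv,v), v+ \mathfrak{V}^n_t(x-tv,v))$, and apply $G^\beta$ using the chain rule. The crucial algebraic observation is that $G(x^j-tv^j) = 0$, so $G$ leaves the first argument $z := x-tv$ of $\mathfrak{X}^n_t$ and $\mathfrak{V}^n_t$ untouched and only differentiates in $v$. Consequently, $G^\beta f$ on the good region is a sum of products of (at most $|\beta|$) derivatives of $\mathfrak{X}^n_t$ and $\mathfrak{V}^n_t$ multiplied by velocity derivatives of $g_n$ of order at most $|\beta| \leq N-r_{n+1} \leq N-1-r_n$ evaluated at $(X,V) = (x-tv+\mathfrak{X}^n_t, v+\mathfrak{V}^n_t)$. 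Performing the change of variables $y = \Y^n_{t,x}(v)$, legitimate by Corollary \ref{CorYordern}, transforms $t^3 I_{\mathrm{in}}$ into an integral over $\Y^n_{t,x}(\mathcal{D}^{c_n}_{t,x})$ of products of $\partial_v^{\gamma_v} g_n(t, y, \mathcal V(t,x,y))$ with coefficient functions of $(t,x,y)$, where $\mathcal V(t,x,y) := \Y^{n,-1}_{t,x}(y) + \mathfrak{V}^n_t(x-t\Y^{n,-1}_{t,x}(y), \Y^{n,-1}_{t,x}(y))$, all multiplied by the Jacobian $t^3|\det \dr \Y^{n,-1}_{t,x}|(y)$.

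Now I would Taylor expand to order $n$ in powers of $t^{-1}\log(t)$ every factor appearing in the integrand: the coefficients coming from derivatives of $\mathfrak{X}^n_t$, $\mathfrak{V}^n_t$ (Corollary \ref{CorexpanXVn}), the Jacobian and the expression for $\Y^{n,-1}_{t,x}$ (Corollary \ref{CorYordern}), and finally $g_n(t,y,\mathcal V)$ around $\mathcal V = x/t$ using the expansion of $\mathcal V - x/t$ provided by Corollary \ref{CorYordern}. This last Taylor expansion is the step that requires the most derivatives of $g_n$ in $v$: it produces terms $\partial_v^{\gamma_v} g_n(t,y,x/t)$ with $|\gamma_v|$ up to $|\beta|+n$, which is exactly compatible with $|\beta| \leq N-r_{n+1}$ and the bound $N-1-r_n$ available, thanks to the identity $r_{n+1} = r_n + n+1$. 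Grouping terms according to powers of $\log(t)$, $t^{-1}$ and $y^\alpha$ yields a sum of terms of the form \eqref{equat:forRk2} with $p+|\alpha| \leq q \leq n$ plus a Taylor remainder controlled in $L^\infty$ via Corollary \ref{Corestig1}, which provides the $\langle y \rangle^{N_x-5-n}$ spatial decay needed to integrate the remainders. Finally, the difference between integrating over $\Y^n_{t,x}(\mathcal{D}^{c_n}_{t,x})$ and over $\{|y|<t\}$ is handled as in Step~1 via the spatial decay of $g_n$ guaranteed by Corollary \ref{Corestig1}.

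The main obstacle is purely combinatorial bookkeeping: controlling all Taylor remainders simultaneously, verifying that all the pieces fit within the prescribed logarithmic loss $S_{n+1}$ and the spatial weight $\langle t+|x| \rangle^{-3} t^{-(n-2)}$, and checking that the domain mismatch and the contribution of $I_{\mathrm{out}}$ are absorbed in the $n$--equivalence. The analytic content beyond this bookkeeping is already packaged in Corollaries \ref{CorexpanXVn}, \ref{CorYordern} and \ref{Corestig1}.
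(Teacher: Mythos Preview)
Your overall strategy matches the paper's, but there is a genuine gap in the chain-rule step. You correctly observe that $G_i(x^j-tv^j)=0$, so that when $G$ hits $\mathfrak{X}^n_t(x-tv,v)$ or $\mathfrak{V}^n_t(x-tv,v)$ only their second ($v$-)argument is differentiated. However, your inference that only \emph{velocity} derivatives of $g_n$ appear does not follow. Writing $X=x-tv+\mathfrak{X}^n_t$ and $V=v+\mathfrak{V}^n_t$, the chain rule gives
\[
G_i\big[g_n(t,X,V)\big]=(G_iX)\cdot[\nabla_x g_n](t,X,V)+(G_iV)\cdot[\nabla_v g_n](t,X,V),
\]
and $G_iX=[\partial_{v^i}\mathfrak{X}^n_t](x-tv,v)$ is nonzero, of size $\sim\log(t)$ by Corollary~\ref{CorexpanXVn}. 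This is precisely the content of Lemma~\ref{LemZbetafg1}: each application of $G$ may produce a $[\partial_x^{\alpha_x}\partial_v^{\alpha_v}g_n]$ factor together with $|\alpha_x|$ accompanying factors $\partial_v^{\kappa_i}\mathfrak{X}^{n,k_i}_t$, each worth a $\log(t)$.

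This matters for the final form. After the change of variables and Taylor expansion (your Steps~3--4, packaged in the paper as Proposition~\ref{ProCDV}), the generic term carries $\log^{p_1+p_2}(t)/t^q$ with $p_1+|\alpha|\le q$ but an extra $p_2\le|\gamma_x|$ coming from the $\mathfrak{X}$-factors; hence $p_1+p_2+|\alpha|$ may exceed $q$, and the spatial derivatives $\partial_x^{\gamma_x}$ are still present---so these terms do \emph{not} fit \eqref{equat:forRk2}. The paper closes the gap by integrating by parts in the $y$-variable: $\int_{|y|<t} y^\alpha\,\partial_y^{\gamma_x}[\partial_v^{\gamma_v}g_n]\,\dr y$ becomes $\int_{|y|<t}\partial_y^{\gamma_x}(y^\alpha)\,[\partial_v^{\gamma_v}g_n]\,\dr y$ plus boundary terms controlled via the spatial decay \eqref{equa:induModscat}. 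This either annihilates the term (if $y^\alpha$ is differentiated to zero) or replaces $y^\alpha$ by $y^{\alpha'}$ with $|\alpha'|=|\alpha|-|\gamma_x|$, after which $p_1+p_2+|\alpha'|\le q$ holds. Your outline never mentions this integration by parts, and without it the argument does not produce terms of the stated form.

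A minor point: the spatial decay of $g_n$ you invoke at the end is not provided by Corollary~\ref{Corestig1} (which concerns $g_1$) but by the induction hypothesis \eqref{equa:induModscat}.
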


A preparatory result is required to treat the cases where $|\beta| \geq 1$.

 \begin{lemma}\label{LemZbetafg1}
Let $|\beta| \leq N-r_n$ and $(t,x,v) \in [T_n,\infty) \times \R^3_x \times \R^3_v$ be such that $|x-tv|<c_nt$. Then, $G^\beta f(t,x,v)$ can be written as a linear combination of terms of the form
$$ \prod_{1 \leq i \leq p_x} \! \big[\partial_v^{\kappa_i} \mathfrak{X}^{n,k_i}_t\big](x-tv,v) \! \prod_{1 \leq j \leq p_v} \! \big[\partial_v^{\gamma_j} \mathfrak{V}^{n,\ell_j}_t\big](x-tv,v) \big[ \partial_x^{\alpha_x} \partial_v^{\alpha_v} g_n \big] \big(t, x-tv+\mathfrak{X}_t^n(x-tv,v),v+\mathfrak{V}_t^n(x-tv,v) \big) ,$$
where $k_i, \, \ell_j \in \llbracket 1,3 \rrbracket$, $\mathfrak{X}_t^{n,k_i}$ and $\mathfrak{V}_t^{n,\ell_j}$ are the $k_i$ and $\ell_j$ cartesian components of $\mathfrak{X}_t^n$ and $\mathfrak{V}_t^n$,
$$|\alpha_x|+|\alpha_v| \leq |\beta|, \qquad 0 \leq p_x = |\alpha_x|, \qquad 0 \leq p_v \leq |\alpha_v| , \qquad  \sum_{1 \leq i \leq p_x} \sum_{1 \leq j \leq p_v} |\kappa_i|+|\gamma_j| \leq |\beta|.$$
 \end{lemma}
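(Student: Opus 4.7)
The plan is to proceed by induction on $|\beta|$, starting from the identity \eqref{ident_distrib_terms_gn}, which handles the base case $|\beta|=0$ with all multi-indices and counters equal to zero. The crucial algebraic observation I would exploit is that $G_i = t\partial_{x^i}+\partial_{v^i}$ annihilates the linear weights $x^j-tv^j$, so that for any regular $h \colon \R^3_z \times \R^3_v \to \R$ one has
$$ G_i\bigl[h(x-tv,v)\bigr] = [\partial_{v^i} h](x-tv,v). $$
Applied to the factors $\partial_v^\kappa \mathfrak{X}^{n,k}_t(x-tv,v)$ or $\partial_v^\gamma \mathfrak{V}^{n,\ell}_t(x-tv,v)$, this identity means that $G_i$ only increments the corresponding multi-index $|\kappa|$ or $|\gamma|$ by one, and in particular never produces an $x$-derivative. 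This is the structural miracle that keeps the claimed form stable under further differentiation.

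For the inductive step, I would apply $G_i$ to a typical summand of the claimed form and distribute it using Leibniz and the chain rule. Three classes of contributions arise: (i) $G_i$ hits one of the $\mathfrak{X}$ or $\mathfrak{V}$ factors, which by the identity above simply raises some $|\kappa_i|$ or $|\gamma_j|$ by one; (ii) $G_i$ hits the first argument of $[\partial_x^{\alpha_x}\partial_v^{\alpha_v} g_n]$, producing $[\partial_{v^i}\mathfrak{X}^{n,j}_t](x-tv,v)\cdot[\partial_{x^j}\partial_x^{\alpha_x}\partial_v^{\alpha_v}g_n]$ and so raising $|\alpha_x|$ and $p_x$ simultaneously by one; (iii) $G_i$ hits the second argument, which splits into a term $[\partial_{v^i}\partial_v^{\alpha_v}\partial_x^{\alpha_x} g_n]$ (raising only $|\alpha_v|$) and a term $[\partial_{v^i}\mathfrak{V}^{n,j}_t]\cdot[\partial_{v^j}\partial_v^{\alpha_v}\partial_x^{\alpha_x} g_n]$ (raising both $|\alpha_v|$ and $p_v$ by one, and adding a new $\mathfrak{V}$ factor of multi-index one).

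The one slightly delicate bookkeeping point is to verify that the equality $p_x = |\alpha_x|$ is preserved: this holds precisely because an $x$-derivative on $g_n$ can only appear through case (ii) above, and each such creation comes paired with exactly one additional $\partial_v \mathfrak{X}^n_t$ factor. The remaining constraints $|\alpha_x|+|\alpha_v| \leq |\beta|$, $p_v \leq |\alpha_v|$ and $\sum|\kappa_i|+|\gamma_j| \leq |\beta|$ are easily checked, case by case, to each be preserved when $|\beta|$ is replaced by $|\beta|+1$. The regularity assumption $|\beta| \leq N - r_n$, combined with Corollary \ref{CorexpanXVn}, ensures that all the $\partial_v^\kappa \mathfrak{X}^n_t$ and $\partial_v^\gamma \mathfrak{V}^n_t$ factors arising in the induction are genuine classical derivatives, so Leibniz and the chain rule apply without any issue. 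I do not expect a real obstacle here: the lemma is essentially a chain-rule bookkeeping statement whose content rests entirely on the identity $G_i(x-tv) = 0$.
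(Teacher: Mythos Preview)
Your proposal is correct and follows essentially the same approach as the paper: both arguments rest on the observation that $G_i$ annihilates $x-tv$, so that applying $G_i$ to a composite of the form $h(t,x-tv+\mathfrak{X}_t^n(x-tv,v),v+\mathfrak{V}_t^n(x-tv,v))$ produces exactly the three contributions you enumerate, and the result follows by iteration. The paper states this as a single chain-rule identity for $G_i$ acting on such a composite and then says ``iterate''; your case breakdown and explicit verification of the constraints (especially $p_x=|\alpha_x|$) spell out that iteration in slightly more detail, but there is no substantive difference.
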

 \begin{remark}\label{RkZbetafg1}
 Recall from Section \ref{Subsec72} that if $n=1$, then $(\mathfrak{X}_t^1,\mathfrak{V}_t^1)=(\mathfrak{X}_t,\mathfrak{V}_t)$ does not depend on the spatial variable. So, in this case the result holds for all $(t,x,v) \in [2,\infty) \times \R^3_x \times \R^3_v$.
 \end{remark}

\begin{proof}
Let $h: \R_+^* \times \R^3_x \times \R^3_v \to \R$ be a distribution function. Using, the relations
\begin{align*}
\partial_{v^i} \big[ h  \big(t, x-tv+\mathfrak{X}_t^n(x-tv,v),v+\mathfrak{V}_t^n(x-tv,v) \big) \big]  = -\big[t \partial_{x^i} h \big] \big(t, x-tv+\mathfrak{X}_t^n(x-tv,v),v+\mathfrak{V}_t^n(x-tv,v) \big)& \\
+ \, \big[ \partial_{v^i} h \big] \big(t, x-tv+\mathfrak{X}_t^n(x-tv,v),v+\mathfrak{V}_t^n(x-tv,v) \big) & \\
 + \, \partial_{v^i} \big[ \mathfrak{X}_t^n (x-tv,v)\big] \cdot \big[ \nabla_x h \big]\big(t, x-tv+\mathfrak{X}_t^n(x-tv,v),v+\mathfrak{V}_t^n(x-tv,v) \big)& \\
 + \, \partial_{v^i} \big[ \mathfrak{V}_t^n (x-tv,v)\big] \cdot \big[ \nabla_v h \big] \big(t, x-tv+\mathfrak{X}_t^n(x-tv,v),v+\mathfrak{V}_t^n(x-tv,v) \big)&
   \end{align*}
and 
$$\partial_{v^i} [ \mathfrak{X}_t^n (x-tv,v)]=-t[\partial_{x^i}  \mathfrak{X}_t^n] (x-tv,v)+[\partial_{v^i}  \mathfrak{X}_t^n] (x-tv,v),$$ 
we obtain in view of $G=t\nabla_x+\nabla_v$ the identity
\begin{align*}
 G_i\big[ h \big(t, x-tv+\mathfrak{X}_t^n(x-tv,v),v+\mathfrak{V}_t^n(x-tv,v) \big) \big]  = \big[ \partial_{v^i}h \big] \big(t, x-tv+\mathfrak{X}_t^n(x-tv,v),v+\mathfrak{V}_t^n(x-tv,v) \big) &  \\
 + \, \big[ \partial_{v^i} \mathfrak{X}_t^n \big](x-tv,v) \cdot \big[ \nabla_x h \big]\big(t, x-tv+\mathfrak{X}_t^n(x-tv,v),v+\mathfrak{V}_t^n(x-tv,v) \big)& \\
 + \, \big[ \partial_{v^i} \mathfrak{V}_t^n \big](x-tv,v) \cdot \big[ \nabla_v h \big] \big(t, x-tv+\mathfrak{X}_t^n(x-tv,v),v+\mathfrak{V}_t^n(x-tv,v) \big)&.
   \end{align*}
The result is proven by iterating this relation and recalling \eqref{ident_distrib_terms_gn}. 
\end{proof} 

We now exploit the first step.

\begin{proposition}\label{Profromg1tognbis}
Let $(t,x) \in [T_n,\infty) \times \R^3_x$ and $|\beta| \leq N-r_{n+1}$. Then, $t^3 \rho(G^\beta f)$ is $n$--equivalent to a linear combination of terms 
\begin{equation*}
 \frac{\log^{p_1+p_2}(t)}{t^{q-3}} \int_{|x-tv| < c_nt} \mathbf{L}(v) \big(x-tv+\mathfrak{X}_t^n(x-tv,v) \big)^\alpha \big[ \partial_x^{\kappa_x} \partial_{v}^{\kappa_v} g_n \big] \! \big(t, x-tv+\mathfrak{X}_t^n(x-tv,v),v+\mathfrak{V}_t^n(x-tv,v) \big) \dr v  , 
\end{equation*}
with $|\kappa_x| + |\kappa_v|\leq N-r_{n+1}$, $p_1+|\alpha| \leq q \leq n$, $p_2 \leq |\kappa_x|$ and $\mathbf{L} \in C^{n+1-q} \cap W^{n+1-q,\infty}(\R^3_v)$.
\end{proposition}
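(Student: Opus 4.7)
The plan is to decompose the velocity integral defining $\rho(G^\beta f)(t,x)$ according to whether $|x-tv|<c_nt$ or not, handle the far region as an error, and on the near region substitute the representation of $G^\beta f$ from Lemma \ref{LemZbetafg1} followed by the asymptotic expansions of Corollary \ref{CorexpanXVn} for the factors involving $\mathfrak{X}_t^n$ and $\mathfrak{V}_t^n$.

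For the far region $\{|x-tv|\geq c_nt\}$, I will use the pointwise bounds from Proposition \ref{prop_point_bound_deriv_distrib} together with $|\mathbf{z}_{\mathrm{mod}}-\langle x-tv\rangle|\lesssim \log(t)$ from Lemma \ref{lemma_varphi} to get $|G^\beta f|(t,x,v) \lesssim \log^{|\beta|}(t)\langle x-tv\rangle^{-N_x}\langle v\rangle^{-N_v}$. Since $N_x\geq 2n+8$, the lower bound $\langle x-tv\rangle \geq c_nt$ provides a surplus factor $t^{-(N_x-4)}$ which is much smaller than $t^{-n-1}$; a change of variables $y=x-tv$ (as in the proof of Lemma \ref{Lemdecay1}) and the spatial decay trick $|x|\leq 2\langle t\rangle(\langle v\rangle+\mathbf{z}_{\mathrm{mod}})$ then produce the required $\langle t+|x|\rangle^{-3}$ weight, showing this contribution is $n$-equivalent to zero.

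On the near region, applying Lemma \ref{LemZbetafg1} expresses the integrand as a sum of terms
\[
\prod_{i=1}^{p_x}[\partial_v^{\kappa_i}\mathfrak{X}_t^{n,k_i}](x-tv,v)\prod_{j=1}^{p_v}[\partial_v^{\gamma_j}\mathfrak{V}_t^{n,\ell_j}](x-tv,v)\,[\partial_x^{\kappa_x}\partial_v^{\kappa_v}g_n](t,\ldots,\ldots),
\]
with $|\kappa_x|+|\kappa_v|\leq|\beta|\leq N-r_{n+1}$ and $p_x=|\kappa_x|$. I then substitute the expansions from Corollary \ref{CorexpanXVn} (with $|\kappa_x|=0$ there, since only $\partial_v$-derivatives appear): each $\mathfrak{X}$-factor contributes a leading term $-\mu\log(t)\nabla_v\partial_v^{\kappa_i}\phi_\infty(v)$ plus a sum of $t^{-q}z^{\alpha}\log^{p}(t)\mathbf{R}^{(0,\kappa_i)}_{\alpha,q,p}(v)$ with $1\leq q$ and $|\alpha|+p\leq q$, whereas each $\mathfrak{V}$-factor contributes a sum of $t^{-q-1}z^{\alpha}\log^p(t)\mathbf{S}^{(0,\gamma_j)}_{\alpha,q,p}(v)$ with $q\geq 0$ and $|\alpha|+p\leq q$. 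Expanding the products, the count of leading $\log(t)$ factors from the $\mathfrak{X}$-terms gives exactly the index $p_2\leq p_x=|\kappa_x|$, while the aggregate of polynomial factors $z^\alpha$ and subleading $\log^p/t^q$ contributions is captured by the $p_1+|\alpha|\leq q$ condition. The remainder terms in Corollary \ref{CorexpanXVn}, combined with the pointwise $L^\infty$ bounds on $\partial_v^\kappa g_n$ provided by Corollary \ref{Corestig1}, are easily seen to fall into the $n$-equivalence error after integration. Finally, terms with effective order $q>n$ are dropped into the error as well (the bound on $g_n$ provides the required $\langle x/t\rangle$-decay). The monomial $(x-tv)^\alpha$ produced by the expansions is rewritten as $((x-tv+\mathfrak{X}_t^n(x-tv,v))-\mathfrak{X}_t^n(x-tv,v))^\alpha$ via the binomial formula, and each extra factor of $\mathfrak{X}_t^n(x-tv,v)$ is re-expanded by Corollary \ref{CorexpanXVn}, redistributing into terms of the exact form in the statement.

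The main obstacle will be the bookkeeping: tracking simultaneously the number of $\log(t)$ factors (split as $p_1+p_2$), the polynomial weight order $|\alpha|$, the total $t$-power $q$, and the regularity of the resulting scalar coefficient $\mathbf{L}$. For the last point, I will invoke Remark \ref{lastRk}, which gives $\mathbf{R}^{(0,\kappa)}_{\alpha,q,p}\in C^{n+1-q}\cap W^{n+1-q,\infty}$ (and analogously for $\mathbf{S}$), together with $\|\nabla_v\phi_\infty\|_{W^{N-2,\infty}}\lesssim\epsilon$ from Proposition \ref{Proasympselfsiphi}; the resulting $\mathbf{L}$, being a product of at most $q$ such factors (by the constraint $p_1+|\alpha|\leq q$), lies in $C^{n+1-q}\cap W^{n+1-q,\infty}(\R^3_v)$ as required.
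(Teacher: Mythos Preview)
Your proposal is essentially correct and follows the same route as the paper's proof: restrict to $|x-tv|<c_nt$, apply Lemma~\ref{LemZbetafg1}, expand the $\mathfrak{X}_t^n,\mathfrak{V}_t^n$ factors via Corollary~\ref{CorexpanXVn} and Remark~\ref{lastRk}, and finally convert $(x-tv)^\alpha$ to $(x-tv+\mathfrak{X}_t^n)^\alpha$ by the binomial identity and iteration.

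One correction is needed. You invoke Corollary~\ref{Corestig1} for pointwise bounds on $\partial_{x,v}^\kappa g_n$, but that corollary only concerns $g_1$. The bounds on $g_n$ you need come from the induction hypothesis \eqref{equa:induModscat} at step $n-1$, which gives $\langle z\rangle^{N_x-n}\langle v\rangle^{N_v}|\partial_{x,v}^\kappa g_n|(t,z,v)\lesssim 1$ for $|z|\leq t$ (combined with the bound on $f_\infty$). This is precisely what the paper uses to control the error terms $\mathfrak{Q}^{\kappa_x,\kappa_v}_{t,x}$ coming from the remainders in Corollary~\ref{CorexpanXVn}: since $|\mathfrak{X}_t^n|\lesssim\log(t)$ on the near region, the argument $x-tv+\mathfrak{X}_t^n$ stays in $\{|z|\lesssim t\}$, so the induction hypothesis applies there and yields the $\langle x-tv\rangle^{-(n+8)}\langle v\rangle^{-7}$ decay needed to close via Remark~\ref{rklineardecay}. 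With this substitution your argument goes through.
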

\begin{proof}
We remark first that by support considerations, Proposition \ref{prop_point_bound_deriv_distrib}, and Remark \ref{rklineardecay}, we have
\begin{align*}
&t^3 \bigg|  \rho(G^\beta f) (t,x) - \int_{|x-tv|<c_nt} G^\beta f(t,x,v) \dr v \bigg| \\
& \qquad \qquad \qquad  \lesssim \! \int_{|x-tv| \geq c_nt} \frac{t^3 \, \dr v}{t^{n+2} \langle x-tv \rangle^7 \langle v \rangle^7} \sup_{(y,w) \in \R^3 \times \R^3} \langle y-tw \rangle^{n+9} \langle w \rangle^7 \big| G^{\beta} f \big|(t,y,w)\\  
& \qquad \qquad \qquad \lesssim \frac{\log^{n+9+|\beta|}(t)}{\langle t+|x| \rangle^3 \, t^{n-1}}  \lesssim \frac{1}{\langle t+|x| \rangle^3 \, t^{n-2}}.
 \end{align*}
Since $N+n-2-r_n-N+r_{n+1}=2n-1 \geq n$, the previous Lemma \ref{LemZbetafg1} together with the expansions in Corollary \ref{CorexpanXVn} and Remark \ref{lastRk} imply that 
$$t^3\int_{|x-tv|<c_nt} G^\beta f(t,x,v) \dr v$$
can be written as a linear combination of terms of the form 
\begin{equation}\label{eq:forhere000}
\color{white} \square \qquad \color{black} \frac{t^3 \log^{p_1'+p_2'}(t)}{t^{q'}} \int_{|x-tv| < c_nt} \mathbf{L}(v) (x-tv)^{\alpha'} \big[ \partial_x^{\kappa_x} \partial_{v}^{\kappa_v} g_n \big]\big(t, x-tv+\mathfrak{X}_t^n(x-tv,v),v+\mathfrak{V}_t^n(x-tv,v) \big) \dr v  , 
 \end{equation}
with $|\kappa_x| + |\kappa_v|\leq N-r_{n+1}$, $p_1'+|\alpha'| \leq q' \leq n$, $p_2' \leq |\kappa_x|$ and $\mathbf{L} \in C^{n+1-q'} \cap W^{n+1-q',\infty}(\R^3_v)$, and terms bounded by
$$ \mathfrak{Q}^{\kappa_x,\kappa_v}_{t,x}:= \log^{|\kappa_x|}(t)t^3\int_{|x-tv|<c_nt} \frac{\langle x-tv , \log(t) \rangle^{n+1}}{t^{n+1}}  \big| \partial_x^{\kappa_x} \partial_{v}^{\kappa_v} g_n \big|\big(t, x-tv+\mathfrak{X}_t^n(x-tv,v),v+\mathfrak{V}_t^n(x-tv,v) \big) \dr v ,$$
with $|\kappa_x|+|\kappa_v| \leq N-r_{n+1}$. 

As $|\mathfrak{X}_t^n (z,w) |\lesssim \log(t)$ and $|\mathfrak{V}_t^n (z,w) \lesssim 1$, we get by \eqref{equa:induModscat} and $N_x \geq 2n+8$ that
$$ \langle x-tv \rangle^{n+8} \langle v \rangle^7 \big| \partial_x^{\kappa_x} \partial_{v}^{\kappa_v} g_n \big|\big(t, x-tv+\mathfrak{X}_t^n(x-tv,v),v+\mathfrak{V}_t^n(x-tv,v) \big) \lesssim \log^{n+8}(t).$$
By Remark \ref{rklineardecay}, we then have
\begin{equation}\label{boundmathfrakQ}
| \mathfrak{Q}^{\kappa_x,\kappa_v}_{t,x} |\lesssim \frac{\log^{N-r_{n+1}+n+8}(t)}{\langle t+|x| \rangle^3 \, t^{n-2}} \lesssim \frac{\log^{S_{n+1}}(t)}{\langle t+|x| \rangle^3 \, t^{n-2}}.
 \end{equation}
We now work with the terms of the form \eqref{eq:forhere000}. If $|\alpha'|=0$ they are already of the expected form. Otherwise, $(x-tv)^{\alpha'}$ can be written as a linear combination of terms
$$ \big(x-tv+\mathfrak{X}_t^n(x-tv,v) \big)^{\alpha_1}  \big[ \mathfrak{X}_t^n(x-tv,v) \big]^{\alpha_2}, $$
with $\alpha_1+\alpha_2=\alpha'$. Using the expansion for $\mathfrak{X}_t^n$ in Corollary \ref{CorexpanXVn} and \eqref{boundmathfrakQ} (in order to bound the strongly decaying terms), we obtain that \eqref{eq:forhere000} is $n$--equivalent to a linear combination of terms 
\begin{equation*}
 \frac{t^3 \log^{p_1+p_2'}(t)}{t^{q}} \int_{|x-tv| < c_nt} \mathbf{M}(v)  \big(x-tv+\mathfrak{X}_t^n \big)^{\alpha_1} (x-tv)^{\alpha_0} \big[ \partial_x^{\kappa_x} \partial_{v}^{\kappa_v} g_n \big]\big(t, x-tv+\mathfrak{X}_t^n,v+\mathfrak{V}_t^n \big) \dr v  , 
 \end{equation*}
 with $p_1+|\alpha_1|+|\alpha_0| \leq q \leq n$, $|\alpha_0| < |\alpha'|$ and $\mathbf{M} \in C^{n+1-q} \cap W^{n+1-q,\infty}(\R^3_v)$. Note that we have dropped the dependence in $(x-tv,v)$ of $(\mathfrak{X}_t^n,\mathfrak{V}_t^n)$. One can iterate the process to obtain the result since $|\alpha_0| < |\alpha'|$.
\end{proof}

Next, we perform the high order change of variables which generalises $y=x-tv$.

\begin{proposition}\label{ProCDV}
Let $(t,x) \in [T_n,\infty) \times \R^3_x$ and $|\beta| \leq N-r_{n+1}$. Then, $t^3 \rho(G^\beta f)$ is $n$--equivalent to a linear combination of terms of the form
\begin{equation*}
 \frac{ \log^{p_1+p_2}(t)}{t^{q}} \mathbf{K} \Big( \frac{x}{t} \Big) \int_{ y \in \Y_{t,x}^n(\mathcal{D}_{t,x}^{c_n})} y^\alpha \big[ \partial_x^{\gamma_x} \partial_{v}^{\gamma_v} g_n \big]\Big(t, y, \frac{x}{t} \Big) \dr y  , 
 \end{equation*}
with $|\gamma_x| + |\gamma_v| \leq N-1-r_{n}$, $p_1+|\alpha| \leq q \leq n$, and $p_2 \leq |\gamma_x|$.
\end{proposition}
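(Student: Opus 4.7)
The plan is to take as input the decomposition provided by Proposition \ref{Profromg1tognbis}, which already writes $t^3\rho(G^\beta f)$, modulo $n$-equivalence, as a linear combination of integrals over $\{|x-tv|<c_nt\}$ weighted by $(x-tv+\mathfrak{X}_t^n)^\alpha$, by a smooth $v$-coefficient $\mathbf{L}$, and by derivatives $[\partial_x^{\kappa_x}\partial_v^{\kappa_v}g_n]$ evaluated at the nonlinear velocity $v+\mathfrak{V}_t^n(x-tv,v)$. The strategy consists in performing the velocity change of variables $y=\Y_{t,x}^n(v)$ provided by Corollary \ref{CorYordern}, and then Taylor expanding every remaining $v$-dependent factor around $v=x/t$, using the polyhomogeneous expansions of the same corollary.

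Concretely, the change of variables is allowed since $\Y_{t,x}^n$ is a $C^1$-diffeomorphism from $\mathcal{D}_{t,x}^{c_n}$ onto $\Y_{t,x}^n(\mathcal{D}_{t,x}^{c_n})$: the weight $(x-tv+\mathfrak{X}_t^n)^\alpha$ becomes simply $y^\alpha$, while $\dr v$ becomes $|\det \dr \Y_{t,x}^{n,-1}|(y)\dr y$. Expanding the Jacobian via Corollary \ref{CorYordern} gives $t^3|\det \dr \Y_{t,x}^{n,-1}|(y)=1+\sum_{1\leq q',|\alpha'|+p'\leq q'\leq n}\frac{y^{\alpha'}\log^{p'}(t)}{t^{q'}}\mathbf{G}_{q',\alpha',p'}(x/t)$ modulo an error of size $\langle y,\log(t)\rangle^{n+1}/t^{n+1}$, so that the prefactor $1/t^{q-3}$ of Proposition \ref{Profromg1tognbis} turns into contributions of global order $1/t^{q+q'}$ with $q+q'\leq n$. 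The only remaining $v$-dependent factors are $\mathbf{L}(\Y_{t,x}^{n,-1}(y))$ and the velocity argument $\Y_{t,x}^{n,-1}(y)+\mathfrak{V}_t^n(\ldots)$ of $[\partial_x^{\kappa_x}\partial_v^{\kappa_v}g_n]$, and Corollary \ref{CorYordern} identifies both with $x/t$ modulo polyhomogeneous terms of the same structure plus an $O(\langle y,\log(t)\rangle^{n+1}/t^{n+1})$ remainder. Taylor expanding $\mathbf{L}$ and $[\partial_x^{\kappa_x}\partial_v^{\kappa_v}g_n]$ in $v$ around $x/t$ up to order $n-q$ is justified by the regularity $\mathbf{L}\in W^{n+1-q,\infty}$ from Proposition \ref{Profromg1tognbis} and by the inductive hypothesis \eqref{equa:induModscat} at step $n-1$, which delivers $N-r_n\geq |\kappa_x|+|\kappa_v|+(n-q+1)$ derivatives of $g_n$ in $v$. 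Collecting the resulting polyhomogeneous contributions yields terms of the claimed form with $\gamma_x=\kappa_x$ (so $p_2\leq |\gamma_x|$), $|\gamma_x|+|\gamma_v|\leq |\kappa_x|+|\kappa_v|+(n-q)\leq N-r_{n+1}+n=N-1-r_n$, and $p_1+|\alpha|\leq q\leq n$.

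The main obstacle is to verify that all the errors produced along the way are $n$-equivalent to zero. The typical remainder is of the form $\log^S(t)t^{-n-1}\langle y,\log(t)\rangle^{n+1}$ times a $v$-derivative of $g_n$, integrated against $y^{\tilde\alpha}$ over a domain contained in $\{|y|<2c_nt\}$. Using the pointwise bound $\langle y\rangle^{N_x-1-n}\langle v\rangle^{N_v}|\partial^{\kappa}g_n|\lesssim \log^{S_n}(t)$ from \eqref{equa:induModscat}, with $N_x\geq 2n+8$ to absorb the factors $y^{\tilde\alpha}$ with $|\tilde\alpha|\leq n+1$ and render the $y$-integral finite, and with $N_v\geq 7\geq 3$ providing $\langle x/t\rangle^{-N_v}\lesssim t^3\langle t+|x|\rangle^{-3}$, the whole remainder is controlled by $\log^{S_{n+1}}(t)\langle t+|x|\rangle^{-3}t^{-n+2}$, which is precisely the bound \eqref{equat:kequiv} defining $n$-equivalence. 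This closes the argument.
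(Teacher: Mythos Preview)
Your proposal is correct and follows essentially the same route as the paper: start from Proposition~\ref{Profromg1tognbis}, perform the change of variables $y=\Y_{t,x}^n(v)$ via Corollary~\ref{CorYordern}, and Taylor expand the resulting velocity-dependent factors around $x/t$ to the appropriate order. Your derivative count $|\gamma_x|+|\gamma_v|\leq N-r_{n+1}+n=N-1-r_n$ and your observation that $\gamma_x=\kappa_x$ (so $p_2\leq|\gamma_x|$ is preserved) are both right.

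One point you gloss over deserves a sentence: in the Taylor remainder, the derivatives of $g_n$ are evaluated not at $v=x/t$ but at an intermediate velocity $x/t+z_\tau(y)$ with $\tau\in[0,1]$. To recover the spatial decay $\langle x/t\rangle^{-3}$ required by $n$--equivalence, you need $|z_\tau(y)|\lesssim 1$ uniformly on the integration domain; this follows from the expansions in Corollary~\ref{CorYordern} together with $|y|\leq 2c_nt$ (so each $y^\alpha\log^p(t)/t^q$ with $|\alpha|+p\leq q$ is bounded). The paper isolates this as its final step. A second minor point: when you multiply the separate expansions of the Jacobian, of $\mathbf{L}$, and of $g_n$, you generate cross-terms of combined order $q+q''>n$; these are not of the stated form and must be absorbed into the error. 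On $|y|<2c_nt$ one trades excess $y$-powers for $t$-decay via $|y|^k\leq(2c_nt)^k$, reducing the effective $y$-weight to at most $n+1$, which $N_x\geq 2n+8$ accommodates. The paper sidesteps this bookkeeping by applying Taylor's theorem once at order $n-q'$ to the full integrand rather than expanding each factor separately, but your factor-by-factor approach is equivalent once these two details are spelled out.
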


\begin{proof}
We fix $(t,x) \in [T_n,\infty) \times \R^3_x$. Apply first Proposition \ref{Profromg1tognbis} and perform the change of variables $y=\Y_{t,x}^n(v)=x-tv+\mathfrak{X}_t^n(x-tv,v)$ allowed by Corollary \ref{CorYordern}. We then have that 
$t^3 \rho(G^\beta f)$ is $n$--equivalent to a linear combination of terms of the form
\begin{align*}
\frac{ \log^{p_1'+p_2}(t)}{t^{q'}}  \int_{y \in \mathcal{D}_{t,x}} &\mathbf{K}\big( \Y^{n,-1}_{t,x}(y) \big) y^{\alpha'} \\
& \times  \big[ \partial_x^{\kappa_x} \partial_{v}^{\kappa_v} g_n \big]\Big(t, y,\Y^{n,-1}_{t,x}(y)+\mathfrak{V}_t^n \big( x-t\Y^{n,-1}_{t,x}(y),\Y^{n,-1}_{t,x}(y)\big) \Big) t^3\big| \det \dr \Y_{t,x}^{n,-1} \big|(y) \dr y , 
 \end{align*}
 with $|\kappa_x|+|\kappa_v| \leq \! N-r_{n+1}$, $p_1'+|\alpha'| \leq q' \leq n$, $p_2 \leq |\kappa_x|$, and the domain of integration is $\mathcal{D}_{t,x} \! := \Y_{t,x}^n (\mathcal{D}_{t,x}^{c_n}) $. 
 
Recall now the expansions satisfied by $\Y^{n,-1}_{t,x}(y)$ and $\Y^{n,-1}_{t,x}(y)+\mathfrak{V}_t^n \big( x-t\Y^{n,-1}_{t,x}(y),\Y^{n,-1}_{t,x}(y)\big)$ in Corollary \ref{CorYordern}. By Taylor's theorem applied at order $n-q'$, we get that the last quantity is the sum of terms 
 $$  \frac{ \log^{p_1+p_2}(t)}{t^{q}} \mathbf{K} \Big( \frac{x}{t} \Big) \int_{y \in \mathcal{D}_{t,x}}  y^\alpha \big[ \partial_x^{\gamma_x} \partial_{v}^{\gamma_v} g_n \big]\Big(t, y, \frac{x}{t} \Big) \dr y  ,$$
 where $p_1+|\alpha| \leq q \leq n$ and $|\gamma_x|+|\gamma_v| \leq N-r_{n+1}+q \leq N-1-r_n$, and also terms bounded by
 \begin{equation*}
\sup_{0 \leq \tau \leq 1} \frac{ \log^{p_1'+p_2}(t)}{t^{q'}} \int_{y  \in \mathcal{D}_{t,x}} \frac{\langle y, \log(t) \rangle^{n-q'+1}}{t^{n-q'+1}} |y|^{\alpha'} \big| \partial_x^{\xi_x} \partial_{v}^{\xi_v} g_n \big|\Big(t, y,\frac{x}{t}+z_\tau(y) \Big) \dr y  , 
 \end{equation*}
 where $|\xi_x|+|\xi_v| \leq N-r_n$ and
 $$ z_\tau (y) := \tau \Big( \Y^{n,-1}_{t,x}(y)+\mathfrak{V}_t^n \Big( x-t\Y^{n,-1}_{t,x}(y),\Y^{n,-1}_{t,x}(y)\Big)-\frac{x}{t} \Big) , \qquad y \in \mathcal{D}_{t,x}, \; \tau \in [0,1] .$$
To conclude the proof, it remains us to prove that for any $|\xi_x|+|\xi_v| \leq N-r_n$ and all $ \tau \in [0,1]$, we have
\begin{equation}\label{eqeqeq:1}
  \int_{y \in \mathcal{D}_{t,x}}  \langle y \rangle^{n-q'+1+|\alpha'|}  \big| \partial_x^{\xi_x} \partial_{v}^{\xi_v} g_n \big| \Big(t, y,\frac{x}{t}+z_\tau(y) \Big) \dr y \lesssim \frac{1}{\langle x/t \rangle^3} \lesssim \frac{ t^3}{\langle t+|x| \rangle^3}.
  \end{equation}
Recall from Corollary \ref{CorYordern} that $\mathcal{D}_{t,x} \subset \{ |y| < 2c_n t \}$. Hence, using $c_n<1$ and the expansion for $z_\tau (y)$, still given by Corollary \ref{CorYordern}, we get
$$ \forall \, (\tau,y) \in [0,1] \times \mathcal{D}_{t,x}, \qquad \big| z_\tau (y) \big| \lesssim c_n+t^{-1} \log(t) \leq 2.$$
We then obtain \eqref{eqeqeq:1} by using
$$ \langle y \rangle^{n-q'+1+|\alpha'|}  \big| \partial_x^{\xi_x} \partial_{v}^{\xi_v} g_n \big| (t, y,v) \lesssim \langle y \rangle^{-4} \langle v \rangle^{-3},$$
 which holds for all $t \geq 2$, $|y| \leq t$ and $v\in \R^3_v$ according to \eqref{equa:induModscat} and $N_x-n \geq n+5$.
\end{proof}

 We are now ready to obtain Proposition \ref{Profromg1togn}. 

\begin{proof}[Proof of Proposition \ref{Profromg1togn}]
The proposition is a direct consequence of the previous result as well as the strong spatial decay estimates satisfied by $g_n$. We remark first that \eqref{equa:induModscat} implies that for all $t \geq T_n$
\begin{equation}\label{kevatalenn:0}
 \forall \, \frac{1}{2}c_nt \leq |z| \leq t, \; \forall \, v \in \R^3_v, \qquad \langle z \rangle^{|\alpha|} \langle v \rangle^3 \big| \partial_x^{\gamma_x} \partial_v^{\gamma_v} g_n \big|(t,z,v) \lesssim t^{-N_x+n+|\alpha|} \lesssim t^{-n-2+|\alpha|},
 \end{equation}
for any $|\gamma_x|+|\gamma_v| \leq N-1-r_n$, and $|\alpha| \leq n$. Moreover, we have $\{|y| < \frac{1}{2}c_nt \} \subset \Y_{t,x}^n(\mathcal{D}_{t,x}^{c_n})$ according to Corollary \ref{CorYordern}. By Proposition \ref{ProCDV} and the estimate \eqref{kevatalenn:0}, the normalised spatial average $t^3 \rho (G^\beta f)$ is then $n$--equivalent to a linear combination of terms of the form
\begin{equation*}
 \frac{ \log^{p_1+p_2}(t)}{t^{q}} \mathbf{K} \Big( \frac{x}{t} \Big) \int_{|y| < t} y^\alpha \big[ \partial_x^{\gamma_x} \partial_{v}^{\gamma_v} g_n \big]\Big(t, y, \frac{x}{t} \Big) \dr y  , 
 \end{equation*}
with $|\gamma_x| + |\gamma_v|\leq N-r_{n+1}$, $p_1+|\alpha| \leq q \leq n$, and $p_2 \leq |\gamma_x|$. We conclude the proof by performing integration by parts and by bounding the strongly decaying terms (which includes the boundary terms) through \eqref{kevatalenn:0}.
\end{proof}

\subsubsection{Step 4: Exploiting the convergence of the modified spatial averages}

We are finally able to conclude the analysis of $t^3\rho(G^\beta f)$.

\begin{proposition}\label{Proselfsimrho}
Let $|\beta| \leq N-r_{n+1}$. Then, for all $(t,x) \in [2,\infty) \times \R^3_x$, we have
$$ \bigg| t^3 \rho (G^\beta f)(t,x) - \sum_{p \leq q \leq n} \frac{\log^p(t)}{t^q} \partial_v^\beta \mathbf{F}_{p,q} \Big( \frac{x}{t} \Big) \bigg| \lesssim \frac{\log^{S_n+1}(t)}{\langle t+|x| \rangle^3 \, t^{n-2}} ,$$ 
where $\mathbf{F}_{p,q} \in C^{N-r_{q+1}}(\R^3_v)$, and $\| \langle v \rangle^3 \partial_v^\beta \mathbf{F}_{p,q} \|_{L^\infty} <\infty$.
\end{proposition}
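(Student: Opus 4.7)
The starting point is Proposition \ref{Profromg1togn}, which shows that $t^3\rho(G^\beta f)(t,x)$ is $n$--equivalent to a linear combination of terms of the form
\[
\frac{\log^p(t)}{t^q}\mathbf{K}\!\Big(\frac{x}{t}\Big)\int_{|z|<t}\!z^\alpha\partial_v^{\gamma_v} g_n(t,z,x/t)\,\dr z,
\]
with $p+|\alpha|\leq q\leq n$, $|\gamma_v|\leq N-1-r_n$ and $\mathbf{K}\in C^0\cap L^\infty$. The plan is to replace each weighted spatial average of $g_n$ by its asymptotic expansion in powers of $\log^k(t)/t^\ell$ via the induction hypothesis, then regroup according to the index $(p,q)$ and identify the resulting coefficients.

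\textbf{Step 1: generic case $q\geq 1$.} Using the modified scattering estimate \eqref{equa:induModscat} (at order $n-1$, applied to $g_n$) together with the pointwise decay of $f_\infty$ and the fact that $N_x\geq 2n+8$, I would write
\[
\int_{|z|<t}\!z^\alpha\partial_v^{\gamma_v}g_n(t,z,v)\,\dr z = \int_{\R^3_z}\!z^\alpha\partial_v^{\gamma_v}f_\infty(z,v)\,\dr z + O\!\Big(\log^{S_n}(t)\,t^{-n}\langle v\rangle^{-3}\Big),
\]
where the remainder is controlled by splitting into $\int_{|z|\geq t}z^\alpha\partial_v^{\gamma_v}f_\infty$ and $\int_{|z|<t}z^\alpha\partial_v^{\gamma_v}(g_n-f_\infty)$. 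After multiplication by $\log^p(t)/t^q$ with $q\geq 1$ and using $\mathbf{K}(x/t)\lesssim 1$, the error is at most $\log^{p+S_n}(t)/\big(\langle t+|x|\rangle^3\,t^{n-2}\big)$, hence absorbable into the $n$--equivalent error.

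\textbf{Step 2: critical case $q=0$.} Here $p=|\alpha|=0$, and the $O(t^{-n})$ replacement above is too weak by a factor of $t$. This is precisely where part~(b) of the induction hypothesis at order $n-1$ enters, providing
\[
\int_{|z|<t}\!\partial_v^{\gamma_v}g_n(t,z,v)\,\dr z = \int_{\R^3_z}\!\partial_v^{\gamma_v}f_\infty\,\dr z + \!\!\!\!\sum_{\substack{|\eta|\leq|\gamma_v|\\ p'+|\xi|\leq n-1}}\!\!\!\!\frac{\log^{p'}(t)}{t^n}\mathbf{Q}^{\gamma_v,\eta}_{p',\xi}(v)\!\int_{\R^3_z}\!z^\xi\partial_v^\eta f_\infty\,\dr z + O\!\Big(\log^{S_n}(t)\,t^{-n-1}\langle v\rangle^{-3}\Big).
\]
The remaining error fits within the target bound, and the lower-order corrections contribute to the coefficients of $\log^{p'}(t)/t^n$ for $p'\leq n-1$. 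Combining Steps~1 and~2 and regrouping by $(p,q)$ yields functions $\mathbf{G}^\beta_{p,q}\in C^0(\R^3_v)$ such that
\[
t^3\rho(G^\beta f)(t,x) = \sum_{p\leq q\leq n}\frac{\log^p(t)}{t^q}\mathbf{G}^\beta_{p,q}\!\Big(\frac{x}{t}\Big) + O\!\bigg(\frac{\log^{S_n+1}(t)}{\langle t+|x|\rangle^3\,t^{n-2}}\bigg).
\]

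\textbf{Step 3: identification, regularity and decay.} I would set $\mathbf{F}_{p,q}:=\mathbf{G}^0_{p,q}$ and use the identity $t^3\rho(G^\beta f)=t^{3+|\beta|}\partial_x^\beta\rho(f)$ (Lemma \ref{lemma_connection_microscopic_macroscopic_vector_fields}) together with Lemma~\ref{LemForexpbis}--(2), applied inductively on $|\beta|$, to conclude $\mathbf{G}^\beta_{p,q}=\partial_v^\beta\mathbf{F}_{p,q}$; the point is that $\partial_x^\beta\big[\mathbf{F}_{p,q}(x/t)\big] = t^{-|\beta|}(\partial_v^\beta\mathbf{F}_{p,q})(x/t)$. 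The regularity $\mathbf{F}_{p,q}\in C^{N-r_{q+1}}(\R^3_v)$ is tracked by inspection of the building blocks: the modifications $\mathbb{X}_{q',\alpha,p'},\mathbb{V}_{q',\alpha,p'}\in C^{N-r_{q'+1}}$, the Taylor coefficients $\mathbf{R}^\kappa_{\alpha,q',p'}, \mathbf{D}_{q',\alpha,p'}$ of Corollaries \ref{CorexpanXVn}--\ref{CorYordern}, and the regularity of $f_\infty$ and $\nabla_v\phi_\infty$. The weighted bound $\|\langle v\rangle^3\partial_v^\beta\mathbf{F}_{p,q}\|_{L^\infty}<\infty$ follows from the velocity decay of the weighted spatial averages of $f_\infty$ supplied by Corollary~\ref{Corestig1}. \emph{The main obstacle} is the bookkeeping in Step~3: a direct combinatorial expression of the collected coefficient $\mathbf{G}^\beta_{p,q}$ as $\partial_v^\beta\mathbf{F}_{p,q}$ is unwieldy, and one sidesteps it by appealing to the uniqueness of self-similar polyhomogeneous expansions, exploiting that all $t^3\rho(G^\beta f)$ arise from the single quantity $t^3\rho(f)$ by spatial differentiation.
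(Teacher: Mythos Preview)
Your proposal is correct and follows essentially the same approach as the paper: both start from Proposition~\ref{Profromg1togn}, treat the $q\geq 1$ terms via the modified scattering estimate \eqref{equa:induModscat} for $g_n$, handle the critical $q=0$ term via the enhanced spatial-average convergence \eqref{equa:induQn} from the induction hypothesis at order $n-1$, and identify the coefficients $\mathbf{G}^\beta_{p,q}=\partial_v^\beta\mathbf{F}_{p,q}$ using uniqueness of polyhomogeneous expansions (Lemma~\ref{LemForexpbis}). The one noteworthy difference is in the regularity argument: the paper obtains $\mathbf{F}_{p,q}\in C^{N-r_{q+1}}$ for $q\leq n-1$ more cleanly by observing that the same coefficient already appeared in the expansion at the earlier induction step of order $q$ (where the range $|\beta|\leq N-r_{q+1}$ was available), and then invokes uniqueness; your ``inspection of the building blocks'' would work but is more laborious to carry out rigorously.
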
 

\begin{proof}
In view of the spatial (respectively velocity) decay satisfied by $\rho ( G^\beta f)$ (respectively $\partial_v^\beta \mathbf{F}_{p,q}$), and the compactness of $[2,T_n]$, it suffices to establish the result for $t \geq T_n$. We remark further that the existence of the asymptotic self-similar polyhomogeneous expansion (recall Definition \ref{Defselsimexp}) implies by Lemma \ref{LemForexpbis} that
$$  \big[t^3 \rho (G^\beta f) \big]_{p,q}=\partial_v^\beta  \big[t^3 \rho (f) \big]_{p,q} \qquad \textrm{and} \qquad   \mathbf{F}_{p,q} \in C^{N-r_{n+1}}(\R^3_v).$$
Then, as such expansions are unique, we get from the induction hypothesis at order $q \in \llbracket 0,n-1 \rrbracket$ that $\mathbf{F}_{p,q} \in C^{N-r_{q+1}}(\R^3_v)$.

We then fix $(t,x) \in [T_n,\infty) \times \R^3_x$. We first apply Proposition \ref{Profromg1togn} to reduce the analysis to the study of terms of the form 
\begin{equation}
\color{white} \square \qquad \quad \color{black} \frac{\log^p(t)}{t^{q}} \mathbf{K} \Big(\frac{x}{t} \Big) \!\int_{|z| < t}  z^\alpha \big[ \partial_{v}^{\gamma_v} g_n \big] \Big( t,z,\frac{x}{t} \Big) \dr z  ,
 \end{equation}
with $ |\gamma_v| \leq N-1-r_{n}$, and $p+|\alpha| \leq q \leq n$. We start by dealing with the terms for which $q \geq 1$. Note that $N_x-n \geq 4+n$, so \eqref{equa:induModscat} implies
$$   \frac{\log(t)}{t}\bigg| \int_{|z|<t} z^\alpha \big[ \partial_{v}^{\gamma_v} g_n \big] \Big(t,z,\frac{x}{t} \Big) \dr z - \int_{|z|<t} z^\alpha \big[ \partial_{v}^{\gamma_v} f_\infty \big] \Big(z,\frac{x}{t} \Big) \dr z \bigg| \lesssim \frac{\log^{S_n+1}(t)}{\langle x/t \rangle^3 \, t^{n+1}}.$$
Furthermore, we get from Corollary \ref{Corestig1} that $f_\infty$ enjoys strong spatial decay, so 
$$\frac{\log(t)}{t}\bigg| \int_{|z|\geq t} z^\alpha \big[ \partial_{v}^{\gamma_v} f_\infty \big] \Big(z,\frac{x}{t} \Big) \dr z \bigg| \lesssim \frac{\log(t)}{\langle x/t \rangle^3 \,t^{1+N_x-5-|\alpha|}} \lesssim \frac{\log(t)}{\langle x/t \rangle^3 \,t^{n+1}},$$
since $N_x \geq 2n+5$. It remains to treat the case $q=0$, for which we have $|\alpha|=0$. It then suffices to use the strong convergence estimate for the modified non-weighted spatial averages \eqref{equa:induQn} provided by the induction hypothesis. Finally, $\| \langle v \rangle^3 \partial_v^\beta \mathbf{F}_{p,q} \|_{L^\infty} <\infty$ ensues from the velocity decay of $f_\infty$ and its derivatives.
\end{proof}

\subsection{Late-time asymptotics for the force field}\label{SubsecforcefieldAxpan}

In this section we will prove that, for any $|\gamma| \leq N-r_{n+1}$ the normalised force field $t^2\nabla_x G^\gamma \phi$ admits a polyhomogeneous expansion of order $n$ along the modified spatial characteristics $t \mapsto (t,\X_{n}(t,x,v)+t\V_n(t,x,v))$. For this, we start by proving the next result. 
\begin{proposition}\label{Proexpa}
Let $ \Phi_{p,q} \in C^{N-r_{q+1}}(\R^3_v)$ be defined as
\begin{equation}\label{asymp_poiss_eqn}
 \Delta_v \Phi_{p,q}= \mathbf{F}_{p,q},
\end{equation}
for any $p \leq q \leq n$. Then, we have $ \Phi_{p,q} \in W^{N-r_{q+1},\infty} (\R^3_v)$, and for any $|\gamma| \leq N-r_{n+1}$
$$ \forall \, (t,x) \in [2,\infty) \times \R^3_x , \qquad  \bigg|t^{2+|\gamma|} \nabla_x \partial_x^\gamma \phi (t,x)-\sum_{p \leq q \leq n} \frac{\log^p(t)}{t^q} \big[ \nabla_v \partial_v^\gamma \Phi_{p,q} \big] \Big( \frac{x}{t} \Big)\bigg| \lesssim \frac{\log^{1+S_n}(t)}{t^{n+1}}  .$$
\end{proposition}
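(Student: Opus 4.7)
My plan is to deduce the polyhomogeneous expansion for the force field directly from the self-similar expansion for the spatial density in Proposition \ref{Proselfsimrho}, together with the commuted Poisson equation and standard elliptic estimates in the spirit of Proposition \ref{Proasympselfsiphi}.

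First, I would verify that the coefficients $\Phi_{p,q}$ are well-defined and satisfy the claimed regularity. Proposition \ref{Proselfsimrho} gives $\mathbf{F}_{p,q} \in C^{N-r_{q+1}}(\R^3_v)$ with $\langle v \rangle^3 \partial_v^\beta \mathbf{F}_{p,q}$ bounded for $|\beta|\leq N-r_{n+1}$. The solution of $\Delta_v \Phi_{p,q}=\mathbf{F}_{p,q}$ can then be written via the Newtonian potential on $\R^3_v$, and the associated gradient has the integral representation
$$ \nabla_v \partial_v^\gamma \Phi_{p,q}(v)=\frac{1}{4\pi}\int_{\R^3_w}\frac{v-w}{|v-w|^3}\partial_v^\gamma \mathbf{F}_{p,q}(w)\dr w.$$
Boundedness of this quantity follows from Lemma \ref{lemma_uniform_integral_bound_kernel_convolution_duan} applied exactly as in the proof of Proposition \ref{proposition_estimate_phi}, combined with the estimate $\| \langle w \rangle^3 \partial_v^\gamma \mathbf{F}_{p,q}\|_{L^\infty}<\infty$. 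This in particular gives $\Phi_{p,q}\in W^{N-r_{q+1},\infty}(\R^3_v)$ up to the top derivative, which is what is needed.

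Next, I would set, for a fixed $|\gamma|\leq N-r_{n+1}$,
$$ u(t,x):=\partial_x^\gamma \phi (t,x)-\sum_{p\leq q\leq n}\frac{\log^p(t)}{t^{1+|\gamma|+q}}\big[\partial_v^\gamma \Phi_{p,q}\big]\Big(\frac{x}{t}\Big) $$
and compute $\Delta_x u$. Using Lemma \ref{lemma_commuted_poisson_equation} we have $\Delta_x \partial_x^\gamma \phi=\rho(\partial_x^\gamma f)=t^{-|\gamma|}\rho(G^\gamma f)$. On the other hand, the chain rule gives $\Delta_x[h(x/t)]=t^{-2}(\Delta_v h)(x/t)$, so by the defining relation \eqref{asymp_poiss_eqn} we obtain
$$ \Delta_x u(t,x)=t^{-|\gamma|}\Big(\rho (G^\gamma f)(t,x)-t^{-3}\sum_{p\leq q\leq n}\frac{\log^p(t)}{t^q}\partial_v^\gamma \mathbf{F}_{p,q}\Big(\frac{x}{t}\Big)\Big).$$
Proposition \ref{Proselfsimrho} then directly yields the bound
$$ \big|\Delta_x u \big|(t,x)\lesssim \frac{\log^{S_n+1}(t)}{\langle t+|x|\rangle^3 \,t^{n+1+|\gamma|}},$$
which is the decisive pointwise estimate.

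Finally, I would use the Green's function representation
$$ \nabla_x u(t,x)=-\frac{1}{4\pi}\int_{\R^3_y}\frac{y}{|y|^3}\Delta_x u(t,x-y)\dr y$$
together with Lemma \ref{lemma_uniform_integral_bound_kernel_convolution_duan}, applied exactly as in \eqref{remark_uniform_integral_bound_kernel_convolution_duan} and the proof of Proposition \ref{proposition_estimate_phi}, to obtain
$$ |\nabla_x u|(t,x)\lesssim \frac{\log^{S_n+1}(t)}{t^{2+n+1+|\gamma|}},$$
which rearranged is the desired estimate. The main obstacle will be verifying the uniformity of this elliptic estimate up to the top order $|\gamma|=N-r_{n+1}$: I must make sure that the spatial decay in $\langle t+|x|\rangle^{-3}$ in the bound on $\Delta_x u$ is preserved by the convolution (this is precisely the point of Lemma \ref{lemma_uniform_integral_bound_kernel_convolution_duan} and the remark following Definition \ref{Defselsimexp}), and that my derivative count on $\Phi_{p,q}$ does not require one extra regularity order beyond what Proposition \ref{Proselfsimrho} supplies. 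The latter is guaranteed because the convolution representation recovers $\nabla_v \partial_v^\gamma \Phi_{p,q}$ from $\partial_v^\gamma \mathbf{F}_{p,q}$ directly, without needing $|\gamma|+1$ derivatives of $\mathbf{F}_{p,q}$.
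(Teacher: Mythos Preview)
Your proposal is correct and follows essentially the same route as the paper: define the difference between $t^{1+|\gamma|}\partial_x^\gamma\phi$ and the self-similar ansatz built from $\Phi_{p,q}$, compute its Laplacian via the asymptotic Poisson equation \eqref{asymp_poiss_eqn} and Proposition~\ref{Proselfsimrho}, and close with the convolution estimate \eqref{remark_uniform_integral_bound_kernel_convolution_duan}. The only cosmetic difference is that the paper works with $t^{1+|\gamma|}\partial_x^\gamma\phi-\psi$ while you work with the unnormalised $u$, and the paper defers the boundedness of $\nabla_v\partial_v^\gamma\Phi_{p,q}$ to the end rather than treating it first.
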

\begin{proof}
Let $|\gamma| \leq N-r_{n+1}$, and set
$$ \psi (t,x) := \sum_{p \leq q \leq n} \frac{\log^p(t)}{t^{q}} \partial_v^\gamma \Phi_{p,q} \Big( \frac{x}{t} \Big).$$ We note that $$\nabla_x \psi (t,x) = \sum_{p \leq q \leq n} \frac{\log^p(t) }{t^{q+1}} [ \nabla_v \partial_v^\gamma \Phi_{p,q} ] \Big( \frac{x}{t} \Big).$$
Then, as $t^2 \Delta_x [ \partial_v^\gamma \Phi_{p,q} (\frac{x}{t}) ]=[\Delta_v  \partial_v^\gamma \Phi_{p,q}](\frac{x}{t})$, we have
$$ t^2 \Delta_x  \big( t^{1+|\gamma|} \partial_x^\gamma \phi - \psi \big) (t,x) = t^3 \rho \big( G^\gamma f \big)(t,x)- \sum_{p \leq q \leq n} \frac{\log^p(t)}{t^q} \partial_v^\gamma \mathbf{F}_{p,q} \Big( \frac{x}{t} \Big),$$
so that 
$$ t^2 \big| \Delta_x  \big(  t^{1+|\gamma|} \partial_x^\gamma \phi - \psi \big) \big| (t,x) \lesssim  \langle t+|x| \rangle^{-3} \, t^{-n+2}\log^{1+S_n}(t).$$
It remains to estimate $\nabla_x (t^{1+|\gamma|} \partial_x^\gamma \phi - \psi)$ by using \eqref{remark_uniform_integral_bound_kernel_convolution_duan}. The boundedness of $\nabla_v \partial_v^\gamma \Phi_{p,q}$ is given by the estimates of $\partial_v^\gamma \mathbf{F}_{p,q}$ in Proposition \ref{Proselfsimrho}, and Lemma \ref{lemma_uniform_integral_bound_kernel_convolution_duan}.
\end{proof}

We next show that the force field satisfies an asymptotic polyhomogeneous expansion in the sense of Definition \ref{defpolhomoexpforce}.

\begin{proposition}\label{Proexpanphi}
For any $|\gamma| \leq N-r_{n+1}$, there exists $\overline{\Phi}_{q,\alpha,p}^{\,\gamma} \in C^{N-r_{q+1}}\cap W^{N-r_{q+1},\infty}(\R^3_v)$ such that
$$ \bigg|t^2 \nabla_x \partial_x^\gamma \phi \big(t,\X_n+t\V_n\big) - \sum_{q \leq n } \,  \sum_{p+|\alpha| \leq q} \frac{x^\alpha  \log^p(t)}{t^{q}} \overline{\Phi}^{\,\gamma}_{q,\alpha,p}(v)  \bigg| \lesssim \frac{ \langle x \rangle^{n+1}\log^{1+S_n}(t)}{ t^{n+1}},$$ for all $t \geq 2$, all $ |x| \leq t$, and all $v \in \R^3_v$.
\end{proposition}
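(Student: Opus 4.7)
The plan is to substitute the self-similar polyhomogeneous expansion of $t^{2+|\gamma|}\nabla_x \partial_x^\gamma \phi$ established in Proposition~\ref{Proexpa} into the spatial point $\X_n(t,x,v)+t\V_n(t,x,v)$, and then Taylor expand each self-similar coefficient in the velocity variable to transfer the $x$--dependency of the modified characteristics into a clean polyhomogeneous structure.

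Concretely, I would begin by writing
$$\frac{\X_n(t,x,v)+t\V_n(t,x,v)}{t} = v + \mathfrak{w}(t,x,v),$$
where, by the explicit form \eqref{eq:defCX }--\eqref{eq:defCV} of $(\X_n,\V_n)$, the quantity $\mathfrak{w}$ is a finite linear combination of terms $\frac{x^\alpha \log^p(t)}{t^q} \mathbb{W}_{q,\alpha,p}(v)$ with $1 \leq q \leq n$, $|\alpha|+p \leq q$, and coefficients $\mathbb{W}_{q,\alpha,p}$ built from the $\mathbb{X}_{*},\mathbb{V}_{*}$ and derivatives of $\phi_\infty$. For $|x|\leq t$ this gives the uniform bound $|\mathfrak{w}| \lesssim (\langle x \rangle+\log(t))/t$. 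Applying Proposition~\ref{Proexpa} at the point $x'=\X_n+t\V_n$ yields
$$t^{2+|\gamma|}\nabla_x \partial_x^\gamma \phi(t,\X_n+t\V_n) = \sum_{p \leq q \leq n}\frac{\log^p(t)}{t^q}\nabla_v\partial_v^\gamma \Phi_{p,q}(v+\mathfrak{w}) + O\!\left(\frac{\log^{1+S_n}(t)}{t^{n+1}}\right),$$
the error being uniform in $x$.

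Next, for each $(p,q)$ with $p\leq q\leq n$, I would Taylor expand $\nabla_v\partial_v^\gamma \Phi_{p,q}(v+\mathfrak{w})$ at $v$ to order $n-q$, using the regularity $\Phi_{p,q}\in C^{N-r_{q+1}}\cap W^{N-r_{q+1},\infty}$ granted by Proposition~\ref{Proexpa} (one checks $|\gamma|+1+(n-q+1)\leq N-r_{q+1}$ via $r_{n+1}-r_{q+1}=\tfrac12(n-q)(n+q+3)$, which gives the needed regularity margin when $|\gamma|\leq N-r_{n+1}$). Each monomial $\frac{\log^p(t)}{t^q}\frac{\mathfrak{w}^{\kappa}}{\kappa!}\,\partial_v^\kappa\nabla_v\partial_v^\gamma\Phi_{p,q}(v)$ then expands, by multiplying out the $|\kappa|$ copies of $\mathfrak{w}$, into a finite sum of terms of the form $\frac{x^\alpha \log^{p'}(t)}{t^{q'}}\overline{\Phi}^\gamma_{q',\alpha,p'}(v)$ satisfying the index constraint $|\alpha|+p'\leq q'$, because each factor of $\mathfrak{w}$ individually obeys such a constraint. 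Truncating the resulting sum at $q'\leq n$ and absorbing the truncated part into the remainder, one obtains precisely the structure demanded by \eqref{eq:Expan}; the coefficients $\overline{\Phi}^\gamma_{q,\alpha,p}$ inherit the required $C^{N-r_{q+1}}\cap W^{N-r_{q+1},\infty}$ regularity from $\nabla_v\partial_v^\gamma\Phi_{p,q}$ and the smooth functions $\mathbb{W}_{*}$.

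The main obstacle is the error control: the Taylor remainder for the $(p,q)$ term is bounded by $\frac{\log^p(t)}{t^q}|\mathfrak{w}|^{n-q+1}\lesssim\frac{\log^{p+n-q+1}(t)(\langle x \rangle+\log(t))^{n-q+1}}{t^{n+1}}$, which, after summing over $(p,q)$ and combining with the self-similar remainder from Proposition~\ref{Proexpa} and the truncated monomials, gives an overall bound of the form $\langle x \rangle^{n+1}\log^{1+S_n}(t)/t^{n+1}$ (using $\langle x \rangle\leq t$ to convert log factors into constants when needed). The bookkeeping for the index ranges under multiplication of $\mathfrak{w}^\kappa$, and the verification that the constraint $|\alpha|+p'\leq q'$ propagates through these products, is the only genuinely technical part; everything else reduces to substitution and Taylor expansion.
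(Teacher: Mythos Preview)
Your overall strategy---apply the self-similar expansion of Proposition~\ref{Proexpa} at the point $\X_n+t\V_n$ and then Taylor expand each coefficient $\nabla_v\partial_v^\gamma\Phi_{p,q}(v+\mathfrak{w})$ in velocity---is natural, but the regularity check you assert does not go through. You claim $|\gamma|+1+(n-q+1)\leq N-r_{q+1}$, which for $|\gamma|=N-r_{n+1}$ amounts to $r_{n+1}-r_{q+1}\geq n-q+2$. Take $q=n$: the left side is $0$, the right side is $2$. Take $n=1,\,q=0$: the left side is $r_2-r_1=2$, the right side is $3$. In both cases the inequality fails, so at the top of the allowed range of $|\gamma|$ you do not have enough derivatives of $\Phi_{p,q}$ to control the Taylor remainder (for $q=n$ you cannot even pass from $\nabla_v\partial_v^\gamma\Phi_{p,n}(v+\mathfrak{w})$ to $\nabla_v\partial_v^\gamma\Phi_{p,n}(v)$ with an $O(|\mathfrak{w}|)$ error, since that already asks for one more derivative than $\Phi_{p,n}\in W^{N-r_{n+1},\infty}$ provides).

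The paper avoids this by Taylor expanding in the \emph{spatial} variable of $\phi$ rather than in the velocity variable of $\Phi_{p,q}$. Concretely, one writes
\[
t^{2}\nabla_x\partial_x^\gamma\phi(t,\X_n+t\V_n)=t^2\sum_{|\alpha|\le n}\frac{(\X_n+t(\V_n-v))^\alpha}{\alpha!}\,\nabla_x\partial_x^{\alpha+\gamma}\phi(t,tv)+\text{remainder},
\]
and the remainder involves $\nabla_x^{n+1}\partial_x^\gamma\phi$, which is controlled by the global pointwise bounds of Proposition~\ref{proposition_estimate_phi} (these hold all the way up to $N-1$ derivatives, so one only needs $|\gamma|+n+1\leq N-1$, and $r_{n+1}\geq n+2$ gives that). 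For each coefficient $\nabla_x\partial_x^{\alpha+\gamma}\phi(t,tv)$ with $|\alpha|\geq 1$ one then invokes the \emph{induction hypothesis at the lower order} $n-|\alpha|$, where the regularity requirement is only $|\alpha|+|\gamma|\leq N-r_{n-|\alpha|+1}$, and this is satisfied because $r_{n+1}-r_{n-|\alpha|+1}\geq |\alpha|$. The point is that the remainder bound comes from the full $C^{N-1}$ control on $\phi$ rather than from the more limited $C^{N-r_{q+1}}$ control on $\Phi_{p,q}$; your velocity-side expansion forfeits exactly this margin.
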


\begin{proof}
Fix now $(t,x,v) \in [2,\infty) \times \R^3_x \times \R^3_v$ such that $|x| \leq t$ and apply a $n^{\mathrm{th}}$-order Taylor expansion to get
$$ \bigg| t^{2} \nabla_x \partial_x^\gamma \phi \big(t,\X_n+t\V_n \big)-t^2\sum_{|\alpha| \leq n} \frac{\big(\X_n+t(\V_n-v)\big)^\alpha}{\alpha ! } \nabla_x \partial_x^\alpha \partial_x^\gamma \phi (t,tv) \bigg| \lesssim \frac{\epsilon \,  \langle \X_n+t(\V_n-v) \rangle^{n+1}}{t^{n+1+|\gamma|}}   .$$
The RHS can be bounded for $t \geq |x|$, by using $|\X_n-x| \lesssim \log(t)$ and $t|\V_n-v| \lesssim 1$. Then, to derive the result:
\begin{enumerate}
\item We use the expansion obtained for $\nabla_x \partial_x^\gamma \phi $ in Proposition \ref{Proexpa}. If $|\alpha| \geq 1$, we appeal to the induction hypothesis at order $n-|\alpha|$ in order to get the corresponding expansion for $\nabla_x \partial_x^\alpha \partial_x^\gamma \phi$. Note for this that $|\alpha|+|\gamma| \leq N-r_{n-|\alpha|+1}$.
\item We expand $(\X_n+t(\V_n-v))^\alpha$. 
\item Finally, some terms thus obtained are lower order, so they have to be included in the error term. For this, we use that $\mathbb{X}_{q,\alpha,p}, \, \mathbb{V}_{q,\alpha,p}, \, \nabla_v \Phi_{p,q} \in W^{N-r_{q+1},\infty}_v$, and that $|x| \leq t$.
\end{enumerate} 
\end{proof}

\begin{remark}
One can check that $\overline{\Phi}^{ \, \gamma}_{1,\alpha,0}=\nabla_v \partial_v^\alpha \phi_\infty$ if $|\gamma|=0$ and $|\alpha|=1$. Since $\X_{n+1}$ will verify \eqref{defXnplus1} below, we will have that $\mathbb{X}_{1,\alpha,0}=-\nabla_v \partial_v^\alpha \phi_\infty$.
\end{remark}

\subsection{Improved modified scattering}\label{SubsecImproscatmod}

Let us begin by assuming that we have constructed $\X_{n+1}$ and $\V_{n+1}$. Then, by the same computations as in \eqref{timederivfirstexptechlemm}, we have
\begin{align}
\partial_t g_{n+1}(t,x,v) &= \big(\partial_t \X_{n+1}-t\mu \nabla_x \phi (t,\X_{n+1}+t\V_{n+1}) \big)\cdot \big[ \nabla_x f \big]\big(t,\X_{n+1}+t\V_{n+1},\V_{n+1}\big)\label{timederivnexptechlemm}\\
& \quad +\big( \mu \nabla_x \phi (t,\X_{n+1}+t\V_{n+1})+\partial_t \V_{n+1} \big) \cdot \big[ G f \big]\big(t,\X_{n+1}+t\V_{n+1},\V_{n+1}\big).\nonumber
\end{align}
Now, in view of the required form of $\X_{n+1}$ and $\X_n$, we have
\begin{equation}\label{eq:estXnplus1Xn}
 \forall \, (t,x,v) \in [2,\infty) \times \R^3_x \times \R^3_v,  \qquad |\X_{n+1}- \X_{n}|+t|\V_{n+1}-\V_n| \lesssim  \frac{\langle x \rangle^{n}\log^{n}(t)}{t^{n}} .
\end{equation}
Thus, we expect the difference $$\nabla_x \phi \big(t,\X_{n+1}+t\V_{n+1} \big)-\nabla_x \phi \big(t,\X_{n}+t\V_n \big)$$ to be strongly decaying. 

We then define $(\X_{n+1},\V_{n+1})$ as the \emph{unique polyhomogeneous map} such that, for all $t \geq 2$, $|x| \leq t$ and $v \in \R^3_v$, we have
\begin{align}
\big| t\mu \nabla_x \phi (t,\X_{n}(t,x,v)+t\V_{n}(t,x,v) \big) - \partial_t \X_{n+1}(t,x,v) \big| & \lesssim  \frac{\langle x \rangle^{n+1}\log^{1+S_n}(t)}{t^{n+2}}  , \label{defXnplus1} \\
  \big|\mu \nabla_x \phi \big(t,\X_n(t,x,v) +t\V_n(t,x,v)\big)+\partial_t \V_{n+1} (t,x,v)\big| &\lesssim   \frac{\langle x \rangle^{n+1}\log^{1+S_n}(t)}{t^{n+3}}   
  \end{align}
and $(\X_{n+1},\V_{n+1})$ has the constant in time term $(x,v)$. Recall Lemma \ref{LemForexp} for the uniqueness of the expansions. In view of Proposition \ref{Proexpanphi}, the maps $\X_{n+1}$ and $\V_{n+1}$ are indeed of the form stated in \eqref{eq:defCX }--\eqref{eq:defCV}. 

The next step consists in computing the derivatives of $\partial_t g_{n+1}$. For convenience, we introduce a notation.
  
\begin{definition}\label{DefPolyXV}
Let $a \in \mathbb{N}^*$ and $b \in \llbracket 1, N-r_{n+1} \rrbracket$. We denote by $P_{a,b}^{n+1}(\X)$ any quantity of the form
$$ \prod_{1 \leq i \leq a } \partial_{x,v}^{\gamma_i} \X^{k_i}_{n+1}, \qquad \quad k_i \in \llbracket 1 , 3 \rrbracket, \quad \sum_{1 \leq i \leq a } |\gamma_i| = b, \quad |\gamma_i| \geq 1.$$
 We set $P_{0,b}^{n+1}(\X)=1$. And we define $P_{a,b}^{n+1}(\V)$ similarly.
\end{definition}

We can easily bound these quantities.

\begin{lemma}\label{BoundPolyXV}
Let $a \in \mathbb{N}$ and $b \in \llbracket 0, N-r_{n+1} \rrbracket$. Then, for all $t \geq 2$,
$$ \forall \, |x| \leq t, \quad \forall \, v \in \R^3_v, \qquad  \big| P_{a,b}^{n+1}(\X) \big|(t,x,v) \lesssim \epsilon^a \log^a(t)+1, \qquad \big| P_{a,b}^{n+1}(\V) \big|(t,x,v) \lesssim 1.$$
\end{lemma}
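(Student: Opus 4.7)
My plan is to prove the two estimates by a direct pointwise analysis of the explicit polyhomogeneous forms \eqref{eq:defCX }--\eqref{eq:defCV} of $\X_{n+1}$ and $\V_{n+1}$, followed by simply multiplying the bounds on individual factors. The key observation is that, by Definition \ref{DefPolyXV}, each factor $\partial_{x,v}^{\gamma_i}\X_{n+1}^{k_i}$ (resp. $\partial_{x,v}^{\gamma_i}\V_{n+1}^{k_i}$) has $|\gamma_i| \geq 1$, which removes the constant-in-time terms $x$ and $v$ up to a harmless contribution of size at most $1$.

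I will first treat $\partial_{x,v}^\gamma \X_{n+1}^k$ with $|\gamma| \geq 1$ and $|x| \leq t$. The term $x^k$ contributes at most $1$, and only when $|\gamma|=1$ is an $x$-derivative. The leading correction $\mu \log(t)\,\partial_{v^k}\phi_\infty(v)$ contributes at most $\epsilon\log(t)$, using $\|\nabla_v\phi_\infty\|_{W^{N-2,\infty}} \lesssim \epsilon$ from Proposition \ref{Proasympselfsiphi}; note that this contribution vanishes unless $\gamma$ is a pure $v$-derivative. For a typical sum term, after noting that $|\gamma_x| > |\alpha|$ gives zero, I get
\begin{equation*}
\bigg|\partial_x^{\gamma_x}\partial_v^{\gamma_v}\bigg(\frac{x^\alpha\log^p(t)}{t^{q}}\mathbb{X}_{q,\alpha,p}^k(v)\bigg)\bigg|\lesssim \frac{|x|^{|\alpha|-|\gamma_x|}\log^p(t)}{t^{q}},
\end{equation*}
which, using $|\alpha|+p\leq q$ and $|x|\leq t$, is bounded by $\log^p(t)/t^{q-|\alpha|+|\gamma_x|}\lesssim 1$. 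The required regularity $\mathbb{X}_{q,\alpha,p}\in W^{|\gamma|,\infty}(\R^3_v)$ for $q\leq n$ is supplied by Proposition \ref{Proinduction} since $|\gamma|\leq b\leq N-r_{n+1}\leq N-r_{q+1}$. Summing yields
\begin{equation*}
\big|\partial_{x,v}^\gamma \X_{n+1}^k\big|(t,x,v)\lesssim 1+\epsilon\log(t) \qquad \text{for all } |\gamma|\geq 1 \text{ and } |x|\leq t.
\end{equation*}
For $\V_{n+1}$, the exact same analysis applies, but every non-polynomial term carries an extra $t^{-1}$ (from the leading $\frac{\mu}{t}\nabla_v\phi_\infty$ and from the $t^{q+1}$ in the denominators of the sum). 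Hence each derivative with $|\gamma|\geq 1$ is uniformly bounded: $\big|\partial_{x,v}^\gamma \V_{n+1}^k\big|(t,x,v)\lesssim 1$.

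The two stated inequalities then follow by multiplying the factor-wise bounds for $P^{n+1}_{a,b}(\X)$ and $P^{n+1}_{a,b}(\V)$, using the elementary estimate $(1+\epsilon\log(t))^a\lesssim 1+\epsilon^a\log^a(t)$, itself a consequence of Young's inequality applied to the intermediate terms $(\epsilon\log(t))^k$ with $0<k<a$. I do not anticipate any substantial obstacle: the whole argument amounts to careful bookkeeping on the explicit expansions of $\X_{n+1}$ and $\V_{n+1}$ together with the constraint $|\alpha|+p\leq q$ that controls the cost of differentiating each monomial term.
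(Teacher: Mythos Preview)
Your proof is correct and is precisely the elementary computation the paper has in mind; the paper does not give a proof and merely prefaces the lemma with ``We can easily bound these quantities.'' Your bookkeeping on each factor $\partial_{x,v}^{\gamma_i}\X_{n+1}^{k_i}$ (respectively $\partial_{x,v}^{\gamma_i}\V_{n+1}^{k_i}$), using $|\alpha|+p\leq q$, $|x|\leq t$, and the available $W^{N-r_{q+1},\infty}$ regularity of $\mathbb{X}_{q,\alpha,p}$, $\mathbb{V}_{q,\alpha,p}$, together with the Young-inequality step $(1+\epsilon\log t)^a\lesssim 1+\epsilon^a\log^a(t)$, is exactly what is needed.
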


As expected, these terms do not generate strongly divergent factors in the error terms.

\begin{lemma}\label{LemderivPhialongXnplus}
Let $|\kappa| \leq N-r_{n+1}$ and $(t,x,v) \in \R_+^* \times \R^3_x \times \R^3_v$. Then, $\partial_t \partial_{x,v}^\kappa g_{n+1}(t,x,v)$ can be written as a linear combination of terms of the first kind
\begin{align*}
\mathbb{T}^{1,\kappa}_{\gamma,\alpha,\beta,b,c}:=~& \partial_{x,v}^\gamma \Big( \! \partial_t\X_{n+1}\!-t \nabla_x \phi (t,\X_{n+1}+t\V_{n+1}) \! \Big) \\
&\qquad \qquad \cdot \big[\nabla_x \partial_x^\alpha G^\beta \! f \big] \big(t,\X_{n+1}\!+t\V_{n+1},\V_{n+1} \big) P_{|\alpha|, \, b}^{n+1}(\X)P_{|\beta|, \,c}^{n+1}(\V), 
\end{align*}
or of the second kind
\begin{align*}
\mathbb{T}^{2,\kappa}_{\gamma,\alpha,\beta,b,c}:=~& \partial_{x,v}^\gamma \Big( \! \nabla_x \phi (t,\X_{n+1}+t\V_{n+1})-\partial_t\V_{n+1} \! \Big) \\
&\qquad \qquad \cdot  \big[G \partial_x^\alpha G^\beta \! f \big] \big(t,\X_{n+1}+t\V_{n+1},\V_{n+1} \big) P_{|\alpha|, \, b}^{n+1}(\X)P_{|\beta|, \, c}^{n+1}(\V), 
\end{align*}
where $|\gamma|+|\alpha|+|\beta| \leq |\kappa|$, and $b+c \leq |\kappa|$. 
\end{lemma}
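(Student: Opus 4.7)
I would prove the statement by induction on $|\kappa|$. The base case $|\kappa|=0$ is exactly formula \eqref{timederivnexptechlemm}: indeed, with the convention $P^{n+1}_{0,0}(\X)=P^{n+1}_{0,0}(\V)=1$ from Definition \ref{DefPolyXV}, the two summands in \eqref{timederivnexptechlemm} are, respectively, $\mathbb{T}^{1,0}_{0,0,0,0,0}$ and $\mathbb{T}^{2,0}_{0,0,0,0,0}$.

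\textbf{The key chain rule identity.} The main computation to perform first is the following. For any smooth $h:\R_+^*\times\R^3_x\times\R^3_v\to\R$ and any $Z_i\in\{\partial_{x^1},\partial_{x^2},\partial_{x^3},\partial_{v^1},\partial_{v^2},\partial_{v^3}\}$, I would show, using the identity $G_k=t\partial_{x^k}+\partial_{v^k}$ to absorb the time factor produced by differentiating $t\V_{n+1}^k$, that
\begin{equation*}
Z_i\bigl[h(t,\X_{n+1}+t\V_{n+1},\V_{n+1})\bigr]=\sum_{k=1}^3 Z_i\X_{n+1}^k\,[\partial_{x^k}h](t,\X_{n+1}+t\V_{n+1},\V_{n+1})+\sum_{k=1}^3 Z_i\V_{n+1}^k\,[G_k h](t,\X_{n+1}+t\V_{n+1},\V_{n+1}).
\end{equation*}
The point is that the $t$-derivative term coming from $\partial_{v^i}(\X_{n+1}^k+t\V_{n+1}^k)$ combines with $[\partial_{v^k}h]$ to reconstruct $G_k h$; this is the mechanism that keeps derivatives of $f$ in the favourable combinations $\nabla_x$ and $G$.

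\textbf{Inductive step.} Assuming the formula for some $|\kappa|-1\geq 0$, I would apply any $Z_i=\partial_{x^j}$ or $\partial_{v^j}$ to a generic term $\mathbb{T}^{1,\kappa-1}_{\gamma,\alpha,\beta,b,c}$ and track three contributions via Leibniz rule: (i) differentiation of the prefactor $\partial_{x,v}^\gamma(\partial_t\X_{n+1}-t\nabla_x\phi(\X_{n+1}+t\V_{n+1}))$, which simply promotes $\gamma\mapsto\gamma+e_j$; (ii) differentiation of the factor $[\nabla_x\partial_x^\alpha G^\beta f](t,\X_{n+1}+t\V_{n+1},\V_{n+1})$ via the chain rule identity above, which either promotes $\alpha\mapsto\alpha+e_k$ and multiplies by the new factor $Z_i\X_{n+1}^k$ (so $P^{n+1}_{|\alpha|,b}(\X)\mapsto P^{n+1}_{|\alpha|+1,b+1}(\X)$), or does the analogous replacement for $\beta$ and $\V$; and (iii) differentiation of one of the factors inside $P^{n+1}_{|\alpha|,b}(\X)$ or $P^{n+1}_{|\beta|,c}(\V)$, which preserves the number of factors but raises the total differential order by one. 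The analysis for $\mathbb{T}^{2,\kappa-1}_{\gamma,\alpha,\beta,b,c}$ is entirely analogous.

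\textbf{Index accounting.} The final task is to check that every new term produced at step $|\kappa|$ still satisfies $|\gamma|+|\alpha|+|\beta|\leq|\kappa|$ and $b+c\leq|\kappa|$: in case (i), $|\gamma|$ gains $1$ and $b+c$ is unchanged; in case (ii), both $|\alpha|+|\beta|$ and $b+c$ gain exactly $1$; in case (iii), only $b+c$ gains $1$. Starting from an inductive term where both sums are at most $|\kappa|-1$, we conclude that they are at most $|\kappa|$ after differentiation, as required. I do not anticipate any real obstacle here: the whole argument is a bookkeeping exercise, and the only subtle point---which is essentially responsible for the clean form of the statement---is the chain-rule identity above, showing that differentiation along the modified trajectories $(\X_{n+1}+t\V_{n+1},\V_{n+1})$ produces derivatives of $f$ in the combinations $\nabla_x$ and $G$ (not $\nabla_v$), which is precisely the structure needed to exploit the good decay of the force field and the modified-scattering bounds in later sections.
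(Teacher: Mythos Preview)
Your proposal is correct and follows essentially the same approach as the paper: the paper also proves the lemma by induction on $|\kappa|$, using \eqref{timederivnexptechlemm} for the base case and the identical chain-rule identity (their equation \eqref{relation_first_second_kind}) for the inductive step. Your write-up is in fact more detailed than the paper's, which does not spell out the Leibniz-rule case analysis or the index accounting.
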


\begin{remark}\label{Rk:easy}
If $\partial_{x,v}^\kappa = \partial_v^\kappa$ contains only velocity derivatives, then so does $\partial_{x,v}^\gamma$.
\end{remark}

\begin{proof}
Recall that $G=t\nabla_x +\nabla_v$, so that with $\partial_i$ denoting either $\partial_ {x^i}$ or $\partial_{v^i}$, we have
\begin{equation}\label{relation_first_second_kind}
 \partial_{i} \big[ h(t,\X+t\V,\V) \big] = \partial_{i} \X \cdot \big[ \nabla_x h \big] (t,\X+t\V,\V) +\partial_{i} \V \cdot \big[G h \big] (t,\X+t\V,\V),
\end{equation}
for any $C^1$ function $h:\R_+ \times \R^3_x\times \R^3_v \to \R$, and maps $\X,\, \V:\R_+^* \times \R^3_x\times \R^3_v \to \R^3$. We then obtain the result from an induction, where the base case is treated in \eqref{timederivnexptechlemm}, and the induction step is treated by using the relation \eqref{relation_first_second_kind} for $(\X,\V)=(\X_{n+1},\V_{n+1})$.
\end{proof}
Next, we control the first factor in these two types of error terms.
\begin{lemma}\label{LemderivModcoeff}
For any $|\alpha_x|+|\alpha_v| \leq N-r_{n+1}$ and for all $(t,x,v) \in [2,\infty) \times \R^3_x \times \R^3_v$ with $t \geq |x|$, we have
\begin{align*} 
  \Big|\partial_x^{\alpha_x} \partial_v^{\alpha_v} \Big( \partial_t \X_{n+1}- t\mu \nabla_x \phi \big(t,\X_{n+1}+t\V_{n+1}\big) \Big) \Big| & \lesssim   \langle x \rangle^{n+1}\log^{1+S_n}(t) t^{-n-2} , \\
  \Big|\partial_x^{\alpha_x} \partial_v^{\alpha_v} \Big(\mu \nabla_x \phi \big(t,\X_{n+1}+t \V_{n+1} \big)+\partial_t \V_{n+1} \Big) \Big| & \lesssim   \langle x \rangle^{n+1} \log^{1+S_n}(t) t^{-n-3} .
\end{align*}
\end{lemma}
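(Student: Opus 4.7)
The base case $|\alpha_x|+|\alpha_v|=0$ of both estimates is exactly the defining property of $(\X_{n+1},\V_{n+1})$, namely \eqref{defXnplus1} and its companion. The strategy for the general case is to show, on $\{|x|\le t\}$, that both $\partial_t\X_{n+1}$ and $t\mu\nabla_x\phi(t,\X_{n+1}+t\V_{n+1})$ admit asymptotic polyhomogeneous expansions (in the sense of Definition \ref{defpolhomoexpforce}) of order $n+1$ that coincide, and then to establish that this matching is preserved when one applies $\partial_x^{\alpha_x}\partial_v^{\alpha_v}$, thanks to the uniqueness-of-coefficients statement in Lemma \ref{LemForexp}.

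\textbf{Step 1: Exchanging the evaluation point.} I would first replace $\nabla_x\phi(t,\X_{n+1}+t\V_{n+1})$ with $\nabla_x\phi(t,\X_n+t\V_n)$. By the polyhomogeneous form \eqref{eq:defCX }--\eqref{eq:defCV}, the differences $\X_{n+1}-\X_n$ and $t(\V_{n+1}-\V_n)$, together with all their $\partial_x^{\alpha_x}\partial_v^{\alpha_v}$ derivatives, are $O(\langle x\rangle^n\log^n(t)\,t^{-n})$ up to logarithmic losses, since the coefficients $\mathbb{X}_{n,\alpha,p},\mathbb{V}_{n,\alpha,p}$ lie in $W^{N-r_{n+1},\infty}(\R^3_v)$. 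A Taylor expansion of $\nabla_x\phi(t,\cdot)$ together with the pointwise decay of $\nabla_x^{1+k}\phi$ given in Proposition \ref{proposition_estimate_phi} then yields
\[
t\,\bigl|\partial_x^{\alpha_x}\partial_v^{\alpha_v}\bigl[\nabla_x\phi(t,\X_{n+1}+t\V_{n+1})-\nabla_x\phi(t,\X_n+t\V_n)\bigr]\bigr|\lesssim \epsilon\,\langle x\rangle^n\log^{n+|\alpha_x|+|\alpha_v|}(t)\,t^{-n-2},
\]
after a chain rule expansion in which each extra spatial derivative on $\phi$ brings a factor $t^{-1}$, while each derivative of $\X_{n+1}+t\V_{n+1}$ produces at worst bounded polyhomogeneous growth.

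\textbf{Step 2: Matching the principal expansion.} It then suffices to bound $\partial_x^{\alpha_x}\partial_v^{\alpha_v}[\partial_t\X_{n+1}-t\mu\nabla_x\phi(t,\X_n+t\V_n)]$. Combining Proposition \ref{Proexpanphi} applied to $\nabla_x\phi$ with the explicit polyhomogeneous form of $(\X_n,\V_n)$ shows that $t\mu\nabla_x\phi(t,\X_n+t\V_n)$ admits an asymptotic polyhomogeneous expansion of order $n+1$ with remainder $\langle x\rangle^{n+1}\log^{1+S_n}(t)\,t^{-n-2}$. By the very construction \eqref{defXnplus1}, $\partial_t\X_{n+1}$ is, up to the claimed error, precisely the polyhomogeneous part of this expansion, which closes the undifferentiated case. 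For $|\alpha_x|+|\alpha_v|\ge 1$, I would apply a Fa\`a di Bruno expansion to rewrite $\partial_x^{\alpha_x}\partial_v^{\alpha_v}[t\mu\nabla_x\phi(t,\X_n+t\V_n)]$ as a linear combination of terms of the form $t^{1+|\gamma|}[\nabla_x\partial_x^\gamma\phi](t,\X_n+t\V_n)\prod_i\partial^{\xi_i}(\X_n+t\V_n)$ with $|\gamma|\le|\alpha_x|+|\alpha_v|\le N-r_{n+1}$, and then expand each factor $t^{2+|\gamma|}\nabla_x\partial_x^\gamma\phi$ via Proposition \ref{Proexpanphi} and each factor $\partial^{\xi_i}(\X_n+t\V_n)$ explicitly. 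The result is an asymptotic polyhomogeneous expansion of order $n+1-|\alpha_x|$, to which Lemma \ref{LemForexp} applies: its polyhomogeneous part must coincide with $\partial_x^{\alpha_x}\partial_v^{\alpha_v}\partial_t\X_{n+1}$ (obtained by differentiating the explicit polyhomogeneous form of $\X_{n+1}$), because both are $\partial_x^{\alpha_x}\partial_v^{\alpha_v}$ of the same polyhomogeneous expression. The remainder bound is then inherited from Proposition \ref{Proexpanphi}, multiplied by the polynomial-in-$\langle x\rangle$ and $\log(t)$ factors coming from the Fa\`a di Bruno products, which fit within the budget $\langle x\rangle^{n+1}\log^{1+S_n}(t)\,t^{-n-2}$ in view of $|x|\le t$ and our choice of $(S_n)$.

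\textbf{Step 3 and main obstacle.} The second inequality is proven identically, noting that $\mu\nabla_x\phi$ carries one less factor of $t$ than $t\mu\nabla_x\phi$, which accounts for the improvement from $t^{-n-2}$ to $t^{-n-3}$. The main technical obstacle is the combinatorial bookkeeping in Step 2: one must verify that at each application of Lemma \ref{LemForexp} the derivatives being taken stay strictly below the available regularity $W^{N-r_{q+1},\infty}(\R^3_v)$ of every coefficient $\mathbb{X}_{q,\alpha,p},\mathbb{V}_{q,\alpha,p},\overline{\Phi}^{\,\gamma}_{q,\alpha,p}$ involved, and that the polynomial growth in $\langle x\rangle$ and the logarithmic growth in $t$ produced by the chain rule do not exceed the $\langle x\rangle^{n+1}\log^{1+S_n}(t)$ budget permitted by the definition of $(S_n)$. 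This is what ultimately dictates the quadratic regularity loss $r_{n+1}=1+\tfrac{(n+1)(n+2)}{2}$.
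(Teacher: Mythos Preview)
Your proposal is correct and follows essentially the same route as the paper. Both arguments first exchange $\X_{n+1}+t\V_{n+1}$ for $\X_n+t\V_n$ in the force field via the mean value theorem and Proposition \ref{proposition_estimate_phi}, then use Proposition \ref{Proexpanphi} and the chain rule to obtain an asymptotic polyhomogeneous expansion of the differentiated quantity, and finally invoke the uniqueness statement in Lemma \ref{LemForexp} to conclude that the polyhomogeneous parts of $\partial_t\X_{n+1}$ and $t\mu\nabla_x\phi$ (and of $\partial_t\V_{n+1}$ and $-\mu\nabla_x\phi$) continue to match after differentiation. The only organizational difference is that the paper phrases the expansion step directly for $\nabla_x\phi(t,\X_{n+1}+t\V_{n+1})$ rather than for $\nabla_x\phi(t,\X_n+t\V_n)$, absorbing the exchange into the expansion statement; this is cosmetic.
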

\begin{proof}
According to Proposition \ref{proposition_estimate_phi}, the mean value theorem, and \eqref{eq:estXnplus1Xn}, we have
$$ t^{|\gamma|}\big| \big[\nabla_x \partial_x^{\gamma}  \phi\big] \big(t,\X_{n+1}+t\V_{n+1}\big)- \big[\nabla_x \partial_x^{\gamma}  \phi\big] \big(t,\X_{n}+t\V_{n}\big) \big| \lesssim \epsilon t^{-3-n} \langle x , \log(t)\rangle^n,$$ for any $|\gamma| \leq N-r_{n+1} $. In view of the definition of $(\X_{n+1},\V_{n+1})$, it implies that: 
\begin{itemize}
\item The two estimates in the statement hold for $|\alpha_x|+|\alpha_v|=0$.
\item  $\partial_x^{\alpha_x} \partial_v^{\alpha_v} \big[ \nabla_x \phi \big(t,\X_{n+1}+t \V_{n+1} \big)\big]$ admits an asymptotic polyhomogeneous expansion of order $n+2+|\alpha_x|$ (with an error term decaying at least as $\langle x \rangle^{n+1} \log^{1+S_n}(t) t^{-n-3-|\alpha_x|}$), by using the chain rule, and Proposition \ref{Proexpanphi}. Thus, so do the LHS in the second estimate of the statement. The LHS in the first estimate admits an expansion of order $n+1+|\alpha_x|$ because of the additional factor $t$.
\end{itemize}
These two properties together with Lemma \ref{LemForexp}, imply the result. Note that because of the last part of Lemma \ref{LemForexp}, the spatial derivatives do not provide an improved estimate.
\end{proof}
We are finally able to conclude this subsection. We recall that $S_{n+1}\geq S_n+N+1-r_{n+1}$.

\begin{proposition}\label{ProModscattn1}
For any $|\kappa| \leq N-r_{n+1}$, there holds
$$ \forall \, t \geq 2, \; \forall |x| \leq t, \; \forall v \in \R^3_v,  \qquad   \langle x \rangle^{N_x-n-1} \langle v \rangle^{N_v} \big| \partial_{x,v}^\kappa g_{n+1}(t,x,v)-\partial_{x,v}^\kappa f_\infty (x,v) \big| \lesssim \frac{\log^{S_{n+1}}(t)}{t^{n+1}} .$$
\end{proposition}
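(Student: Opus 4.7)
The plan is to follow the strategy used for the base case $n=0$ in Proposition \ref{prop_mod_scatteringg_1}: establish a strong time-decay estimate for $\partial_t \partial_{x,v}^\kappa g_{n+1}$ and integrate from $t$ to $+\infty$. The representation of $\partial_t \partial_{x,v}^\kappa g_{n+1}$ as a finite sum of terms $\mathbb{T}^{1,\kappa}_{\gamma,\alpha,\beta,b,c}$ and $\mathbb{T}^{2,\kappa}_{\gamma,\alpha,\beta,b,c}$ is already provided by Lemma \ref{LemderivPhialongXnplus}, so the main task reduces to a pointwise estimate of each of these building blocks.

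For a fixed $\mathbb{T}^{i,\kappa}_{\gamma,\alpha,\beta,b,c}$, I would combine three inputs. First, Lemma \ref{LemderivModcoeff} bounds the first factor (a derivative of the residual $\partial_t \X_{n+1}-t\nabla_x\phi$ or $\partial_t \V_{n+1}+\nabla_x\phi$ along $\X_{n+1}+t\V_{n+1}$) by the crucial gain $\langle x \rangle^{n+1}\log^{1+S_n}(t)\,t^{-n-2}$, which is precisely the order dictated by the construction of $(\X_{n+1},\V_{n+1})$ in Section \ref{SubsecImproscatmod}. Second, the polynomial factors $P_{|\alpha|,b}^{n+1}(\X)$ and $P_{|\beta|,c}^{n+1}(\V)$ are bounded by Lemma \ref{BoundPolyXV}, contributing at most $\log^{N-r_{n+1}}(t)$. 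Third, I would establish a variant of Lemma \ref{lemmaesti_f} adapted to the $(n{+}1)^{\mathrm{th}}$-order modified flow: since $\X_{n+1}-x-\mu\log(t)\nabla_v\phi_\infty(v)$ and $t(\V_{n+1}-v)-\mu\nabla_v\phi_\infty(v)$ are, by \eqref{eq:defCX }--\eqref{eq:defCV}, polyhomogeneous in $(x,t)$ with an a priori bound $\lesssim \langle x \rangle \log(t)/t$ on $\{|x|\leq t\}$, the argument of Lemma \ref{lemmaesti_f} transports to the new flow and shows that $\z$ evaluated at $(\X_{n+1}+t\V_{n+1},\V_{n+1})$ is comparable to $\langle x \rangle + \log t$. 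Coupled with Proposition \ref{prop_point_bound_deriv_distrib}, this yields
$$ \langle x \rangle^{N_x} \langle v \rangle^{N_v} \bigl| \partial_x^{\kappa_x} G^{\kappa_v} f \bigr|(t,\X_{n+1}+t\V_{n+1},\V_{n+1}) \lesssim \log^{|\kappa_v|+N_x}(t) $$
on $\{|x|\leq t\}$.

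Multiplying these three bounds together and distributing the weight $\langle x \rangle^{N_x-n-1}\langle v \rangle^{N_v}$ (using one factor of $\langle x \rangle^{n+1}$ to absorb the spatial growth from Lemma \ref{LemderivModcoeff}) gives
$$ \langle x \rangle^{N_x-n-1}\langle v \rangle^{N_v}\bigl|\mathbb{T}^{i,\kappa}_{\gamma,\alpha,\beta,b,c}\bigr| \lesssim \frac{\log^{1+S_n+N-r_{n+1}+C}(t)}{t^{n+2}}, $$
for some fixed $C$ depending only on $N$. Because $S_{n+1}\geq S_n+N+1-r_{n+1}$, integrating in time between $t$ and $+\infty$ yields a Cauchy estimate of the form $\log^{S_{n+1}}(t)/t^{n+1}$ in the weighted topology on $\{|x|\leq t\}$, hence the existence of a limit $\partial_{x,v}^\kappa g_{n+1} \to h_\kappa$ as $t \to \infty$. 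To identify $h_\kappa$ with $\partial_{x,v}^\kappa f_\infty$, I would use the induction hypothesis: a mean value theorem argument, combined with \eqref{eq:estXnplus1Xn} and the pointwise control of $\partial_{x,v}^\kappa f$ (via Proposition \ref{prop_point_bound_deriv_distrib} transported to the modified flow), shows $\partial_{x,v}^\kappa g_{n+1}-\partial_{x,v}^\kappa g_n \to 0$ as $t\to\infty$, while $\partial_{x,v}^\kappa g_n\to \partial_{x,v}^\kappa f_\infty$ by the induction hypothesis at order $n$.

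The main obstacle will be the careful adaptation of the $\z$-type weight estimate of Lemma \ref{lemmaesti_f} to the high-order modifications $(\X_{n+1},\V_{n+1})$, because these now depend non-trivially on $x$ and the decomposition $\X_{n+1}+t\V_{n+1}-tv = \X_{n+1}+t(\V_{n+1}-v)$ involves polyhomogeneous terms of all orders $q\leq n$; controlling $\T_\phi$ applied to the approximate weight requires using the asymptotic expansion for the force field provided by Proposition \ref{Proexpanphi} together with the defining relations \eqref{defXnplus1} for $\X_{n+1}$. A secondary bookkeeping difficulty is verifying that the combinatorial structure in Lemma \ref{LemderivPhialongXnplus} preserves the link between the number of Galilean derivatives $|\beta|$ on $f$ and the log losses, so that the total power of $\log t$ stays within $S_{n+1}$.
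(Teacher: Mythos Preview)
Your overall architecture is correct and matches the paper's proof: decompose $\partial_t\partial_{x,v}^\kappa g_{n+1}$ via Lemma \ref{LemderivPhialongXnplus}, bound the first factor by Lemma \ref{LemderivModcoeff}, bound the $P$-factors by Lemma \ref{BoundPolyXV}, bound the $f$-factor by a weighted estimate, integrate in time, and identify the limit with $f_\infty$ by comparing $g_{n+1}$ to $g_n$ through \eqref{eq:estXnplus1Xn}.

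The point where you diverge from the paper is your third input, which you also flag as the ``main obstacle''. You propose to re-derive a version of Lemma \ref{lemmaesti_f} adapted to the $(n{+}1)^{\mathrm{th}}$-order flow by controlling $\T_\phi$ applied to an approximate weight and invoking Proposition \ref{Proexpanphi}. The paper avoids this entirely: on $\{|x|\leq t\}$, each polyhomogeneous term $x^\alpha\log^p(t)/t^q$ with $|\alpha|+p\leq q$ is uniformly bounded, so $|\X_1-\X_{n+1}|\lesssim 1$ and $|v-\V_{n+1}|\lesssim t^{-1}$. Hence $(\X_{n+1}+t\V_{n+1},\V_{n+1})$ differs from $(\X_1+t\V_1,\V_1)$ by a bounded perturbation, and Lemma \ref{lemmaesti_f} applies \emph{as is} to give
\[
\langle x \rangle^{N_x}\langle v \rangle^{N_v}\bigl|\partial_x^{\gamma_x}G^{\gamma_v}f\bigr|(t,\X_{n+1}+t\V_{n+1},\V_{n+1})\lesssim \log^{|\gamma_v|}(t),
\]
with no $N_x$ loss in the logarithm. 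Your route, going through Proposition \ref{prop_point_bound_deriv_distrib} after comparing $\z$ to $\langle x\rangle+\log t$, incurs the extra $\log^{N_x}(t)$ you record; this would force a larger choice of $(S_n)$ than the one fixed in the paper and would not directly yield the stated bound $\log^{S_{n+1}}(t)/t^{n+1}$ with $S_{n+1}=S_n+N+1-r_{n+1}$. The simple observation $|\X_1-\X_{n+1}|\lesssim 1$ on $\{|x|\leq t\}$ removes what you anticipated as the main difficulty.
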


\begin{proof}
Since $|\X_1-\X_{n+1}| \lesssim 1$ and $|v-\V_{n+1}| \lesssim t^{-1}$ on the set $\{|x| \leq t \} \times \R^3_v$, we have by Lemma \ref{lemmaesti_f} that
\begin{equation}\label{toproveafter2}
\forall \, t \geq 2, \; \forall |x| \leq t, \; \forall v \in \R^3_v,  \qquad \langle x \rangle^{N_x} \langle v \rangle^{N_v} \big| \partial_x^{\gamma_x}G^{\gamma_v} f \big| \big(t,\X_{n+1}+t\V_{n+1},\V_{n+1} \big) \lesssim  \log^{|\gamma_v|}(t),
\end{equation}
for any $|\gamma_x|+|\gamma_v| \leq N-1$. Combining this estimate with the Lemmata \ref{BoundPolyXV}--\ref{LemderivPhialongXnplus}--\ref{LemderivModcoeff}, we have
$$ \forall \, t \geq 2, \; \forall |x| \leq t, \; \forall v \in \R^3_v,  \qquad \langle x \rangle^{N_x-n-1} \, \langle v \rangle^{N_v} \big| \partial_t \partial_{x,v}^\kappa g_{n+1} \big|(t,x,v) \lesssim \frac{\log^{1+S_n+N-r_{n+1}}(t)}{t^{n+2}} .$$
To conclude the proof, it remains to show that $f_\infty$ is indeed the limit of $g_{n+1}$. For this, we use that $f_\infty$ is the limit of $g_n$, and that 
$$|g_{n+1}(t,x,v)-g_n(t,x,v)| \leq \|\nabla_{x,v} g_n(t,\cdot,\cdot) \|_{L^\infty_{x,v}} \big( |\X_{n+1}-\X_n|+t|\V_{n+1}-\V_n|),$$ according to the mean value theorem. By \eqref{eq:estXnplus1Xn} and \eqref{equa:induModscat}, the RHS converges in $L^\infty_{\mathrm{loc}}(\R^3_x \times \R^3_v)$ to zero.
\end{proof}

\subsection{Strong convergence estimates for the spatial averages of $g_{n+1}$}\label{Subsecspatialgnplus1}

A direct application of Proposition \ref{ProModscattn1} provides that the spatial average of $g_{n+1}$ converges to the one of $f_\infty$ as $t^{-n-1} \log^{S_{n+1}}t$. As we performed in Section \ref{Subsubsecg1} for the analysis of $g_{1}$, we are in fact able to establish an enhanced convergence estimate for the spatial average of $g_{n+1}$. For this, we can decompose $\partial_t \partial_{x,v}^\kappa g_{n+1}$ as in Lemma \ref{LemderivPhialongXnplus} into the sum of terms of the first and second kind. The terms of the first kind decay as $t^{-n-2}\log^{1+S_n}(t)$ and they govern the rate of convergence in Proposition \ref{ProModscattn1}. In contrast, the terms of the second kind decay as $t^{-n-3}\log^{1+S_n}(t)$. 

In order to obtain the improved convergence estimate, we need to carefully study the spatial average of the terms of the first kind. The key idea will be to exploit that these terms carry a factor of the form $\nabla_x Z^\xi f$ allowing for integration by parts. 

\begin{proposition}
Let $|\kappa| \leq N-1-r_{n+1}$. Then, there exist $\mathbf{Q}_{p,\xi}^{\kappa,\beta} \in C^0 \cap L^\infty (\R^3_v) $ such that
\begin{align*}
 \bigg| \int_{|z|<t } \partial_v^\kappa g_{n+1}(t,z,v) \dr z -\int_{\R^3_z} \partial_v^\kappa f_\infty (z,v) \dr z- \sum_{|\beta| \leq |\kappa|} \sum_{p +|\xi| \leq n} &\frac{\log^p(t)}{t^{n+1}} \mathbf{Q}_{p,\xi}^{\kappa,\beta} (v)\int_{\R^3_z} z^\xi \partial_v^\beta f_\infty (z,v) \dr z \bigg| \\
  \lesssim t^{-n-2} \log^{S_{n+1}}(t)&,
  \end{align*}
for all $(t,v) \in [2,\infty) \times \R^3_v$.
\end{proposition}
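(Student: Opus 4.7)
The strategy is to analyse the derivative in time of the modified non-weighted spatial average and then integrate back. We compute
\begin{equation*}
 \partial_t \! \int_{|z|<t}  \partial_v^\kappa g_{n+1}(t,z,v) \dr z = \int_{|z|=t} \partial_v^\kappa g_{n+1}(t,z,v) \dr \sigma (z) +\int_{|z|<t} \partial_t  \partial_v^\kappa g_{n+1}(t,z,v) \dr z .
 \end{equation*}
The boundary term is harmless: Proposition \ref{ProModscattn1} yields $|\partial_v^\kappa g_{n+1}(t,z,v)| \lesssim \log^{S_{n+1}}(t) \langle z \rangle^{-(N_x-1-n)}$, so after multiplying by the surface measure $\sim t^2$ we get $O(t^{-(N_x-3-n)}) = O(t^{-n-2})$ thanks to $N_x \geq 2\sqrt{2N}+5 \geq 2n+5$. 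The limit $\lim_{t \to \infty} \int_{|z|<t} \partial_v^\kappa g_{n+1}\dr z$ exists and equals $\int_{\R^3} \partial_v^\kappa f_\infty \dr z$, again by Proposition \ref{ProModscattn1}, so integrating the time derivative from $t$ to $+\infty$ will produce the stated expansion provided we can show that $\partial_t \int_{|z|<t} \partial_v^\kappa g_{n+1} \dr z$ admits an expansion in powers $\log^p(t) t^{-n-2}$ with coefficients linear in $\int z^\xi \partial_v^\beta f_\infty \dr z$, up to an error $t^{-n-3} \log^{S_{n+1}}(t)$.

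\textbf{Handling the interior integral.} By Lemma \ref{LemderivPhialongXnplus} and Remark \ref{Rk:easy}, the integrand $\partial_t \partial_v^\kappa g_{n+1}$ decomposes into first- and second-kind terms $\mathbb{T}^{1,\kappa}$, $\mathbb{T}^{2,\kappa}$, in which the external derivative $\partial_{x,v}^\gamma$ is purely in $v$. The second-kind terms are easy: their coefficient is controlled by $\langle z \rangle^{n+1} \log^{1+S_n}(t)/t^{n+3}$ (Lemma \ref{LemderivModcoeff}), while $|G \partial_x^\alpha G^\beta f|$ at the modified characteristics decays as $\langle z \rangle^{-N_x} \langle v \rangle^{-N_v} \log^C(t)$ by \eqref{toproveafter2}; since $N_x-(n+1)>3$, the $z$-integral converges and produces a contribution of size $t^{-n-3} \log^C(t)$, which is absorbed into the error.

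\textbf{The main obstacle: the first-kind terms.} The first-kind terms
\begin{equation*}
 \int_{|z|<t} \partial_v^\gamma \bigl( \partial_t \X_{n+1} -t\mu \nabla_x \phi(t,\X_{n+1}+t\V_{n+1}) \bigr) \cdot [\nabla_x \partial_x^\alpha G^\beta f](t,\X_{n+1}+t\V_{n+1},\V_{n+1}) \, P^{n+1}_{|\alpha|,b}(\X) P^{n+1}_{|\beta|,c}(\V) \dr z
 \end{equation*}
only give a pointwise bound of order $t^{-n-2} \log^{1+S_n}(t)$, and unlike in Section \ref{Subsubsecg1} (where $\X_1-x$ and $\V_1$ do not depend on $x$, so integration by parts in $z$ is immediate), here the correction $\X_{n+1}-x$ and $\V_{n+1}-v$ depend polynomially on $z$. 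The plan is: first, expand the scalar coefficient $C(t,z,v):= \partial_v^\gamma(\partial_t \X_{n+1}-t\mu\nabla_x\phi(\X_{n+1}+t\V_{n+1})) \cdot P^{n+1}_{|\alpha|,b}(\X)P^{n+1}_{|\beta|,c}(\V)$ polyhomogeneously as
\begin{equation*}
 C(t,z,v) = \sum_{ p+|\eta| \leq n+1} \frac{z^\eta \log^p(t)}{t^{n+2}} C_{\eta,p}^{\kappa,\alpha,\beta}(v) + O \bigl( \langle z \rangle^{n+2} \log^{S_n+C}(t)\, t^{-n-3} \bigr),
 \end{equation*}
which follows from the definition \eqref{defXnplus1} of $\X_{n+1}, \V_{n+1}$, Proposition \ref{Proexpanphi}, Lemma \ref{LemForexp}, and Lemma \ref{BoundPolyXV}. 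Second, convert $[\nabla_x \partial_x^\alpha G^\beta f] (t,\X_{n+1}+t\V_{n+1},\V_{n+1})$ into a derivative in $z$: the chain rule gives
\begin{equation*}
 \nabla_z \bigl[ \partial_x^\alpha G^\beta f (t,\X_{n+1}+t\V_{n+1},\V_{n+1}) \bigr] = A(t,z,v) [\nabla_x \partial_x^\alpha G^\beta f] + B(t,z,v) [\nabla_v \partial_x^\alpha G^\beta f] ,
\end{equation*}
where $A = I + \dr_z(\X_{n+1}-x) + t\dr_z \V_{n+1}$ is invertible on $|z|<c_{n+1}t$ (Lemma \ref{Lemdiffeoordern} applied at order $n+1$) and $B = \dr_z \V_{n+1}$ is small. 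Solving for $\nabla_x \partial_x^\alpha G^\beta f$ and substituting, the $B$-contribution feeds back into additional second-kind-type terms (with an additional $t^{-1}$), while the remaining factor $\nabla_z$ allows integration by parts in $z$: the boundary term on $|z|=t$ is controlled as in the boundary analysis above, and the bulk becomes $\int_{|z|<t} z^\eta \log^p(t) t^{-n-2} \widetilde{C}(v)\cdot \partial_x^\alpha G^\beta f(t,\X_{n+1}+t\V_{n+1},\V_{n+1})\, \dr z$ times polynomial corrections of the Jacobian, up to divergences of the expansion coefficients which are absorbed into the error.

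\textbf{Closing the expansion.} For each remaining bulk term of the form $\frac{\log^p(t)}{t^{n+2}} K(v)\int_{|z|<t} z^\eta \,\partial_x^\alpha G^\beta f(t,\X_{n+1}+t\V_{n+1},\V_{n+1}) \dr z$, convert $\partial_x^\alpha G^\beta f$ along the modified flow into $\partial_v^{\kappa'} g_{n+1}$ (for some $|\kappa'| \leq |\kappa|$) via another application of the chain rule, use Proposition \ref{ProModscattn1} to replace it by $\partial_v^{\kappa'} f_\infty$ up to an error $O(t^{-n-1}\log^{S_{n+1}}(t)\langle z\rangle^{-(N_x-1-n)})$, and finally extend the integration domain from $|z|<t$ to $\R^3$ at the cost of an $O(t^{-\infty})$ error thanks to the spatial decay of $f_\infty$ (Corollary \ref{Corestig1}). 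This identifies the coefficients $\mathbf{Q}_{p,\xi}^{\kappa,\beta}(v)$ as the universal scalar multiples of $K(v)$ obtained in the previous step; integrating in $t$ from $t$ to $+\infty$ converts the $t^{-n-2}\log^p(t)$ bulk into the stated $t^{-n-1}\log^p(t)$ contributions, while the $t^{-n-3}\log^{S_{n+1}}(t)$ errors integrate to $t^{-n-2}\log^{S_{n+1}}(t)$. The condition $|\kappa| \leq N-1-r_{n+1}$ leaves one degree of regularity for the chain rule manipulations above and ensures that Proposition \ref{ProModscattn1} can be applied to $\partial_v^{\kappa'} g_{n+1}$ with $|\kappa'| \leq |\kappa|+1 \leq N-r_{n+1}$; this is the reason for the loss of one derivative between parts (b) and (c) of Proposition \ref{Proinduction}.
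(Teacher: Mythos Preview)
Your overall architecture (differentiate in time, split into first- and second-kind terms, control the boundary, integrate back) matches the paper, and the handling of the boundary and second-kind terms is fine. The gap is in the treatment of the first-kind terms, and it is not cosmetic.

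You claim a polyhomogeneous expansion of the scalar coefficient
\[
C(t,z,v)=\partial_v^\gamma\bigl(\partial_t\X_{n+1}-t\mu\nabla_x\phi(t,\X_{n+1}+t\V_{n+1})\bigr)\cdot P^{n+1}_{|\alpha|,b}(\X)\,P^{n+1}_{|\beta|,c}(\V)
\]
at order $t^{-n-2}$ with explicit coefficients $C_{\eta,p}(v)$ and remainder $O(t^{-n-3})$. This is not available. Proposition~\ref{Proexpanphi} gives the expansion of $t^2\nabla_x\phi(t,\X_n+t\V_n)$ only up to order $n$, with error $O(\langle x\rangle^{n+1}\log^{1+S_n}(t)\,t^{-n-1})$. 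Consequently $t\nabla_x\phi(\ldots)$ has explicit terms at $t^{-1},\dots,t^{-n-1}$ and an \emph{unresolved} remainder $O(t^{-n-2})$; by construction \eqref{defXnplus1} this remainder is exactly $\partial_t\X_{n+1}-t\mu\nabla_x\phi(\ldots)$. To obtain the $t^{-n-2}$ coefficients you would need the order-$(n{+}1)$ expansion of the force field, which in turn requires the spatial-average estimate you are proving. The argument is circular.

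The paper's device to break the circularity is to reverse the order of operations: integrate by parts \emph{first}, then expand. Concretely, it replaces the inner argument $(\X_{n+1}+t\V_{n+1},\V_{n+1})$ by $(\X_1+t\V_1,\V_1)$ (cost $O(t^{-1})$), writes $G^\beta f(t,\X_1+t\V_1,\V_1)$ in terms of $\partial_x^\nu\partial_v^\mu g_1$ via Lemma~\ref{LemZbetafg1}/Remark~\ref{RkZbetafg1}, and integrates by parts in $z$ --- clean because $\X_1-x$ and $\V_1$ are independent of $x$ --- moving \emph{all} of $\nabla_x\partial_x^\nu$ onto the coefficient. The resulting coefficient $\nabla_x\partial_x^\nu\partial_v^\gamma\bigl(\partial_t\X_{n+1}-t\nabla_x\phi(\X_{n+1}+t\V_{n+1})\bigr)$ now involves $\nabla_x\partial_x^{\nu+1}\phi$ through the chain rule, and Proposition~\ref{Proexpa} applied with $|\gamma|=|\nu|+1$ yields an expansion with the \emph{improved} error $O(t^{-n-3-|\nu|})$. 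At this stage the $t^{-n-2}$ coefficient is explicit, and one can match with weighted averages of $f_\infty$ via Corollary~\ref{Corestig1}.

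Two secondary points: (i) a single integration by parts removes only the outer $\nabla_x$, leaving $\partial_x^\alpha$ on $f$; the paper iterates $|\nu|+1$ times, and this iteration is what compensates the $\log^{|\alpha|}(t)$ growth of $P^{n+1}_{|\alpha|,b}(\X)$. (ii) The assertion that $\partial_x^\alpha G^\beta f(t,\X_{n+1}+t\V_{n+1},\V_{n+1})$ can be ``converted into $\partial_v^{\kappa'}g_{n+1}$'' is not correct as stated: the chain rule produces combinations of $\partial_z^{\alpha'}\partial_v^{\beta'}g_{n+1}$, and further integration by parts in $z$ is needed to reach pure weighted $\partial_v$-averages.
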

\begin{proof}
Fix $(t,v) \in [2,\infty) \times \R^3_v$. We start by applying Lemma \ref{LemderivPhialongXnplus}. There exists $N^\kappa_{\gamma,\alpha,\beta,b,c} \in \R$ such that
\begin{align}
\nonumber \langle v \rangle^3\bigg| \partial_t \! \int_{|z| <t } &\partial_v^\kappa g_{n+1}(t,z,v) \dr z-\sum_{|\gamma|+|\alpha|+|\beta| \leq |\kappa|} \, \sum_{b+c \leq |\kappa|}N_{\gamma,\alpha,\beta,b,c}^\kappa \int_{|z|<t}\mathbb{T}^{1,\kappa}_{\gamma,\alpha,\beta,b,c}(t,z,v) \dr z \bigg| \\
  &\lesssim  \int_{\mathbb{S}^2_\omega}\langle v \rangle^3 \big| \partial_v^\kappa g_{n+1}(t,t\omega,v)  \big| t^2 \dr \mu_{\mathbb{S}^2_\omega} + \sup_{|\gamma|+|\alpha|+|\beta| \leq |\kappa|} \, \sup_{b+c \leq |\kappa|} \langle v \rangle^3 \bigg|\int_{|z|<t}\mathbb{T}^{2,\kappa}_{\gamma,\alpha,\beta,b,c}(t,z,v) \dr z \bigg|, \label{kevatal:error}
\end{align}
where $\dr \mu_{\mathbb{S}^2}$ is the volume form on the unit sphere $\mathbb{S}^2$. The goal consists in proving the following properties:
\begin{itemize}
\item Any error term in the RHS decay faster than $t^{-n-3}\log^{S_{n+1}}(t)$.
\item The integral over $\{|z| \!< \! t \}$ of any term $\mathbb{T}^{1,\kappa}_{\gamma,\alpha,\beta,b,c}(t,z,v)$ is $(n\!+\!2)$--equivalent to a sum of terms 
\begin{equation}\label{kevatalenn:1}
 \frac{\log^p(t)}{t^{n+2}} \overline{\mathbf{Q}}_{ \, p,\xi}^{\kappa,\beta} (v)\int_{\R^3_z} z^\xi \partial_v^\beta f_\infty (z,v) \dr z, \qquad p+|\xi| \leq n.
 \end{equation}
\end{itemize} 
To conclude, it remains to take the antiderivatives in time of quantities of the form \eqref{kevatalenn:1}. Here, we identify the limit by applying Proposition \ref{ProModscattn1}, which implies
$$ \lim_{t \to + \infty} \int_{|z|<t}g_{n+1}(t,z,v) \dr z = \int_{\R^3_z} f_\infty (z,v) \dr z.$$

\textbf{Control of boundary terms.} Recall that $N_x \geq 2n+6$. We control the boundary term in \eqref{kevatal:error} by applying Proposition \ref{ProModscattn1}, together with Corollary \ref{Corestig1} in order to estimate $f_\infty$. So, for every $ |\xi| \leq N-r_{n+1}$ we have
\begin{equation*}
 \int_{\mathbb{S}^2_\omega} \big| \partial_v^\xi g_{n+1}(t,t\omega,v)  \big|  \dr \mu_{\mathbb{S}^2_\omega} \lesssim t^{-N_x+n+1}\langle v \rangle^{-N_v} \lesssim t^{-n-5}\langle v \rangle^{-3}.
 \end{equation*}
We will deal with other boundary terms by the next consequence of Corollary \ref{Corestig1},
 \begin{equation}\label{kevatalenn:boundary}
 \color{white} \square \qquad \color{black} \mathcal{B}_\gamma(t,v):= \! \int_{\mathbb{S}^2_\omega}  \big| \partial_{x,v}^\gamma g_1 \big| \big(t,t\omega,v \big)  \dr \mu_{\mathbb{S}^2_\omega} \lesssim t^{-N_x+1}\langle v \rangle^{-3} , 
 \end{equation}
with $|\gamma| \leq N-2$.

\textbf{Estimate for terms of the second kind.} Let us fix $b+c\leq |\kappa|$ and $|\gamma|+|\alpha|+|\beta| \leq |\kappa|$. Combining \eqref{toproveafter2} with the Lemmata \ref{BoundPolyXV} and \ref{LemderivModcoeff}, we bound the terms of the second kind by
\begin{equation}\label{equa:T2}
 \forall \, |z| \leq t, \qquad  \big| \mathbb{T}^{2,\kappa}_{\gamma,\alpha,\beta,b,c}\big|(t,z,v) \lesssim \langle z \rangle^{-4}\langle v \rangle^{-3} t^{-n-3} \log^{1+S_n+N-r_{n+1}}(t).
 \end{equation}
Consequently, these terms give rise to error terms decaying as $t^{-n-3}\log^{S_{n+1}}(t)$ in \eqref{kevatal:error}. 

\textbf{Estimate for terms of the first kind.} We now focus on the term of the first kind $\mathbb{T}^{1,\kappa}_{\gamma,\alpha,\beta,b,c}(t,z,v)$, that is, terms of the form
\begin{align*}
&  \partial_{v}^\gamma \Big( \! \partial_t\X_{n+1}\!-t \nabla_x \phi (t,\X_{n+1}+t\V_{n+1}) \! \Big) \cdot \big[\nabla_x \partial_x^\alpha G^\beta \! f \big] \big(t,\X_{n+1}\!+t\V_{n+1},\V_{n+1} \big) P_{|\alpha|, \, b}^{n+1}(\X)P_{|\beta|, \,c}^{n+1}(\V). 
\end{align*}
From now on, we study the decay properties of a fixed term $\mathbb{T}^{1,\kappa}_{\gamma,\alpha,\beta,b,c}$.

\textit{Reduction of the analysis to $ \overline{\mathbb{T}}{\vphantom{\mathbb{T}}}^{\,1}$.}We first reduce the analysis to the study of 
$$\overline{\mathbb{T}}{\vphantom{\mathbb{T}}}^{\,1}(t,z,v):=  \partial_{v}^\gamma \Big( \! \partial_t\X_{n+1}\!-t \nabla_x \phi (t,\X_{n+1}+t\V_{n+1}) \! \Big) \cdot \big[\nabla_x \partial_x^\alpha G^\beta \! f \big] \big(t,\X_{n+1}\!+t\V_{n+1},\V_{n+1} \big) P_{|\alpha|, \, b}^{1}(\X),$$
where $P_{|\alpha|, \, b}^{1}(\X)$ is obtained by formally replacing $\X_{n+1}$ by $\X_1$, so that
$$ P_{|\alpha|, \, b}^{1}(\X):= \log^{|\alpha|}(t) \prod_{1 \leq i \leq |\alpha| }\nabla_v \partial_{x,v}^{\gamma_i} \phi_\infty^{k_i}(v), \qquad  k_i \in \llbracket 1 , 3 \rrbracket, \quad \sum_{1 \leq i \leq |\alpha| } |\gamma_i| = b, \quad |\gamma_i| \geq 1.$$ 
For this, we note that:
\begin{itemize}

\item By the form \eqref{eq:defCX } of $\X_{n+1}-\X_1$, we have $|P_{|\alpha|, \,b}^{n+1}(\X)-P_{|\alpha|, \,b}^{1}(\X)| \lesssim \langle z , \log(t) \rangle \, t^{-1} \log^{|\alpha|-1}(t)$.

\vspace{0.5mm}

\item By the form \eqref{eq:defCV} of $\V_{n+1}-v$, either $|P_{|\beta|, \,c}^{n+1}(\V)| \lesssim t^{-1}$ or $|P_{|\beta|, \,c}^{n+1}(\V)-1| \lesssim t^{-1}$.

\vspace{0.5mm}

\item Thus, by \eqref{toproveafter2} and the Lemmata \ref{BoundPolyXV} and \ref{LemderivModcoeff}, one of the next two estimates hold:
 $$ \color{white} \square \qquad \color{black} \big|\mathbb{T}^{1,\kappa}_{\gamma,\alpha,\beta,b,c} \big|(t,z,v)\lesssim \frac{\log^{S_n+N-r_{n+1}}(t)}{\langle z \rangle^4\langle v \rangle^3 t^{n+3}} ,  \qquad\quad \big|\mathbb{T}^{1,\kappa}_{\gamma,\alpha,\beta,b,c}-\overline{\mathbb{T}}{\vphantom{\mathbb{T}}}^{\,1} \big|(t,z,v)\lesssim \frac{\log^{S_n+N-r_{n+1}}(t)}{\langle z \rangle^4\langle v \rangle^3 t^{n+3}}.$$ 
\end{itemize}
Therefore, if the first estimate holds, the term $\mathbb{T}^{1,\kappa}_{\gamma,\alpha,\beta,b,c}$ is of lower order. Otherwise, if the second estimate holds, we carry on with the analysis of $\overline{\mathbb{T}}{\vphantom{\mathbb{T}}}^{\,1}$.

\textit{Reduction of the analysis to $ \widetilde{\mathbb{T}}{\vphantom{\mathbb{T}}}^{\,1}$.} We next show that we can in fact focus on
$$ \widetilde{\mathbb{T}}{\vphantom{\mathbb{T}}}^{\,1}(t,z,v):=  \partial_{v}^\gamma \Big( \! \partial_t\X_{n+1}\!-t \nabla_x \phi (t,\X_{n+1}+t\V_{n+1}) \! \Big) \cdot \big[\nabla_x \partial_x^\alpha G^\beta \! f \big] \big(t,\X_{1}+t\V_{1},\V_{1} \big) P_{|\alpha|, \, b}^{1}(\X)$$
since the difference $\overline{\mathbb{T}}{\vphantom{\mathbb{T}}}^{\,1}-\widetilde{\mathbb{T}}{\vphantom{\mathbb{T}}}^{\,1}$ is lower order, that is
$$ \forall \, |z| \leq t ,  \qquad \big|\overline{\mathbb{T}}{\vphantom{\mathbb{T}}}^{\,1}-\widetilde{\mathbb{T}}{\vphantom{\mathbb{T}}}^{\,1}\big|(t,z,v) \lesssim \frac{\log^{1+S_n+N-r_{n+1}}(t)}{\langle z \rangle^{4}\langle v \rangle^{3} t^{n+3}} .$$
For this, we apply Lemma \ref{LemderivModcoeff}, and we use the mean value theorem together with Lemma \ref{lemmaesti_f} as well as
$$ \forall \, |z| \leq t ,  \qquad  \big| \X_{n+1}-\X_1\big|(t,z,v)+t \big|\V_{n+1}-\V_1\big|(t,z,v) \lesssim \frac{1}{t}\langle z , \log(t) \rangle \, .$$

\textit{Comparison with $g_1$ and conclusion.} Our last reduction consists in applying Remark \ref{RkZbetafg1} and Lemma \ref{LemPhiminus1}, in order to write $G^\beta f$ in terms of the derivatives of $g_1$. This implies that $\widetilde{\mathbb{T}}{\vphantom{\mathbb{T}}}^{\,1}(t,z,v)$ can be written as a linear combination of terms
\begin{itemize}
\item either decaying as $\langle z \rangle^{-4}\langle v \rangle^{-3} t^{-n-3} \log^{1+S_n+N-r_{n+1}}(t)$,
\item or of the form
$$\color{white} \square \quad \color{black} \mathbb{T}(t,z,v):=  \partial_{v}^\gamma \Big( \! \partial_t\X_{n+1}\!-t \nabla_x \phi (t,\X_{n+1}+t\V_{n+1}) \! \Big) \cdot \big[\nabla_x \partial_x^\nu \partial_v^\mu  g_1 \big] (t,z,v) P_{|\nu|, \, b}^{1}(\X),$$
where $|\nu| \geq |\alpha|$, and $|\nu|+|\mu| \leq |\alpha|+|\beta|$.
\end{itemize}

Note now that integration by parts yields, as $\X_1-x$ and $\V_1$ are independent of $x$, that
\begin{align*}
\langle v \rangle^3 \bigg| \int_{|z| < t }&\nabla_x \cdot \partial_x^\nu \partial_{v}^\gamma \Big( \! \partial_t\X_{n+1}\!-t \nabla_x \phi (t,\X_{n+1}+t\V_{n+1}) \! \Big)  \big[  \partial_v^\mu  g_1 \big] (t,z,v) P_{|\nu|, \, b}^{1}(\X)  - \mathbb{T}^1(t,z,v) \dr z \bigg| \\
& \lesssim \frac{\langle t \rangle^{n+1}\log^{1+S_n+|\nu|}(t)}{ t^{n+2}} \sup_{|\xi| \leq |\nu|+|\mu|} \langle v \rangle^3 \mathcal{B}_\xi (t,v) \lesssim \frac{1}{t^{n+3}},
\end{align*}
where we used Lemma \ref{LemderivModcoeff}, \eqref{kevatalenn:boundary}, and $N_x-1 \geq n+3$ in the last step. Assume first that $|\nu|+1 \leq n$ and note that $|\gamma|+|\nu|+1 \leq N-r_{n+1}$. Then:
\begin{itemize}
\item Recall from Lemma \ref{LemderivModcoeff} that $\nabla_x \partial_x^\nu \partial_{v}^\gamma (  \partial_t\X_{n+1}\!-t \nabla_x \phi (t,\X_{n+1}+t\V_{n+1}) )$ admits an asymptotic polyhomogeneous expansion of order $n+2+|\nu|$. Here, the error term decays as $\langle x \rangle^{n+1}\! \log^{1+S_n}(t) t^{-n-3-|\nu|}$.
\item According to Lemma \ref{LemderivModcoeff}, all the terms of order up to $n+1$ vanish.
\end{itemize} 
Consequently, there exist $\mathbf{K}^\xi_p \in C^0 \cap L^\infty (\R^3_v)$ such that, for all $|z| \leq t$ and all$v \in \R^3_v$, we have
$$\bigg| \nabla_x \cdot \partial_x^\nu \partial_{v}^\gamma \Big( \! \partial_t\X_{n+1}\!-t \nabla_x \phi (t,\X_{n+1}+t\V_{n+1}) \! \Big) - \! \sum_{p+|\xi| \leq n-|\nu|} \! \frac{z^\xi \log^p(t)}{t^{n+2}} \mathbf{K}^\xi_{p}(v) \bigg| \lesssim  \!\frac{ \langle z \rangle^{n+1}\log^{1+S_n}(t)}{ t^{n+3}}.$$
Next, we use the particular form of $P^1_{|\nu|,b}(\X)$ to show that there exists $\mathbf{L}^\xi_{p} \in C^0 \cap L^\infty (\R^3_v)$ such that
\begin{align*}
\langle v \rangle^3 \bigg| \int_{|z| < t }\mathbb{T}(t,z,v)\dr z - \! \sum_{p+|\xi|\leq  n} \! \frac{\log^p(t)}{t^{n+2}} \mathbf{L}_{p}^\xi(v) \int_{|z|<t}z^\xi\big[ \partial_v^\beta g_1 \big] (t,z,v) \dr z \bigg|  \lesssim \frac{\log^{1+S_n}(t)}{t^{n+3}}.
\end{align*}
It remains to apply Corollary \ref{Corestig1} for $|\xi| \leq n $, which provides
$$\bigg|\int_{|z|<t}z^\xi\big[ \partial_v^\beta g_1 \big] (t,z,v) \dr z-\int_{\R^3_z} z^\xi \partial_v^\beta f_\infty (z,v) \dr z \bigg| \lesssim \frac{\log^{N}(t)}{t} \lesssim  \frac{\log^{S_{n+1}-n}(t)}{t}.$$
If $|\nu| \geq n$, we use $|\partial_x^\zeta \X_{n+1}|(t,z,v)+t|\partial_x^\zeta \V_{n+1}|(t,z,v)\lesssim t^{-|\zeta|} $ and $|\nabla_x \partial_x^\zeta \phi|(t,x) \lesssim \epsilon t^{-2-|\zeta|}$ to argue that
$$ \big| \nabla_x \partial_x^\nu \partial_{v}^\gamma \big(  \partial_t\X_{n+1}\!-t \nabla_x \phi (t,\X_{n+1}+t\V_{n+1}) \big) \big| \lesssim \frac{1}{t^{n+3}}.$$
One can then deduce that $\mathbb{T}$ also satisfies \eqref{equa:T2}. This concludes the proof. 
\end{proof}

\section{Non-linear tails and weak convergence}\label{sec_tail_weak_conv}

In this section, we obtain late-time tails for the spatial density and the force field. We also prove that the distribution (up to normalisation) converges weakly to a Dirac mass in the zero velocity set. We finally capture the shearing of the system with a weak convergence statement.

\subsection{Hierarchy of asymptotic conservation laws}

Let $f_{\infty}\colon\R^3_x\times\R^3_v\to \R$ be a regular scattering state. Let $|\alpha|+|\beta| \leq N-2$ be multi-indices. We consider the \emph{weighted spatial averages} $\mathcal{A}^{\alpha}_{\beta}:\R^3_v\to\R$ given by 
\begin{equation*}
\mathcal{A}^{\alpha}_{\beta}(v):=\int_{\R^3_x} x^{\alpha}\partial_v^{\alpha+\beta}f_{\infty}(x,v)\dr x.
\end{equation*}
By Proposition \ref{prop_def_cons_laws}, we note that $\mathcal{A}^{\alpha}_{\beta}$ is bounded by a weighted $L^{\infty}_{x,v}$ norm of the scattering state $f_{\infty}$. Moreover, the weighted spatial averages $\mathcal{A}^{\alpha}_{\beta}$ can be defined as $$\mathcal{A}^{\alpha}_{\beta}(v)=\lim_{t\to \infty} \int_{\R^3_x}x^{\alpha}\partial_{v}^{\alpha+\beta}g_1(t,x,v)\dr x,$$ by Corollary \ref{Corestig1}. See for comparison Proposition \ref{prop_cons_laws_linear} for the definition of the conservation laws of the linearised system.

\subsection{Late-time tail for the spatial density}

Let $f\colon [0,\infty)\times  \R^3_x\times\R^3_v\to \R$ be a small regular solution for the Vlasov--Poisson system. The late-time asymptotics of the spatial density are obtained in Proposition \ref{Proselfsimrho}. In this subsection, we apply this result to obtain non-linear tails for the spatial density. We begin writing the tail of the self-similar profile $\mathcal{A}^{\alpha}_{\beta}(\frac{x}{t})$ in terms of the conservation laws $\mathcal{A}^{\alpha}_{\beta}(0)$. 

\begin{lemma}\label{lemexpconslawprofil}
Let $N\geq 2$ and $|\alpha|+|\beta|+|\gamma|\leq N-2 $. Then, the weighted spatial average $\mathcal{A}^{\alpha}_{\beta}$ satisfies that for all $(t,x)\in [2,\infty) \times \R^3_x$,
$$\Big|\mathcal{A}^{\alpha}_{\beta}\Big(\frac{x}{t}\Big)-\sum_{|\gamma|\leq N-|\alpha|-|\beta|} \frac{1}{\gamma !}\partial_v^{\gamma}\mathcal{A}^{\alpha}_{\beta}(0)\frac{x^{\gamma}}{t^{|\gamma|}}\Big|\lesssim  \frac{|x|^{N+1-|\alpha|-|\beta|}}{t^{N+1-|\alpha|-|\beta|}}$$
\end{lemma}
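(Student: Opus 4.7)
The approach is to apply Taylor's theorem to the smooth function $v \mapsto \mathcal{A}^{\alpha}_{\beta}(v)$ at the origin and evaluate at $v = x/t$. This is the direct nonlinear counterpart of Lemma \ref{lemexpconslawprofillin}, with the scattering state $f_\infty$ playing the role of the initial data $f_0$, and the proof will essentially transcribe that argument in the new setting.

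First I would verify, for multi-indices with $|\alpha|+|\beta|+|\gamma|\leq N-2$, the differentiation-under-the-integral identity
$$ \partial_v^\gamma \mathcal{A}^{\alpha}_{\beta}(v) = \mathcal{A}^{\alpha}_{\beta+\gamma}(v).$$
This is legitimate because Corollary \ref{Corestig1} furnishes $f_\infty \in C^{N-2}(\R^3_x \times \R^3_v)$ together with weighted bounds of the form $\langle x \rangle^{N_x-1} \langle v \rangle^{N_v}|\partial_{x,v}^\kappa f_\infty|(x,v) \lesssim 1$ for $|\kappa|\leq N-2$. Repeating the short computation of Proposition \ref{prop_def_cons_laws} applied to $f_\infty$ then yields
$$ \|\mathcal{A}^{\alpha}_{\beta+\gamma}\|_{L^\infty_v} \lesssim \big\| \langle x \rangle^4 x^\alpha \partial_v^{\alpha+\beta+\gamma} f_\infty \big\|_{L^\infty_{x,v}},$$
and the right-hand side is finite since the spatial weight $N_x-1$ supplied by Corollary \ref{Corestig1} is large enough to dominate $|x|^{|\alpha|+4}$ under our assumptions on $N_x$.

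With these two ingredients in hand, the stated bound follows immediately from Taylor's formula at order $N-|\alpha|-|\beta|$. Writing
\begin{align*}
\mathcal{A}^{\alpha}_{\beta}\Big(\frac{x}{t}\Big) &= \sum_{|\gamma|\leq N-|\alpha|-|\beta|} \frac{1}{\gamma!}\,\partial_v^\gamma \mathcal{A}^{\alpha}_{\beta}(0)\,\frac{x^\gamma}{t^{|\gamma|}} + \!\!\! \sum_{|\gamma|=N+1-|\alpha|-|\beta|} R_\gamma\Big(\frac{x}{t}\Big)\frac{x^\gamma}{t^{|\gamma|}},
\end{align*}
with the Lagrange remainder satisfying $|R_\gamma(v')| \lesssim \sup_v |\partial_v^\gamma \mathcal{A}^{\alpha}_{\beta}(v)| = \sup_v |\mathcal{A}^{\alpha}_{\beta+\gamma}(v)|$, the uniform bound from the previous step immediately produces the factor $|x|^{N+1-|\alpha|-|\beta|}/t^{N+1-|\alpha|-|\beta|}$ in the error, as claimed.

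The main point to watch --- more a bookkeeping matter than a genuine obstacle --- is that every derivative invoked, both for the Taylor coefficients and for the remainder, remains within the regularity of $f_\infty$ supplied by the modified scattering result. The hypothesis $|\alpha|+|\beta|+|\gamma|\leq N-2$ is tailored precisely so that this constraint is automatic, and the only work beyond the linear proof of Lemma \ref{lemexpconslawprofillin} lies in invoking Corollary \ref{Corestig1} in place of the hypothesis on $f_0$.
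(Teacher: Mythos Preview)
Your proposal is correct and follows exactly the paper's approach: the paper's proof consists of a single sentence pointing to Taylor's theorem and the argument of Lemma~\ref{lemexpconslawprofillin}, with the bounds on $f_\infty$ replacing those on $f_0$. You have simply filled in the details the paper omits, correctly invoking Corollary~\ref{Corestig1} for the weighted control of $f_\infty$ and flagging the regularity bookkeeping as the only point requiring care.
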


\begin{proof}
The proof follows by a straightforward application of Taylor's theorem (see the proof of Lemma \ref{lemexpconslawprofillin}). Here, we have bounded the norm of the scattering state by the initial data norm.
\end{proof}

We similarly apply Taylor to obtain an expansion for the self-similar profile $\mathbf{F}_{p,q}(\frac{x}{t})$. Recall that by Proposition \ref{Proselfsimrho}, these profiles can be written in terms of the hierarchy of asymptotic conservation laws $\mathcal{A}^{\alpha}_{\beta}$. Moreover, according to Lemma \ref{LemForexpbis}, we have $\mathbf{F}_{p,q} \in C^{N-r_{q+1}}\cap W^{N-r_{q+1}, \infty}(\R^3_v)$.

\begin{lemma}\label{lemexpconslawprofil_more_general}
Let $N\geq 4$ and $(p,q) \in \mathbb{N}^2$ such that $p \leq q$ and $r_{q+1} \leq N$. For any integer $k \in \mathbb{N}$ and multi-index $\beta$ satisfying $k+|\beta| \leq N-r_{q+1}-1$, we have
$$\forall \, (t,x)\in[2,\infty)\times \R^3_x, \qquad \bigg| \partial_v^\beta \mathbf{F}_{p,q}\Big(\frac{x}{t}\Big)-\sum_{|\gamma|\leq k} \frac{1}{\gamma !}\partial_v^{\beta+\gamma}\mathbf{F}_{p,q}(0)\frac{x^{\gamma}}{t^{|\gamma|}}\bigg|\lesssim  \frac{|x|^{k+1}}{t^{k+1}}.$$ 
\end{lemma}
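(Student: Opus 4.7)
The plan is to prove this as a direct application of Taylor's theorem, with the main work being the bookkeeping to verify that the required regularity of $\mathbf{F}_{p,q}$ is indeed available. First, I would recall from Proposition \ref{Proselfsimrho} (together with Lemma \ref{LemForexpbis}) that $\mathbf{F}_{p,q} \in C^{N-r_{q+1}} \cap W^{N-r_{q+1},\infty}(\R^3_v)$. Under the hypothesis $k+|\beta| \leq N-r_{q+1}-1$, this yields in particular that $|\beta|+k+1 \leq N-r_{q+1}$, so that $\partial_v^\beta \mathbf{F}_{p,q}$ is of class $C^{k+1}$ with all its derivatives up to order $k+1$ uniformly bounded on $\R^3_v$.

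Next, I would fix $(t,x) \in [2,\infty) \times \R^3_x$ and apply Taylor's theorem with integral remainder (or Lagrange remainder) to the function $v \mapsto \partial_v^\beta \mathbf{F}_{p,q}(v)$ around the point $v=0$, evaluated at the point $v = x/t$. This provides
$$ \partial_v^\beta \mathbf{F}_{p,q}\Big(\frac{x}{t}\Big) = \sum_{|\gamma| \leq k} \frac{1}{\gamma!}\partial_v^{\beta+\gamma}\mathbf{F}_{p,q}(0) \frac{x^\gamma}{t^{|\gamma|}} + R_{k,\beta}\Big(\frac{x}{t}\Big), $$
where the remainder satisfies the pointwise bound
$$ \Big|R_{k,\beta}\Big(\frac{x}{t}\Big)\Big| \lesssim \frac{|x|^{k+1}}{t^{k+1}} \sup_{|\gamma|=k+1} \big\| \partial_v^{\beta+\gamma} \mathbf{F}_{p,q} \big\|_{L^\infty(\R^3_v)}. $$

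Finally, I would conclude by noting that the supremum on the right-hand side is finite by the regularity statement for $\mathbf{F}_{p,q}$ recalled in the first step, yielding the desired estimate with an implicit constant that depends only on $N$, $|\beta|$, $k$, and on $\|\mathbf{F}_{p,q}\|_{W^{N-r_{q+1},\infty}}$. I do not expect any genuine obstacle here: the only thing to be careful about is to match the regularity condition $k+|\beta|\leq N-r_{q+1}-1$ with the Sobolev regularity of $\mathbf{F}_{p,q}$ given by Proposition \ref{Proselfsimrho} and Lemma \ref{LemForexpbis}, which is exactly what the hypothesis ensures. This mirrors exactly the argument of Lemma \ref{lemexpconslawprofillin} in the linearised setting, the only difference being that here the role played by the initial data norms is played by the $W^{N-r_{q+1},\infty}$ norm of the coefficient $\mathbf{F}_{p,q}$.
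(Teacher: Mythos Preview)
Your proposal is correct and matches the paper's approach exactly: the paper simply remarks that one applies Taylor's theorem as in Lemma \ref{lemexpconslawprofillin}, after recalling from Proposition \ref{Proselfsimrho} and Lemma \ref{LemForexpbis} that $\mathbf{F}_{p,q} \in C^{N-r_{q+1}}\cap W^{N-r_{q+1}, \infty}(\R^3_v)$. Your bookkeeping of the regularity condition $k+|\beta|+1 \leq N-r_{q+1}$ is precisely what is needed to make the remainder estimate go through.
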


Finally, we obtain late-time tails for the spatial density in terms of the hierarchy of asymptotic conservation laws $\mathcal{A}(f)$.

\begin{theorem}\label{thm_latetime_nonlinear}
Let $N\geq 4$ and $n \in \mathbb{N}$ such that $r_{n+1} \leq N$. For any $|\beta| \leq N-r_{n+1}-1$ and all $(t,x)\in[2,\infty)\times \R^3_x$, we have
\begin{align*}
\bigg|t^3 \! \int_{\R^3_v}G^{\beta}f(t,x,v)\dr v-\sum_{p\leq q\leq n}  \, \sum_{|\gamma|\leq n-q}  \frac{C_{\alpha}}{\gamma !}\partial_v^{\gamma+\beta}\mathbf{F}_{p,q}(0)\frac{x^{\gamma}\log^p (t)}{t^{|\gamma|+q}}\bigg|\lesssim \dfrac{\log^{S_n+1}(t) }{t^{n+1}} |x|^{n+1}.
\end{align*}
Here, $\partial_v^{\gamma+\beta}\mathbf{F}_{p,q}(0)$ can be computed in terms of $\mathcal{A}^{\alpha}_{\beta}(v)$ and its derivatives for $|\alpha|+|\beta|\leq N-2$.
\end{theorem}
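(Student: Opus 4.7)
The argument is a two-step substitution combining the asymptotic self-similar polyhomogeneous expansion of $t^3 \rho(G^\beta f)$ with a Taylor expansion of each self-similar profile at $v=0$. First I would apply Proposition \ref{Proselfsimrho} to $G^\beta f$, which gives
\begin{equation*}
t^3 \rho(G^\beta f)(t,x) = \sum_{p\leq q\leq n}\frac{\log^p(t)}{t^q}\,\partial_v^\beta \mathbf{F}_{p,q}\!\Big(\frac{x}{t}\Big) + O\!\bigg(\frac{\log^{S_n+1}(t)}{\langle t+|x|\rangle^{3}\,t^{n-2}}\bigg).
\end{equation*}
Then, for each pair $(p,q)$ with $p\leq q\leq n$, I would invoke Lemma \ref{lemexpconslawprofil_more_general} with $k=n-q$ to Taylor-expand $\partial_v^\beta \mathbf{F}_{p,q}(x/t)$ at the origin. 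The admissibility condition $k+|\beta|\leq N-r_{q+1}-1$ becomes $n-q+|\beta|\leq N-r_{q+1}-1$, and using the identity $r_{n+1}-r_{q+1}=\sum_{j=q+2}^{n+1}j\geq n-q$, one checks that it is implied by the standing hypothesis $|\beta|\leq N-r_{n+1}-1$ (and is saturated at $q=n$). Substituting $\partial_v^\beta \mathbf{F}_{p,q}(x/t)=\sum_{|\gamma|\leq n-q}\frac{1}{\gamma!}\partial_v^{\beta+\gamma}\mathbf{F}_{p,q}(0)\frac{x^\gamma}{t^{|\gamma|}}+O(|x|^{n-q+1}/t^{n-q+1})$ and collecting powers produces the announced polynomial-in-$x$ part, the coefficients $C^{p,q}/\gamma!$ being the combinatorial factors arising from this reorganisation.

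\textbf{Handling the error and the region $|x|\geq t$.} The aggregated Taylor remainders contribute terms of size $\log^p(t)|x|^{n-q+1}/t^{n+1}$, each dominated by $\log^n(t)\langle x\rangle^{n+1}/t^{n+1}$ in the region $|x|\leq t$, where the leading error from Proposition \ref{Proselfsimrho} is also $\lesssim\log^{S_n+1}(t)/t^{n+1}\lesssim\log^{S_n+1}(t)\langle x\rangle^{n+1}/t^{n+1}$. In the complementary region $|x|\geq t$, I would bypass the expansion and instead use the direct decay estimate of Corollary \ref{cor_decay_losing_spatial_density}, namely $|t^3\rho(G^\beta f)|\lesssim\epsilon\, t^3/\langle t+|x|\rangle^3\lesssim\epsilon\langle x\rangle^{n+1}/t^{n+1}$ (since $|x|\geq t$ implies $t^{n+4}\leq\langle x\rangle^{n+4}$), and similarly estimate every term $\frac{x^\gamma \log^p(t)}{t^{|\gamma|+q}}\partial_v^{\beta+\gamma}\mathbf{F}_{p,q}(0)$ by $\langle x\rangle^{n+1}\log^p(t)/t^{n+1}$ using $|x|\geq t$ and $|\gamma|\leq n-q$.

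\textbf{Identification with $\mathcal{A}^\alpha_\beta$ and main obstacle.} To conclude that each $\partial_v^{\beta+\gamma}\mathbf{F}_{p,q}(0)$ is computable from the $\mathcal{A}^\alpha_{\beta'}(v)$ and their derivatives, I would trace back the construction of $\mathbf{F}_{p,q}$: in Proposition \ref{Proselfsimrho}, the profiles $\mathbf{F}_{p,q}$ are built from the coefficients $\mathbf{Q}^{\kappa,\beta}_{p,\xi}(v)$ appearing in the enhanced spatial-average convergence \eqref{equa:induQn} of Proposition \ref{Proinduction}, which multiply precisely the quantities $\int_{\R^3_x}x^\xi\partial_v^{\beta'}f_\infty(x,v)\,\dr x=\mathcal{A}^{\xi}_{\beta'-\xi}(v)$. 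Differentiating in $v$ and evaluating at $v=0$ then yields the desired representation in terms of $\mathcal{A}^\alpha_{\beta'}(0)$ and its velocity derivatives, all of which are finite under the hypothesis $|\alpha|+|\beta'|\leq N-2$. The main obstacle is purely bookkeeping: verifying that the admissibility index $k+|\beta|$ in Lemma \ref{lemexpconslawprofil_more_general} remains within the regularity $N-r_{q+1}$ of $\mathbf{F}_{p,q}$ for every $(p,q)$, and that the global error envelope $\log^{S_n+1}(t)\langle x\rangle^{n+1}/t^{n+1}$ simultaneously absorbs the remainder of Proposition \ref{Proselfsimrho} in both spatial regimes and the sum of Taylor remainders across all admissible $(p,q,\gamma)$.
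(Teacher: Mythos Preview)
Your proposal is correct and follows essentially the same approach as the paper, which simply invokes Proposition \ref{Proselfsimrho} and Lemma \ref{lemexpconslawprofil_more_general} together with the recursion $r_{n+1}=r_n+n+1$. Your treatment is in fact more detailed than the paper's one-line proof: you explicitly verify the admissibility condition $k+|\beta|\leq N-r_{q+1}-1$ via $r_{n+1}-r_{q+1}\geq n-q$, and you separately handle the region $|x|\geq t$ (this last step is not strictly necessary, since the error term $\log^{S_n+1}(t)\langle t+|x|\rangle^{-3}t^{-(n-2)}$ from Proposition \ref{Proselfsimrho} and the Taylor remainders $\log^p(t)|x|^{n-q+1}/t^{n+1}$ are already dominated by $\log^{S_n+1}(t)\langle x\rangle^{n+1}/t^{n+1}$ uniformly in $x$; also, for the direct density bound you should cite Corollary \ref{Cordecayaveragev} rather than Corollary \ref{cor_decay_losing_spatial_density} to avoid the spurious $\langle t\rangle^{1/4}$).
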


\begin{proof}
Since $r_{n+1}=r_n+n+1$, the proof follows directly by Proposition \ref{Proselfsimrho} and Lemma \ref{lemexpconslawprofil_more_general}.
\end{proof}

\subsection{Late-time tail for the force field}

The late-time asymptotics of the force field are obtained in Proposition \ref{Proexpa}. In this subsection, we apply this result to obtain non-linear tails for the force field. We begin writing the tails of the self-similar profile $\nabla_v \Phi_{p,q}(\frac{x}{t})$ in terms of the constants $\nabla_v \partial_v^\gamma \Phi_{p,q}(0)$.

\begin{lemma}\label{lemexpconslawprofil_more_general_force}
Let $N\geq 4$ and $(p,q) \in \mathbb{N}^2$ such that $p \leq q$ and $r_{q+1} \leq N$. For any integer $k \in \mathbb{N}$ and multi-index $\beta$ satisfying $k+|\beta| \leq N-r_{q+1}-1$, we have
$$  \forall \, (t,x) \in [2,\infty) \times \R^3_x, \qquad  \bigg|\nabla_v \partial_v^\beta \Phi_{p,q}\Big(\frac{x}{t}\Big)-\sum_{|\gamma|\leq k} \frac{1}{\gamma !}\nabla_v \partial_v^{\gamma+\beta}\Phi_{p,q}(0)\frac{x^{\gamma}}{t^{|\gamma|}}\bigg|\lesssim  \frac{|x|^{k+1}}{t^{k+1}}.$$ 
\end{lemma}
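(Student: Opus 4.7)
The plan is to reduce this statement to a direct application of Taylor's theorem, exactly in the spirit of the proof of Lemma \ref{lemexpconslawprofil_more_general}. The essential input is that $\nabla_v \partial_v^{\beta} \Phi_{p,q}$ is of class $C^{k+1}$ with a uniform bound on its $(k+1)$-th order derivatives, with a constant depending only on the norm of $\mathbf{F}_{p,q}$ (which in turn is controlled by the initial data norm).

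First, I would establish the required regularity. By Proposition \ref{Proexpa}, $\Phi_{p,q} \in C^{N-r_{q+1}} \cap W^{N-r_{q+1},\infty}(\R_v^3)$, and $\Phi_{p,q}$ solves the Poisson equation $\Delta_v \Phi_{p,q} = \mathbf{F}_{p,q}$ in \eqref{asymp_poiss_eqn}. Since $k+|\beta|+1 \leq N-r_{q+1}$, differentiating the Poisson equation yields $\Delta_v \partial_v^{\gamma+\beta}\Phi_{p,q} = \partial_v^{\gamma+\beta}\mathbf{F}_{p,q}$ for $|\gamma| \leq k+1$. Combined with the $W^{N-r_{q+1},\infty}$ bound on $\Phi_{p,q}$ and the $W^{N-r_{q+1},\infty}$ bound on $\mathbf{F}_{p,q}$ (recalled from Proposition \ref{Proselfsimrho}), this ensures $\| \nabla_v \partial_v^{\beta+\gamma} \Phi_{p,q} \|_{L^\infty(\R_v^3)} \lesssim 1$ for every $|\gamma| \leq k+1$.

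Next, I would apply Taylor's theorem at order $k$ to the $C^{k+1}$ function $v \mapsto \nabla_v \partial_v^{\beta} \Phi_{p,q}(v)$ centered at $v=0$ and evaluated at $v = x/t$. This gives
\begin{align*}
\nabla_v \partial_v^{\beta} \Phi_{p,q}\Bigl(\frac{x}{t}\Bigr) &= \sum_{|\gamma| \leq k} \frac{1}{\gamma !} \nabla_v \partial_v^{\gamma+\beta}\Phi_{p,q}(0) \frac{x^{\gamma}}{t^{|\gamma|}} + \sum_{|\gamma|=k+1} R_{\gamma}\Bigl(\frac{x}{t}\Bigr) \frac{x^{\gamma}}{t^{k+1}},
\end{align*}
with remainder coefficients satisfying
\[
\Bigl| R_{\gamma}\Bigl(\frac{x}{t}\Bigr) \Bigr| \leq \frac{1}{\gamma !}\sup_{|\gamma|=k+1}\, \sup_{w \in \R_v^3} \bigl|\nabla_v \partial_v^{\gamma+\beta}\Phi_{p,q}(w)\bigr|.
\]
By the uniform bound established in the previous step, this is $\lesssim 1$, so that the full remainder is bounded by $|x|^{k+1}/t^{k+1}$, yielding the claimed estimate.

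There is no genuine obstacle here: the argument is a verbatim transcription of the proof of Lemma \ref{lemexpconslawprofil_more_general}, with the only (extremely mild) extra ingredient being the verification that $\nabla_v \partial_v^{\beta}\Phi_{p,q}$ inherits enough regularity from $\mathbf{F}_{p,q}$ through the Poisson equation, which is already guaranteed by Proposition \ref{Proexpa}.
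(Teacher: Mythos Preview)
Your proposal is correct and matches the paper's approach: the lemma is stated in the paper without proof, being a direct analogue of Lemma~\ref{lemexpconslawprofil_more_general} with $\mathbf{F}_{p,q}$ replaced by $\nabla_v\Phi_{p,q}$, and your Taylor expansion argument together with the regularity $\Phi_{p,q}\in C^{N-r_{q+1}}\cap W^{N-r_{q+1},\infty}(\R^3_v)$ from Proposition~\ref{Proexpa} is exactly what is intended.
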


Finally, we obtain late-time tails for the force field $\nabla_x\phi$ in terms of the profiles $\Phi^\beta_{p,q}$. We recall that $\Phi^\beta_{p,q}$ are explicitly defined using the hierarchy of asymptotic conservation laws $\mathcal{A}(f)$.

\begin{theorem}\label{thm_latetime_nonlinear_force}
Let $N\geq 4$ and $n \in \mathbb{N}$ such that $r_{n+1} \leq N$. For any $|\beta| \leq N-r_{n+1}-1$ and all $(t,x)\in[2,\infty)\times \R^3_x$, we have
\begin{align*}
\bigg|t^{2+|\beta|} \nabla_x \partial_x^\beta \phi (t,x)-\sum_{p\leq q\leq n}\sum_{|\gamma|\leq n-q} \frac{C_{\alpha}}{\gamma !}\partial_v^{\gamma+\beta}\nabla_v \Phi_{p,q}(0)\frac{x^{\gamma}\log^p (t)}{t^{|\gamma|+q}}\bigg|\lesssim \dfrac{\log^{S_n+1}(t) }{t^{n+1}} |x|^{n+1}.
\end{align*}
where $\Phi_{p,q}$ is defined by $\Delta_v\partial_v^{\gamma+\beta}\Phi_{p,q}= \partial_v^{\gamma+\beta}\mathbf{F}_{p,q}$.
\end{theorem}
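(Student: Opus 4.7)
The plan is to prove this result in direct parallel with Theorem \ref{thm_latetime_nonlinear}, replacing the self-similar expansion of $t^3 \rho(G^\beta f)$ (Proposition \ref{Proselfsimrho}) with the self-similar expansion of $t^{2+|\beta|}\nabla_x\partial_x^\beta \phi$ (Proposition \ref{Proexpa}), and Lemma \ref{lemexpconslawprofil_more_general} with Lemma \ref{lemexpconslawprofil_more_general_force}. The constants $C^{p,q}$ appearing in the expansion are the same combinatorial ones as for the spatial density, since the asymptotic Poisson relation $\Delta_v \Phi_{p,q} = \mathbf{F}_{p,q}$ is linear and compatible with differentiation.

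First, I apply Proposition \ref{Proexpa} at order $n$ to the derivative of order $|\beta|$. Since $|\beta| \leq N - r_{n+1}-1 \leq N - r_{n+1}$, the proposition gives
\begin{equation*}
\Big| t^{2+|\beta|} \nabla_x \partial_x^\beta \phi(t,x) - \sum_{p \leq q \leq n} \frac{\log^p(t)}{t^q} \, \nabla_v \partial_v^\beta \Phi_{p,q}\Big(\frac{x}{t}\Big) \Big| \lesssim \frac{\log^{1+S_n}(t)}{t^{n+1}},
\end{equation*}
valid for all $(t,x) \in [2,\infty) \times \R^3_x$. This is the non-polynomial (in $x$) part of the error, which is absorbed into the stated bound $\log^{S_n+1}(t)\,|x|^{n+1}/t^{n+1}$ once one interprets it with $\langle x \rangle^{n+1}$ as in Theorem \ref{thm_latetime_nonlinear}.

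Next, for each fixed pair $(p,q)$ with $p \leq q \leq n$, I apply Lemma \ref{lemexpconslawprofil_more_general_force} with Taylor order $k = n-q$. The regularity hypothesis requires $k + |\beta| = n-q+|\beta| \leq N - r_{q+1}-1$; using $|\beta| \leq N-r_{n+1}-1$ this amounts to $r_{n+1} - r_{q+1} \geq n-q$, which holds since $r_{n+1}-r_{q+1} = \sum_{j=q+1}^{n}(j+1) \geq n-q$. The lemma then yields
\begin{equation*}
\Big| \nabla_v \partial_v^\beta \Phi_{p,q}\Big(\frac{x}{t}\Big) - \sum_{|\gamma| \leq n-q} \frac{1}{\gamma!}\nabla_v \partial_v^{\gamma+\beta}\Phi_{p,q}(0)\,\frac{x^\gamma}{t^{|\gamma|}} \Big| \lesssim \frac{|x|^{n-q+1}}{t^{n-q+1}}.
\end{equation*}

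Multiplying this Taylor remainder by the weight $\log^p(t)/t^q$ and summing over $(p,q)$ with $p \leq q \leq n$, the total error is controlled by $\log^n(t)\, |x|^{n+1}/t^{n+1}$, which is absorbed into $\log^{S_n+1}(t)\,|x|^{n+1}/t^{n+1}$ since $S_n \geq 2N \geq n$ by construction. Combining this with the elliptic error from Proposition \ref{Proexpa} via the triangle inequality produces the stated asymptotic expansion, with coefficients $\nabla_v \partial_v^{\gamma+\beta}\Phi_{p,q}(0)/\gamma!$ and the same combinatorial factors $C^{p,q}$ as in Theorem \ref{thm_latetime_nonlinear}. I do not anticipate any serious obstacle: the only subtle point is the bookkeeping of the regularity indices $N-r_{q+1}$ of the profiles $\Phi_{p,q}$ against the number $n-q$ of Taylor derivatives required, and the verification above shows that $q=n$ is the tight case, matched exactly by the hypothesis $|\beta| \leq N - r_{n+1}-1$.
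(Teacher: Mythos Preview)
Your proposal is correct and follows exactly the paper's own proof, which simply states that the result follows directly from Proposition \ref{Proexpa} and Lemma \ref{lemexpconslawprofil_more_general_force}. Your additional verification of the regularity indices $k+|\beta| \leq N-r_{q+1}-1$ via $r_{n+1}-r_{q+1} \geq n-q$ is a useful bookkeeping check that the paper leaves implicit.
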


\begin{proof}
The proof follows directly by Proposition \ref{Proexpa} and Lemma \ref{lemexpconslawprofil_more_general_force}.
\end{proof}

\subsection{Weak convergence properties}\label{sec_weak_conv}

In this section, we show the weak convergence of the normalised distribution $t^3G^{\beta}f$ for small data solutions to the Vlasov--Poisson system in terms of the scattering state $\partial_v^{\beta}f_{\infty}$. For this purpose, we first compute the limit of the spatial averages studied in Proposition \ref{ProconvQ} in terms of the scattering state.

\begin{proposition}\label{lem_constant_expansion_rho_term_scatt_state}
Let $|\beta|\leq N-2$. For all $(t,v) \in [2,\infty)\times \R^3_v$, we have $$\bigg|\int_{\R_x^3} G^{\beta}f(t,x,v)\mathrm{d}x- \int_{\R_x^3}\partial_v^{\beta}f_{\infty}(x,v)\mathrm{d}x\bigg|\lesssim \e \frac{ \log^{N-1}(t)}{t}.$$ In particular, the spatial average $\int_{\R_x^3} G^{\beta}f(t,x,v)\mathrm{d}x$ converges to $\int_{\R_x^3}\partial_v^{\beta}f_{\infty}(x,v)\mathrm{d}x$ as $t\to \infty$.
\end{proposition}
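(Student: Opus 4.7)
The strategy is to reduce the statement to Proposition \ref{ProconvQ}, whose convergence rate $\epsilon \log^{N-1}(t)/t$ already matches the one we seek. The only task is then to identify the limiting function $\partial_v^\beta Q_\infty$ with $\int_{\R^3_x} \partial_v^\beta f_\infty(x,v)\dr x$.

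First, performing the change of variables $y = x+tv$ in $\int_{\R^3_x} \partial_v^\beta g_0(t,x,v)\dr x$ and recalling that $\partial_v^\beta g_0(t,x,v) = [G^\beta f](t,x+tv,v)$ by Lemma \ref{lem_properties_linear_change_variable_trivial}, we obtain the identity
$$ \int_{\R^3_x} G^\beta f(t,x,v) \dr x = \int_{\R^3_x} \partial_v^\beta g_0(t,x,v)\dr x.$$
Hence Proposition \ref{ProconvQ} directly yields
$$ \bigg| \int_{\R^3_x} G^\beta f(t,x,v) \dr x - \partial_v^\beta Q_\infty(v) \bigg| \lesssim \epsilon \, \frac{\log^{N-1}(t)}{t}, \qquad |\beta| \leq N-2.$$
It therefore remains only to establish the identity $Q_\infty(v) = \int_{\R^3_x} f_\infty(x,v)\dr x$; differentiation under the integral sign (justified by the pointwise decay and regularity of $f_\infty$ provided by Corollary \ref{Corestig1}) then gives $\partial_v^\beta Q_\infty = \int_{\R^3_x}\partial_v^\beta f_\infty\dr x$.

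To identify $Q_\infty$, I will compare the spatial averages of $g_0$ and $g_1$. Using $f(t,a,b) = g_0(t,a-tb,b)$ with $(a,b) = (\X_1+t\V_1, \V_1)$ gives $a-tb = \X_1 = x+\mu\log(t)\nabla_v\phi_\infty(v)$, so
$$ g_1(t,x,v) = g_0\big(t, x+\mu\log(t)\nabla_v\phi_\infty(v),\V_1(t,v)\big).$$
Integrating in $x$ (with Jacobian one) and applying Proposition \ref{ProconvQ} at $v$ replaced by $\V_1(t,v)$ yields
$$ \int_{\R^3_x} g_1(t,x,v)\dr x \;=\; \int_{\R^3_z} g_0(t,z,\V_1(t,v))\dr z \;=\; Q_\infty(\V_1(t,v))+O\!\big(\epsilon \log^{N-1}(t)/t\big).$$
By Corollary \ref{Corestig1}, the left-hand side converges to $\int_{\R^3_x} f_\infty(x,v)\dr x$ as $t\to\infty$. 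Since $\V_1(t,v)\to v$ and $Q_\infty$ is continuous (Proposition \ref{ProconvQ}), the right-hand side converges to $Q_\infty(v)$. This yields the desired identification and completes the proof.

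The only subtle point is the justification of commuting $\partial_v^\beta$ with the integral in $\int f_\infty(x,v)\dr x$: this follows since Corollary \ref{Corestig1} provides the uniform integrable bound $\langle x\rangle^{N_x-1}\langle v\rangle^{N_v}|\partial_{x,v}^\kappa f_\infty|\lesssim 1$ for $|\kappa|\leq N-2$, with $N_x \geq 8 \geq 5$. No calculation is genuinely hard; the main conceptual step is recognising that Proposition \ref{ProconvQ} and the modified scattering convergence of $g_1$ must be combined in order to pin down $Q_\infty$ as a spatial average of the scattering state.
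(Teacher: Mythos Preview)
Your proof is correct and follows essentially the same route as the paper's own argument: both reduce the claim to Proposition \ref{ProconvQ} and then identify $Q_\infty(v)$ with $\int_{\R^3_x} f_\infty(x,v)\,\dr x$ by comparing the spatial averages of $g_0$ and $g_1$ and invoking Corollary \ref{Corestig1}. In fact you are slightly more careful than the paper, which asserts that ``the spatial average of $f$ and $g_1$ are equal'' without explicitly addressing the velocity shift $\V_1(t,v)\neq v$; your use of the continuity of $Q_\infty$ together with $\V_1(t,v)\to v$ is exactly the missing link that makes that step rigorous.
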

\begin{proof}
We recall that $[G^\beta f](t,x+tv,v)=\partial_v^\beta g_0(t,x,v)$, so it suffices to prove the result for $|\beta|=0$ according to Proposition \ref{ProconvQ}. Then, by performing the change of variables $y(x)=x-tv-\mu \log (t) \nabla_v \phi_{\infty}(v)$, we observe that the spatial average of $f$ and $g_1$ are equal. It remains to apply Corollary \ref{Corestig1}. 
\end{proof}

In particular, one can show the following corollary.

\begin{corollary}\label{cor_limit_mass_stable_mfld_origin}
Let $|\beta| \leq N-3$. Then, for all $(t,v) \in [2,\infty) \times \R^3_v$, we have 
$$ \forall \, \bar{v} \in \R^3_v, \qquad  \bigg|\int_{\R_y^3} G^{\beta}f\Big(t,y, \bar{v}+\frac{x}{t}\Big)\mathrm{d}y- \int_{\R_y^3}\partial_{v}^{\beta}f_{\infty}(y,\bar{v})\mathrm{d}y\bigg|\lesssim \e \langle x \rangle \frac{ \log^{N-1}(t)}{t}.$$ 
\end{corollary}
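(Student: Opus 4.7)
The plan is to derive Corollary \ref{cor_limit_mass_stable_mfld_origin} directly from Proposition \ref{lem_constant_expansion_rho_term_scatt_state} combined with a first-order Taylor expansion in the velocity variable. The estimate in Proposition \ref{lem_constant_expansion_rho_term_scatt_state} is pointwise in $v$, so applying it at the shifted velocity $\bar{v} + x/t$ (allowed since $|\beta| \leq N-3 \leq N-2$) immediately yields
\[
 \bigg|\int_{\R_y^3} G^{\beta}f\Big(t,y,\bar{v} + \tfrac{x}{t}\Big)\mathrm{d}y - \int_{\R_y^3}\partial_v^{\beta}f_{\infty}\Big(y,\bar{v} + \tfrac{x}{t}\Big)\mathrm{d}y\bigg|\lesssim \e \, t^{-1} \log^{N-1}(t).
\]

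Next, I would compare the limiting spatial average at the two velocities by writing, via the fundamental theorem of calculus,
\[
\int_{\R_y^3}\partial_v^{\beta}f_{\infty}\Big(y,\bar{v} + \tfrac{x}{t}\Big)\mathrm{d}y - \int_{\R_y^3}\partial_v^{\beta}f_{\infty}(y,\bar{v})\mathrm{d}y = \int_0^1 \frac{x}{t}\cdot \int_{\R^3_y} \nabla_v \partial_v^{\beta}f_{\infty}\Big(y,\bar{v}+\tau \tfrac{x}{t}\Big)\mathrm{d}y\, \mathrm{d}\tau ,
\]
which is bounded by $|x|/t$ times a uniform bound on $\int_{\R^3_y} |\nabla_v\partial_v^{\beta}f_\infty|(y,v)\,\mathrm{d}y$. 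Since $|\beta|+1 \leq N-2$, Corollary \ref{Corestig1} applied to $g_1(\infty,\cdot,\cdot)=f_\infty$ with $\kappa=(0,\beta+e_i)$ gives $\langle y \rangle^{N_x-1}\langle v\rangle^{N_v}|\nabla_v\partial_v^\beta f_\infty|(y,v)\lesssim 1$, and since $N_x-1 \geq 4$, the spatial integral converges and is uniformly controlled in $v$. This yields
\[
\bigg|\int_{\R_y^3}\partial_v^{\beta}f_{\infty}\Big(y,\bar{v} + \tfrac{x}{t}\Big)\mathrm{d}y - \int_{\R_y^3}\partial_v^{\beta}f_{\infty}(y,\bar{v})\mathrm{d}y\bigg| \lesssim \e \, |x|/t.
\]

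Combining the two bounds via the triangle inequality and absorbing both $t^{-1}\log^{N-1}(t)$ and $|x|/t$ into $\e \langle x \rangle t^{-1} \log^{N-1}(t)$ (using that $\log^{N-1}(t) \geq \log^{N-1}(2) > 0$ for $t\geq 2$) completes the proof. There is no substantial obstacle here: the argument is a straightforward shift-and-Taylor reduction, and the only care needed is bookkeeping on the order of derivatives to make sure Corollary \ref{Corestig1} applies at the order $|\beta|+1$.
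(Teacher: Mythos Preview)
Your proof is correct and is precisely the approach the paper has in mind (the paper states the corollary as an immediate consequence of Proposition \ref{lem_constant_expansion_rho_term_scatt_state} without writing a proof). One small remark on the bookkeeping: Corollary \ref{Corestig1} as stated gives a bound by a constant $C$, not by $\epsilon$; to recover the $\epsilon$ factor in the Taylor step you should instead invoke Corollary \ref{cor_bound_small_integral_x} (valid at order $|\beta|+1\le N-2\le N-1$) and pass to the limit $t\to\infty$ using Proposition \ref{lem_constant_expansion_rho_term_scatt_state}, which yields $\langle v\rangle^3|\partial_v^{\beta+e_i}Q_\infty(v)|\lesssim\epsilon$ and hence the desired $\epsilon|x|/t$ bound.
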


Corollary \ref{cor_limit_mass_stable_mfld_origin} will be used in the forthcoming subsections to show the weak convergence statements of the distribution function.

\subsubsection{Concentration in the zero velocity set}

In this section, we show that $t^{3}G^{\beta}f(t,x,v)$ converges weakly to the Dirac mass $(\int \partial_v^{\beta}f_{\infty}(x,0)\mathrm{d}x)\delta_{v=0}(v)$. This the nonlinear version of Proposition \ref{propconce_lin}.

\begin{proposition}\label{Proconc0}
Let $\varphi\in C^{\infty}_{x,v}$ be a compactly supported test function. Let $|\beta|\leq N-2$. Then, the Vlasov field $f$ satisfies
$$ \lim_{t\to\infty}\int_{\R^3_x\times \R^3_v}t^3G^{\beta}f(t,x,v)\varphi(x,v)\dr x\dr v=\int_{\R^3_x\times \R^3_v} \Big(\mathcal{A}_{\beta}(0)\delta_{v=0}(v)\Big)\varphi(x,v)\dr x\dr v. $$
In other words, the distribution $t^3G^{\beta}f(t,x,v)$ converges weakly to $\mathcal{A}_{\beta}(0)\delta_{v=0}(v)$ as $t \to  \infty$.
\end{proposition}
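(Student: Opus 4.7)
The plan is to closely mirror the linear argument in Proposition \ref{propconce_lin}, replacing the conserved linear spatial averages with their nonlinear counterparts supplied by Proposition \ref{lem_constant_expansion_rho_term_scatt_state}. The whole strategy rests on pushing the derivatives $G^\beta$ onto the profile $g_0(t,x,v)=f(t,x+tv,v)$, for which strong pointwise control and convergence of spatial averages are already available.

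First I would use the identity $G^\beta f(t,x,v)=\partial_v^\beta g_0(t,x-tv,v)$ together with the change of variables $v \mapsto y = x-tv$ at fixed $x$ (so that $\dr v = t^{-3} \dr y$) to rewrite
\begin{equation*}
\int_{\R^3_x\times\R^3_v} t^3 G^\beta f(t,x,v)\,\varphi(x,v)\,\dr x \dr v = \int_{\R^3_x\times\R^3_y} \partial_v^\beta g_0\Big(t,y,\tfrac{x-y}{t}\Big)\,\varphi\Big(x,\tfrac{x-y}{t}\Big)\, \dr y \dr x.
\end{equation*}
Exactly as in Lemma \ref{lem_asympt_exp_spatial_density_estimate_weak_conv_first_order}, a mean value theorem argument then allows one to replace $(x-y)/t$ by $x/t$ in both arguments, at the cost of an error of size $t^{-1}$ times weighted $L^\infty_{x,v}$ norms of $\partial_v^\beta g_0$ and $\nabla_v \partial_v^\beta g_0$. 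Corollary \ref{Corboundg0} controls these norms by powers of $\log(t)$ provided $|\beta| \leq N-2$, so the remainder vanishes in the limit.

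The problem thus reduces to computing the limit of
\begin{equation*}
\int_{\R^3_x} \varphi\Big(x,\tfrac{x}{t}\Big)\Big(\int_{\R^3_y} \partial_v^\beta g_0\Big(t,y,\tfrac{x}{t}\Big)\dr y\Big) \dr x.
\end{equation*}
A trivial change of variable identifies the inner spatial average with $\int_{\R^3_z} G^\beta f(t,z,x/t)\,\dr z$, so Proposition \ref{lem_constant_expansion_rho_term_scatt_state} delivers its convergence, uniform in the velocity parameter, to $\int_{\R^3_y} \partial_v^\beta f_\infty(y,x/t)\,\dr y$ at rate $\epsilon\log^{N-1}(t)/t$. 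Because $\varphi$ has compact support, $x/t \to 0$ uniformly on $\mathrm{supp}\,\varphi$; combining this with continuity of the map $v \mapsto \int \partial_v^\beta f_\infty(y,v)\,\dr y$ and dominated convergence gives the limit $\mathcal{A}_\beta(0)\int_{\R^3_x}\varphi(x,0)\,\dr x$, which is precisely the right-hand side of the statement.

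No genuinely new obstacle appears: all the nonlinear work has already been absorbed into Proposition \ref{lem_constant_expansion_rho_term_scatt_state} and, further upstream, into the modified scattering and the identification of the spatial average of $g_1$ with that of $f_\infty$. The only point requiring mild bookkeeping is to ensure that the weighted $L^\infty$ norms appearing in the Taylor remainder are dominated by the compact support and smoothness of $\varphi$, which is automatic thanks to the polynomial spatial weights carried by the estimates of $g_0$ in Corollary \ref{Corboundg0}.
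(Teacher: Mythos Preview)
Your proposal is correct and follows essentially the same approach as the paper: apply Lemma \ref{lem_asympt_exp_spatial_density_estimate_weak_conv_first_order} to $g(x,v)=\partial_v^\beta g_0(t,x,v)=G^\beta f(t,x+tv,v)$, bound the remainder via Corollary \ref{Corboundg0} (equivalently Proposition \ref{prop_point_bound_deriv_distrib}), and pass to the limit using the convergence of spatial averages. The only cosmetic difference is that the paper invokes Corollary \ref{cor_limit_mass_stable_mfld_origin} to handle in one stroke both the convergence of the spatial average and the evaluation at $x/t\to 0$, whereas you separate these into Proposition \ref{lem_constant_expansion_rho_term_scatt_state} followed by continuity of $v\mapsto\int\partial_v^\beta f_\infty(y,v)\,\dr y$ and dominated convergence; your route has the minor advantage of cleanly covering the full range $|\beta|\le N-2$ without appealing to a result stated for $|\beta|\le N-3$.
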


\begin{proof}
Applying Lemma \ref{lem_asympt_exp_spatial_density_estimate_weak_conv_first_order}, for fixed $t \geq 2$, to the distribution $g(x,v)=G^{\beta}f(t,x+vt,v)$, we have
\begin{align*}
\Big| \int_{\R_x^3\times\R_v^3}t^{3}G^{\beta}f(t,x,v)\varphi (x,v)\mathrm{d}x\mathrm{d}v~-&\int_{\R_x^3} \varphi \Big(x,\frac{x}{t}\Big) \int_{\R_y^3}G^{\beta}f\Big(t,y+x,\frac{x}{t}\Big)\mathrm{d}y\mathrm{d}x\Big|\\
&\lesssim t^{-1}\sup_{(x,v)\in\R^3_x\times \R^3_v}\langle x \rangle^{5} \big(|G^{\beta}f|+|GG^{\beta}f| \big) \lesssim \log^{N-1}(t)t^{-1},
\end{align*}
where the last estimate holds by Proposition \ref{prop_point_bound_deriv_distrib}. Finally, we apply Fubini, the dominated convergence theorem, and Corollary \ref{cor_limit_mass_stable_mfld_origin}, to show
\begin{align*}
\lim_{t\to\infty}\int_{\R^3_x}\varphi\Big(x,\frac{x}{t}\Big)\int_{\R^3_y}G^{\beta}f\Big(t,y+x,\frac{x}{t}\Big)\dr y\dr x=\int_{\R^3_x}\varphi(x,0)\dr x\int_{\R^3_z}\partial_v^{\beta}f_{\infty}(z,0)\dr z.
\end{align*}
\end{proof}

\subsubsection{Shearing of the Hamiltonian flow}

Let $\bar{v}\in \R^3_v$. In this subsection, we show that $t^{3}G^{\beta}f(t,x+t\bar{v},v+\bar{v})$ converges weakly to the Dirac mass $(\int \partial_v^{\beta}f_{\infty}(x,\bar{v})\mathrm{d}x)\delta_{v=\bar{v}}(v)$. This result is analogous to Proposition \ref{prop_shearing_linear} in the nonlinear setting. The proof is similar to the one of Proposition \ref{Proconc0} and relies on Corollary \ref{cor_limit_mass_stable_mfld_origin}.

\begin{proposition}
Let $\varphi\in C^{\infty}_{x,v}$ be a compactly supported test function. Let $\bar{v}\in\R^3_v$, and $|\beta|\leq N-2$. Then, the Vlasov field $f$ satisfies $$\lim_{t\to\infty}\int_{\R^3_x\times \R^3_v}t^3G^{\beta}f(t,x,v)\varphi(x-t\bar{v},v)\dr x\dr v= \int_{\R^3_x\times \R^3_v}\Big(\mathcal{A}_{\beta}(\bar{v})\delta_{v=\bar{v}}(v)\Big) \varphi(x,v)\dr x\dr v.$$ In other words, the distribution $t^{3}G^{\beta}f(t,x+t\bar{v},v+\bar{v})$ converges weakly to $\mathcal{A}_{\beta}(\bar{v})\delta_{v=\bar{v}}(v)$ as $t \to  \infty$.
\end{proposition}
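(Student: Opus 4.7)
The plan is to follow the strategy developed for the concentration result in Proposition~\ref{Proconc0}, carefully tracking the Galilean shift by $\bar v$. Note that the naive ansatz $f_{\bar v}(t,x,v):=f(t,x+t\bar v,v+\bar v)$ does \emph{not} yield a solution to the Vlasov--Poisson system, since the nonlinear potential is not Galilean invariant, so a direct reduction to Proposition~\ref{Proconc0} (in contrast with the linear Proposition~\ref{prop_shearing_linear}) is unavailable. Instead, the argument must mirror the structure of the proof of Proposition~\ref{Proconc0}, with $\bar v$ consistently replacing the origin in the velocity variable.

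First, I would use the identity $G^\beta f(t,x,v)=\partial_v^\beta g_0(t,x-tv,v)$ and perform a sequence of changes of variables --- the spatial translation $y=x-tv$, the velocity shift $v=\bar v+u$, and an additional spatial translation by $tu$ --- to rewrite the left-hand side in the form
\begin{equation*}
\int_{\R^3_y\times \R^3_u} t^3 \widetilde g(y-tu,u)\,\widetilde\varphi(y,u)\,\dr y\,\dr u,
\end{equation*}
where $\widetilde g(y,u):=\partial_v^\beta g_0(t,y,\bar v+u)$ and $\widetilde\varphi(y,u):=\varphi(y,\bar v+u)$ remains compactly supported. This fits exactly the setting of Lemma~\ref{lem_asympt_exp_spatial_density_estimate_weak_conv_first_order} applied to $\widetilde g$ at fixed $t$; combined with the bounds of Proposition~\ref{prop_point_bound_deriv_distrib}, that lemma yields the approximation
\begin{equation*}
\int_{\R^3_y} \varphi\bigl(y,\bar v+y/t\bigr) \int_{\R^3_z} G^\beta f\bigl(t,z,\bar v+y/t\bigr)\,\dr z\,\dr y + O\bigl(t^{-1}\log^{N-1}(t)\bigr),
\end{equation*}
after undoing the spatial translation inside the inner integral.

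To identify the limit, I would invoke Corollary~\ref{cor_limit_mass_stable_mfld_origin} at the shifted velocity $\bar v+y/t$, which controls the inner spatial average by $\int \partial_v^\beta f_\infty(z,\bar v)\,\dr z=\mathcal A_\beta(\bar v)$ up to an error of order $\e\langle y\rangle\log^{N-1}(t)/t$. The linear growth in $y$ in this error is harmless because $\varphi(\cdot,\bar v+\cdot/t)$ is supported in a fixed compact set for $t$ large, so Fubini and the dominated convergence theorem then give
\begin{equation*}
\lim_{t\to\infty}\int_{\R^3_y} \varphi\bigl(y,\bar v+y/t\bigr)\,\dr y\cdot\int_{\R^3_z}\partial_v^\beta f_\infty(z,\bar v)\,\dr z=\mathcal A_\beta(\bar v)\int_{\R^3_y}\varphi(y,\bar v)\,\dr y,
\end{equation*}
which is precisely the pairing of the Dirac mass $\mathcal A_\beta(\bar v)\delta_{v=\bar v}(v)$ against $\varphi$.

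The main technical obstacle is simply bookkeeping: ensuring that the successive Galilean changes of variables correctly conjugate the Taylor expansion produced by Lemma~\ref{lem_asympt_exp_spatial_density_estimate_weak_conv_first_order} into a statement about the spatial averages of $G^\beta f$ evaluated at $\bar v+y/t$, so that Corollary~\ref{cor_limit_mass_stable_mfld_origin} applies directly with $\bar v$ as its parameter. No new analytic ingredient beyond those already developed for the $\bar v=0$ case in the previous subsection is required.
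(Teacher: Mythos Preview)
Your proposal is correct and follows essentially the same route the paper indicates: the paper omits the proof, noting only that it is similar to Proposition~\ref{Proconc0} and relies on Corollary~\ref{cor_limit_mass_stable_mfld_origin}, and your reduction via the Galilean change of variables to Lemma~\ref{lem_asympt_exp_spatial_density_estimate_weak_conv_first_order} does precisely this. One minor bookkeeping point: Corollary~\ref{cor_limit_mass_stable_mfld_origin} is stated for $|\beta|\leq N-3$, so to cover the full range $|\beta|\leq N-2$ you should instead appeal to Proposition~\ref{lem_constant_expansion_rho_term_scatt_state} (uniform in $v$) together with the continuity of $\mathcal{A}_\beta$ for the pointwise limit in the dominated convergence step---but this is the same slight imprecision present in the paper's proof of Proposition~\ref{Proconc0}.
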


\bibliographystyle{alpha}
\bibliography{Bibliography.bib} 

\end{document}